\documentclass[11pt,letterpaper]{amsart} 

\usepackage{amssymb,amsmath,amsfonts,amsthm,mathtools}
\usepackage{latexsym,mathrsfs,pb-diagram}
\usepackage[usenames,dvipsnames,svgnames,table]{xcolor} 
\usepackage[pdftex]{graphicx}
\usepackage{pgf,tikz,tikz-cd} 

\usepackage{enumitem} 

\usepackage[plainpages=false,colorlinks,pdfpagelabels]{hyperref} 
\hypersetup{
  urlcolor=blue,
  citecolor=green,
  linkcolor=blue
}

\numberwithin{equation}{section}

\theoremstyle{definition}
\newtheorem{Def}{Definition}[section]

\theoremstyle{remark}

\newtheorem{Rem}[Def]{Remark}

\theoremstyle{plain}
\newtheorem{Prop}[Def]{Proposition}
\newtheorem{Cor}[Def]{Corollary}
\newtheorem{Thm}[Def]{Theorem}
\newtheorem{Lem}[Def]{Lemma}

\newcommand{\dfn}{\doteq}
\newcommand{\st}{ \ ; \ }
\newcommand{\lra}{\longrightarrow}
\newcommand{\sset}{\subset}
\newcommand{\bb}{\bullet}
\DeclareMathOperator{\pr}{pr}

\newcommand{\Z}{\mathbb{Z}}
\newcommand{\N}{\mathbb{N}}
\newcommand{\R}{\mathbb{R}}
\newcommand{\Q}{\mathbb{Q}}
\newcommand{\C}{\mathbb{C}}

\newcommand{\TR}[5]{
  \begin{array}{c c c c c}
    {#1} & : & {#3} & \longrightarrow & {#5} \\
         &   & {#2} & \longmapsto     & {#4}
  \end{array}
}

\newcommand{\transp}[1]{\prescript{\mathrm{t} \!}{}{{#1}}}
\DeclareMathOperator{\Span}{\mathrm{span}}
\DeclareMathOperator{\ran}{\mathrm{ran}}
\DeclareMathOperator{\Id}{\mathrm{Id}}

\DeclareMathOperator{\Aut}{\mathrm{Aut}}
\DeclareMathOperator{\Deck}{\mathrm{Deck}}

\newcommand{\supp}{\mathrm{supp}}
\newcommand{\del}{\partial}
\newcommand{\dd}{\mathrm{d}}
\let\Re\relax
\let\Im\relax
\DeclareMathOperator{\Re}{\mathsf{Re}}
\DeclareMathOperator{\Im}{\mathsf{Im}}
\newcommand{\dR}{\mathrm{dR}}

\newcommand{\D}{\mathscr{D}}
\newcommand{\E}{\mathscr{E}}
\newcommand{\cinfty}{\mathscr{C}^\infty}
\newcommand{\Ol}{\mathcal{O}}
\newcommand{\msf}{\mathsf}

\newcommand{\VV}{\mathcal{V}}
\newcommand{\WW}{\mathcal{W}}
\newcommand{\psum}{\sideset{}{'}\sum}
\newcommand{\NN}{\mathfrak{N}}
\newcommand{\szeta}{\zeta \mkern-13mu {-}}
\newcommand{\per}{\mathcal{P}}

\newcommand{\til}{\widetilde}
\newcommand{\iprod}{\mathbin{\lrcorner}}

\author[Ara\'{u}jo]{Gabriel Ara\'{u}jo}
\address{Universidade de S{\~a}o Paulo, ICMC-USP, S{\~a}o Carlos, SP, Brazil}
\email{\texttt{gccsa@icmc.usp.br}}

\author[Dattori da Silva]{Paulo Leandro Dattori da Silva}
\address{Universidade de S{\~ao} Paulo, ICMC-USP, S{\~a}o Carlos, SP, Brazil}
\email{dattori@icmc.usp.br}

\author[de Lessa Victor]{Bruno de Lessa Victor}
\address{Universidade Federal de Santa Catarina, Brazil}
\email{\texttt{bruno.lessa@ufsc.br}}

\author[Novelli]{Vin\'icius Novelli}
\address{University of Vienna, Vienna, Austria}
\email{vinicius.novelli.da.silva@univie.ac.at}
\keywords{Closed 1-forms, global solvability, global hypoellipticity}
\subjclass[2020]{35F35 (primary); 35H10; 35R01}

\title[Global properties of real, non-singular $1$-forms]{Global properties of the differential complex associated to closed, nonsingular $1$-forms on compact manifolds}

\begin{document}

\begin{abstract}
Given a closed, real, non-singular $1$-form on a compact manifold $\Omega$, global properties of the associated differential complex are studied. We completely characterize global solvability in the first and last levels of the complex. Furthermore, in the particular case where the $1$-form is rational, we prove global solvability for every degree and give a complete description of the cohomology spaces. Finally, a complete characterization for global hypoellipticity is obtained, building on the work of A.~Meziani (Comm.~PDE.,~2002). In all cases, it is shown that the conditions depend exclusively on the arithmetic nature of the form's periods.
\end{abstract}
		
\maketitle

\section{Introduction}
\label{sec:intro}

Let $\Omega$ be an $(n+1)$-dimensional smooth manifold\footnote{We assume it is connected, Hausdorff, paracompact and orientable.} and $\zeta \in \cinfty(\Omega; T^* \Omega)$ be a closed, non-singular, real-valued $1$-form. To such a form, one associates the subbundle $\VV \sset \C T \Omega$ whose sections are the complex vector fields annihilated by $\zeta$. Closedness of the latter entails involutivity of $\VV$. In such a situation, we obtain a differential complex~\cite{bch_iis, treves_has}, whose definition we recall.

Let $F^q(\Omega)$ denote the space of smooth, complex-valued $q$-forms on $\Omega$. For $1 \leq q \leq n$, let $\NN^q_{\VV}(\Omega)$ denote the subspace of all $f \in F^q(\Omega)$ such that $f(Y_1, \ldots, Y_q) = 0$ whenever $Y_1, \ldots, Y_q$ are smooth sections of $\VV$. Set $\NN^0_{\VV}(\Omega) \dfn \{0\}$, and define 
\begin{equation*}
  \Lambda^q_\VV (\Omega) \dfn \frac{F^q(\Omega)}{\NN^q_{\VV}(\Omega)},
  \quad 0 \leq q \leq n.
\end{equation*}
We endow $F^{q}(\Omega)$ with the standard Fr{\'e}chet $\cinfty$ topology. Then, $\NN^q_{\VV}(\Omega)$ is a closed subspace of $F^q(\Omega)$, thus $\Lambda^q_\VV(\Omega)$ is a Fr\'echet space for the quotient topology. It is also a $\cinfty(\Omega)$-module. This construction can be localized on open sets $U \sset \Omega$: for every $q$ there is a smooth vector bundle $\Lambda^{q}_\VV \to \Omega$ such that $\Lambda^{q}_\VV(U)$ is its space of smooth sections over $U$.

Since $\VV$ is involutive, we have $\dd \NN^q_{\VV} (\Omega) \sset \NN^{q+1}_{\VV}(\Omega)$ for $0 \leq q \leq n - 1$. Therefore, the exterior derivative $\dd$ induces continuous linear operators
\begin{equation}
   \label{eq:dprime}
   \dd'_q: \Lambda^q_\VV (\Omega) \lra \Lambda^{q+1}_\VV (\Omega)
\end{equation}
that form a differential complex. Using local coordinates, it is plain that these are first-order linear differential operators with smooth coefficients. The cohomology spaces of this complex are denoted by
\begin{equation*}
  H^q_{\mathcal{V}}(\Omega) \dfn
  \frac{ \ker \left\{ \dd'_q: \Lambda^q_\VV(\Omega) \lra \Lambda^{q+1}_\VV (\Omega) \right\} }
  { \ran \left\{ \dd'_{q-1} : \Lambda^{q-1}_\VV (\Omega) \lra \Lambda^q_\VV(\Omega) \right\} },
  \quad 1 \leq q \leq n,
\end{equation*}
whereas $H^0_{\VV}(\Omega) \dfn \ker \left\{ \dd'_0: \cinfty(\Omega) \to \Lambda^1_\VV (\Omega) \right\}$. By Frobenius' Theorem, $\VV$ defines a (non-singular smooth) foliation $\mathcal{F}$ of $\Omega$ by smooth hypersurfaces, and $\dd'$ can be thought of as the exterior derivative ``along the leaves'' of $\mathcal{F}$. Accordingly, these are often called \emph{tangential foliated} (or \emph{leafwise}) cohomology spaces (see, for instance,~\cite{Asaoka2014, Moore1988} and references therein).

From now on, we assume that $\Omega$ is compact. Concerning the complex~\eqref{eq:dprime}, we are interested in global questions of analytic nature. The first one is \emph{global solvability} of $\dd'_q$: in this work, this property means that its range is a \emph{closed} subspace of $\Lambda^{q+1}_{\VV}(\Omega)$. As is well-known, this property is equivalent to smooth solvability of the equation
\begin{equation*}
  \dd'_{q} \pmb{u} = \pmb{f},
\end{equation*}
for right-hand sides $\pmb{f} \in \Lambda^{q+1}_{\VV}(\Omega)$ satisfying appropriate compatibility conditions\footnote{Namely, the condition is that $\pmb{f}$ belongs to the annihilator of the kernel of the transpose operator $\transp{\dd}'_q$, as the Hahn-Banach theorem easily implies. See Section~\ref{sec:irr_degn} for more details.}. In that case, we shall say $\zeta$ is \emph{globally solvable} in degree $q+1$.

When $\Omega$ is of the form $M \times S^1$, for $M$ a compact $n$-dimensional manifold, an interesting class is given by $\zeta \dfn \omega - \dd \theta$, where $\omega \in F^1(M)$ is a closed, real-valued $1$-form on $M$ and $\dd\theta$ is the standard volume element in $S^1$: structures of this kind are called \emph{real tube-type structures}, and their global solvability has been studied by several authors (see, e.g.,~\cite{araujo_jahnke_ragog_ferra24, bcp96, dm16}). Characterizations are known in the first and last levels of the complex, and rely mainly on Fourier-analytic techniques, exploiting crucially the product structure with $S^1$.

In this paper, the absence of a product structure (or any circular symmetry) requires the introduction of new techniques. In common with previous works, our characterizations of global solvability rely on certain arithmetic properties of $\zeta$ (see below for precise definitions). In the so-called \emph{rational} case (Definition~\ref{def:rat_irr_integ}), we prove solvability in every level of the complex without further hypotheses. In the \emph{irrational} case, on the other hand, we obtain complete characterizations for solvability in the first and last levels of the complex. In particular, we obtain \emph{smooth} global solvability in \emph{top degree}. It is worth mentioning that the only results  which were previously known in that direction, even for tube-type situations, were the particular where the manifold is the torus $\mathbb{T}^{m + n}$~\cite{dm16}  and the weaker result where \emph{distributional} solutions are obtained for smooth right-hand sides~\cite{bcp96}.

In order to describe our results, we need to define a crucial object: the \emph{group of periods} of $\zeta$ is the (additive) subgroup of $\R$ given by
\begin{equation*}
  \per(\zeta) \dfn \left\{
    \int_\gamma \zeta \st [\gamma] \in H_1(\Omega; \Z)
  \right\}.
\end{equation*}
\begin{Def}
  \label{def:rat_irr_integ}
  A closed $1$-form $\zeta$ is \emph{integral} if $\per(\zeta) = \Z$. More generally, $\zeta$ is \emph{rational} if $\per(\zeta)$ is infinite cyclic; and \emph{irrational} otherwise\footnote{In which case $\per(\zeta)$ is everywhere dense~\cite[Lemma~9.3.13]{conlon_manifolds}. It can never be zero, since in that case $\zeta$ would be exact (by de Rham's Theorem), which cannot be since $\zeta$ is by assumption non-singular, whereas any globally defined smooth function must have a critical point by compactness.}.
\end{Def}

Our first result is a complete description of $H^{q}_{\VV}(\Omega)$ when $\zeta$ is rational.
\begin{Thm}
  \label{thm:rational_mainthm}
  Let $\Omega$ be a compact $(n+1)$-dimensional manifold and $\zeta \in F^1(\Omega)$ be a closed, non-singular, real-valued $1$-form. Assume that $\zeta$ is rational. Then, there is a compact hypersurface $L \subset \Omega$ such that, for every $0 \leq q \leq n$, there are a smooth vector bundle $\mathbb{H}^q_{\dR} (L)$ over the circle, whose fibers are naturally isomorphic to $H^{q}_{\dR}(L)$, and a topological isomorphism
  \begin{equation*}
    H^{q}_{\VV}(\Omega) \simeq \cinfty(S^1; \mathbb{H}^q_{\dR} (L))
  \end{equation*}
-- the space of smooth sections of $\mathbb{H}^q_{\dR} (L) \to S^1$, endowed with the standard Fr{\'e}chet topology. In particular, $\zeta$ is globally solvable in every degree.
\end{Thm}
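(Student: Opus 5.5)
We reduce to the integral case and use Tischler's fibration. Since $\per(\zeta) = c\Z$ for some $c>0$, replacing $\zeta$ by $c^{-1}\zeta$ changes neither $\VV$ nor the complex~\eqref{eq:dprime}, so we may assume $\zeta$ is integral. Fix a base point $p_0 \in \Omega$. The map
\begin{equation*}
  \pi(p) \dfn \int_{p_0}^{p} \zeta \mod \Z
\end{equation*}
is then a well-defined smooth map $\pi : \Omega \to S^1 = \R/\Z$ with $\pi^*\dd\theta = \zeta$. Because $\zeta$ is non-singular, $\pi$ is a submersion; because $\Omega$ is compact, it is proper. By Ehresmann's theorem, $\pi$ is therefore a locally trivial fibre bundle with compact fibre $L \dfn \pi^{-1}(0)$. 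Its fibres are unions of leaves of $\mathcal{F}$, and in fact are exactly the leaves, since $\per(\zeta)=\Z$ forces connectedness of the fibres. Consequently $\VV$ is the complexified vertical bundle $\ker \dd\pi$. It follows that $\Lambda^q_\VV(\Omega)$ is the space of smooth sections of the bundle of vertical $q$-forms, and that $\dd'$ is the fibrewise exterior derivative.

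Next we trivialise the situation as a mapping torus. Choose a vector field $X$ with $\zeta(X)=1$ (for instance $X = \nabla\zeta/|\nabla\zeta|^2$ for some metric). Its flow $\phi_t$ preserves $\zeta$, since $\mathcal{L}_X\zeta = \dd(\zeta(X)) = 0$, and maps fibres to fibres. Setting $\varphi = \phi_1|_L$, we obtain a diffeomorphism $\Omega \simeq (L\times\R)/\Z$, where $(x,t+1)\sim(\varphi(x),t)$, under which $\pi$ corresponds to $t$. Hence $\Lambda^q_\VV(\Omega)$ is identified with the space of smooth families $t\mapsto u_t\in F^q(L)$, $t \in \R$, satisfying $u_{t+1} = \varphi^* u_t$ (with the appropriate sign convention), and $\dd'$ becomes $u_t\mapsto \dd_L u_t$. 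In the same way, $\mathbb{H}^q_{\dR}(L)$ is the flat bundle $(\R\times H^q_{\dR}(L))/\Z$ twisted by $\varphi^*$. This is a finite-rank vector bundle, since $L$ is compact, and its fibres are $H^q_{\dR}(L_s)$ with $L_s = \pi^{-1}(s)$.

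We then define the map $H^q_\VV(\Omega)\to \cinfty(S^1;\mathbb{H}^q_{\dR}(L))$ by $[u]\mapsto (t\mapsto [u_t])$. Surjectivity is obtained by lifting a smooth section through a fixed finite-dimensional space of harmonic forms (Hodge theory on $L$ for a fixed metric), arranging the twisting condition with a partition of unity on $S^1$. The main obstacle is injectivity together with the topological statement: if each $u_t$ is exact, we need a primitive $v_t$ depending smoothly on $t$ and still satisfying the twisting condition. We handle this with a parametrised Hodge decomposition. We choose the metric on $\Omega$ invariant in the sense that it is pulled back from a family $g_t$ on $L$ with $g_{t+1}=\varphi^* g_t$; for example we average via a partition of unity, which preserves the compatibility. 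We then set $v_t = \dd^*_{g_t}G_{g_t}u_t$. The Green operators $G_{g_t}$ depend smoothly on $t$, because the dimension of the harmonic spaces is constant (it equals the Betti number), so the harmonic projector varies smoothly, for instance by the Riesz contour-integral formula around $0$. Naturality of Hodge theory gives $v_{t+1}=\varphi^* v_t$. This yields a continuous right inverse, so the range of $\dd'$ is the kernel of a continuous map onto a Fréchet space and is therefore closed. The map is thus a continuous linear bijection between Fréchet spaces, hence a topological isomorphism by the open mapping theorem. Closedness of the range of $\dd'_{q-1}$ for every $q$ then gives global solvability in every degree.
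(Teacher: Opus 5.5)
Your proof is correct, but it takes a different route from the paper's. The paper does not invoke Ehresmann; it constructs the fibration $g\colon\Omega\to S^1$ by hand, descending $e^{i\psi}$ from the universal cover. It then trivializes $g$ explicitly over the two arcs $U_{\pm1}$ using the flow of a transversal field. On each piece $U_z\times L_1$ it computes the partial de Rham cohomology through Proposition~\ref{prop:partial_derham_solvable}: it identifies $\cinfty(U_z;\pr_2^*\wedge^qT^*L_1)$ with $\cinfty(U_z)\widehat{\otimes}F^q(L_1)$ and uses exactness of the completed tensor product on nuclear Fr{\'e}chet spaces. Injectivity, and the inverse of the restriction map $\mathsf{T}$, then come from gluing with a partition of unity on the base, since functions pulled back from $S^1$ are $\dd'$-closed. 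The image of $\mathsf{T}$ is identified with compatible pairs, i.e.\ sections of the bundle whose transition function is induced by the return map.

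You instead work globally on the mapping torus, where $\Lambda^q_{\VV}(\Omega)$ becomes the space of $\varphi$-twisted families, and you get both lifts and primitives from Hodge theory on the compact fibre. Your route buys the following:
\begin{itemize}
\item $\mathbb{H}^q_{\dR}(L)$ appears transparently as the flat bundle with monodromy $\varphi^*$;
\item it gives an explicit right inverse for $\dd'$;
\item it avoids nuclear spaces.
\end{itemize}
The paper's route has one real advantage: Proposition~\ref{prop:partial_derham_solvable} holds for a non-compact fibre. That is exactly how it is reused in Section~\ref{sec:irr_deg1} on $\R\times\widehat{L}$, where Hodge theory is unavailable.

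The one heavy analytic input in your injectivity step, the smooth dependence of $G_{g_t}$ on $t$ (a Kodaira--Spencer type statement), can in fact be dropped. Fix a metric $g_0$ on $L$. Then $w_t\dfn\dd^*_{g_0}G_{g_0}u_t$ is a primitive of $u_t$ depending smoothly on $t$. Now apply the same averaging you already use for surjectivity:
\begin{equation*}
v_t\dfn\sum_{k\in\Z}\chi(t-k)\,(\varphi^k)^*w_{t-k},
\end{equation*}
with $\chi$ compactly supported and $\sum_{k}\chi(\cdot-k)=1$. This $v_t$ satisfies $\dd v_t=u_t$, because $(\varphi^k)^*u_{t-k}=u_t$. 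It also satisfies $v_{t+1}=\varphi^*v_t$, so no varying family of metrics is needed.
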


The proof of Theorem~\ref{thm:rational_mainthm} is presented in Section~\ref{sec:rational}. The main idea is the following: when $\zeta$ is rational, the foliation $\mathcal{F}$ defined by $\zeta$ -- of which $L$ is a leaf -- is actually given by a locally trivial fibration, so the analysis reduces to a gluing of two partial de Rham complexes. Even though the solvability statement can be deduced from~\cite[Theorem~2.2]{cn24}, we present a self-contained argument which leads to the description of the vector bundle $\mathbb{H}^q_{\dR}(L)$. 

In the irrational case, we tackle solvability in levels $1$ and $n$. These statements depend on the ``degree of irrationality'' of the form $\zeta$, encoded as Diophantine approximations.
\begin{Def}
  \label{def:liouville}
  An irrational $1$-form $\zeta$ is said to be \emph{Liouville} if there are a sequence of integral $1$-forms $\{ \zeta_\nu \}_{\nu \in \N}$ and a sequence of integers $\{ q_\nu \}_{\nu \in \N}$, with $q_\nu \geq 2$, such that
  \begin{equation*}
    \left\{ q_\nu^\nu (\zeta - q_\nu^{-1} \zeta_\nu) \st \nu \in \N \right\}
    \quad \text{is a bounded set in $F^1 (\Omega)$}.
  \end{equation*}
\end{Def}
Our main result reads as follows:
\begin{Thm}
  \label{thm:degs1n_mainthm}
  Let $\Omega$ be a compact $(n+1)$-dimensional manifold and $\zeta \in F^1(\Omega)$ be a closed, non-singular, real-valued $1$-form. The following are equivalent:
  \begin{enumerate}
  \item \label{thm:degs1n_mainthm-it1} $\zeta$ is globally solvable in degree $1$.
  \item \label{thm:degs1n_mainthm-it2} $\zeta$ is globally solvable in degree $n$.
  \item \label{thm:degs1n_mainthm-it3} $\zeta$ is either rational or an irrational non-Liouville form.
  \end{enumerate}
\end{Thm}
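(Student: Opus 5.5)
The rational case is already settled by Theorem~\ref{thm:rational_mainthm}, so it suffices to show that, for irrational $\zeta$, solvability in degree $1$ and in degree $n$ are each equivalent to $\zeta$ being non-Liouville. The first step is a normal form. By Tischler's argument, $\zeta$ can be approximated by closed forms with rational periods. I would fix a basis $[\gamma_1],\dots,[\gamma_m]$ of $H_1(\Omega;\Z)$ modulo torsion and integral closed forms $\theta_1,\dots,\theta_m$ dual to it. Then
\begin{equation*}
\zeta=\sum_j \lambda_j\theta_j+\dd h,
\end{equation*}
where $\lambda_j=\int_{\gamma_j}\zeta$. In these terms the Liouville condition becomes a Diophantine condition on the period vector $\lambda=(\lambda_1,\dots,\lambda_m)$. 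Indeed, $q_\nu\zeta-\zeta_\nu$ small in every $C^k$ norm forces $\|q_\nu\lambda-p_\nu\|\le C q_\nu^{-\nu}$ for integer vectors $p_\nu$, and conversely such $p_\nu$ give $\zeta_\nu=\sum p_{\nu,j}\theta_j+q_\nu\,\dd h$. The twist is that $q_\nu\,\dd h$ must be absorbed, so I would first replace $\zeta$ by a cohomologous form. Since $\dd'$ is conjugate under multiplication by $e^{i\,\cdot}$-type gauge only in tube situations, I would instead note that $\zeta$ and $\zeta-\dd h$ define different foliations, and work with the intrinsic statement via periods, i.e.\ $|q\lambda-p|\ge Cq^{-N}$.

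Degree $1$. The plan is as follows. For $\Leftarrow$, let $f\in\Lambda^1_\VV$ with $\dd' f=0$ satisfy the compatibility conditions, i.e.\ annihilate $\ker\transp{\dd}'_0$. Choose a transversal vector field $X$ with $\zeta(X)=1$ whose flow preserves $\mathcal F$. Then $\dd'$-closed $1$-forms lift to closed $1$-forms $\tilde f=f+g\zeta$ with $\dd\tilde f=0$, which requires solving $\dd' g$-type equations (actually $\dd g\wedge\zeta=-\dd f$). Next, write $\tilde f=\sum c_j\theta_j+\dd u$ via de Rham, and observe that $f=\dd' u$ exactly when the class of $\tilde f$ lies in the span of $[\zeta]$, adjusted by $g$. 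The smooth solution comes from solving $Xg=$ given data along the flow of an irrational linear-type flow. Here the small-divisor estimate $|\langle k,\lambda\rangle|$ enters, through Fourier expansion on the Albanese-type torus map $\Omega\to\mathbb T^m$ given by the $\theta_j$. For $\Rightarrow$, a Liouville sequence lets me build $f=\sum_\nu e^{2\pi i\,\phi_\nu}$-type data, where $\phi_\nu$ is the primitive of $\zeta_\nu$ on $\Omega$ modulo $\Z$, so that $e^{2\pi i\phi_\nu}$ is smooth. I would set $f=\sum_\nu q_\nu^{-\nu/2}\dd'(e^{2\pi i\phi_\nu})$. This is smooth and lies in the closure of the range, but the formal preimage $\sum q_\nu^{-\nu/2}e^{2\pi i\phi_\nu}$ has coefficients that would be forced to blow up, because $\dd' e^{2\pi i\phi_\nu}=2\pi i\,e^{2\pi i\phi_\nu}\,(\dd\phi_\nu)'$ and $(\dd\phi_\nu)'=(\zeta_\nu-q_\nu\zeta)'$ is tiny. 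The construction must then be corrected by dividing by these small factors to get a non-closed range, which contradicts the open mapping theorem.

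Degree $n$ and the main obstacle. Degree $n$ I would handle by duality. The transpose of $\dd'_{n-1}$ is identified, via the orientation and a volume form, with $\dd'_0$ acting on currents, twisted by the trivial line bundle $\Lambda^{n+1}/\zeta\wedge\Lambda^n$. Closed range of $\dd'_{n-1}$ is equivalent to closed range of its transpose, and in turn to an a priori estimate $\|u\|_{-k}\le C\|\dd' u\|_{-k'}$ modulo $\ker$, of the same Diophantine type as in degree $1$. The same Liouville test functions then disprove it. The hard step is the sufficiency direction for degree $1$: producing smooth solutions without any product structure. My first attempt would be to use the map $\Omega\to\mathbb T^m$ together with a fiberwise Hodge decomposition along the compact fibres of a nearby rational approximant, from which tame estimates can be derived.
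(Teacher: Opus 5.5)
\textbf{There is a genuine gap.} For irrational $\zeta$, the implication ``non-Liouville $\Rightarrow$ closed range'' is never proved, in either degree, and the mechanism you sketch cannot supply it.

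First, your lifting step is circular. If $\dd' f=0$ then $\dd f=\zeta\wedge\beta$ with $\dd'\beta=0$. Since $\dd(f+g\zeta)=\zeta\wedge(\beta-\dd g)$, this vanishes iff $\dd'_0 g=[\beta]$ in $\Lambda^1_\VV(\Omega)$. That is an equation of exactly the type whose solvability is in question.

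Second, the map $\Omega\to\mathbb{T}^m$ given by the $\theta_j$ gives no product structure for $\dd'$. Functions on $\Omega$ do not factor through it and $\dd'$ does not descend, so Fourier expansion on $\mathbb{T}^m$ diagonalizes nothing. A fiberwise Hodge decomposition along the fibrations of rational approximants would need estimates uniform in $\nu$, but the fibrations and all constants change with $\nu$. No estimate is actually produced.

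The missing idea is the paper's reduction to a one-dimensional problem:
\begin{enumerate}
\item Pass to the minimal covering $g\colon\R\times\widehat{L}\to\Omega$ with $g^*\zeta=\dd t$. Its deck group $\per(\zeta)=\Z a_0\oplus\cdots\oplus\Z a_r$ acts by $t\mapsto t+\alpha$ in the first factor.
\item Identify $\Lambda^q_\VV(\Omega)$ with the $\per(\zeta)$-invariant part of the partial de Rham complex $(\Lambda^q_{\widehat{\VV}}(\R\times\widehat{L}),\dd_{\widehat{L}})$.
\item Use the double complex built from $\dd_{\widehat{L}}$ and the group-cohomology differential. The first has closed range by the parametric de Rham theorem. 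The second is acyclic, since $\per(\zeta)$ is free abelian, plus a partition-of-unity argument in degree one. This shows $\dd'_0$ has closed range iff $Df=(f(\cdot+a_j)-f)_{j=0,\dots,r}$ has closed range on $\cinfty(\R)$.
\item Settle the latter by showing $\ran({}^tD)=\{v\in\E'(\R):\hat v(0)=0\}$, via Ehrenpreis's theorem for the generators $(e^{ia_jz}-1)/z$. Its hypothesis $\sum_j|(e^{ia_jz}-1)/z|\ge Ce^{-\epsilon|\Im z|}(1+|z|)^{-N}$ is exactly where non-Liouvilleness enters, through $|\sin x|+\sum_j|\sin a_jx|\gtrsim(1+|x|)^{-m}$.
\end{enumerate}

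Your necessity argument also fails as written. Since $\dd\phi_\nu=\zeta_\nu=O(q_\nu)$ in every $C^k$ norm, the series $\sum_\nu q_\nu^{-\nu/2}e^{2\pi i\phi_\nu}$ converges in $\cinfty(\Omega)$; its $C^k$ seminorms are $O(q_\nu^{k-\nu/2})$. So your $f$ lies in the range and nothing is contradicted. Unit weights (after passing to a subsequence) do make $\sum_\nu\dd'(e^{2\pi i\phi_\nu})$ converge while the formal preimage diverges. But you would still have to show that \emph{no} smooth $u$ solves $\dd'u=f$, i.e.\ find a way to read off ``coefficients'' of an arbitrary solution, and you give none. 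In the paper this comes from uniqueness of Fourier coefficients of a $1$-periodic function on $\R$, once the reduction to $D$ is made.

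In degree $n$, your duality step matches Proposition~\ref{prop:duality_basic}. Sufficiency, however, is again only asserted (``an a priori estimate of the same Diophantine type''). On currents the paper needs the analogous reduction, identifying $\ker\dd_{\widehat{L}}$ on currents with $\D'(\R)$. It then uses Ehrenpreis's theorem with fast decay on $\R$ to get closed range of the transpose of $D$ on $\cinfty_c(\R)$.

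Finally, your concern about absorbing $q_\nu\,\dd h$ is unfounded: $\sum_j p_{\nu,j}\theta_j+q_\nu\,\dd h$ has integer periods because $\dd h$ is exact.
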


Its proof is given in Sections~\ref{sec:irr_deg1} and~\ref{sec:irr_degn}. Let us describe the main ideas involved. When $\zeta$ is irrational the leaves of $\mathcal{F}$ are dense, which complicates the geometry. We first observe~\cite[Section~1.4]{godbillon} that there is a minimal covering space $\Pi: \widehat{\Omega}\to \Omega$ such that the pullback $\Pi^*\zeta$ is exact (the application of minimal coverings to such questions was introduced in the paper \cite{BKNZ}). One can show that $\widehat{\Omega} \simeq \R \times L$, where $L$ is a fixed (non-compact) leaf of $\mathcal{F}$. Moreover, the group of periods $\per(\zeta)$ acts in $\R \times L$ via translation in the first coordinate and the flow of a transversal vector field\footnote{In our techniques, the dynamics of a vector field $X$ \emph{transversal} to $\zeta$ plays a key role. This means that $X$ is globally defined on $\Omega$ and satisfies $\zeta(X) = 1$ everywhere.} in the second. 

Let $\mathscr{A}$ be the sheaf of smooth solutions of $\dd'_0$ in $\Omega$ and $\hat{\mathscr{A}}$ the sheaf of smooth solutions of $\dd_{L}$ (the partial de Rham operator in the leaf direction) in $\R \times L$. Since $\per(\zeta)$ acts on $\R \times L$, the sheaf $\hat{\mathscr{A}}$ is a so-called \emph{$\per(\zeta)$-sheaf}~\cite[Chapitre~5]{gro57}), and $\hat{\mathscr{A}}^{\per(\zeta)}$ -- the \emph{sheaf of invariants} of $\Pi_* \hat{\mathscr{A}}$ -- is isomorphic to $\mathscr{A}$ (see Proposition~\ref{prop:pullback_map}). Then, the Cartan-Leray spectral sequence degenerates and yields an isomorphism between $H^q(\Omega;\mathscr{A}) \simeq H^q_\VV(\Omega)$ and a direct sum of \emph{group cohomology spaces} associated with $\hat{\mathscr{A}}$. This technique has been used previously by Kacimi-Alaoui \cite{Alaoui2025} in the general setting of developable foliations and their connections with deformation theory. 

However, in our context we are interested in topological isomorphisms (and not merely algebraic ones which come from the abstract theory), which are much more difficult to exhibit, thereby requiring us to work directly with the double complex that yields the Cartan-Leray spectral sequence (see Theorem~\ref{thm:abstract_eq}).
 
The problem is then reduced to determining when the differential of group cohomology has closed range, i.e., when does the vector-valued operator
\begin{equation*}
  f \longmapsto \left( f(\cdot + a_j) - f \right)_{j = 0,1, \ldots, r}
\end{equation*}
has closed range (acting on $\cinfty$ functions on $\R$), where $\{ a_0, a_1, \ldots, a_r\}$ are the generators of $\per(\zeta)$ (see Corollaries \ref{cor:abstract_eq} and \ref{cor:reduction_to_generators}). This is accomplished, via the Homomorphism Theorem for Fr{\'e}chet spaces~\cite[p.~18]{{kothe_tvs2}}, by studying the transpose operator and solving a convolution-type equation on compactly supported distributions. Here, a classical result due to L.~Ehrenpreis (Theorem~\ref{thm:ehrenpreis}) is decisive. We remark that this technique was also applied recently in \cite{christensen2024} for problems related to the wave equation.

As for $\eqref{thm:degs1n_mainthm-it2} \Longleftrightarrow \eqref{thm:degs1n_mainthm-it3}$, we proceed by duality. It is enough to prove that $\dd'_0$ has strongly closed range when acting on distributions, rather than smooth functions. The proof proceeds in a similar fashion, but the necessity of dealing with distributions and currents in a non-compact setting requires a much more delicate manipulation; their functional analysis is also more intricate.

The second property that concerns us is global hypoellipticity of $\dd'_0$. That is, the property that
\begin{equation}
  \label{eq:GH}
  \tag{GH}
  \text{any $u \in \D'(\Omega)$ such that $\dd'_0 u$ is smooth is itself smooth.}
\end{equation}
As $\dd'_0$ is a differential operator, \eqref{eq:GH} entails smooth global solvability in degree $1$~\cite[Theorem~3.5]{afr22}. Again, in the real tube-type case, this property is well understood~\cite{araujo_lessa_datt_22, bcm93}, but in our current situation the usual Fourier-analytic techniques break down, and it becomes necessary to introduce different methods to approach this problem.

This was first addressed in~\cite{meziani02}, where a general necessary condition for~\eqref{eq:GH} was established~\cite[Theorem~3.1]{meziani02}. There, however, the sufficiency of the said condition was shown under an additional hypothesis of periodicity of the suspension diffeomorphism that defines the fibration $\Omega \to S^1$~\cite[Theorem~4.1]{meziani02}, which allows one to essentially return to the tube-type case.

In our final result we solve that issue, by proving the sufficiency of the condition for~\eqref{eq:GH} without any extraneous hypothesis. That is:
\begin{Thm}
  \label{thm:hypo_mainthm}
  Let $\Omega$ be a compact $(n+1)$-dimensional manifold and $\zeta \in F^1(\Omega)$ a closed, non-singular, real-valued $1$-form. The following are equivalent:
  \begin{enumerate}
  \item \label{thm:hypo_mainthm-it1} $\dd'_0$ is~\eqref{eq:GH}.
  \item \label{thm:hypo_mainthm-it2} $\zeta$ is irrational non-Liouville.
  \end{enumerate}
\end{Thm}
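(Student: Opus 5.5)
For \eqref{thm:hypo_mainthm-it1}$\Rightarrow$\eqref{thm:hypo_mainthm-it2} I would combine two facts. First, \eqref{eq:GH} implies global solvability in degree $1$ by~\cite[Theorem~3.5]{afr22}. Theorem~\ref{thm:degs1n_mainthm} then rules out the irrational Liouville case, so $\zeta$ is rational or irrational non-Liouville. Second, I would exclude the rational case directly. By Theorem~\ref{thm:rational_mainthm} (or the fibration behind it), a rational $\zeta$ gives a fibration $\Phi:\Omega\to S^1$ whose fibers are the leaves, with $\zeta = c\,\Phi^*\dd\theta$. Take any non-smooth distribution $v\in\D'(S^1)$, for instance a Dirac mass. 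Then $u=\Phi^*v$ is a non-smooth distribution (since $\Phi$ is a submersion) and $\dd'_0u=0$. This contradicts \eqref{eq:GH}, which is also Meziani's necessary condition~\cite[Theorem~3.1]{meziani02}.

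For the converse, assume $\zeta$ is irrational and non-Liouville, and let $u\in\D'(\Omega)$ satisfy $\dd'_0u=\pmb f$ smooth. By Theorem~\ref{thm:degs1n_mainthm}, $\dd'_0$ has closed range on smooth functions, so $\pmb f=\dd'_0 v$ for some $v\in\cinfty(\Omega)$. Then $w=u-v$ satisfies $\dd'_0w=0$ in $\D'$. It is therefore enough to show that every distributional solution of $\dd'_0w=0$ is constant, and in particular smooth.

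To prove this, I would lift $w$ to the minimal cover $\widehat\Omega\simeq\R\times L$. There the lift $\hat w$ is annihilated by $\dd_L$, and $L$ is connected, so $\hat w=1\otimes g$ for some $g\in\D'(\R)$. This needs a distributional version of "leafwise closed implies constant along $L$", which follows by regularizing in the $\R$ variable. Invariance of $\hat w$ under $\per(\zeta)$ means that $g(\cdot+a_j)=g$ for all generators $a_j$. Because $\per(\zeta)$ is dense in $\R$ and translation is continuous on $\D'(\R)$, $g$ is invariant under every translation. Hence $g$ is constant, so $w$ is constant.

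The main obstacle is making the distributional descent rigorous: showing that the pullback $\Pi^*$ identifies $\ker\dd'_0$ on $\D'(\Omega)$ with the $\per(\zeta)$-invariant $\dd_L$-closed distributions on $\R\times L$. The issue is that the transversal flow acts nontrivially on the $L$ factor. I would handle this using the distributional analogue of Proposition~\ref{prop:pullback_map}, checked locally on foliated charts where $\dd'_0$ is the de Rham operator in the leaf variables. Observe that the non-Liouville hypothesis is used only through Theorem~\ref{thm:degs1n_mainthm}, while density of the periods is what kills the invariant distributions.
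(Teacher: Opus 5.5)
Your direction (1)$\Rightarrow$(2) is correct and differs from the paper, which simply cites \cite[Theorem~3.1]{meziani02}. Combining \cite[Theorem~3.5]{afr22} with Theorem~\ref{thm:degs1n_mainthm} excludes the Liouville case, and pulling back a Dirac mass through the fibration $\Omega \to S^1$ excludes the rational case; this is a valid self-contained argument. The gap is in the converse, at the step ``$\dd'_0$ has closed range on smooth functions, so $\pmb f=\dd'_0 v$ for some $v\in\cinfty(\Omega)$''. Closed range only guarantees that limits \emph{in the $\cinfty$ topology} of $\dd'_0 v_k$, with $v_k$ smooth, lie in $\dd'_0(\cinfty(\Omega))$. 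Your $\pmb f$ is merely the image of a distribution, i.e.\ a limit of such $\dd'_0 v_k$ in the distributional topology. In terms of compatibility conditions, you would need $\langle \mu, \dd'_0 u\rangle=\langle \transp{\dd}'_0\mu, u\rangle=0$ for currents $\mu\in\ker\transp{\dd}'_0$. That is a pairing of two distributions, which is not defined without further work. What you are actually using is almost global hypoellipticity, $\dd'_0(\D'(\Omega))\cap\Lambda^1_\VV(\Omega)=\dd'_0(\cinfty(\Omega))$, and this is strictly stronger than closed range in general. For example, multiplication by $\sin\theta$ on $S^1$ has closed range on $\cinfty(S^1)$, namely $\{f\st f(0)=f(\pi)=0\}$, yet $\sin\theta\cdot\mathrm{p.v.}(1/\sin\theta)=1$.

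This is precisely the issue the paper confronts in Lemma~\ref{lem:agh_partialderham}, and it is not a formality. From $\dd_{\widehat L}u$ smooth one derives the local decomposition $u_j=\Psi_j+u_j^0$ with $\Psi_j$ smooth and $u_j^0\in\D'(\R)$. Mollification in $t$ then yields \emph{smooth} functions whose $\dd_{\widehat L}$-images converge in $\cinfty$, and only then is closedness of the range invoked. Your argument can be repaired the same way. Since $\mathcal{L}_X\zeta=\dd(\zeta(X))+X\iprod\dd\zeta=0$, the flow $\Phi_t$ of a transversal field preserves $\VV$. Hence $u_\varepsilon\dfn\int\rho_\varepsilon(t)\,\Phi_t^*u\,\dd t$ satisfies $\dd'_0u_\varepsilon=\int\rho_\varepsilon(t)\,\Phi_t^*\pmb f\,\dd t\to\pmb f$ in $\Lambda^1_\VV(\Omega)$. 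Moreover $g\circ\tau_t=\Phi_t\circ g$ with $\tau_t(s,p)=(s+t,p)$, so the lift $g^*u_\varepsilon$ is the $t$-mollification of $g^*u$, which is smooth by the lemma's argument. Closed range now gives $v$. With that step supplied, your route is a genuine alternative to the paper's. The paper proves global hypoellipticity of the group differential $\delta$ through the identity $\sum_j(T^*_{a_j}u_j-u_j)=\delta_0'$ (Theorem~\ref{thm:group_coh_hypo} and the final proposition of Section~\ref{sec:hypo}). You instead combine the closed-range result of Theorem~\ref{thm:degs1n_mainthm} with the elementary fact that a distribution on $\R$ invariant under the dense group $\per(\zeta)$ is constant.
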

As we mentioned, the implication $\eqref{thm:hypo_mainthm-it1} \Longrightarrow \eqref{thm:hypo_mainthm-it2}$ is proven in \cite[Theorem~3.1]{meziani02} by explicitly producing non-smooth distributions in the kernel of $\dd'_0$. The converse $\eqref{thm:hypo_mainthm-it2} \Longrightarrow \eqref{thm:hypo_mainthm-it1}$, carried out in Section~\ref{sec:hypo}, uses the same formalism developed to prove solvability, and shows that this property is equivalent to global hypoellipticity of the group cohomology differential in the first level (see Theorem~\ref{thm:group_coh_hypo}). Once we are in this situation, we can apply the fundamental solution obtained by Ehrenpreis's technique to conclude the desired hypoellipticity.

\section{The geometry of integral forms}
\label{sec:rational}

We devote this section to the proof of Theorem~\ref{thm:rational_mainthm}. Very little knowledge of foliation theory is required, and although many of the constructions below are well-known, it will cost us little to present them in a (mostly) self-contained manner; it will also help us to set the notation straight. We refer the reader to~\cite[Section~9.3]{conlon_manifolds} whenever necessary.

Assume that $\zeta \in F^1(\Omega)$ is a rational $1$-form; this means (Definition~\ref{def:rat_irr_integ}) that $\per(\zeta) = c \Z$ for some non-zero $c \in \R$. Since the definition of $\VV$ is invariant by multiplication of $\zeta$ by a non-vanishing constant, we may assume without loss of generality that $c = 1$, that is, that $\zeta$ is integral to start with. For simplicity, we adopt throughout the normalization
\begin{equation*}
  \szeta \dfn 2\pi \zeta.
\end{equation*}
Notice that the annihilator bundle of $\szeta$ is exactly $\VV$, whereas $\per(\szeta) = 2 \pi \Z$. If we denote by $\Pi: \widetilde{\Omega} \to \Omega$ the universal covering space of $\Omega$, and take $\psi \in \cinfty(\widetilde{\Omega}; \R)$ such that $\dd \psi = \Pi^* \szeta$, then~\cite[Lemma~2.3]{bcm93}
\begin{equation*}
  \forall p, p' \in \widetilde{\Omega}, \ \Pi(p) = \Pi(p')
  \Longrightarrow
  \psi(p) - \psi(p') \in 2 \pi \Z,
\end{equation*}
which is precisely the property required for the existence of a $G \in \cinfty(\Omega)$ such that $e^{i \psi} = \Pi^* G$. Hence,
\begin{equation*}
  \Pi^* (\dd G) = \dd \Pi^* G = \dd e^{i \psi} = i e^{i \psi} \dd \psi = i e^{i \psi} \Pi^* \szeta = \Pi^* (i G \szeta)
  \quad \text{on $\widetilde{\Omega}$},
\end{equation*}
and since $\Pi$ is a submersion we conclude that
\begin{equation*}
  \dd G = i G \szeta \quad \text{on $\Omega$}.
\end{equation*}

Notice, moreover, that $|G(x)| = 1$ for every $x \in \Omega$. Hence, there exists a smooth map $g: \Omega \to S^1$ -- which we call the \emph{fibration associated with $\zeta$} -- such that $G = \iota \circ g$, where $\iota: S^1 \hookrightarrow \C$ is the inclusion map. It follows that
\begin{equation}
  \label{eq:gdthetaiszeta}
  g^* (\dd \theta) = \szeta.
\end{equation}
Indeed, first we pick a point $z_0 \in S^1$ and take an angle coordinate $\theta: S^1 \setminus \{ z_0 \} \to \R$. As such, $\theta (e^{i t}) = t + 2 \pi N_0$ for some $N_0 \in \Z$ independent of $t$. Let $\Omega_0 \dfn g^{-1}(S^1 \setminus \{ z_0 \})$. On $\Pi^{-1}(\Omega_0)$ we have
\begin{equation*}
  \theta(g(\Pi(p))) = \theta(G(\Pi(p))) = \theta(e^{i \psi(p)}) = \psi(p) + 2 \pi N_0;
\end{equation*}
hence,
\begin{equation*}
  \Pi^* g^* (\dd \theta) = \dd (\theta \circ g \circ \Pi) = \dd \psi = \Pi^* \szeta.
\end{equation*}
Since $\Pi$ is a submersion we conclude that~\eqref{eq:gdthetaiszeta} holds on $\Omega_0$, and since $z_0$ is arbitrary that identity then holds everywhere. Incidentally, this also proves that $g$ must be surjective: if its image missed a point in $S^1$ we would be able to define an angle coordinate $\theta$ all over it, and $\theta \circ g \in \cinfty(\Omega; \R)$ would be a global primitive of $2 \pi \zeta$, which cannot happen.

It is the case that $g$ is a submersion, since $g^*: T^*_{g(x)} S^1 \to T^*_x \Omega$ is injective for every $x \in \Omega$. Indeed, any $\xi \in T^*_{g(x)} S^1$ can be written as $\xi = \lambda \dd \theta|_{g(x)}$ for some $\lambda \in \R$. Hence, by~\eqref{eq:gdthetaiszeta}:
\begin{equation*}
  g^* \xi = 0
  \Longrightarrow
  \lambda \szeta|_x = \lambda g^* (\dd \theta|_{g(x)}) = g^* \xi = 0
  \Longrightarrow
  \lambda = 0,
\end{equation*}
since $\szeta$ is non-singular. It also follows from~\eqref{eq:gdthetaiszeta} that
\begin{equation*}
  \VV_x = \ker \left\{ g_*: T_x \Omega \lra T_{g(x)} S^1 \right\},
  \quad \forall x \in \Omega.
\end{equation*}
Indeed, given $v \in T_x \Omega$ we have
\begin{equation*}
  \langle \zeta|_x, v \rangle = 0
  \Longleftrightarrow
  \langle \dd \theta|_{g(x)}, g_* v \rangle = 0
\end{equation*}
and since $\dd \theta$ is a global frame for $T S^1$ we have that $\dd \theta|_{g(x)}$ is injective as a linear functional on $T_{g(x)} S^1$. We conclude that, when $\zeta$ is integral, the leaves of our foliation $\mathcal{F}$ -- which are maximal integral submanifolds of $\VV$ -- are contained in the level sets of $g$.

Next, pick $X$ a vector field transversal to $\szeta$ on $\Omega$. Then
\begin{equation*}
  \langle \dd \theta|_{g(x)}, g_* (X|_x) \rangle = \langle \szeta|_x, X|_x \rangle = 1
  \Longrightarrow
  g_* (X|_x) = \left. \frac{\del}{\del \theta} \right|_{g(x)},
  \quad \forall x \in \Omega;
\end{equation*}
i.e.,~$X$ and $\del/\del \theta$ are $g$-related. Moreover, if $\{ \Phi_t \}_{t \in \R}$ is the flow of $X$ then
\begin{equation}
  \label{eq:g_equivariant}
  g (\Phi_t(x)) = e^{i t} g(x),
  \quad \forall (t, x) \in \R \times \Omega.
\end{equation}
Indeed, if we fix $x \in \Omega$ then $t \in \R \mapsto e^{i t} g(x) \in S^1$ is clearly the maximal integral curve of $\del / \del \theta$ on $S^1$ passing through $g(x)$ at $t = 0$; on the other hand, for any $t_0 \in \R$ one has
\begin{equation*}
  \left. \frac{\dd}{\dd t} \right|_{t = t_0} g (\Phi_t(x))
  =
  g_* \left( \left. \frac{\dd}{\dd t} \right|_{t = t_0} \Phi_t(x) \right)
  =
  g_* (X|_{\Phi_{t_0}(x)})
  =
  \left. \frac{\del}{\del \theta} \right|_{g(\Phi_{t_0}(x))}.
\end{equation*}
Hence, the same holds true for the curve $t \in \R \mapsto g (\Phi_t(x)) \in S^1$. Relationship~\eqref{eq:g_equivariant} then follows from the uniqueness of solutions to the Cauchy problem in ODEs.

Actually, one can prove~\cite[Lemma~9.3.9]{conlon_manifolds} that the leaves of $\mathcal{F}$ are exactly level sets
\begin{equation*}
  L_z \dfn g^{-1}(z), \quad z \in S^1,
\end{equation*}
and it follows from~\eqref{eq:g_equivariant} that
\begin{equation*}
  \Phi_t (L_z) \sset L_{e^{it} z},
  \quad \forall t \in \R, \ \forall z \in S^1.
\end{equation*}
Thanks to time reversibility we have
\begin{equation}
  \label{eq:navigating_leaves}
  \Phi_t: L_z \lra L_{e^{it} z} \quad \text{diffeomorphically}.
\end{equation}
Moreover,
\begin{equation*}
  \left\{ t \in \R \st \Phi_t(L_z) = L_z \right\} = 2 \pi \Z = \per(\szeta),
  \quad \forall z \in S^1.
\end{equation*}
We also get a nice description of the (smooth) null solutions of $\VV$. Given $f \in \cinfty(\Omega)$, the following\footnote{This is already the zeroth instance of Theorem~\ref{thm:rational_mainthm}: leaves, being connected, have a vanishing $H^0$; hence, generate a trivial rank zero vector bundle over the circle; the isomorphism, in this regard, is explicitly pullback by $g$.} are equivalent:
\begin{equation}
  \label{eq: equivalence kernel d'}
  \parbox{7cm}{
    \begin{itemize}
    \item $\dd'_0 f = 0$;
    \item $f$ is constant along the leaves of $\mathcal{F}$;
    \item $f = \rho \circ g$ for some $\rho \in \cinfty(S^1)$.
    \end{itemize}
  }
\end{equation}

\subsection{The structure of locally trivial fiber bundle}

Still under the assumption of integrality of $\zeta$, we show next that $g$ is a locally trivial fibration, with typical fiber $L_1$, taking the care of making explicit the transition functions. We split $S^1$ into two overlapping open sets
\begin{equation*}
  U_z \dfn S^1 \setminus \{ z \}, \quad z = \pm 1,
\end{equation*}
over which we fix angle coordinates $\theta_z: U_z \to \R$. For each $z = \pm 1$, notice that
\begin{equation*}
  w \in U_z \Longrightarrow w \neq z \Longrightarrow w^{-1} \neq z^{-1} = z \Longrightarrow w^{-1} \in U_z
\end{equation*}
(that is, both $U_1$ and $U_{-1}$ are symmetric under inversion); hence,
\begin{equation*}
  e^{i \theta_z( w^{-1})} w = w^{-1} w = 1, \quad \forall w \in U_z.
\end{equation*}
Now consider
\begin{equation*}
\Omega_z \dfn g^{-1}(U_z).
\end{equation*}
Given $x \in \Omega_z$, it follows from \eqref{eq:navigating_leaves} that
\begin{equation*}
  x \in L_{g(x)}, \ g(x) \in U_z \Longrightarrow \Phi_{\theta_z( g(x)^{-1})} (x) \in L_{e^{i \theta_z( g(x)^{-1})} g(x)} = L_1.
\end{equation*}

We conclude that, for each $z = \pm 1$, the map
\begin{equation*}
  \TR{\phi_z}{x}{\Omega_z}{\left( g(x), \Phi_{\theta_z( g(x)^{-1})} (x) \right)}{U_z \times L_1}
\end{equation*}
is well-defined, and, moreover, a diffeomorphism.

  Indeed, smoothness of $\phi_z$ is clear. If $(w, y) \in \ran (\phi_z)$ then there exists $x \in U_z$ such that
  \begin{align*}
    \left\{
    \begin{array}{r c l}
      w & = & g(x) \\
      y & = & \Phi_{\theta_z( g(x)^{-1})} (x)
    \end{array}
    \right. \ \Longrightarrow \ x = \Phi_{- \theta_z( w^{-1})} (y),
  \end{align*}
  by~\eqref{eq:navigating_leaves}. This motivates the definition of
  \begin{equation*}
    \TR{\Xi_{z}}{(w, y)}{U_z \times L_1}{ \Phi_{- \theta_z( w^{-1})} (y)}{\Omega_z}
  \end{equation*} 
  which, we claim, is well-defined and the inverse of $\phi_z$; its smoothness is immediate. Indeed:
  \begin{itemize}
  \item $\Xi_{z}$ is well-defined: it follows from~\eqref{eq:g_equivariant} that
    \begin{equation*}
      g \left( \Phi_{- \theta_z( w^{-1})} (y) \right) = e^{-i \theta_z( w^{-1})} g(y) = w \cdot 1 = w \in U_{z}.
    \end{equation*}
  \item $\Xi_{z}$ is the right-inverse of $\phi_{z}$: for any $(w, y) \in U_z \times L_1$ we have
    \begin{align*}
      \left( \phi_{z} \circ \Xi_{z} \right) (w, y)
      &= \phi_{z} \left( \Phi_{- \theta_z( w^{-1})} (y) \right) \\ 
      &= \left( w,  \Phi_{\theta_z( w^{-1})} \left( \Phi_{- \theta_z( w^{-1})} (y) \right) \right) \\ 
      &= (w, y).
    \end{align*}  
  \item $\Xi_{z}$ is the left-inverse of $\phi_{z}$: for any $x \in \Omega_z$ we have 
    \begin{align*}
      \left( \Xi_{z} \circ \phi_{z}  \right) (x)
      &= \Xi_{z} \left( g(x), \Phi_{\theta_z( g(x)^{-1})} (x) \right) \\
      &= \Phi_{- \theta_z( g(x)^{-1})} \left( \Phi_{\theta_z( g(x)^{-1})} (x) \right) \\
      &= x.
    \end{align*}
  \end{itemize}

Moreover, note that for $w \in U_1 \cap U_{-1}$ and $y \in L_1$ we have
\begin{align*}
  \phi_{-z} (\phi_z^{-1} (w, y)) &= \phi_{-z} \left( \Phi_{-\theta_z( w^{-1})} (y) \right) \\
  &= \left( w, \Phi_{\theta_{-z} (w^{-1})} (\Phi_{-\theta_z( w^{-1})} (y) ) \right) \\
  &= \left( w, \Phi_{[\theta_{-z} (w^{-1}) - \theta_z( w^{-1})]} (y) \right).
\end{align*}
But since $\theta_{-z}, \theta_z$ are angle functions, there exists a non-zero $N \in \Z$ such that
\begin{equation*}
  \theta_{-z} - \theta_z = 2 \pi N
  \quad \text{on $U_1 \cap U_{-1}$};
\end{equation*}
hence,
\begin{equation} \label{eq: transition functions}
  ( \phi_{-z} \circ \phi_z^{-1}) (w, y) = (w, \Phi_{2 \pi N} (y)),
  \quad \forall w \in U_1 \cap U_{-1}, \ \forall y \in L_1.
\end{equation}

\subsection{Description of the cohomology}

Using the constructions in the previous section, we shall now study $H^{q}_{\VV}(\Omega)$, including its topology.

For $z = \pm 1$, take $Y$ a smooth section of $\mathcal{V}$ over $\Omega_z = g^{-1}(U_z)$ and let $(\phi_z)_* Y$ be its pushforward to $U_z \times L_1$. If $\pr_1:S^1 \times L_1 \to S^1$ denotes projection onto the first factor, then $\pr_1 \circ \phi_z = g$ on $\Omega_z$. Hence, by~\eqref{eq:gdthetaiszeta},
\begin{equation*}
  \phi_z^* (\pr_1^* (\dd \theta)) = g^* (\dd \theta) = \szeta
  \quad \text{on $\Omega_z$}.
\end{equation*}
Since $\szeta$ annihilates $L$, so $\pr_1^* (\dd \theta)$ must annihilate $(\phi_z)_* Y$. And, conversely, if $Y$ is any smooth vector field in $\Omega_z$ such that $(\phi_z)_* Y$ is in the kernel of $\pr_1^* ( \dd \theta)$, then $Y$ must be a section of $\mathcal{V}$.

The involutive subbundle $\WW \sset T (S^1 \times L_1)$ whose sections are annihilated by $\pr_1^* (\dd \theta)$ is precisely $\pr_2^* (TL_1)$ -- the tangent bundle of $L_1$ embedded into $T (S^1 \times L_1)$, where $\pr_2: S^1 \times L_1 \to L_1$ stands for the remaining projection. What we actually proved above is that the map 
\begin{equation*}
  \text{$\phi_z: \Omega_z \to U_z \times L_1$ is \emph{compatible} with the pair of structures $\VV$ and $\WW$},
\end{equation*}
restricted to the open sets $\Omega_z$ and $U_z \times L_1$, respectively. As such, and by functoriality alone, $\phi_z$ induces (topological linear) isomorphisms
\begin{equation}
  \label{eq:iso1}
  \phi_z^*: H_{\WW}^q( U_z \times L_1 ) \lra H^q_{\VV} (\Omega_z),
\end{equation}
which we will describe concretely and in detail below.

Since $\phi_z$ is a diffeomorphism, $\phi_z^*: F^q(U_z \times L_1) \to F^q(\Omega_z)$ is an isomorphism and, given $f \in F^q(U_z \times L_1)$,
\begin{equation*}
  f \in \NN^q_{\WW}(U_z \times L_1) \Longleftrightarrow \phi_z^* f \in \NN^q_{\VV} (\Omega_z),
\end{equation*}
clearly, as $(\phi_z)_*$ maps sections of $\VV$ onto sections of $\WW$ bijectively. Isomorphisms are, thus, induced on the quotients:
\begin{equation*}
  \phi_z^*: \Lambda_{\WW}^q( U_z \times L_1 ) \lra \Lambda^q_{\VV} (\Omega_z).
\end{equation*}
These are chain maps: pullbacks commute with the exterior derivatives on the level of forms; hence, their induced maps on the quotients commute with the induced differentials. Therefore, they further descend to cohomology as~\eqref{eq:iso1}, surely isomorphisms as that was already the case above.

The advantage is that the differential complex associated to $\WW = \pr_2^* (TL_1)$ is much easier to describe. For $U \sset S^1$ an open set, one can show that
\begin{equation*}
  \NN^q_{\WW}(U \times L_1) = \left\{ \pr_1^* (\dd \theta) \wedge \eta \st \eta \in \cinfty \left( U \times L_1; \pr_2^* (\wedge^{q - 1} T^* L_1) \right) \right\}.
\end{equation*}
  Indeed, by definition $\NN^q_{\WW}(U \times L_1)$ denotes the space of elements of $f \in F^{q}(U \times L_1)$ such that
  \begin{equation*}
    f(Y_{1}, \ldots, Y_{q}) = 0, \quad \text{for any $Y_{1}, \ldots Y_{q}$ smooth sections of $\WW$}.
  \end{equation*} 
  Since each section $Y$ of $\WW$ is given locally by
  \begin{equation*}
    Y = \sum_{j=1}^{n} a_{j}(w, t) \frac{\del}{\del t_{j}},
  \end{equation*}
  where $(t_1, \ldots, t_n)$ are local coordinates on $L_1$, the result follows.

Hence, the composition
\begin{equation*}
  \cinfty \left( U \times L_1 ; \pr_2^* (\wedge^{q} T^* L_1) \right)
  \hookrightarrow
  F^q (U \times L_1) \lra \Lambda_\WW^q (U \times L_1)
\end{equation*}
(where the first arrow is inclusion, and the second one is projection onto the quotient) is an isomorphism of Fr\'echet spaces. Moreover, let
\begin{equation*}
  \dd_{L_1}: \cinfty \left( U \times L_1 ; \pr_2^* (\wedge^q T^*L_1) \right) \lra \cinfty \left (U \times L_1 ; \pr_2^* (\wedge^{q+1} T^*L_1) \right)
\end{equation*}
be the partial exterior derivative in the direction of $L_1$. Locally, if 
\begin{equation*}
  \text{$(V; t_1, \ldots, t_n)$ is a coordinate system in $L_1$,}
\end{equation*}
we can write an $f \in \cinfty (U \times L_1 ; \pr_2^* (\wedge^{q} T^*L_1))$ as\footnote{Here, $J = (j_1, \ldots, j_q)$ is an \emph{ordered} multi-index, and $\dd t_J = \dd t_{j_1} \wedge \ldots \wedge \dd t_{j_q}$}.
\begin{equation*}
  f = \psum_{|J| = q} f_J (w, t) \dd t_J,
  \quad f_J \in \cinfty(U \times V),
\end{equation*}
in which case
\begin{equation*}
  \dd_{L_1} f = \psum_{|J| = q} \sum_{j = 1}^n \frac{\del f_J (w, t)}{\del t_j} \dd t_j \wedge \dd t_J.
\end{equation*}
Hence, $\dd f = \dd_{L_1} f$ modulo multiples of $\pr_1^* (\dd \theta)$ -- notice that 
\begin{equation*}
  \text{$\pr_1^* (\dd \theta), \dd t_1, \ldots, \dd t_n$ is a frame for $T^* (S^1 \times V)$,}
\end{equation*}
just as $(\theta_z, t_1, \ldots, t_n)$ is a smooth chart on $U_z \times V$ --; hence, we have a commutative diagram
\begin{equation*}
  \begin{tikzcd}
    \cinfty \left( U \times L_1 ; \pr_2^* (\wedge^{q} T^* L_1) \right) \arrow{r}{\simeq} \arrow[swap]{d}{\dd_{L_1}}
    &
    \Lambda_\WW^q (U \times L_1) \arrow{d}{\dd'_q}  \\
    \cinfty \left( U \times L_1 ; \pr_2^* (\wedge^{q + 1} T^* L_1) \right) \arrow[swap]{r}{\simeq}
    &
    \Lambda_\WW^{q + 1} (U \times L_1)
  \end{tikzcd} .
\end{equation*}
An important result about $\dd_{L_1}$ is contained in the following general result:
\begin{Prop}
  \label{prop:partial_derham_solvable}
  Let $M,N$ be smooth manifolds of dimensions $m,n$, respectively. Consider $\WW \dfn \pr^*_2\left(TN \right) \subset T(M\times N)$, which defines an involutive structure over $M \times N$. Then, if $\dd_{N}:\Lambda^q_{\WW}(M \times N) \to \Lambda^{q+1}_{\WW}(M \times N)$ denotes the associated differential complex, the cohomology space is topologically isomorphic to 
  \begin{equation*}
    \cinfty \left(M; H^{q}_{\dR}(N) \right) = \cinfty(M) \widehat{\otimes} H^{q}_{\dR}(N),
    \quad 0 \leq q \leq n,
  \end{equation*}
  where $\widehat{\otimes}$ denotes the completed tensor product in the category of Fr{\'e}chet-nuclear spaces. In particular, $\dd_N$ is globally solvable in every degree.
\end{Prop}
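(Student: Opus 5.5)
We identify $\Lambda^q_\WW(M\times N)$ with $\cinfty(M\times N;\pr_2^*(\wedge^q T^*N))$, as in the commutative diagram above, so that the complex is $\dd_N$. The plan is to reduce to the de Rham complex of $N$ tensored with $\cinfty(M)$. Since $\cinfty(M)$ is nuclear Fréchet, we have
\begin{equation*}
  \cinfty\left(M\times N;\pr_2^*(\wedge^q T^*N)\right)\simeq \cinfty(M)\widehat{\otimes} F^q(N),
\end{equation*}
and under this identification $\dd_N=\Id\otimes \dd$, because $\dd_N$ only differentiates in the $N$-variables. The statement then amounts to the following: the completed tensor product of $\cinfty(M)$ with the de Rham complex has cohomology $\cinfty(M)\widehat{\otimes} H^q_{\dR}(N)$, and the differentials have closed range.

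For compact $N$ I would use Hodge theory rather than abstract exactness of $\widehat\otimes$. Fix a Riemannian metric on $N$. This gives the decomposition
\begin{equation*}
  F^q(N)=\mathcal H^q\oplus \dd F^{q-1}(N)\oplus \dd^*F^{q+1}(N),
\end{equation*}
the harmonic projection $P$, and the Green operator $G$, all continuous. For each $x\in M$ apply these fiberwise: $(Pf)(x)=P(f(x,\cdot))$, and similarly for $G$. Since $P,G$ are continuous linear on $F^q(N)$, the maps $\Id\otimes P$ and $\Id\otimes G$ act continuously on $\cinfty(M)\widehat\otimes F^q(N)$; concretely, differentiating under $x$ commutes with them. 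The operator $h=\dd^* G$ then gives a continuous chain homotopy $\Id-P=\dd_N h+h\dd_N$. Consequently $\ker\dd_N=\ran\dd_N\oplus \cinfty(M;\mathcal H^q)$, and the range $\ran \dd_N=\ker \dd_N\cap\ker P$ is closed. The quotient is topologically isomorphic to $\cinfty(M;\mathcal H^q)\simeq \cinfty(M)\otimes H^q_{\dR}(N)$, with finite-dimensional fiber, and this gives the result with natural identification.

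For noncompact $N$ (needed later, when $L$ is a noncompact leaf) there is no Hodge theory, and this is the main obstacle. Here I would argue that $\dd:F^{q-1}(N)\to F^q(N)$ already has closed range, by de Rham's theorem: the range is the set of closed forms whose integrals over all cycles in $H_q(N;\Z)$ vanish, and that set is closed. I would also note that $H^q_{\dR}(N)$ is then a Fréchet space, possibly infinite-dimensional. A surjective map of nuclear Fréchet spaces stays surjective after $\widehat\otimes$ with a Fréchet space, and by the Homomorphism Theorem the complex consists of topological homomorphisms. Tensoring the short exact sequences
\begin{equation*}
  0\to Z^q\to F^q\to B^{q+1}\to 0,\qquad 0\to B^q\to Z^q\to H^q\to 0
\end{equation*}
with $\cinfty(M)$ preserves exactness, by nuclearity (\emph{cf.}~Grothendieck/Treves on exactness of $\widehat\otimes$ for Fréchet spaces with one factor nuclear). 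Splicing these sequences gives $\ker(\Id\otimes\dd)=\cinfty(M)\widehat\otimes Z^q$ and $\ran=\cinfty(M)\widehat\otimes B^q$, which is closed. Hence the cohomology is $\cinfty(M)\widehat\otimes H^q_{\dR}(N)=\cinfty(M;H^q_{\dR}(N))$. The delicate point I expect is justifying that $\ker(\Id\otimes\dd)$ is exactly $\cinfty(M)\widehat\otimes Z^q$, i.e.\ that $\widehat\otimes$ is left exact here. I would verify this by identifying the tensor product with $\cinfty(M;Z^q)$ and $\cinfty(M;F^q)$ as vector-valued smooth functions, using nuclearity of $\cinfty(M)$; closedness of the range then follows from the exact sequences together with the open mapping theorem.
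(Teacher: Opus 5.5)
Your proposal is correct, and the argument you give for non-compact $N$ is exactly the paper's proof. It has the same ingredients: closed range of $\dd$ on $F^q(N)$ from de Rham's theorem, the identification $\cinfty(M\times N;\pr_2^*(\wedge^qT^*N))\simeq\cinfty(M)\widehat\otimes F^q(N)$ intertwining $\dd_N$ with $\Id\widehat\otimes\dd$, and exactness of $\cinfty(M)\widehat\otimes(\cdot)$ on the sequences $0\to Z^q\to F^q(N)\to B^{q+1}\to 0$ and $0\to B^q\to Z^q\to H^q_{\dR}(N)\to 0$. That argument never uses non-compactness, so it already covers compact $N$. Your case split is therefore unnecessary, and the paper does not make it.

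Your Hodge-theoretic treatment of compact $N$ is a genuinely different and valid route. The fiberwise homotopy $h=\dd^*G$ with $\Id-P=\dd_N h+h\dd_N$ gives the topological splitting $\ker\dd_N=\ran\dd_N\oplus\cinfty(M;\mathcal H^q)$. It also gives an explicit continuous solution operator, since $f=\dd_N(hf)$ for every $f\in\ran\dd_N$, and none of this appeals to exactness of completed tensor products. What this buys is explicitness: continuous right inverses and canonical harmonic representatives. What it cannot do is reach the non-compact leaves $\widehat L$ to which the paper applies the proposition in the irrational case. There, the abstract exactness of $\widehat\otimes$ with the nuclear factor $\cinfty(M)$ is what carries the proof.
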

\begin{Rem}
  This result is well-known, and can be obtained by applying the K\"unneth formula for foliated de Rham cohomology~\cite[Theorem~2.1]{Bertelson2011} with the point foliation $\mathcal{F} \dfn \{p\}_{p \in M}$ and the foliation of a single leaf $\mathcal{G} \dfn \{N\}$ (see also~\cite{Grothendieck1955, Kaup1967}). Since our context is much simpler than that, it is interesting to provide a self-contained proof, which we present below.
\end{Rem}
\begin{proof}
  Recall that the natural topology on the space of smooth sections of a vector bundle is Fr{\'e}chet-nuclear\footnote{If $E \to M$ is a vector bundle of rank $r$, the space of smooth sections $\cinfty (M; E)$ is topologically isomorphic to a closed subspace of $\prod_{\alpha} \cinfty(U_\alpha; \C^r)$, where $\{ U_\alpha \}$ is a countable trivializing open covering of $M$, thus Fr{\'e}chet and nuclear~\cite[Proposition~50.1, Theorem~51.5]{treves_tvs}.}. Moreover, it follows from de Rham's Theorem that the exterior derivative has closed range in every degree, so all the de Rham cohomology spaces are Fr{\'e}chet-nuclear~\cite[Proposition~50.1]{treves_tvs}. Consider the natural continuous bilinear map
  \begin{equation}
    \label{eq:tensor_product_1}
    \begin{array}{r c l}
      \cinfty(M) \times F^q(N) & \lra & \cinfty \left( M \times N; \pr^*_2( \wedge^q T^*N) \right) \\
      (f,\omega) & \longmapsto & \pr_2^* f \cdot \pr_2^* \omega
    \end{array}
  \end{equation}
  which extends to a linear map 
  \begin{equation}
    \label{eq:tensor_product_2}
    \cinfty(M) \otimes F^q(N) \lra \cinfty \left( M \times N; \pr^*_{2} ( \wedge^q T^*N) \right),
  \end{equation}
  whose range is given by all the $\sigma \in F^q(M \times N)$ such that 
  \begin{equation*}
    \sigma|_p \in \Span \left\{ ( \pr_2^*\eta_1 )|_p, \ldots, (\pr_2^* \eta_k)|_p \right\},
    \quad \forall p \in M \times N,
  \end{equation*}
  for some $\C$-linearly independent $\eta_1, \ldots, \eta_k \in F^q(N)$. In particular, the map~\eqref{eq:tensor_product_2} is injective with dense range, hence~\eqref{eq:tensor_product_1} induces an isomorphism
  \begin{equation*}
    \cinfty(M) \widehat{\otimes} F^q(N) \lra \cinfty \left(M \times N; \pr^*_2(\wedge^q T^*N) \right).
  \end{equation*}
  Moreover, the diagram 
  \begin{equation*}
    \begin{tikzcd}
      \cinfty(M) \widehat{\otimes} F^q(N) \arrow{r} \arrow{d}{\Id \widehat{\otimes} \dd} & \cinfty \left(M \times N; \pr^*_2( \wedge^q T^* N) \right) \arrow{d}{\dd_N} \\
      \cinfty(M) \widehat{\otimes} F^{q+1}(N) \arrow{r} & \cinfty \left( M \times N; \pr^*_2(\wedge^{q+1} T^* N) \right)
    \end{tikzcd}
  \end{equation*}
  is commutative. From de Rham's Theorem, one has a short exact sequence 
  \begin{equation*}
    0 \lra \ker \dd \lra F^{q}(N) \lra \ran \dd \lra 0
  \end{equation*}
  of Fr{\'e}chet-nuclear spaces, and since the completed tensor product preserves short exact sequences~\cite[Proposition~(5.17), Chapter~IX]{demailly}, we conclude that so is
  \begin{equation*}
    0 \lra \cinfty(M) \widehat{\otimes} \ker \dd \lra \cinfty(M) \widehat{\otimes} F^q(N) \lra \cinfty(M) \widehat{\otimes} \ran \dd \lra  0.
  \end{equation*}
  In particular, the range of $\dd_N$ is closed and, moreover, the cohomology is isomorphic to 
  \begin{equation*}
    \frac{\cinfty(M) \widehat{\otimes} \ker \dd}{\cinfty(M) \widehat{\otimes} \ran \dd},
  \end{equation*}
  which is in turn isomorphic to $\cinfty(M) \widehat{\otimes} H^q_{\dR}(N)$, since one has the obvious short exact sequence $0\to \ran \dd \to \ker \dd \to H^q_{\dR}(N) \to 0$ of Fr{\'e}chet-nuclear spaces, and the exact functor property implies the result.
\end{proof}

It follows from Proposition~\ref{prop:partial_derham_solvable} (with $M = U_z$ and $N = L_1$) that $\dd'_q: \Lambda^q_\VV(\Omega_z)\to \Lambda^{q+1}_\VV(\Omega_z)$ has closed range and~\eqref{eq:iso1} induces a topological isomorphism
\begin{equation*}
  (\phi_z)_*: H^q_{\VV} ( \Omega_z ) \lra \cinfty \left( U_z ; H^q_{\dR}(L_1) \right).
\end{equation*}
Consider the continuous map
\begin{equation}
  \label{eq:main_iso}
  \mathsf{T}: H^q_{\VV}(\Omega) \lra \cinfty (U_{-1} ; H^q_{\dR} (L_1) ) \times \cinfty (U_1 ; H^q_{\dR}(L_1) )
\end{equation}
defined by
\begin{equation*}
  \mathsf{T}([\pmb{f}]) \dfn \left( (\phi_{-1})_* \left[ \pmb{f}|_{\Omega_{-1}} \right], (\phi_1)_* \left[ \pmb{f}|_{\Omega_1} \right] \right).
\end{equation*}
In general, one should not expect injectivity from a map like this (the Mayer-Vietoris long exact sequence). However, in our situation, one has the structure of a locally fine sheaf (over the base $S^1$): assume that $\pmb{f} \in \Lambda^q_\VV(\Omega)$ is $\dd'_q$-closed and such that for each $z=\pm 1$ we have $\pmb{f}|_{\Omega_z} = \dd'_{q - 1} \pmb{v}_z$ for some $\pmb{v}_z \in \Lambda^{q-1}_\VV(\Omega_z)$. Let $\{\rho_z\}_{z=\pm 1}$ be a partition of unity on $S^1$ subordinated to the cover $\{U_z\}_{z=\pm 1}$. Then
\begin{equation*}
  \pmb{v} \dfn (\rho_{-1} \circ g) \pmb{v}_{-1} + (\rho_{1} \circ g) \pmb{v}_1
\end{equation*}
is a well-defined element of $\Lambda_\VV^{q - 1} (\Omega)$, and satisfies $\dd' \pmb{v} = \pmb{f}$ thanks to~\eqref{eq: equivalence kernel d'}.

  Indeed:
  \begin{equation*}
    \dd' \pmb{v}
    = \left( \dd' (\rho_{-1} \circ g) \right) \wedge \pmb{v}_{-1} + \left( \dd'(\rho_{1} \circ g) \right) \wedge \pmb{v}_{1}
    + (\rho_{-1} \circ g) \pmb{f} + (\rho_{1} \circ g) \pmb{f} 
    = \pmb{f},
  \end{equation*}
  since $\dd' (\rho_{-1} \circ g) = \dd' (\rho_{1} \circ g) = 0$.

Thus we have just proved that~\eqref{eq:main_iso} is indeed injective; we now describe its range. Consider the map
\begin{equation*}
  \tau: \cinfty \left( U_{-1} \cap U_1 ; H^q_{\dR}(L_1) \right) \lra \cinfty \left( U_{-1} \cap U_1 ; H^q_{\dR}(L_1) \right)
\end{equation*}
given by (see~\eqref{eq: transition functions})
\begin{equation*}
  \tau \dfn (\phi_1 \circ \phi_{-1}^{-1})_*: \sigma^\flat(\cdot) \longmapsto  \left( \Phi_{2\pi N} \right)_* \sigma^\flat(\cdot),
\end{equation*}
where $\Phi_{2\pi}$ is Poincar{\'e}'s first return map on the leaf $L_1$ for the transversal vector field $X$. One can then form the (closed) subspace
\begin{equation*}
  \mathcal{R} \sset \cinfty (U_{-1} ; H^q_{\dR} (L_1) ) \times \cinfty (U_1 ; H^q_{\dR}(L_1) )
\end{equation*}
of compatible pairs $(\sigma^\flat_{-1}, \sigma^\flat_1)$ satisfying
\begin{equation} \label{eq: compatible pairs}
  \tau \left( \sigma^\flat_{-1}|_{U_{-1} \cap U_1} \right) = \sigma^\flat_1|_{U_{-1} \cap U_1}.
\end{equation}

Notice that $\mathcal{R}$ is none other than the space of smooth sections of a certain vector bundle $\mathbb{H}^q_{\dR} (L_1) \to S^1$ with typical fiber $H^q_{\dR} (L_1)$. Indeed, $\{ U_z \}_{z = \pm 1}$ is an open covering of $S^1$, and, for $z, z' = \pm 1$, we define $\varphi_{z,z'}: U_z \cap U_{z'} \to \mathrm{GL}(H^q_{\dR} (L_1))$ as:
\begin{itemize}
\item $\varphi_{z,z} (w) \dfn  \Id_{H^q_{\dR} (L_1)}$;
\item $\varphi_{1, -1} (w) \dfn \left( \Phi_{2\pi N} \right)_*$;
\item $\varphi_{-1, 1} (w) \dfn \left( \Phi_{-2\pi N} \right)_*$.
\end{itemize}
It is clear that these are cocycles; hence, by the general theory of vector bundles~\cite[Section~3.4]{conlon_manifolds} they are the transition functions of a well-defined vector bundle $\mathsf{p}: \mathbb{H}^q_{\dR} (L_1) \to S^1$. 

Indeed, 
  \begin{itemize}
  \item $\varphi_{1, -1} \circ \varphi_{-1, 1} = \Id_{H^q_{\dR} (L_1)} = \varphi_{1, 1};$ 
  \item $\varphi_{-1, 1} \circ \varphi_{1, - 1} = \Id_{H^q_{\dR} (L_1)} = \varphi_{-1,-1};$ 
  \item the other cases are immediate.
  \end{itemize}

As such, they completely describe the space of smooth sections of $\mathbb{H}^q_{\dR} (L_1)$, which are smooth maps $\sigma: S^{1} \to \mathbb{H}^q_{\dR} (L_1)$ such that $\mathsf{p} \circ \sigma = \Id_{S^{1}}$. The local trivializations
\begin{equation*}
  \chi_z : \mathsf{p}^{-1}(U_z) \lra U_z \times H^q_{\dR}(L_{1}),
  \quad z = \pm 1,
\end{equation*}
satisfy
\begin{equation*}
  \chi_{z} \circ \chi_{z'}^{-1} (w, [f]) = \left( w, \varphi_{z,z'}(w) [f] \right),
  \quad z, z' = \pm 1.
\end{equation*}
Due to the section property, the image of $\sigma_{z} \dfn \sigma|_{U_z}$ is contained in $\mathsf{p}^{-1}(U_{z})$, hence satisfies
\begin{equation*}
  \chi_z \sigma_z(w) = (w, \sigma^\flat_z(w)),
  \quad w \in U_z,
\end{equation*}
for a uniquely determined smooth map $\sigma^\flat_z: U_z \to H^q_\dR(L_1)$. It follows that, for $w \in U_{-1} \cap U_1$, we have
\begin{align*}
  \left( w, \varphi_{z,z'}(w) \sigma^\flat_{z'} (w) \right)
  &= \chi_{z} \circ \chi_{z'}^{-1} (w, \sigma^\flat_{z'} (w)) \\
  &= \chi_z \sigma_z'(w) \\
  &=\chi_z \sigma_z(w) \\
  &= \left( w, \sigma^\flat_z(w) \right),
\end{align*}
that is:
\begin{equation*}
  \varphi_{z,z'} (w) \sigma^\flat_{z'} (w) = \sigma^\flat_{z} (w),
  \quad \forall z, z' = \pm 1, \ w \in U_{-1} \cap U_1.
\end{equation*}
Note that in fact a section is completely characterized by the property above, summarized as
\begin{equation*}
  \varphi_{1,-1} (w) \sigma^\flat_{-1}(w) =  \sigma^\flat_{1}(w),
  \quad \forall w \in U_{-1} \cap U_{1},
\end{equation*}
which is precisely~\eqref{eq: compatible pairs}.   

One has $\ran \mathsf{T} = \mathcal{R}$ and, moreover, we can explicitly write down its (clearly continuous) inverse $\mathsf{T}^{-1}: \mathcal{R} \to H^q_\VV(\Omega)$, by the formula
\begin{equation*}
  \mathsf{T}^{-1} \left( \sigma^\flat_{-1}, \sigma^\flat_1 \right) = (\rho_{-1} \circ g) (\phi_{-1}^*\sigma^\flat_{-1}) + (\rho_{1} \circ g) (\phi_1^*\sigma^\flat_1).
\end{equation*}
Indeed:
\begin{align*}
  \mathsf{T}^{-1} \mathsf{T}([\pmb{f}])
  &= \mathsf{T}^{-1} \left( (\phi_{-1})_* \left[ \pmb{f}|_{\Omega_{-1}} \right], (\phi_1)_* \left[ \pmb{f}|_{\Omega_1} \right] \right) \\
  &= \left[ (\rho_{-1} \circ g) \pmb{f} +  (\rho_{1} \circ g) \pmb{f}  \right]  \\
  &= [\pmb{f}];
\end{align*}
Now, for the other composition, take $\left( \sigma_{-1}^\flat, \sigma_1^\flat \right) \in \mathcal{R}$ and let
\begin{equation*}
  [\pmb{f}] \dfn (\rho_{-1} \circ g)(\phi_{-1}^* \sigma_{-1}^\flat) + (\rho_1 \circ g)(\phi^*_1 \sigma_1^\flat).
\end{equation*}
To prove that $\mathsf{T}([\pmb{f}]) = \left( \sigma_{-1}^\flat, \sigma_1^\flat \right)$, we have, for $z = \pm 1$, to restrict this cohomology class to $\Omega_z$ and then compute the respective pushforward. That is, we claim that
\begin{equation*}
  (\phi_z)_* [\pmb{f}|_{\Omega_z}] = \sigma_z^\flat,
  \quad z = \pm 1,
\end{equation*}
which is equivalent to showing that
\begin{equation}
  \label{eq:equalityofrestrictions}
  [\pmb{f}|_{\Omega_z}] = \phi_z^* \sigma_z^\flat,
  \quad z = \pm 1.
\end{equation}
We prove the latter equality locally about a point $x \in \Omega_z$. Assume first that $x \in \Omega_{-1} \cap \Omega_1$. Then, in a neighborhood of $x$, we have the equality $\phi_1^* \sigma_1^\flat = \phi_{-1}^* \sigma_{-1}^\flat$, since the pair $\left( \sigma_{-1}^\flat, \sigma_1^\flat \right)$ belongs to $\mathcal{R}$. Therefore, \eqref{eq:equalityofrestrictions} holds in that neighborhood since $( \rho_{-1} \circ g) + (\rho_1 \circ g) = 1$. Suppose now that $x \in \Omega_1 \setminus \Omega_{-1}$. Since $\supp (\rho_{-1} \circ g) \subset \Omega_{1}$, we have $\rho_{1} \circ g = 1$ in a neighborhood of $x$, and we have that $[ \pmb{f} ]$ equals $\phi_1^* \sigma_1^\flat$ there (by its very definition). Hence, $[\pmb{f}|_{\Omega_1}] = \phi_1^* \sigma_1^\flat$, and a similar argument entails the same for $z = -1$.

We have thus realized an isomorphism
\begin{equation*}
  \mathsf{T}: H^q_{\VV}(\Omega) \lra \cinfty (S^1; \mathbb{H}^q_{\dR} (L_1)).
\end{equation*}
and the proof of Theorem~\ref{thm:rational_mainthm} is complete.

\section{The irrational case in degree one}
\label{sec:irr_deg1}

In this and the next section, we work on Theorem~\ref{thm:degs1n_mainthm}, addressing specifically the first degree. In the irrational case, the foliation induced by $\zeta$ has dense leaves and the geometry is a lot more involved. Let us restate the desired theorem; recall the definition of $\dd'_q$ in~\eqref{eq:dprime}.

\begin{Thm}
  \label{thm:first_step_solv}
  Let $\zeta \in F^1(\Omega)$ be a closed, non-singular real-valued $1$-form on the compact manifold $\Omega$. Then, the following are equivalent:
  \begin{enumerate}
  \item $\dd'_0:\cinfty(\Omega)\to \Lambda^1_{\VV}(\Omega)$ is globally solvable.
  \item $\zeta$ is either rational or an irrational non-Liouville form.
  \end{enumerate}
\end{Thm}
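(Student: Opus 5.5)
The rational case is already settled by Theorem~\ref{thm:rational_mainthm}, which gives global solvability in every degree. So the content of the statement is the irrational case: for irrational $\zeta$, $\dd'_0$ has closed range if and only if $\zeta$ is non-Liouville. I would follow the strategy outlined in the introduction and reduce everything to a problem on $\R$.

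\textbf{Step 1: reduction to a problem on $\R$.} Take the minimal covering $\Pi:\widehat{\Omega}\to\Omega$ on which $\Pi^*\zeta$ becomes exact. Identify $\widehat{\Omega}\simeq\R\times L$, with $\per(\zeta)$ acting by $a\cdot(t,y)=(t+a,\Phi_a(y))$, where $\Phi$ is the flow of a transversal field $X$. Smooth functions on $\Omega$ correspond to $\per(\zeta)$-invariant functions on $\R\times L$.

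An element $\pmb f\in\Lambda^1_\VV(\Omega)$ in the kernel of $\dd'_1$ pulls back to a leafwise-closed form. By Proposition~\ref{prop:partial_derham_solvable} applied on $\R\times L$, together with the structure of $H^1_\dR(L)$, this form is $\dd_L$ of some $u\in\cinfty(\R\times L)$. The solution $u$ is unique up to a function $h(t)$, since leaves are connected.

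Invariance of $\pmb f$ means that, for each generator $a_j$ of $\per(\zeta)=\Z a_0\oplus\cdots\oplus\Z a_r$, the function $u(a_j\cdot p)-u(p)$ depends only on $t$. Call it $c_j(t)$; it satisfies a cocycle condition. Solving $\dd'_0 w=\pmb f$ on $\Omega$ amounts to finding $h$ with $h(t+a_j)-h(t)=c_j(t)$ for all $j$.

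This gives a continuous linear isomorphism between $H^1_\VV(\Omega)$ and the first group cohomology of $\per(\zeta)$ acting on $\cinfty(\R)$ by translations, possibly plus a summand coming from invariant classes in $H^1_\dR(L)$. The closed-range question therefore reduces to closedness of the range of
\begin{equation*}
  D: \cinfty(\R)\to\cinfty(\R)^{r+1},\qquad Dh=\bigl(h(\cdot+a_j)-h\bigr)_{j},
\end{equation*}
inside the space of cocycles. The point to check carefully is that all these identifications are topological: the maps $\pmb f\mapsto u\mapsto(c_j)$ and back must be continuous. I would set this up by working with the explicit double complex rather than the abstract spectral sequence.

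\textbf{Step 2: non-Liouville implies closed range.} I would use the Homomorphism Theorem for Fréchet spaces, so that closed range of $D$ is equivalent to weak-$*$ closed range of the transpose
\begin{equation*}
  \transp{D}(\mu_j)=\sum_j(\delta_{-a_j}-\delta_0)*\mu_j
\end{equation*}
acting on $\E'(\R)^{r+1}$.

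Here the non-Liouville hypothesis enters in Fourier terms. The entire functions $e^{ia_j\xi}-1$ have no common zeros apart from those $\xi$ with $a_j\xi\in2\pi\Z$ for every $j$. By irrationality, this common zero set is only $\xi=0$ (a simple zero), and away from it one has the lower bound
\begin{equation*}
  \textstyle\sum_j|e^{ia_j\xi}-1|\ge C(1+|\xi|)^{-N}.
\end{equation*}
This lower bound is precisely the non-Liouville condition rephrased through the periods.

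Ehrenpreis' theorem (Theorem~\ref{thm:ehrenpreis}) then gives a division, or fundamental-solution, statement in $\E'$. From it one gets that the range of $\transp{D}$ is closed.

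\textbf{Step 3: Liouville implies non-closed range.} In this direction I would build counterexamples. Using the integral forms $\zeta_\nu$ with $|q_\nu\zeta-\zeta_\nu|=O(q_\nu^{-\nu})$, I would take frequencies $\xi_\nu$, essentially $2\pi q_\nu$, at which all $|e^{ia_j\xi_\nu}-1|$ decay faster than any power. Then I would form
\begin{equation*}
  f=\textstyle\sum_\nu (e^{ia_j\xi_\nu}-1)e^{i\xi_\nu t}\chi,
\end{equation*}
suitably pulled back and cut off so that it is invariant on $\Omega$. This $f$ is smooth and lies in the closure of the range of $D$. A formal solution would have to be $\sum_\nu e^{i\xi_\nu t}$, which is not smooth. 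Alternatively, I could use the explicit non-smooth kernel elements of Meziani to violate the a priori estimate that closed range would force.

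I expect the main obstacle to be Step 1, specifically obtaining topological rather than merely algebraic isomorphisms, and handling the $H^1_\dR(L)$ contributions on the non-compact leaf $L$.
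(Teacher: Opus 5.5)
Your architecture matches the paper's: the cover $g:\R\times\widehat{L}\to\Omega$, the double complex, reduction to $D$ on $\cinfty(\R)$, the Homomorphism Theorem plus Ehrenpreis, and a lacunary Fourier series in the Liouville case. However, two load-bearing steps are missing.

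\textbf{The reduction is justified in only one direction.} Passing from closed range of $D$ to closed range of $\dd'_0$ needs only that $\dd_{\widehat{L}}$ is a homomorphism (Proposition~\ref{prop:partial_derham_solvable}, Lemma~\ref{lem:seq_invert}). This works provided you start from $\pmb{f}\in\overline{\ran\dd'_0}$, not from a $\dd'_1$-closed $\pmb{f}$. A $\dd'_1$-closed form need not pull back to a $\dd_{\widehat{L}}$-exact one, since $H^1_{\dR}(\widehat{L})$ can be nonzero (take $\dd x_3$ for $\zeta=\dd x_1+a\,\dd x_2$ on $\mathbb{T}^3$). No topological isomorphism of cohomologies is needed, only equivalence of closed ranges (Theorem~\ref{thm:abstract_eq}).

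Your Step~3, however, needs the converse. A cocycle $c=\lim_\nu\delta h_\nu$, with $h_\nu\in\cinfty(\R)$ and $c\notin\delta(\cinfty(\R))$, must be turned into an element of $\overline{\ran\dd'_0}\setminus\ran\dd'_0$. For that you need two things:
\begin{itemize}
\item some $u\in\cinfty(\R\times\widehat{L})$ with $T_\alpha^*u-u=c_\alpha$ for all $\alpha\in\per(\zeta)$;
\item closed range of $\delta$ on $\cinfty(\R\times\widehat{L})$.
\end{itemize}
Both come from the vanishing of $\msf{H}^1(\per(\zeta);\Lambda^p_{\widehat{\VV}}(\R\times\widehat{L}))$, Lemma~\ref{lem:delta_acyclic}. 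That lemma rests on the free, proper action upstairs (evenly covered sets and the partition of unity $\rho_j\circ g$). With it, $\dd_{\widehat{L}}u$ descends to $\Omega$ and lies in $\overline{\ran\dd'_0}$, while a solution $w$ would give $u-g^*w\in\cinfty(\R)$ with $\delta(u-g^*w)=c$, a contradiction.

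Your ``suitably pulled back and cut off so that it is invariant'' does not supply this. Multiplying by a cutoff $\chi$ does not commute with the translations in $D$, and it cannot create $\per(\zeta)$-invariance. With the lemma, the counterexample lives on $\R$ with no cutoff, exactly as in the paper: $f_N=\sum_{\nu\le N}e^{2\pi ix\xi_\nu}$, where $1$-periodicity forces $\hat f(\xi_\nu)=1$.

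The Meziani fallback also fails. Non-smooth distributional solutions contradict \eqref{eq:GH}, not closed range on $\cinfty$; the rational case is not \eqref{eq:GH} yet is solvable.

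\textbf{Ehrenpreis' hypothesis is not verified.} Theorem~\ref{thm:ehrenpreis} requires $\sum_j|P_j(z)|\ge Ce^{-\epsilon|\Im z|}(1+|z|)^{-N}$ on all of $\C$. It yields $\sum_jP_jF_j=1$, so the $P_j$ cannot share a zero. A real-axis bound away from $0$ therefore does not suffice. You must:
\begin{itemize}
\item take $P_j(z)=(e^{ia_jz}-1)/z$;
\item show $\ran\transp{D}=\{v\in\E'(\R)\st\hat v(0)=0\}$, which is weakly closed, by writing $\hat v(z)=zh(z)$;
\item prove the complex estimate of Theorem~\ref{thm:main_result_suff}.
\end{itemize}
For that estimate, where $|\Re z|\ge1/\max_ja_j$, combine $|e^{ia_jz}-1|\ge e^{-a_j|\Im z|}|\sin(a_j\Re z)|$ with the real bound. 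In the strip $|\Re z|\le1/\max_ja_j$, use $|e^{ia_jz}-1|\ge|e^{-a_j\Im z}\cos(a_j\Re z)-1|$, treating $\Im z>0$ and $\Im z<0$ separately.

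Finally, the real-axis bound is not just a rephrasing of the non-Liouville condition. It needs the Diophantine argument of Lemma~\ref{lem:sin_condition}.
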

\begin{Rem}
  This proves the equivalence $\eqref{thm:degs1n_mainthm-it1} \Longleftrightarrow \eqref{thm:degs1n_mainthm-it3}$ in Theorem~\ref{thm:degs1n_mainthm}.
\end{Rem}

Due to the results from the previous section, we can assume $\zeta$ irrational. Moreover, rescaling\footnote{Again, such a procedure does not change $\VV$ nor its complex.} $\zeta$ allows one to assume that its group of periods $\per(\zeta)$ is generated by $a_0 \dfn 1, a_1,\ldots,a_r$, all positive real numbers and linearly independent over $\Q$.

At this point, it is fruitful to reinterpret $\per(\zeta)$ in terms of the fundamental group of $\Omega$, that is:
\begin{equation*}
  \per(\zeta) = \left\{ \int_{\gamma} \zeta \st [\gamma] \in \pi_1(\Omega; x_0) \right\}.
\end{equation*}
As we shall see, this viewpoint triggers a few geometric constructions that will be primordial in the proof of the theorem, and which we present next.

\subsection{Coverings and groups}

Fix a point $x_0 \in \Omega$ and consider 
\begin{equation*}
  H(x_0) \dfn \left\{ [\gamma]\in \pi_1(\Omega; x_0) \st \int_{\gamma} \zeta = 0 \right\},
\end{equation*}
which is a normal subgroup of the fundamental group of $\Omega$ at $x_0$ (one can always choose a smooth representative in any homotopy class $[\gamma] \in \pi_1(\Omega; x_0)$). It is clear that $H(x_0)$ contains the commutator subgroup $[\pi_1(\Omega; x_0), \pi_1(\Omega; x_0)]$. In particular, if $L$ is the leaf of $\mathcal{F}$ through $x_0$ and $i: L \to \Omega$ is the inclusion, then $i_* \pi_1(L; x_0) \subset H(x_0)$. The surjective map $\pi_1(\Omega; x_0) \to \per(\zeta)$ given by $[\gamma] \mapsto \int_\gamma \zeta$ induces a group isomorphism
\begin{equation*}
  \per(\zeta) \simeq \pi_1(\Omega; x_0) / H (x_0).
\end{equation*}

We can then consider the Galois covering
\begin{equation*}
  \Pi: \widehat{\Omega} \lra \Omega
\end{equation*}
associated with the subgroup $H(x_0) \subset \pi_1(\Omega; x_0)$. From the general theory (see, for instance, \cite[Chapter 21]{Lee2012} or \cite[Theorem 5.9 and Exercise 5.2]{Forster1981}), we know that $\widehat{\Omega}$ is a connected $(n+1)$-dimensional smooth manifold, that
\begin{equation*}
  \Pi_* \left( \pi_1 (\widehat{\Omega}; p) \right) = H(\Pi(p)),
  \quad \forall p \in \widehat{\Omega},
\end{equation*}
and we can define a primitive $\psi: \widehat{\Omega} \to \R$ for $\Pi^* \zeta$ as follows: fixing a point $p_0 \in \Pi^{-1}(x_0)$, let
\begin{equation*}
  \psi (p) \dfn \int_{\Pi \circ \widehat{\gamma}} \zeta = \int_{\widehat{\gamma}} \Pi^*\zeta,
  \quad p \in \widehat{\Omega},
\end{equation*}
where $\widehat{\gamma}:[0,1] \to \widehat{\Omega}$ is any smooth path satisfying $\widehat{\gamma}(0) = p_0$ and $\widehat{\gamma} (1) = p$. It is clear that $\psi$ is well-defined, smooth, and, moreover, $\psi(p_0) = 0$ and $\dd \psi = \Pi^*\zeta$. Since $\zeta$ is non-singular, $\psi$ is a submersion, and the pullback foliation $\widehat{\mathcal{F}} \dfn \Pi^*\mathcal{F}$ is the same as the one induced by $\psi$. Indeed, for $p \in \widehat{\Omega}$:
\begin{equation*}
  \ker (\dd \psi)|_{p} = \ker \left(\zeta|_{\Pi(p)} \circ \Pi_*|_{p} \right) = (\Pi_*|_{p})^{-1} \left( \ker \zeta|_{\Pi(p)} \right) = (\Pi_*|_{p})^{-1} \VV|_{\Pi(p)},
\end{equation*}
the last term being precisely the tangent space to the leaf of $\widehat{\mathcal{F}}$ through $p$.

Observe that the leaves $\widehat{L}$ of $\widehat{\mathcal{F}}$ are precisely the connected components of the inverse images $\Pi^{-1}(L)$, for $L$ a leaf of $\mathcal{F}$. Note also that $\Pi|_{\widehat{L}}: \widehat{L} \to L$ is a local diffeomorphism (since $\widehat{L}$ and $L$ are weakly embedded submanifolds of the same dimension); and, moreover, is injective. Indeed, let $p, p' \in \widehat{L}$ be such that $\Pi(p) = \Pi(p') = x \in L$, and $\widehat{\gamma}: [0,1]\to \widehat{L}$ be a curve satisfying $\widehat{\gamma}(0) = p$ and $\widehat{\gamma}(1) = p'$. Then, $\gamma \dfn \Pi \circ \widehat{\gamma}$ is a closed curve based on $x$, which is contained in $L$, therefore
\begin{equation*}
  [\gamma] \in H (x) = \Pi_* \left( \pi_1 (\widehat{\Omega}; p) \right),
\end{equation*}
i.e.,~there is a loop $\widehat{\eta} :[0,1] \to \widehat{\Omega}$ based on $p$ that satisfies $[\Pi \circ \widehat{\eta}] = [\gamma]$. By the uniqueness property of (homotopy classes of) liftings, $\widehat{\gamma}$ must be closed, so $p = p'$. Therefore, $\Pi|_{\widehat{L}}: \widehat{L} \to L$ is a diffeomorphism.

Next, we will straighten this picture, and linearize both $\widehat{\Omega}$ and $\widehat{\mathcal{F}}$. If $X$ is a vector field on $\Omega$ transversal to $\zeta$, let $\widehat{X}$ be the smooth vector field on $\widehat{\Omega}$ which is $\Pi$-related to $X$, meaning that
\begin{equation*}
  \Pi_* (\widehat{X}|_{p}) = X|_{\Pi(p)},
  \quad \forall p \in \widehat{\Omega},
\end{equation*}
which is well-defined and unique since $\Pi$ is a local diffeomorphism. This vector field is clearly transversal to $\Pi^*\zeta $ and complete. Indeed,
\begin{equation*}
  \langle (\Pi^*\zeta)|_{p},  \widehat{X}|_{p} \rangle =
  \langle \zeta|_{\Pi(p)},  \Pi_* (\widehat{X}|_{p}) \rangle =
  \langle \zeta|_{\Pi(p)},  X|_{\Pi(p)} \rangle =
  1.
\end{equation*}
Moreover, by uniqueness of integral curves, the flows of $X$ and $\widehat{X}$ are $\Pi$-conjugated, hence the completeness of the latter. This means that, if $\{ \Phi_t \}_{t \in \R}$ and $\{ \widehat{\Phi}_t \}_{t \in \R}$ denote the flows of $X$ and $\widehat{X}$, respectively, then
\begin{equation*}
  \Pi \left( \widehat{\Phi}_t (p) \right) = \Phi_t (\Pi(p)),
  \quad \forall (t, p) \in \R \times \widehat{\Omega}.
\end{equation*}
Fixing $p \in \widehat{\Omega}$ we have
\begin{equation*}
  \left. \frac{\dd}{\dd t} \right|_{t=t_0} \psi(\widehat{\Phi}_t(p))
  = \left \langle \dd \psi|_{\widehat{\Phi}_{t_0}(p)}, \widehat{X}|_{\widehat{\Phi}_{t_0}(p)} \right \rangle
  = \left \langle (\Pi^* \zeta)|_{\widehat{\Phi}_{t_0}(p)}, \widehat{X}|_{\widehat{\Phi}_{t_0}(p)} \right \rangle
  = 1
\end{equation*}
from which we conclude that 
\begin{equation}
  \label{eq:psionphihat}
  \psi(\widehat{\Phi}_t(p)) = t + \psi(p),
  \quad \forall (t,p) \in \R \times \widehat{\Omega}.
\end{equation}

In order to proceed, we fix a leaf $\widehat{L}$ of $\widehat{\mathcal{F}}$, and consider the map
\begin{equation*}
  \TR{\widehat{\Phi}}{(t,p)}{\R \times \widehat{L}}{\widehat{\Phi}_t(p)}{\widehat{\Omega}}
\end{equation*}
which is easily seen to be a diffeomorphism: it is a local diffeomorphism (as the flow of a non-singular vector field), onto -- for any two leaves of $\widehat{\mathcal{F}}$ are diffeomorphic via some $\widehat{\Phi}_t$~\cite[Lemma~9.3.9]{conlon_manifolds} --, and, moreover, injective: were $\widehat{\Phi}_t(p) = \widehat{\Phi}_{t'}(p')$ for $p,p' \in \widehat{L}$, \eqref{eq:psionphihat} would entail $t + \psi(p) = t' + \psi(p')$, hence $t = t'$ (as $\psi$ is constant along the leaves), so $p = p'$. Another consequence of~\eqref{eq:psionphihat} is that
\begin{equation*}
  \widehat{\Phi}^* (\dd \psi) = \dd (\psi \circ \widehat{\Phi}) = \dd t
\end{equation*}
since $\psi$ is constant along $\widehat{L}$: in particular, $\widehat{\Phi}^* \widehat{\mathcal{F}} = (\Pi \circ \widehat{\Phi})^* \mathcal{F}$ is the product foliation on $\R \times \widehat{L}$ -- i.e.,~the one with leaves given by $\{t\}\times \widehat{L}$. 

The linearization is now done: we obtained a covering map $g:\R \times \widehat{L} \to \Omega$ given by
\begin{equation*}
  g(t,p) \dfn \Pi(\widehat{\Phi}(t,p)) = \Phi_t(\Pi(p))
\end{equation*}
such that $g^* \zeta = \dd t$, i.e.,~$g^* \mathcal{F}$ is the product foliation. We consider the associated involutive structure, denoted by $\widehat{\VV}$ -- which is just the tangent bundle of $\widehat{L}$ embedded into $T(\R \times \widehat{L})$ -- , and the associated spaces $\NN^q_{\widehat{\VV}}(\R \times \widehat{L})$ and $\Lambda^q_{\widehat{\VV}} (\R \times \widehat{L})$ defined via the main construction in the introduction: locally, classes $\pmb{f} \in \Lambda^q_{\widehat{\VV}} (\R \times \widehat{L})$ are uniquely represented by $q$-forms of the kind
\begin{equation*}
    \psum_{|J|=q} f_J(t,x) \dd x_J
\end{equation*}
where $(V; x_1,\ldots,x_n)$ is a system of coordinates for $\widehat{L}$ and $f_J \in \cinfty(\R \times V)$. The exterior derivative $\dd$ on $\R \times \widehat{L}$ admits a decomposition $\dd_{\R} + \dd_{\widehat{L}}$, and the induced differential on the complex $\Lambda^q_{\widehat{\VV}} (\R \times \widehat{L})$ becomes $\dd_{\widehat{L}}$.

By the previous considerations, the map $g: \R \times \widehat{L} \to \Omega$ is compatible with the pair of structures $\widehat{\VV}, \VV$, and thus by functoriality gives rise to continuous linear maps
\begin{equation}
  \label{eq:gstarquotients}
  g^*: \Lambda_\VV^q(\Omega) \lra \Lambda_{\widehat{\VV}}^q(\R \times \widehat{L}),
  \quad 0 \leq q \leq n,
\end{equation}
which form, moreover, a chain map. Indeed, one must check that the pullback map
\begin{equation*}
  g^*: F^{q}(\Omega) \lra F^{q}(\R \times \widehat{L})
\end{equation*}
induces a map in the quotient spaces. Let $\omega \in \NN^q_{\VV}(\Omega)$, that is, for each $x \in \Omega$ we have
\begin{equation*}
  \omega(v_1, \ldots, v_q) = 0,
  \quad \forall v_1, \ldots, v_q \in \VV_x.
\end{equation*}
Now take $\hat{v}_1, \ldots, \hat{v}_q \in \widehat{\VV}_{(t,x)} \simeq \{t\} \times T_x \widehat{L} $ for some $(t,x) \in \R \times \widehat{L}$. Then
\begin{equation*}
  (g^*\omega) (\hat{v}_1, \ldots, \hat{v}_q) =
  \omega (g_* \hat{v}_1, \ldots, g_* \hat{v}_q) =
  0
\end{equation*}
since $g_*: T_{(t,x)} (\R \times \widehat{L}) \to T_{g(t,x)} \Omega$ maps $\widehat{\VV}_{(t,x)}$ to $\VV_{g(t,x)}$ -- the meaning of compatibility between $\widehat{\VV}$ and $\VV$. Therefore, the mappings~\eqref{eq:gstarquotients} are well-defined; chainness being a consequence of the fact that exterior derivatives commute with pullbacks. Notice, moreover, that each map in~\eqref{eq:gstarquotients} is one-to-one (but not onto: we will compute their ranges in Proposition~\ref{prop:pullback_map} below); this is so because $g$ is a covering map (and in particular a local diffeomorphism, hence $g_*: \widehat{\VV}_{(t,x)} \to \VV_{g(t,x)}$ is an isomorphism).

\subsection{$\per(\zeta)$-invariant forms}
\label{sec:invariant-forms}

Let $\Deck(g)$ denote the group of \emph{deck transformations} of the covering $g$, which are diffeomorphisms $T:\R \times \widehat{L}\to \R \times \widehat{L}$ that satisfy $g = g \circ T$. From standard covering space theory, we know that $\Deck(g) \simeq \per(\zeta)$, and, moreover, the action of $\Deck(g)$ on $\R \times \widehat{L}$ is free and proper. In particular, we have, as smooth manifolds, $\R \times \widehat{L}/\Deck(g) \simeq \Omega$ (see \cite[Proposition 7.23 and Theorem 21.13]{Lee2012})

In the following lemma, we compute the deck transformations of $g$.
\begin{Lem}
  \label{deck_lemma}
  Deck transformations of $g$ are of the form
  \begin{equation*}
    T_\alpha (t,p)  \dfn  \left( t + \alpha, (\Pi_{\widehat{L}})^{-1} \circ \Phi_{-\alpha} \circ ( \Pi|_{\widehat{L}})(p) \right),
    \quad (t,p ) \in \R \times \widehat{L},
  \end{equation*}
  for some $\alpha \in \per(\zeta)$. Moreover, $\alpha \in \per(\zeta) \mapsto T_\alpha \in \Deck(g)$ is a group isomorphism.
\end{Lem}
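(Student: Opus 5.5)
The proof has three parts: check that each $T_\alpha$ is a well-defined deck transformation, show that every deck transformation is of this form, and verify the group structure. I will use three facts already established: $g(t,p)=\Phi_t(\Pi(p))$; $\widehat{\Phi}:\R\times\widehat{L}\to\widehat{\Omega}$ is a diffeomorphism satisfying \eqref{eq:psionphihat}; and $\Pi|_{\widehat{L}}:\widehat{L}\to L$ is a diffeomorphism.

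\emph{Step 1: $\Phi_{-\alpha}$ maps $L$ onto $L$ for every $\alpha\in\per(\zeta)$.} This is the step I expect to be the main obstacle, since in the irrational case leaves are dense and one cannot argue through level sets of a map to $S^1$. Take $x=\Pi(p)$ with $p\in\widehat{L}$.
\begin{itemize}
\item Periods do not depend on the base point, so there is a loop $\gamma$ at $x$ with $\int_\gamma\zeta=\alpha$.
\item Lift $\gamma$ to $\widehat{\Omega}$ starting at $p$. Its endpoint $p'$ satisfies $\Pi(p')=x$ and $\psi(p')=\psi(p)+\alpha$.
\item By \eqref{eq:psionphihat}, $\psi(\widehat{\Phi}_{-\alpha}(p'))=\psi(p)$.
\item Since $\widehat{\Phi}$ is a diffeomorphism and $\psi(\widehat{\Phi}_t(q))=t+\psi(q)$ with $\psi$ constant on $\widehat{L}$, each level set of $\psi$ is exactly one leaf $\widehat{\Phi}_t(\widehat{L})$. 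Hence $\widehat{\Phi}_{-\alpha}(p')\in\widehat{L}$.
\item Projecting, $\Phi_{-\alpha}(x)=\Pi(\widehat{\Phi}_{-\alpha}(p'))\in L$.
\end{itemize}
Applying the same argument with $-\alpha$ shows $\Phi_{-\alpha}(L)=L$. Since $\Phi_{-\alpha}$ is a diffeomorphism of the weakly embedded leaf, $(\Pi|_{\widehat{L}})^{-1}\circ\Phi_{-\alpha}\circ\Pi|_{\widehat{L}}$ is a diffeomorphism of $\widehat{L}$. So $T_\alpha$ is a well-defined diffeomorphism. Moreover,
\[
g(T_\alpha(t,p))=\Phi_{t+\alpha}\bigl(\Phi_{-\alpha}(\Pi(p))\bigr)=\Phi_t(\Pi(p))=g(t,p),
\]
so $T_\alpha\in\Deck(g)$.

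\emph{Step 2: every deck transformation is some $T_\alpha$.} Let $T\in\Deck(g)$.
\begin{itemize}
\item From $g\circ T=g$ and $g^*\zeta=\dd t$ we get $T^*\dd t=\dd t$. Since $\R\times\widehat{L}$ is connected, $t\circ T=t+c$ for a constant $c$.
\item Take a path $\sigma$ from $(0,p)$ to $T(0,p)$. Then $g\circ\sigma$ is a closed curve in $\Omega$ and $\int_{g\circ\sigma}\zeta=\int_\sigma\dd t=c$, so $c\in\per(\zeta)$.
\item The map $S\dfn T_c^{-1}\circ T$ is a deck transformation preserving the $t$-coordinate. In particular it maps $\{0\}\times\widehat{L}$ into itself.
\item On that slice $g(0,\cdot)=\Pi|_{\widehat{L}}$ is injective, so $g\circ S=g$ forces $S=\Id$ there.
\item A deck transformation of a connected covering with a fixed point is the identity. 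Hence $S=\Id$ and $T=T_c$.
\end{itemize}

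\emph{Step 3: group isomorphism.} The flow property $\Phi_{-\alpha}\circ\Phi_{-\beta}=\Phi_{-(\alpha+\beta)}$ gives $T_\alpha\circ T_\beta=T_{\alpha+\beta}$, so $\alpha\mapsto T_\alpha$ is a homomorphism. It is injective because the first component of $T_\alpha$ is $t+\alpha$, and it is surjective by Step 2.
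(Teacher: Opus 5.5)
Your proof is correct. Its three-part skeleton matches the paper's: each $T_\alpha$ is a deck transformation, every deck transformation is some $T_\alpha$, and $\alpha \mapsto T_\alpha$ is an isomorphism. The two key sub-claims, however, are established by different arguments.

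For the first part, the paper simply cites \cite[Lemma~9.3.9]{conlon_manifolds} for the fact that $\Phi_{-\alpha}$ maps $L$ onto itself when $\alpha \in \per(\zeta)$. You prove this directly, by lifting a loop of period $\alpha$ to $\widehat{\Omega}$ and using that the level sets of $\psi$ are exactly the leaves $\widehat{\Phi}_t(\widehat{L})$.

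For the converse, the paper writes $T(s,p) = (t(s,p), v(s,p))$ and invokes the same lemma in the other direction: from $\Phi_{s - t(s,p)}(\Pi(p)) = \Pi(v(s,p)) \in L$ it concludes $t(s,p) - s \in \per(\zeta)$. It then uses continuity and countability of $\per(\zeta)$ to make the shift constant, and reads off $v$ pointwise by applying $(\Pi|_{\widehat{L}})^{-1}$. You instead get constancy of the shift directly from $T^* \dd t = \dd t$, with no countability argument. You identify the shift as a period by integrating $\zeta$ over the closed curve $g \circ \sigma$. You then conclude via uniqueness of deck transformations with a fixed point, rather than by an explicit formula for the second component.

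The paper's version is shorter given the citation, and it produces the second component by direct computation. Yours is self-contained, using only $\Pi$, $\psi$ and $g^*\zeta = \dd t$; in effect it reproves the parts of Conlon's lemma that are needed here.
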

\begin{proof}
  It is clear that each $T_\alpha$ is a deck transformation. Indeed, since $\alpha \in \per(\zeta)$, $\Phi_{-\alpha}$ is a diffeomorphism from $L \dfn \Pi (\widehat{L})$ onto itself~\cite[Lemma~9.3.9]{conlon_manifolds}, which makes the map well-defined. Furthermore
  \begin{align*}
    (g \circ T_\alpha) (t, p)
    &= g \left( t + \alpha, (\Pi_{\widehat{L}})^{-1} \circ \Phi_{-\alpha} \circ ( \Pi|_{\widehat{L}})(p) \right) \\
    &= \Phi_{t + \alpha} \left( \Phi_{-\alpha}  (\Pi (p)) \right) \\
    &=  \Phi_t  (\Pi (p)) \\
    &= g(t,p).
  \end{align*}
  Conversely, given $T \in \Deck(g)$ we write $T(s,p) = (t(s,p), v(s,p)) \in \R \times \widehat{L}$. Then
  \begin{equation}
    \label{eq:deck1}
    \Phi_s (\Pi(p)) = g(s,p) = g( T(s,p) ) = \Phi_{t(s,p)} \left( \Pi (v(s,p)) \right). 
  \end{equation}
  Note that $p, v(s,p) \in \widehat{L}$ imply $\Pi(p), \Pi(v(s,p)) \in L$. Hence~\cite[Lemma~9.3.9]{conlon_manifolds}: 
  \begin{equation*}
    t(s,p) - s \in \per(\zeta),
    \quad \forall (s,p) \in \R \times \widehat{L}.
  \end{equation*}
  By continuity and connectedness, and because $\per(\zeta)$ is countable (as a finitely generated group) we conclude that there exists $\alpha \in \per(\zeta)$ such that 
  \begin{equation*}
    t(s,p) - s = \alpha,
    \quad \forall (s,p) \in \R \times \widehat{L}.
  \end{equation*}
  Therefore~\eqref{eq:deck1}, by the flow property,
  \begin{equation*}
    \Pi (v(s,p)) = \Phi_{t(s,p)}^{-1} \Phi_{s} \left( \Pi (p) \right) = \Phi_{s - t(s,p)} \left( \Pi (p) \right) = \Phi_{-\alpha} \left( \Pi (p)  \right).
  \end{equation*}
  Since $v(s,p) \in \widehat{L}$, we can apply $(\Pi|_{\widehat{L}})^{-1}$ to both sides and obtain
  \begin{equation*}
    v(s,p) =  (\Pi|_{\widehat{L}})^{-1} \circ \Phi_{-\alpha} \circ (\Pi|_{\widehat{L}}) (p),
  \end{equation*}
  i.e.,~$T = T_\alpha$ as desired. The final claim follows from a simple computation.
\end{proof}

Now we consider the action of $\Deck(g)$ on the differential complex associated with $\widehat{\VV}$. A very simple calculation (using Lemma~\ref{deck_lemma}) entails that, for every $\alpha \in \per(\zeta)$, we have 
\begin{equation*}
  T_\alpha^* \left\{ \NN^q_{\widehat{\VV}}(\R \times \widehat{L}) \right\} = \NN^q_{\widehat{\VV}}(\R \times \widehat{L}),
  \quad 0 \leq q \leq n.
\end{equation*}
In particular, $T^*_\alpha$ induces isomorphisms of Fr{\'e}chet spaces
\begin{equation*}
  T_\alpha^* : \Lambda^q_{\widehat{\VV}} (\R \times \widehat{L}) \lra \Lambda^q_{\widehat{\VV}} (\R \times \widehat{L}),
  \quad 0 \leq q \leq n.
\end{equation*}
We can then consider the following (closed) subspace of $\Lambda^q_{\widehat{\VV}} (\R \times \widehat{L})$:
\begin{equation*}
  \Lambda^q_{\widehat{\VV}} (\R \times \widehat{L})^{\per(\zeta)}
  \dfn
  \left\{ \pmb{f} \in \Lambda^q_{\widehat{\VV}} (\R \times \widehat{L}) \st T^*_\alpha \pmb{f} = \pmb{f}, \quad \forall \alpha  \in \per(\zeta) \right\}.
\end{equation*}
Since the exterior derivative commutes with pullbacks, we have an induced subcomplex
\begin{equation*}
  \begin{tikzcd}
    \Lambda^0_{\widehat{\VV}} (\R \times \widehat{L})^{\per(\zeta)} \arrow[r, "\dd_{\widehat{L}}"] &
    \Lambda^1_{\widehat{\VV}} (\R \times \widehat{L})^{\per(\zeta)} \arrow[r, "\dd_{\widehat{L}}"] &
    \cdots                                                     \arrow[r, "\dd_{\widehat{L}}"] &
    \Lambda^n_{\widehat{\VV}} (\R \times \widehat{L})^{\per(\zeta)}
  \end{tikzcd} .
\end{equation*}
We can then prove the following result:
\begin{Prop}
  \label{prop:pullback_map}
  The map~\eqref{eq:gstarquotients} induces an isomorphism of complexes
  \begin{equation*}
    \begin{tikzcd}
      \Lambda^q_{\widehat{\VV}} (\R \times \widehat{L})^{\per(\zeta)} \arrow[r, "\dd_{\widehat{L}}"] & \Lambda^{q + 1}_{\widehat{\VV}} (\R \times \widehat{L})^{\per(\zeta)} \\
      \Lambda^q_\VV(\Omega) \arrow[u, "g^*"] \arrow[r,"\dd'_q "] & \Lambda^{q + 1}_\VV(\Omega) \arrow[u, "g^*"] 
    \end{tikzcd}
  \end{equation*}
  where $0 \leq q \leq n - 1$.
\end{Prop}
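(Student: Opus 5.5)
To prove Proposition~\ref{prop:pullback_map}, we have to show three things. The maps $g^*$ in~\eqref{eq:gstarquotients} take values in the $\per(\zeta)$-invariant subspaces. They are bijective onto those subspaces. And they are topological isomorphisms. Commutativity of the diagram is already known, since $g^*$ is a chain map. The plan is to work first at the level of honest forms $F^q$ and then pass to the quotients by $\NN^q$.

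\emph{Invariance.} For $\pmb{f} = [f]$ with $f \in F^q(\Omega)$ and $\alpha \in \per(\zeta)$, Lemma~\ref{deck_lemma} gives $g \circ T_\alpha = g$. Hence $T_\alpha^* g^* f = (g\circ T_\alpha)^* f = g^* f$, and this passes to classes because $T_\alpha^*$ preserves $\NN^q_{\widehat{\VV}}$. So $g^*$ maps into $\Lambda^q_{\widehat{\VV}}(\R\times\widehat{L})^{\per(\zeta)}$. Injectivity was already observed, since $g$ is a local diffeomorphism compatible with the structures.

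\emph{Surjectivity.} This is the main step. Take an invariant class $\pmb{h}$. The first thing I would try is to choose a canonical representative that is itself $T_\alpha$-invariant as a form. Use the transversal $\widehat{X} = g^*$-pullback of $X$, which on $\R\times\widehat{L}$ is simply $\partial_t$. Every class in $\Lambda^q_{\widehat{\VV}}$ has a unique representative $h$ with $\iota_{\partial_t} h = 0$, namely the representative written as $\psum h_J(t,x)\,\dd x_J$. This works because $\NN^q_{\widehat{\VV}} = \dd t\wedge(\cdot)$ and $h \mapsto \iota_{\partial_t}h$ kills exactly the right things.

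Since $T_\alpha$ is $(t,p)\mapsto(t+\alpha,\cdot)$, it preserves $\partial_t$: indeed $T_\alpha$ commutes with the flow of $\widehat{X}$, being a deck transformation of a $\Pi$-related flow. So $T_\alpha^*$ preserves the condition $\iota_{\partial_t} h = 0$. By uniqueness of the normalized representative, $T_\alpha^* h = h$ as forms. Analogously, on $\Omega$ each class has a unique representative annihilated by $\iota_X$.

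An honest $\Deck(g)$-invariant form descends to $\Omega$, because $g$ is a Galois covering with $\Omega \simeq (\R\times\widehat{L})/\Deck(g)$. This is standard: define $f$ locally via local inverses of $g$; invariance makes the definition independent of the choice of inverse. This yields $f\in F^q(\Omega)$ with $g^*f = h$, hence $g^*[f] = \pmb{h}$.

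\emph{Topology.} Finally, $g^*$ is a continuous linear bijection between Fréchet spaces: the invariant subspace is closed, being an intersection of kernels of the continuous maps $T_\alpha^* - \Id$. The Open Mapping Theorem then gives a topological isomorphism. Alternatively, one can see continuity of the inverse directly, since seminorms on compact pieces of $\Omega$ are controlled by seminorms on compact fundamental-domain pieces of $\R\times\widehat{L}$.

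The only delicate point I foresee is verifying that $T_\alpha$ preserves $\partial_t$ and the normal form, so that the invariance of classes upgrades to invariance of representatives. An alternative is to average, but averaging is unavailable because $\per(\zeta)$ is infinite. So the canonical-representative trick is the route I would commit to.
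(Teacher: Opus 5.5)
Your proof is correct. Its outline matches the paper's: invariance from $g\circ T_\alpha = g$, injectivity because $g$ is a local diffeomorphism, surjectivity by descent along local inverses of $g$, then the Open Mapping Theorem. The difference is in how you handle surjectivity.

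The paper never picks representatives. For a distinguished neighborhood $U_x$ with $g^{-1}(U_x)=\bigsqcup_\alpha \widetilde{U}_{x,\alpha}$, it pushes the \emph{class} $\pmb{\omega}|_{\widetilde{U}_{x,\alpha}}$ down by $(g|_{\widetilde{U}_{x,\alpha}})^{-1}$. It then checks independence of $\alpha$ using invariance of the class, and glues, relying on $\Lambda^q_{\VV}$ being the sheaf of sections of a vector bundle.

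You instead upgrade invariance of the class to invariance of an honest form. The map $h\mapsto h-\dd t\wedge\iota_{\partial_t}h$ is a continuous projection with kernel exactly $\NN^q_{\widehat{\VV}}$. It commutes with $T_\alpha^*$ because $T_\alpha^*\dd t=\dd t$ and $(T_\alpha)_*\partial_t=\partial_t$. That last fact is best justified by the formula in Lemma~\ref{deck_lemma}, since the $\widehat{L}$-component of $T_\alpha$ does not depend on $t$. Alternatively, $(T_\alpha)_*\partial_t$ is again $g$-related to $X$, and $g$-lifts of $X$ are unique. Either is cleaner than your somewhat vague appeal to commuting flows.

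What your route buys is transparency. It identifies $\Lambda^q_{\widehat{\VV}}(\R\times\widehat{L})^{\per(\zeta)}$ with the deck-invariant forms annihilated by $\iota_{\partial_t}$. Surjectivity then becomes the textbook descent of invariant forms along a Galois covering, and continuity of the inverse is visible directly. The paper's route needs no equivariant choice of complement to $\NN^q$. Its distinguished-neighborhood bookkeeping is also the same one reused in Lemma~\ref{lem:delta_acyclic}.
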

\begin{proof}
  It is clear that $g^*$ maps $\Lambda_\VV^q(\Omega)$ to $\Lambda^q_{\widehat{\VV}} (\R \times \widehat{L})^{\per(\zeta)}$, and since the latter is closed in $\Lambda^q_{\widehat{\VV}} (\R \times \widehat{L})$ we have a well-defined continuous linear map
  \begin{equation*}
    g^*: \Lambda_\VV^q(\Omega) \lra \Lambda^q_{\widehat{\VV}} (\R \times \widehat{L})^{\per(\zeta)}
  \end{equation*}
  defined by taking the pullback of a representative and computing its class. Indeed, for $\pmb{f} \in \Lambda_\VV^q(\Omega)$ we have, for all $\alpha \in \per(\zeta)$,
    \begin{equation*}
      T^*_\alpha (g^* \pmb{f}) = (g \circ T_{\alpha})^* \pmb{f} = g^* \pmb{f},
    \end{equation*}
    due to the fact that $T_{\alpha} \in \Deck(g)$. As already pointed out, this mapping is injective; we now check surjectivity.

  For each $x \in \Omega$, take $U_{x}\subset \Omega$ a distinguished neighborhood of $x$, meaning that 
  \begin{equation*}
    g^{-1}(U_x) = \bigsqcup_{\alpha \in \per(\zeta)} \til{U}_{x, \alpha},
  \end{equation*}
  where $\til{U}_{x, \alpha} \subset \R \times \widehat{L}$ is open, $g|_{\til{U}_{x,\alpha}} :\til{U}_{x, \alpha} \to U_x$ is a diffeomorphism and $T_\alpha ( \til{U}_{x,\beta} ) = \til{U}_{x, \alpha+\beta}$ for every $\alpha,\beta \in \per(\zeta)$. Given $\pmb{\omega} \in \Lambda^{q}_{\widehat{\VV}} (\R \times \widehat{L})^{\per(\zeta)}$, it follows from the $\per(\zeta)$-invariance that
  \begin{equation}
    \label{equivariance}
    \pmb{\omega}|_{\til{U}_{x, \alpha}} = T_{\beta-\alpha}^* \left( \pmb{\omega} |_{\til{U}_{x, \beta}} \right),
    \quad \forall \alpha, \beta \in \per(\zeta), \ \forall x \in \Omega.
  \end{equation}
  We then define $\pmb{\eta}_{U_x} \in \Lambda^q_\VV(U_{x})$ by
  \begin{equation*}
    \pmb{\eta}_{U_{x}} \dfn \left( g|_{\til{U}_{x,\alpha}}^{-1} \right)^* (\pmb{\omega}|_{\til{U}_{x, \alpha}}),
  \end{equation*}
  which is independent of $\alpha \in \per(\zeta)$ thanks to~\eqref{equivariance}. Indeed:
    \begin{align*}
      \left( g|_{\til{U}_{x,\alpha}}^{-1} \right)^* (\pmb{\omega}|_{\til{U}_{x, \alpha}})
      &= \left( g|_{\til{U}_{x,\alpha}}^{-1} \right)^* T_{\beta - \alpha}^* (\pmb{\omega}|_{\til{U}_{x, \beta}}) \\
      &= \left( T_{\beta - \alpha} \circ g|_{\til{U}_{x,\alpha}}^{-1} \right)^* (\pmb{\omega}|_{\til{U}_{x, \beta}}) \\
      &= \left( \left( g|_{\til{U}_{x,\alpha}} \circ T_{\beta - \alpha}^{-1} \right)^{-1} \right)^* (\pmb{\omega}|_{\til{U}_{x, \beta}}) \\
      &= \left( \left( g|_{\til{U}_{x,\alpha}} \circ T_{\alpha - \beta} \right)^{-1} \right)^* (\pmb{\omega}|_{\til{U}_{x, \beta}}) \\
      &= \left( g|_{\til{U}_{x,\beta}}^{-1} \right)^* (\pmb{\omega}|_{\til{U}_{x, \beta}})
    \end{align*}
    since $T_{\alpha - \beta}$ is a deck transformation of $g$ and $T_{\alpha - \beta} ( \til{U}_{x, \beta} )  = \til{U}_{x, \alpha}$. We obtain in this fashion of an open cover $\{U_{x}\}_{x \in \Omega}$ of $\Omega$, with locally defined $\pmb{\eta}$. However, using~\eqref{equivariance} again, it is easy to check that these definitions coincide in the intersection of open subsets of this cover, thus obtaining a global object $\pmb{\eta}\in \Lambda^q_\VV(\Omega)$, which clearly satisfies $g^*\pmb{\eta} = \pmb{\omega}$. From the Open Mapping Theorem, $g^*$ is a topological isomorphism. 
\end{proof}

We have, therefore, embedded our differential complex of interest as a subcomplex of $(\Lambda^{q}_{\widehat{\VV}}(\R \times \widehat{L}) ,\dd_{\widehat{L}})$.

\subsection{Group cohomology}

We recall here the relevant definitions that will concern us (see, e.g.,~\cite[Chapter~6]{weibel}).
\begin{Def}
  Let $G$ be a group. A topological vector space $E$ is a \emph{$G$-module} if there is a group homomorphism
  \begin{equation*}
    T:G \lra \Aut(E).
  \end{equation*}
  Explicitly, we have a family $\{T_g\}_{g\in G}$ of topological automorphisms of $E$ that satisfy the group property
  \begin{equation*}
    T_{g\cdot g'} = T_g \circ T_{g'},
    \quad g,g' \in G.
  \end{equation*}
\end{Def}
\begin{Rem}
  We regard $G$ as an abstract group (or endowed with the discrete topology). We also use the notation $g \cdot e$ to mean $T_g(e)$, for $g \in G$ and $e \in E$, where $E$ is a $G$-module.
\end{Rem}
Let $G$ be a countable abelian group and $E$ a $G$-module. Then, we define, for $q\geq 1$, the vector space
\begin{equation*}
  \msf{C}^{q}(G; E) \dfn \left\{ f:G^{q} \lra E \right\} = \prod_{G^{q}}E,
\end{equation*}
endowed with the product topology. We set $\msf{C}^{0}(G; E) \dfn E$. Define the map $\delta = \delta^{q}: \msf{C}^{q}(G; E) \to \msf{C}^{q+1}(G; E)$, for $q \geq 1$, by
\begin{align*}
  ( \delta \msf{f} )(g_1, \ldots, g_{q+1})
  &\dfn g_1 \cdot \msf{f}(g_2, \ldots,g_{q+1}) + (-1)^{q+1} \msf{f}(g_1,\ldots,g_q) \\
  &+ \sum_{k=1}^{q}(-1)^k \msf{f} (g_1, \ldots, g_{k-1}, g_k \cdot g_{k+1}, g_{k+2}, \ldots, g_{q+1} ), \\
\end{align*}
whereas, for $q=0$, we set $\delta^0: \msf{C}^0(G; E) \to \msf{C}^1(G; E)$ as
\begin{equation*}
  (\delta e)(g) \dfn g \cdot e - e.
\end{equation*}
For instance, if $\msf{f}:G \to E$ is an element of $\msf{C}^1(G; E)$, then 
\begin{equation*}
  (\delta \msf{f})(g_1,g_2) = g_1 \cdot \msf{f}(g_2) - \msf{f}(g_1\cdot g_2) + \msf{f}(g_1).
\end{equation*}

One can check that $\delta^{q+1} \circ \delta^{q} = 0$ for every $q \geq 0$, so it defines a cochain complex. It is also clear that $\delta^q$ is a continuous linear map. We define the \emph{group cohomology spaces} of the $G$-module $E$ as
\begin{equation*}
  \msf{H}^{q} (G; E) \dfn \frac{\ker \delta^{q}}{\ran \delta^{q-1}},
  \quad q \geq 1,
\end{equation*}
whereas $\msf{H}^{0}(G; E) \dfn \ker \delta^0$. We endow them with their natural quotient and subspace topologies. Regarding them, the following elementary property will be very important in what follows~\cite[Corollary~6.2.7]{weibel}.
\begin{Prop}
  \label{prop:groupcohom_general}
  If $G$ is a free abelian group, then $\msf{H}^{q}(G;E)=0$ for every $q>1$ and every $G$-module $E$.
\end{Prop}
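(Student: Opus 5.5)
The plan is to identify the cohomology $\msf{H}^q(G;E)$ defined by the inhomogeneous cochains $\msf{C}^q(G;E)$ with $\mathrm{Ext}^q_{\Z[G]}(\Z,E)$, and then to exhibit a projective resolution of the trivial module $\Z$ of length one. The statement is purely algebraic: the topology on $\msf{C}^q(G;E)$ plays no role in whether $\ker\delta^q=\ran\delta^{q-1}$. So I would forget the topology on $E$ and regard it as a left $\Z[G]$-module.

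\emph{Step 1.} The complex $(\msf{C}^\bullet(G;E),\delta)$ is $\mathrm{Hom}_{\Z[G]}(B_\bullet,E)$, where $B_\bullet$ is the bar resolution of $\Z$. This is the standard dictionary: a $\Z[G]$-linear map on the free module with basis $[g_1|\cdots|g_q]$ is the same thing as a function $G^q\to E$, and the bar differential dualizes to the displayed formula for $\delta$. Since $B_\bullet$ is a free resolution, we get $\msf{H}^q(G;E)\simeq \mathrm{Ext}^q_{\Z[G]}(\Z,E)$. By the comparison theorem, this Ext may be computed from any projective resolution of $\Z$.

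\emph{Step 2.} Take $G$ free on a set $S$, and let $I=\ker(\varepsilon:\Z[G]\to\Z)$ be the augmentation ideal. The key claim is that $I$ is a free $\Z[G]$-module with basis $\{s-1 \st s\in S\}$.
\begin{itemize}
\item Generation: use $gh-1=g(h-1)+(g-1)$ and $g^{-1}-1=-g^{-1}(g-1)$, and induct on word length.
\item Freeness: construct, for any module $M$ and any choice of elements $m_s\in M$, a derivation $D:G\to M$ with $D(s)=m_s$. This exists because derivations correspond to homomorphisms $G\to M\rtimes G$ splitting the projection, and $G$ is free. Then $D$ extends to a $\Z[G]$-map $I\to M$ sending $s-1\mapsto m_s$, which is the required universal property.
\end{itemize}
Hence $0\to I\to\Z[G]\to\Z\to 0$ is a free resolution of length one, and so $\mathrm{Ext}^q_{\Z[G]}(\Z,E)=0$ for $q\geq 2$. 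This second step, namely the freeness of $I$, is the only non-formal point, and I expect it to be the main obstacle.

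\emph{Step 3: the abelian case, and a caveat.} When $G=\Z$, the argument is even more explicit. The resolution is
\begin{equation*}
0\to\Z[t,t^{-1}]\xrightarrow{\,t-1\,}\Z[t,t^{-1}]\to\Z\to 0,
\end{equation*}
so $\msf{H}^q(\Z;E)=0$ for $q\geq2$, while $\msf{H}^1(\Z;E)=E/(T_1-\Id)E$. I must flag that the statement as literally phrased, for free \emph{abelian} groups of rank $r\geq 2$, cannot be proved, because it is false. The Koszul resolution shows that $\Z^r$ has cohomological dimension $r$; for instance $\msf{H}^2(\Z^2;\C)\simeq\C$ for the trivial module $\C$. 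The cited~\cite[Corollary~6.2.7]{weibel} concerns free groups. So the proof above establishes the result for free groups, which in the abelian setting means $G\simeq\Z$.

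For $\per(\zeta)\simeq\Z^{r+1}$ with $r\geq1$, one should instead expect $\msf{H}^q(\per(\zeta);E)$ to be computed by the Koszul complex of the commuting operators $T_{a_0}^*-\Id,\ldots,T_{a_r}^*-\Id$, which in general does not vanish for $2\leq q\leq r+1$. I would check how this result is used later, for example in the degeneration of the Cartan--Leray spectral sequence, against that corrected form.
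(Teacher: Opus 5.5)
Your objection is correct, and it is the substantive point. For a free \emph{abelian} group of rank at least $2$ the statement is false. The paper gives no argument beyond citing \cite[Corollary~6.2.7]{weibel}, which is the result for free groups. Its proof is exactly your Steps~1--2: the augmentation ideal is free on $\{s-1\}$, which gives a length-one free resolution of $\Z$.

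Your counterexample can be made explicit in the paper's inhomogeneous complex. Let $G=\Z^2$ act trivially on $\C$, and set $\omega(g_1,g_2)\dfn\det(g_1,g_2)$ (on $\Z^m$, $m\geq2$, use two of the coordinates). Then $\omega$ is bilinear, hence $\delta^2\omega=0$. Every coboundary $(\delta^1 h)(g_1,g_2)=h(g_1)+h(g_2)-h(g_1+g_2)$ is symmetric in $(g_1,g_2)$, whereas $\omega$ is antisymmetric and nonzero. So $[\omega]\neq 0$ in $\msf{H}^2(\Z^2;\C)$. In the irrational case $\per(\zeta)\simeq\Z^{r+1}$ with $r\geq 1$, so the proposition cannot be invoked as it is in Lemma~\ref{lem:delta_acyclic}.

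On your closing remark, the damage is contained.

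\emph{Only $\msf{H}^1=0$ is used downstream.} The proposition is used only in Lemma~\ref{lem:delta_acyclic}. Theorem~\ref{thm:abstract_eq} needs only $\ker\delta^{p,1}=\ran\delta^{p,0}$, both to solve $\delta^{p,0}\pmb{u}=\msf{f}$ and to know that $\ran\delta^{p,0}$ is closed. The paper proves $\msf{H}^1=0$ separately, by its partition-of-unity (sheaf) argument.

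\emph{The higher vanishing still holds for these modules, for a different reason.} Take $E=\Lambda^p_{\widehat{\VV}}(\R\times\widehat{L})$ and $\chi\dfn\sum_j\widehat{\rho}_{j,0}$, with notation as in the proof of Proposition~\ref{prop:delta_dist}. Then $\sum_{\alpha\in\per(\zeta)}T^*_\alpha\chi=1$, a locally finite sum. Consider the two maps $e\mapsto(T^*_\beta e)_{\beta}$ and $(f_\beta)_\beta\mapsto\sum_\beta T^*_{-\beta}(\chi f_\beta)$. Both are $\per(\zeta)$-equivariant, and their composite is the identity on $E$. This exhibits $E$ as a direct summand of the coinduced module $\prod_{\beta\in\per(\zeta)}E$, on which $\alpha$ acts by $(f_\beta)_\beta\mapsto(f_{\beta+\alpha})_\beta$. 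Shapiro's lemma then gives $\msf{H}^q(\per(\zeta);E)=0$ for all $q\geq1$.

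So your Koszul caveat is right for general modules, but it does not affect the paper's main results. The appropriate fix is to replace the citation by an argument of this kind, or to restrict Lemma~\ref{lem:delta_acyclic} to $q=1$.
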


\subsection{The double complex}

As we have seen, the spaces $\Lambda^q_{\widehat{\VV}} (\R \times \widehat{L})$ are naturally $\per(\zeta)$-modules. We can, therefore, consider the double complex of Fr{\'e}chet spaces
\begin{equation*}
  \msf{K}^{p,q} \dfn \msf{C}^{q}\left( \per(\zeta); \Lambda^p_{\widehat{\VV}}(\R \times \widehat{L}) \right),
  \quad p, q \geq 0,
\end{equation*}
whereas $\msf{K}^{p,q} \dfn 0$ if either $p$ or $q$ is negative. The differentials are, of course,
\begin{equation*}
  \dd^{p,q}_{\widehat{L}}:\msf{K}^{p,q} \lra \msf{K}^{p+1,q},
  \quad
  \delta^{p,q}:\msf{K}^{p,q} \lra \msf{K}^{p,q+1}
\end{equation*}
defined in the obvious manner; they commute. In fact, for $\msf{f} \in \msf{K}^{p,q}$:
  \begin{equation*}
    \left( \dd^{p,q}_{\widehat{L}} \msf{f} \right)(\alpha_1, \ldots, \alpha_q) \dfn \dd_{\widehat{L}} \left( \msf{f}(\alpha_1, \ldots, \alpha_q) \right),
    \quad \alpha_1, \ldots, \alpha_q \in \per(\zeta);
  \end{equation*}
  hence,
  {\scriptsize
    \begin{align*}
      \left[ \delta^{p+1,q} \left( \dd^{p,q}_{\widehat{L}} \msf{f} \right) \right] (\alpha_{1}, \ldots, \alpha_{q + 1})
      &= \alpha_1 \cdot \left( \dd^{p,q}_{\widehat{L}} \msf{f} \right) (\alpha_2,\ldots,\alpha_{q+1}) + (-1)^{q+1} \left( \dd^{p,q}_{\widehat{L}} \msf{f} \right) (\alpha_1, \ldots, \alpha_q) \\
      &+ \sum_{k=1}^{q}(-1)^k \left( \dd^{p,q}_{\widehat{L}} \msf{f} \right) (\alpha_1, \ldots, \alpha_{k-1}, \alpha_k + \alpha_{k+1}, \alpha_{k+2}, \ldots, \alpha_{q+1}) \\
      &= T_{\alpha_1}^* \left[ \dd_{\widehat{L}} \left( \msf{f} (\alpha_2,\ldots,\alpha_{q+1}) \right) \right] + (-1)^{q+1} \dd_{\widehat{L}} \left( \msf{f} (\alpha_1, \ldots, \alpha_q) \right) \\
      &+ \sum_{k=1}^{q}(-1)^k  \dd_{\widehat{L}} \left( \msf{f} (\alpha_1, \ldots, \alpha_{k-1}, \alpha_k + \alpha_{k+1}, \alpha_{k+2}, \ldots, \alpha_{q+1}) \right) \\
      &=  \dd_{\widehat{L}} \left[ T_{\alpha_1}^* \left( \msf{f} (\alpha_2,\ldots,\alpha_{q+1}) \right) \right] + (-1)^{q+1} \dd_{\widehat{L}} \left( \msf{f} (\alpha_1, \ldots, \alpha_q) \right) \\
      &+ \sum_{k=1}^{q}(-1)^k  \dd_{\widehat{L}} \left( \msf{f} (\alpha_1, \ldots, \alpha_{k-1}, \alpha_k + \alpha_{k+1}, \alpha_{k+2}, \ldots, \alpha_{q+1}) \right) \\
      &=  \dd_{\widehat{L}} \Big\{ \alpha_1 \cdot \msf{f} (\alpha_2,\ldots,\alpha_{q+1}) + (-1)^{q+1} \msf{f} (\alpha_1, \ldots, \alpha_q)  \\
      &+ \sum_{k=1}^{q}(-1)^k  \msf{f} (\alpha_1, \ldots, \alpha_{k-1}, \alpha_k + \alpha_{k+1}, \alpha_{k+2}, \ldots, \alpha_{q+1}) \Big\} \\
      &= \dd_{\widehat{L}} \left\{ \left( \delta^{p,q} \msf{f} \right) (\alpha_1, \ldots, \alpha_{q + 1}) \right\} \\
      &= \left[ \dd^{p,q + 1}_{\widehat{L}} \left( \delta^{p,q} \msf{f} \right) \right] (\alpha_1, \ldots, \alpha_{q + 1}).
    \end{align*}
    }

When it is clear what spaces are being acted on, we shall often omit the indices $(p,q)$ to alleviate notation.
\begin{Rem}
  \label{rem:invariantfunctions}
  Notice that $\msf{K}^{p,q}$ has extra structure, that of a $\cinfty(\R \times \widehat{L})$-module; the differentials are not linear w.r.t.~this structure, though. Yet, $\delta^{p,q}$ is clearly linear w.r.t.~the subring of \emph{$\per(\zeta)$-invariant functions on $\R \times \widehat{L}$}, which is
  \begin{equation*}
    \cinfty(\R \times \widehat{L})^{\per(\zeta)} = \ker \delta^{0,0} = g^* \cinfty(\Omega).
  \end{equation*}
\end{Rem}
\begin{Lem}
  \label{lem:delta_acyclic}
  For every $p\geq 0$, the complex
  \begin{equation*}
    \begin{tikzcd}
      \msf{K}^{p, 0} \arrow[r, "\delta^{p, 0}"] &
      \msf{K}^{p, 1} \arrow[r, "\delta^{p, 1}"] &
      \msf{K}^{p, 2} \arrow[r, "\delta^{p, 2}"] &
      \cdots
    \end{tikzcd}
  \end{equation*}
  is exact. Explicitly, for every $q\geq 0$,
  \begin{equation*}
    \ker \delta^{p,q+1}=\ran \delta^{p,q};
  \end{equation*}
  or, equivalently, $\msf{H}^{q} \left(\per(\zeta) ; \Lambda^p_{\widehat{\VV}} (\R \times \widehat{L}) \right) = 0$ for all $q\geq 1$.
\end{Lem}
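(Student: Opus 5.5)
Since $\per(\zeta)$ is generated by the $\Q$-linearly independent numbers $a_0 = 1, a_1, \ldots, a_r$, it is a free abelian group ($\simeq \Z^{r+1}$). Proposition~\ref{prop:groupcohom_general} then already gives $\msf{H}^{q}(\per(\zeta); \Lambda^p_{\widehat{\VV}}(\R \times \widehat{L})) = 0$ for $q > 1$. So the only remaining case is $q = 1$: every $1$-cocycle must be a coboundary. Here I would not use the algebraic structure of the group. Instead I would use the geometry of the covering $g$: its deck group acts freely and properly with compact quotient $\Omega$, and $\Lambda^p_{\widehat{\VV}}(\R \times \widehat{L})$ is a $\cinfty(\R \times \widehat{L})$-module. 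This lets a Shapiro-type averaging argument go through.

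\emph{Step 1 (invariant partition of unity).} Pick a compact set $K \subset \R \times \widehat{L}$ with $g(K) = \Omega$; this is possible since $\Omega$ is compact and $g$ is a covering. Take $\rho \in \cinfty_c(\R \times \widehat{L})$ with $\rho \geq 0$ and $\rho > 0$ on $K$. The action is proper, so the sum $S \dfn \sum_{\alpha \in \per(\zeta)} T_\alpha^* \rho$ is locally finite, hence smooth. It is $\per(\zeta)$-invariant and strictly positive everywhere. Set $\chi \dfn \rho / S$ and $\chi_\beta \dfn T_\beta^* \chi$. Then $\sum_\beta \chi_\beta = 1$, the family $\{\chi_\beta\}$ is locally finite, and $T_\alpha^* \chi_\beta = \chi_{\alpha+\beta}$ by the group property $T_\alpha \circ T_\beta = T_{\alpha+\beta}$ of Lemma~\ref{deck_lemma}. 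I will check carefully that $\alpha \mapsto T^*_\alpha$ is the module action used in $\delta$; this is automatic up to a sign convention, since the group is abelian.

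\emph{Step 2 (explicit primitive).} Let $\msf{f} \in \msf{K}^{p,1}$ with $\delta \msf{f} = 0$, that is, $\msf{f}(\alpha + \beta) = T_\alpha^* \msf{f}(\beta) + \msf{f}(\alpha)$. Define
\begin{equation*}
  \pmb{u} \dfn - \sum_{\beta \in \per(\zeta)} \chi_\beta \, \msf{f}(\beta) \in \Lambda^p_{\widehat{\VV}}(\R \times \widehat{L}).
\end{equation*}
This sum is locally finite, so $\pmb{u}$ is a well-defined smooth section. Using that $T_\alpha^*$ respects the module structure, $T^*_\alpha(\chi \pmb{v}) = (T^*_\alpha \chi) T^*_\alpha \pmb{v}$, together with the cocycle identity, I would compute
\begin{equation*}
  T_\alpha^* \pmb{u} = - \sum_\beta \chi_{\alpha+\beta} \left( \msf{f}(\alpha+\beta) - \msf{f}(\alpha) \right) = \pmb{u} + \msf{f}(\alpha).
\end{equation*}
Hence $\delta^{p,0} \pmb{u} = \msf{f}$, which proves exactness at $\msf{K}^{p,1}$.

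The same averaging gives a contracting homotopy $h(\msf{f})(\alpha_1, \ldots, \alpha_{q}) = \pm \sum_\beta \chi_\beta\, \msf{f}(\beta, \alpha_1, \ldots)$ in all degrees. So one could also avoid Proposition~\ref{prop:groupcohom_general} altogether, but citing it is shorter.

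The main point needing care is Step 1. One must verify that properness of the deck action makes the sums locally finite, and that the pullbacks $T_\alpha^*$ on the quotient $\Lambda^p_{\widehat{\VV}}$ are compatible with multiplication by functions. The latter holds because $\NN^p_{\widehat{\VV}}$ is a submodule preserved by each $T_\alpha^*$. The sign and ordering conventions of the action, left versus right, must also be matched with the formula for $\delta$.
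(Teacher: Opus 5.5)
Your proof is correct. For $q>1$ you argue exactly as the paper does, via Proposition~\ref{prop:groupcohom_general}. For the key case $q=1$ you take a different route.

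\textbf{The paper's route for $q=1$.} It argues sheaf-theoretically, in two steps.
\begin{enumerate}
\item It shows that $U\mapsto\msf{H}^1(\per(\zeta);\Lambda^p_{\widehat{\VV}}(g^{-1}(U)))$ is a sheaf on $\Omega$. The gluing uses the invariant cut-offs $\rho_j\circ g$, with respect to which $\delta$ is linear.
\item It shows that this sheaf vanishes on every distinguished open set $U$. There $\Lambda^p_{\widehat{\VV}}(g^{-1}(U))=\prod_\beta\Lambda^p_{\widehat{\VV}}(\til{U}_\beta)$ is a coinduced module, and one writes the primitive componentwise as $\pmb{u}_\alpha=-\pmb{f}_{-\alpha,\alpha}$.
\end{enumerate}
Vanishing of the global $\msf{H}^1$ then follows from vanishing of the stalks.

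\textbf{How your route relates to it.} Your equivariant partition of unity merges these two steps into one global formula. In fact you may take $\chi_\beta\dfn\sum_j\widehat{\rho}_{j,-\beta}$, in the notation of the proof of Proposition~\ref{prop:delta_dist}. Then $T^*_\alpha\chi_\beta=\chi_{\alpha+\beta}$ and $\sum_\beta\chi_\beta=1$. Your $\pmb{u}=-\sum_\beta\chi_\beta\,\msf{f}(\beta)$ is then literally the operator $S$ that the paper itself uses later, for currents.

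That choice of $\chi_\beta$ also sidesteps properness and the normalization $\rho/S$. If you keep your own construction, note that $S>0$ everywhere needs more than properness. It needs $\per(\zeta)$ to act transitively on the fibres of $g$, i.e.\ normality of the covering, which does hold here.

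\textbf{Trade-offs.} Your route is shorter. It produces an explicit continuous linear map $\msf{f}\mapsto\pmb{u}$ with $\delta^{p,0}\pmb{u}=\msf{f}$ on cocycles. This exhibits $\delta^{p,0}$ directly as a topological homomorphism onto $\ker\delta^{p,1}$, whose closedness is what Theorem~\ref{thm:abstract_eq} uses. It also unifies the smooth and distributional arguments. The paper's route is more conceptual, in the spirit of Godement and Grothendieck. It isolates local triviality of the covering as the only input.

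\textbf{A caveat on your aside.} With the inhomogeneous $\delta$ used here, the contracting homotopy in all degrees is
\[
(h\msf{f})(\alpha_1,\ldots,\alpha_q)=\sum_\beta\chi_\beta\,T^*_\beta\,\msf{f}(-\beta,\alpha_1,\ldots,\alpha_q).
\]
This reduces to your formula only in degree one, and only on cocycles. There $\msf{f}(0)=0$, and the cocycle identity gives $T^*_\beta\msf{f}(-\beta)=-\msf{f}(\beta)$. The version you wrote, $\pm\sum_\beta\chi_\beta\,\msf{f}(\beta,\alpha_1,\ldots)$, is not a homotopy in higher degrees. Since you cite Proposition~\ref{prop:groupcohom_general} for $q>1$, this does not affect your proof.
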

\begin{Rem} This technique is based on the proof of \cite[Proposition 1]{Godement1951} and on \cite[Corollaire 3]{gro57}.
	
\end{Rem}
\begin{proof}
  Fix $p\geq 0$. By Proposition~\ref{prop:groupcohom_general}, the only thing left to prove is that  that $\msf{H}^{1} \left(\per(\zeta) ; \Lambda^p_{\widehat{\VV}} (\R \times \widehat{L}) \right) = 0$. We claim that\footnote{The general statement is actually true for every $q \geq 0$, but we only need it for $q=1$.} the correspondence
  \begin{equation*}
    \Omega \supset U \longmapsto \msf{H}^{1} \left( \per(\zeta); \Lambda^p_{\widehat{\VV}} (g^{-1} (U)) \right)
  \end{equation*}
  defines a sheaf over $\Omega$. Indeed, notice first that given any open $U \sset \Omega$ the set $g^{-1}(U)$ is invariant by the action of $\per(\zeta)$ since each $T_\alpha$ is a deck transformation of $g$. It thus makes sense to regard $\Lambda^p_{\widehat{\VV}} (g^{-1} (U))$ as a $\per(\zeta)$-module as well, rerunning the relevant arguments in Section~\ref{sec:invariant-forms} (see Lemma~\ref{deck_lemma}). In particular, its group cohomology spaces are well-defined. Now, if $V\subset U \subset \Omega$ are open subsets of $\Omega$, then we have the restriction map of complexes
  \begin{equation*}
    \msf{C}^q \left( \per(\zeta); \Lambda^p_{\widehat{\VV}} (g^{-1} (U)) \right) \lra \msf{C}^q \left( \per(\zeta); \Lambda^p_{\widehat{\VV}} (g^{-1} (V)) \right),
    \quad q \geq 0,
  \end{equation*}
  given by (coordinate-wise) restriction of the differential forms. This map clearly commutes with $\delta$, so it induces a map
  \begin{equation*}
    \msf{H}^q \left( \per(\zeta); \Lambda^p_{\widehat{\VV}} (g^{-1} (U)) \right) \lra \msf{H}^q \left( \per(\zeta); \Lambda^p_{\widehat{\VV}} (g^{-1} (V)) \right)  \end{equation*}
  that satisfies the axioms of a presheaf for all $q\geq 0$. To check that such presheaf is complete when $q = 1$, we take $\{U_j\}$ an open covering of an open set $U\subset \Omega$ and fix $\{\rho_j\}$ a smooth partition of unity subordinated to it. If
  \begin{equation*}
    \text{$[\msf{f}] \in \msf{H}^1 \left( \per(\zeta); \Lambda^p_{\widehat{\VV}} (g^{-1} (U)) \right)$ is such that $[\msf{f}]|_{U_j}=0$ for every $j$},
  \end{equation*}
  then there exist $\msf{u}_j \in \msf{C}^{0} \left( \per(\zeta); \Lambda^p_{\widehat{\VV}} (g^{-1} (U_j)) \right)$ such that $\msf{f}|_{U_j} = \delta^{p,0}\msf{u}_j$ for every $j$. Hence (see Remark~\ref{rem:invariantfunctions}):
  \begin{equation*}
    \delta^{p,0} \left(\sum_{j}(\rho_j \circ g) \msf{u}_j \right) = \sum_{j} (\rho_j \circ g) \ \delta^{p,0} \msf{u}_j = \msf{f}
  \end{equation*}
  i.e.,~$[\msf{f}] = 0$. Similarly, let $[\msf{f}_j] \in \msf{H}^1 \left(\per(\zeta); \Lambda^p_{\widehat{\VV}} (g^{-1} (U_j)) \right)$ be such that $[\msf{f}_j]|_{U_j\cap U_k} = [\msf{f}_k]|_{U_j\cap U_k}$; i.e., there are $\msf{u}_{jk}\in \msf{C}^{0} \left( \per(\zeta);  \Lambda^p_{\widehat{\VV}} ( g^{-1}(U_j\cap U_k)) \right)$ such that 
  \begin{equation*}
    \msf{f}_j - \msf{f}_k = \delta^{p,0} \msf{u}_{jk}
    \quad \text{on $U_j \cap U_k$}.
  \end{equation*}
  Setting $\msf{f} \dfn \sum_j (\rho_j \circ g) \msf{f}_j\in \msf{C}^1 \left( \per(\zeta); \Lambda^p_{\widehat{\VV}} ( g^{-1}(U)) \right)$, we have $\delta^{p,1} \msf{f} = 0$ and 
  \begin{equation*}
    \msf{f}_j - \msf{f}|_{U_j}
    = \sum_{k ; U_j \cap U_k \neq \emptyset} (\rho_k \circ g) (\msf{f}_j - \msf{f}_k)
    = \delta^{p,0} \left( \sum_{k ; U_j \cap U_k \neq \emptyset} (\rho_k \circ g) \msf{u}_{jk} \right),
  \end{equation*}
  i.e., $[\msf{f}_j] = [\msf{f}|_{U_j}] = [\msf{f}]|_{U_j}$. This proves completeness.
  
  We claim that the stalks of this sheaf are trivial. Indeed, let $x_0\in \Omega$ and take $U \subset \Omega$ a distinguished neighborhood of $x_0$: this means that $g^{-1}(U) = \bigsqcup_{\alpha \in \per(\zeta)} \til{U}_\alpha$, where $g|_{\til{U}_\alpha}: \til{U}_\alpha \to U$ are diffeomorphisms and $T_{\alpha}(\til{U}_\beta) = \til{U}_{\alpha+\beta}$ for all $\alpha,\beta \in \per(\zeta)$. Let $\msf{f} \in \msf{C}^{1} \left( \per(\zeta) ; \Lambda^p_{\widehat{\VV}} (g^{-1}(U)) \right)$ be such that $\delta^{p,1} \msf{f} = 0$. Explicitly, this equation is
  \begin{equation}
    \label{eq:delta_rel}
    T^*_\alpha (\msf{f}(\beta)) - \msf{f}(\alpha + \beta) + \msf{f} (\alpha) = 0,
    \quad \forall \alpha,\beta \in \per(\zeta).
  \end{equation}
  
  Given $\alpha \in \per(\zeta)$, we have $\msf{f}(\alpha) \in \Lambda_{\widehat{\VV}}^p(g^{-1}(U)) = \prod_{\beta \in \per(\zeta)} \Lambda_{\widehat{\VV}}^{p}(\til{U}_\beta)$. Let $\pi_\beta: \Lambda_{\widehat{\VV}}^p(g^{-1}(U)) \to \Lambda_{\widehat{\VV}}^{p}(\til{U}_\beta)$ denote the projection induced by the restriction map and write $\pmb{f}_{\alpha,\beta} \dfn \pi_\beta (\msf{f}(\alpha))$. Note that, given $\alpha,\beta,\gamma \in \per(\zeta)$,
  \begin{equation*}
    T^*_\alpha (\pmb{f}_{\beta,\gamma+\alpha})
    = T^*_\alpha \left( \pi_{\gamma + \alpha} (\msf{f}(\beta)) \right)
    = \pi_{\gamma}\left( T^*_\alpha (\msf{f}(\beta)) \right),
  \end{equation*}
  since $T^*_{\alpha}: \Lambda_{\widehat{\VV}}^p (\til{U}_{\kappa}) \to \Lambda_{\widehat{\VV}}^p(\til{U}_{\kappa-\alpha})$ for every $\kappa \in \per(\zeta)$. Therefore, applying $\pi_{\gamma}$ to~\eqref{eq:delta_rel} yields
  \begin{equation}
    \label{eq:group_rel_coord}
    T^*_{\alpha} (\pmb{f}_{\beta,\gamma+\alpha}) - \pmb{f}_{\alpha+\beta,\gamma} + \pmb{f}_{\alpha,\gamma}=0,
    \quad \alpha,\beta,\gamma \in \per(\zeta).
  \end{equation}
  We would like to find $\pmb{u} \in \Lambda_{\widehat{\VV}}^p(g^{-1}(U))$ satisfying
  \begin{equation*}
    \msf{f}(\alpha) = T^*_\alpha \pmb{u} - \pmb{u},
    \quad \alpha \in \per(\zeta);
  \end{equation*}
  in components $\pmb{u} = (\pmb{u}_\alpha)_{\alpha \in \per(\zeta)}$, this condition reduces to
  \begin{equation*}
    \pmb{f}_{\alpha,\beta} = T^*_\alpha (\pmb{u}_{\alpha+\beta}) - \pmb{u}_{\beta},
    \quad \alpha,\beta \in \per(\zeta).
  \end{equation*}
  With the choice $\pmb{u}_\alpha \dfn -\pmb{f}_{-\alpha,\alpha}$, one verifies that the previous equation holds (by replacing $\beta$ with $-(\alpha+\beta)$ and $\gamma$ with $\beta$ in~\eqref{eq:group_rel_coord}).
\end{proof}

\begin{Lem}
  For every $p,q \geq 0$, the map 
  \begin{equation*}
    \dd_{\widehat{L}}^{p,q}: \msf{K}^{p,q} \lra \msf{K}^{p+1,q}
  \end{equation*}
  has closed range.
\end{Lem}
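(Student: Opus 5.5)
The plan is to reduce the statement to the case $q = 0$ and then to Proposition~\ref{prop:partial_derham_solvable}. Since $\msf{K}^{p,q} = \prod_{\per(\zeta)^q} \Lambda^p_{\widehat{\VV}}(\R \times \widehat{L})$ carries the product topology, and $\dd^{p,q}_{\widehat{L}}$ acts coordinatewise as $\dd_{\widehat{L}}$, its range is the product $\prod_{\per(\zeta)^q} \ran \dd_{\widehat{L}}$. A product of closed subspaces is closed in the product topology. Hence it suffices to show that
\begin{equation*}
  \dd_{\widehat{L}}: \Lambda^p_{\widehat{\VV}}(\R \times \widehat{L}) \lra \Lambda^{p+1}_{\widehat{\VV}}(\R \times \widehat{L})
\end{equation*}
has closed range for every $p$.

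For that, I will identify $\Lambda^p_{\widehat{\VV}}(\R \times \widehat{L})$ with $\cinfty(\R \times \widehat{L}; \pr_2^*(\wedge^p T^*\widehat{L}))$, exactly as was done for $U \times L_1$ in Section~\ref{sec:rational}. The description of $\NN^p_{\widehat{\VV}}$ as multiples of $\dd t$ carries over verbatim, since $\widehat{\VV} = \pr_2^*(T\widehat{L})$. Under this identification the differential becomes the partial de Rham operator $\dd_{\widehat{L}}$ on the product $M \times N$ with $M = \R$ and $N = \widehat{L}$. Proposition~\ref{prop:partial_derham_solvable} then gives closedness of the range directly, because it was stated for arbitrary smooth manifolds $M, N$, with no compactness assumed.

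The one point to double check, and the expected obstacle, is that the proof of Proposition~\ref{prop:partial_derham_solvable} really only uses closedness of the range of $\dd$ on $F^\bullet(N)$ for a possibly noncompact $N$ such as the leaf $\widehat{L}$. This holds: by de Rham's theorem, the exact forms on any manifold are exactly the closed forms whose periods over all smooth singular cycles vanish. That is an intersection of kernels of continuous functionals on $\ker \dd$, hence a closed set. Fr\'echet nuclearity and the exactness of $\widehat{\otimes}$ are likewise insensitive to compactness, since $\widehat{L}$ is second countable. With this, the range of $\dd_{\widehat{L}}$ is closed, and the product argument above completes the proof.
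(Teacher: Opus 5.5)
Your proposal is correct and follows essentially the same route as the paper. The paper also argues coordinatewise: it takes a sequence with $\dd^{p,q}_{\widehat{L}}\msf{u}_\nu \to \msf{f}$, applies Proposition~\ref{prop:partial_derham_solvable} to each component $\msf{f}(\alpha_1,\ldots,\alpha_q)$ to obtain a preimage, and assembles these into $\msf{u}$. This is a sequential version of your observation that a product of closed ranges is closed, and your check that de Rham's theorem yields closed range of $\dd$ on the noncompact leaf $\widehat{L}$ is exactly what that proposition's proof relies on.
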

\begin{proof}
  Let $\{\msf{u}_\nu\}_{\nu \in \N} \sset \msf{K}^{p,q}$ be such that $\dd^{p,q}_{\widehat{L}} \msf{u}_\nu \to \msf{f}$ in $\msf{K}^{p+1,q}$, meaning that for every $(\alpha_1, \ldots, \alpha_q) \in \per(\zeta)^{q}$ we have, in the topology of $\Lambda_{\widehat{\VV}}^{p+1}(\R \times \widehat{L})$:
  \begin{equation*}
    \dd_{\widehat{L}} (\msf{u}_\nu (\alpha_1, \ldots, \alpha_q))
    = \left( \dd_{\widehat{L}}^{p,q} \msf{u}_\nu \right) (\alpha_1, \ldots, \alpha_q)
    \lra \msf{f} (\alpha_1, \ldots, \alpha_q).
  \end{equation*}
  By Proposition~\ref{prop:partial_derham_solvable}, there are $\pmb{u}_{\alpha_1, \ldots, \alpha_q} \in \Lambda_{\widehat{\VV}}^{p}(\R \times \widehat{L})$ such that $\dd_{\widehat{L}} \pmb{u}_{\alpha_1, \ldots, \alpha_q} = \msf{f} (\alpha_1, \ldots, \alpha_q)$. Defining $\msf{u} \in \msf{K}^{p,q}$ by $\msf{u}(\alpha_1, \ldots, \alpha_q) \dfn \pmb{u}_{\alpha_1, \ldots, \alpha_q}$ yields $ \dd_{\widehat{L}}^{p,q} \msf{u} = \msf{f}$ as desired.
\end{proof}

We shall also need the following elementary functional-analytic fact~\cite[p.~13]{kothe_tvs2}:
\begin{Lem}
  \label{lem:seq_invert}
  Let $T:E \to F$ be a continuous linear map between Fr{\'e}chet spaces. Then, $T$ has closed range if and only it is \emph{sequentially invertible}, that is, if for every sequence $\{x_\nu \}_{\nu \in \N} \sset E$ such that $Tx_n \to 0$, there is a sequence $\{y_\nu\}_{\nu \in \N} \sset E$ such that $Tx_\nu=Ty_\nu$ for each $\nu \in \N$ and $y_\nu \to 0$.
\end{Lem}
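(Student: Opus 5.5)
We prove the two implications separately. The forward one uses the Open Mapping Theorem; the converse extracts a uniform, neighbourhood-wise version of sequential invertibility and then sums a rapidly convergent series, using completeness of $E$. Throughout we fix translation-invariant metrics $d_E$, $d_F$ defining the Fr\'echet topologies of $E$ and $F$.

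\emph{Closed range $\Rightarrow$ sequentially invertible.} If $\ran T$ is closed in $F$, it is a Fr\'echet space, and $E/\ker T$ is Fr\'echet since $\ker T$ is closed.
\begin{itemize}
\item The induced map $\widetilde{T}: E/\ker T \to \ran T$ is a continuous linear bijection, so by the Open Mapping Theorem it is a topological isomorphism.
\item If $Tx_\nu \to 0$, then $[x_\nu] \to 0$ in $E/\ker T$. The quotient topology is given by the metric $\hat d([x]) = \inf_{k \in \ker T} d_E(x+k,0)$, so $\hat d([x_\nu]) \to 0$.
\item Choose $k_\nu \in \ker T$ with $d_E(x_\nu + k_\nu, 0) < \hat d([x_\nu]) + 1/\nu$, and set $y_\nu \dfn x_\nu + k_\nu$. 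Then $Ty_\nu = Tx_\nu$ and $y_\nu \to 0$, as required.
\end{itemize}

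\emph{Sequentially invertible $\Rightarrow$ closed range.} The first step is a uniform version of the hypothesis:
\begin{equation*}
\text{for every neighbourhood $U$ of $0$ in $E$ there is a neighbourhood $V$ of $0$ in $F$ with } V \cap \ran T \sset T(U).
\end{equation*}
Suppose this fails for some $U$, and let $\{V_m\}$ be a countable decreasing base at $0$ in $F$. Then for each $m$ there is $x_m$ with $Tx_m \in V_m$ but $Tx_m \notin T(U)$. We have $Tx_m \to 0$, so sequential invertibility gives $y_m \to 0$ with $Ty_m = Tx_m$. For large $m$ we get $y_m \in U$, hence $Tx_m \in T(U)$, a contradiction.

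Now let $Tx_\nu \to f$ in $F$. Take the balls $U_k \dfn \{d_E < 2^{-k}\}$, and let $V_k$ be the corresponding neighbourhoods from the claim.
\begin{itemize}
\item Since $\{Tx_\nu\}$ is Cauchy, we may pass to a subsequence with $T(x_{k+1}-x_k) \in V_k$ for every $k$.
\item The claim gives $w_k \in U_k$ with $Tw_k = T(x_{k+1}-x_k)$.
\item Because $d_E(w_k,0) < 2^{-k}$ and $d_E$ is translation invariant, the partial sums of $\sum_k w_k$ are Cauchy, so the series converges in the complete space $E$.
\item Set $u \dfn x_1 + \sum_k w_k$. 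By continuity of $T$ and telescoping, $Tu = \lim_K T\bigl(x_1 + \sum_{k<K} w_k\bigr) = \lim_K Tx_K = f$.
\end{itemize}
Thus $f \in \ran T$, so $\ran T$ is closed.

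The main obstacle is the converse. The hypothesis only gives convergence to $0$ with no rate, so it cannot be applied directly to the differences $x_{k+1}-x_k$. The uniform claim, obtained by the contradiction argument over countable bases, supplies the quantitative control needed to make $\sum_k w_k$ converge.
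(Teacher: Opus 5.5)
Your proof is correct. The paper gives no argument of its own for this lemma; it cites K\"othe, and Remark~\ref{rem:seq_invert} points to the underlying route. That route has two steps: closed range is equivalent to $T$ being a homomorphism, via the Open Mapping Theorem, and being a homomorphism is equivalent to sequential invertibility.

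Your argument is a self-contained version of exactly this route:
\begin{itemize}
\item The forward direction is the Open Mapping step.
\item Your uniform claim, $V \cap \ran T \sset T(U)$, is precisely the statement that $T\colon E \to \ran T$ is open, i.e.\ a homomorphism.
\item Summing $\sum_k w_k$ is the standard proof that the range of a homomorphism defined on a complete metrizable space is complete, hence closed.
\end{itemize}

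Your write-up also makes the hypotheses transparent. The converse uses only completeness of $E$ and metrizability of $F$. Completeness of $F$ enters only in the forward direction, to make $\ran T$ a Fr\'echet space so that the Open Mapping Theorem applies.
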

\begin{Rem}
  \label{rem:seq_invert}
  If $E$ and $F$ are just arbitrary locally convex spaces, there is an analogous result (for nets) when $T$ is a \emph{homomorphism} (i.e.,~$T$ has closed range and the operator induced on the quotient $T: E/ \ker(T) \to \ran(T)$ is an isomorphism). In fact, it is proved in~\cite[p.~18]{kothe_tvs2} that, for Fr{\'e}chet spaces, $T:E \to F$ has closed range if and only if it is an homomorphism.  
\end{Rem}
Define
\begin{equation*}
  E^{p} \dfn \ker \delta^{p,0} = \Lambda_{\widehat{\VV}}^{p}(\R \times \widehat{L})^{\per(\zeta)} \subset \msf{K}^{p,0}
\end{equation*}
and
\begin{equation*}
  Z^p \dfn \ker \dd^{p,0}_{\widehat{L}} = \left\{ \pmb{f} \in \Lambda_{\widehat{\VV}}^p(\R \times \widehat{L}) \st \dd_{\widehat{L}} \pmb{f} = 0 \right\}.
\end{equation*}
Now we can state and prove the main result of this section.
\begin{Thm}
  \label{thm:abstract_eq}
  Let $p \geq 0$. The map
  \begin{equation}
    \label{eq:dprime-complex}
    \dd_{\widehat{L}}: E^p \lra E^{p+1}
  \end{equation}
  has closed range if and only if the map
  \begin{equation}
    \label{eq:delta-complex}
    \delta: Z^p = \msf{C}^{0} \left( \per(\zeta) ; Z^{p} \right) \lra \msf{C}^{1} \left( \per(\zeta) ; Z^{p} \right)
  \end{equation}
  has closed range.
\end{Thm}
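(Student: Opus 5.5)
We argue by diagram chasing in the double complex $\msf{K}^{p,q}$, using two facts: its columns are acyclic in positive degree (Lemma~\ref{lem:delta_acyclic}), and each horizontal map $\dd_{\widehat{L}}^{p,q}$ has closed range. Closed range will be tested through sequential invertibility (Lemma~\ref{lem:seq_invert}). This is legitimate because every space in sight is Fréchet: $E^p$, $E^{p+1}$ and $Z^p$ are closed subspaces of Fréchet spaces, and the $\msf{C}^q$ are countable products, since $\per(\zeta)$ is finitely generated. By the Homomorphism Theorem, each of the following maps is open onto its range:
\begin{itemize}
\item $\dd^{p,q}_{\widehat{L}}$;
\item $\delta^{p,q}$ for $q\ge 1$ (their ranges are the kernels $\ker\delta^{p,q+1}$, hence closed);
\item $\delta^{p,0}$ onto $\ker\delta^{p,1}$.
\end{itemize}
So convergent sequences in their ranges lift to convergent sequences of preimages.

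\emph{($\Leftarrow$) Assume \eqref{eq:delta-complex} has closed range.} Let $\pmb{u}_\nu\in E^p$ with $\dd_{\widehat{L}}\pmb{u}_\nu\to 0$ in $E^{p+1}$.
\begin{itemize}
\item Since $\dd^{p,0}_{\widehat{L}}$ is a homomorphism, pick $\pmb{w}_\nu\in\msf{K}^{p,0}$ with $\dd_{\widehat{L}}\pmb{w}_\nu=\dd_{\widehat{L}}\pmb{u}_\nu$ and $\pmb{w}_\nu\to0$. Then $\pmb{z}_\nu\dfn\pmb{u}_\nu-\pmb{w}_\nu\in Z^p$.
\item Since $\pmb{u}_\nu$ is invariant, $\delta\pmb{z}_\nu=-\delta\pmb{w}_\nu\to 0$ in $\msf{C}^1(\per(\zeta);Z^p)$.
\item By the hypothesis, \eqref{eq:delta-complex} is a homomorphism, so there are $\pmb{y}_\nu\in Z^p$ with $\delta\pmb{y}_\nu=\delta\pmb{z}_\nu$ and $\pmb{y}_\nu\to0$.
\item Set $\pmb{v}_\nu\dfn\pmb{u}_\nu-\pmb{z}_\nu+\pmb{y}_\nu=\pmb{w}_\nu+\pmb{y}_\nu$. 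Then $\pmb{v}_\nu\to 0$ and $\delta\pmb{v}_\nu=\delta\pmb{u}_\nu=0$, so $\pmb{v}_\nu\in E^p$.
\item Finally $\dd_{\widehat{L}}\pmb{v}_\nu=\dd_{\widehat{L}}\pmb{u}_\nu$, because $\pmb{z}_\nu,\pmb{y}_\nu\in Z^p$.
\end{itemize}
Lemma~\ref{lem:seq_invert} then gives closed range for \eqref{eq:dprime-complex}.

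\emph{($\Rightarrow$) Assume \eqref{eq:dprime-complex} has closed range.} Let $\pmb{z}_\nu\in Z^p$ with $\delta\pmb{z}_\nu\to0$.
\begin{itemize}
\item The element $\delta\pmb{z}_\nu$ lies in $\ker\delta^{p,1}=\ran\delta^{p,0}$ (Lemma~\ref{lem:delta_acyclic}). By openness of $\delta^{p,0}$ onto this closed range, pick $\pmb{w}_\nu\in\msf{K}^{p,0}$ with $\delta\pmb{w}_\nu=\delta\pmb{z}_\nu$ and $\pmb{w}_\nu\to0$.
\item Then $\pmb{e}_\nu\dfn\pmb{z}_\nu-\pmb{w}_\nu\in E^p$, and $\dd_{\widehat{L}}\pmb{e}_\nu=-\dd_{\widehat{L}}\pmb{w}_\nu\to0$ in $E^{p+1}$.
\item By the hypothesis, pick $\pmb{e}'_\nu\in E^p$ with $\dd_{\widehat{L}}\pmb{e}'_\nu=\dd_{\widehat{L}}\pmb{e}_\nu$ and $\pmb{e}'_\nu\to0$.
\item Set $\pmb{y}_\nu\dfn\pmb{w}_\nu+\pmb{e}'_\nu$. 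Then $\pmb{y}_\nu\to0$ and $\dd_{\widehat{L}}\pmb{y}_\nu=\dd_{\widehat{L}}\pmb{w}_\nu+\dd_{\widehat{L}}\pmb{e}_\nu=0$, so $\pmb{y}_\nu\in Z^p$.
\item Also $\delta\pmb{y}_\nu=\delta\pmb{w}_\nu=\delta\pmb{z}_\nu$, since $\pmb{e}'_\nu$ is invariant.
\end{itemize}
Lemma~\ref{lem:seq_invert} then gives closed range for \eqref{eq:delta-complex}.

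The main point to verify carefully is the openness step in ($\Rightarrow$). It relies on $\ran\delta^{p,0}=\ker\delta^{p,1}$ being closed, which is exactly the acyclicity in Lemma~\ref{lem:delta_acyclic}, and on the product topology on $\msf{C}^1$ being Fréchet. One must also note that the convergence $\delta\pmb{z}_\nu\to 0$ in $\msf{C}^1(\per(\zeta);Z^p)$ is the same as convergence in $\msf{K}^{p,1}$, since $Z^p$ carries the subspace topology.
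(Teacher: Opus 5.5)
Your proof is correct and follows the paper's argument: the same diagram chase in $\msf{K}^{p,q}$, using the closed range of $\dd^{p,0}_{\widehat{L}}$, the acyclicity of Lemma~\ref{lem:delta_acyclic} (only in the direction that needs it), and Lemma~\ref{lem:seq_invert}. The only difference is cosmetic. The paper shows that a limit $\pmb{f}$ lies in the range by first lifting it through $\dd_{\widehat{L}}$ (resp.\ $\delta^{p,0}$) on the full space and then correcting. You instead verify sequential invertibility of the target map directly, with sequences tending to zero, which avoids needing a global preimage of the limit.
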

\begin{proof}
  Assume that~\eqref{eq:delta-complex} has closed range. Let $\{ \pmb{u}_\nu \}_{\nu \in \N} \sset E^p$ be such that $\dd_{\widehat{L}} \pmb{u}_\nu \to \pmb{f}$ in $E^{p + 1}$. By Proposition~\ref{prop:partial_derham_solvable}, we can find an $\pmb{u} \in \Lambda_{\widehat{\VV}}^{p}(\R \times \widehat{L})$ such that $\dd_{\widehat{L}} \pmb{u} = \pmb{f}$: our goal is to find $\pmb{u}^{\bb} \in E^p$ such that $\dd_{\widehat{L}}\pmb{u}^\bb = \dd_{\widehat{L}}\pmb{u}$.
  
  Since $\dd_{\widehat{L}}(\pmb{u} - \pmb{u}_\nu) \to 0$, and again by Proposition~\ref{prop:partial_derham_solvable}, we apply Lemma~\ref{lem:seq_invert} to find $\{ \pmb{v}_\nu \}_{\nu \in \N} \sset \Lambda_{\widehat{\VV}}^p(\R \times \widehat{L})$ such that $\pmb{v}_\nu \to 0$ and $\dd_{\widehat{L}} \pmb{v}_\nu = \dd_{\widehat{L}}(\pmb{u} - \pmb{u}_\nu)$, that is, $\pmb{w}_\nu \dfn \pmb{u} - \pmb{u}_\nu - \pmb{v}_\nu \in Z^p$ for every $\nu$. By continuity, it follows that
  \begin{equation*}
    \delta^{p,0} \pmb{w}_\nu = \delta^{p,0} \pmb{u} - \delta^{p,0} \pmb{v}_\nu \lra \delta^{p,0} \pmb{u}
    \quad \text{in $\msf{C}^{1} \left( \per(\zeta) ; Z^{p} \right)$}.
  \end{equation*}
  Since the range of~\eqref{eq:delta-complex} is closed, there is $\pmb{v} \in Z^p$ such that $\delta^{p,0} \pmb{v} = \delta^{p,0}\pmb{u}$: setting $\pmb{u}^ \bb \dfn \pmb{u} - \pmb{v}$, we obtain the result.
  
  Now assume that~\eqref{eq:dprime-complex} has closed range, and let $\{ \pmb{u}_\nu \}_{\nu \in \N} \sset Z^p$ be such that $\delta^{p,0} \pmb{u}_\nu \to \msf{f}$ in $\msf{C}^{1} \left( \per(\zeta) ; Z^{p} \right) \sset \msf{K}^{p,1}$. By Lemma~\ref{lem:delta_acyclic}, there is $ \pmb{u} \in \Lambda_{\widehat{\VV}}^p(\R \times \widehat{L})$ such that $\delta^{p,0} \pmb{u} = \msf{f}$: our goal is to find $\pmb{u}^\bb \in Z^{p}$ such that $\delta^{p,0} \pmb{u}^\bb = \delta^{p,0} \pmb{u}$.
  
  It also follows from Lemma~\ref{lem:delta_acyclic} that the range of $\delta^{p,0}: \msf{K}^{p,0} \to \msf{K}^{p,1}$ is closed, hence using that $\delta^{p,0} (\pmb{u} - \pmb{u}_\nu) \to 0$ we can apply Lemma~\ref{lem:seq_invert} to find $\{\pmb{v}_\nu \}_{\nu \in \N} \sset \Lambda_{\widehat{\VV}}^{p}(\R \times \widehat{L})$ such that $\pmb{v}_\nu \to 0$ and $\delta^{p,0} \pmb{v}_\nu = \delta^{p,0} (\pmb{u} - \pmb{u}_\nu)$, that is, $\pmb{w}_\nu \dfn \pmb{u} - \pmb{u}_\nu - \pmb{v}_\nu \in E^p$ for every $\nu$. By continuity, it follows that
  \begin{equation*}
    \dd_{\widehat{L}} \pmb{w}_\nu =  \dd_{\widehat{L}} \pmb{u} - \dd_{\widehat{L}} \pmb{v}_\nu \lra \dd_{\widehat{L}} \pmb{u}
    \quad \text{in $E^{p + 1}$}.
  \end{equation*}
  Since the range of~\eqref{eq:dprime-complex} is closed, there is $\pmb{v} \in E^p$ such that $\dd_{\widehat{L}} \pmb{v} = \dd_{\widehat{L}} \pmb{u}$: setting $\pmb{u}^ \bb \dfn \pmb{u} - \pmb{v}$, we obtain the result.
\end{proof}

\subsection{Proof of Theorem~\ref{thm:first_step_solv}: reduction}

Let us specialize to degree $p=0$. Observe that, since the leaf $\widehat{L}$ is connected, it follows that 
\begin{equation*}
  Z^{0} = \left\{f\in \cinfty(\R \times \widehat{L}) \st \dd_{\widehat{L}}f = 0 \right\} = \cinfty(\R).
\end{equation*}
The action of $\per(\zeta)$ on this space is simply
\begin{equation*}
  T_{\alpha}^* f = f(\cdot + \alpha),
  \quad \alpha \in \per(\zeta), \ f \in \cinfty(\R).
\end{equation*}
Combining Proposition~\ref{prop:pullback_map} and Theorem~\ref{thm:abstract_eq}, we conclude the following:
\begin{Cor}
  \label{cor:abstract_eq}
  The following are equivalent:
  \begin{enumerate}
  \item $\dd':\cinfty(\Omega)\to \Lambda^1_{\VV}(\Omega)$ is globally solvable.
  \item $\dd_{\widehat{L}}: \cinfty(\R \times \widehat{L})^{\per(\zeta)} \to \Lambda_{\widehat{\VV}}^1(\R \times \widehat{L})^{\per(\zeta)}$ has closed range.
  \item The mapping
    \begin{equation*}
      \TR{\delta}{f}{\cinfty(\R)}{\left( T_\alpha^* f - f \right)_{\alpha \in \per(\zeta)}}{\prod_{\alpha \in \per(\zeta)}\cinfty(\R)}
    \end{equation*}
    has closed range.
  \end{enumerate}
\end{Cor}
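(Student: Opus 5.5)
The plan is to chain Proposition~\ref{prop:pullback_map} with Theorem~\ref{thm:abstract_eq} in degree $p=0$, and then identify the relevant spaces concretely.

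\emph{Step one: (1)$\Leftrightarrow$(2).} By Proposition~\ref{prop:pullback_map}, $g^*$ is a topological isomorphism of Fr\'echet spaces from $\cinfty(\Omega)=\Lambda^0_\VV(\Omega)$ onto $E^0=\cinfty(\R\times\widehat{L})^{\per(\zeta)}$. It is likewise a topological isomorphism from $\Lambda^1_\VV(\Omega)$ onto $E^1$, and it intertwines $\dd'_0$ with $\dd_{\widehat{L}}$. Conjugation by topological isomorphisms preserves closedness of the range. Hence $\dd'_0$ has closed range, which is the definition of global solvability in degree $1$, if and only if $\dd_{\widehat{L}}:E^0\to E^1$ has closed range.

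\emph{Step two: (2)$\Leftrightarrow$(3).} Theorem~\ref{thm:abstract_eq} with $p=0$ says that (2) holds if and only if $\delta:Z^0\to\msf{C}^1(\per(\zeta);Z^0)$ has closed range. It remains to identify this map with the one in item (3), topologically and not just algebraically.

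\emph{Step three: identifying $Z^0$.} A smooth function $f$ on $\R\times\widehat{L}$ satisfies $\dd_{\widehat{L}}f=0$ if and only if it is locally constant in the $\widehat{L}$-variable. Since $\widehat{L}$ is connected, such an $f$ has the form $f(t,p)=h(t)$. The map $h\mapsto h\circ\pr_1$ is therefore a linear bijection $\cinfty(\R)\to Z^0$, and it is continuous. Its inverse is restriction to $\R\times\{p_0\}$ for a fixed $p_0\in\widehat{L}$, which is also continuous, so the bijection is a topological isomorphism of Fr\'echet spaces. Consequently $\msf{C}^1(\per(\zeta);Z^0)=\prod_{\alpha}Z^0$ is identified, with its product topology, with $\prod_{\alpha\in\per(\zeta)}\cinfty(\R)$.

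\emph{Step four: identifying the action.} By Lemma~\ref{deck_lemma}, $T_\alpha$ acts as $t\mapsto t+\alpha$ in the first coordinate. Hence $T_\alpha^*(h\circ\pr_1)=h(\cdot+\alpha)\circ\pr_1$, and $\delta^{0,0}$ on $Z^0$ becomes exactly the map $f\mapsto(f(\cdot+\alpha)-f)_{\alpha\in\per(\zeta)}$ of item (3).

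Nothing here is a real obstacle. The only point requiring care is that every identification is a topological isomorphism, since closedness of the range is not preserved by merely algebraic isomorphisms; each case was checked above.
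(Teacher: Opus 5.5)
Your proposal is correct and follows the paper's own route: the paper obtains the corollary by combining Proposition~\ref{prop:pullback_map} (giving (1)$\Leftrightarrow$(2)) with Theorem~\ref{thm:abstract_eq} at $p=0$. It first observes that $Z^0=\cinfty(\R)$ by connectedness of $\widehat{L}$ and that $T_\alpha^*$ acts on $Z^0$ by translation; your explicit check that $h\mapsto h\circ\pr_1$ is a \emph{topological} isomorphism onto $Z^0$, with inverse given by restriction to $\R\times\{p_0\}$, supplies a detail the paper leaves implicit.
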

We can reduce to verifying the latter only for the generators of $\per(\zeta)$:
\begin{Cor}
  \label{cor:reduction_to_generators}
  $\dd':\cinfty(\Omega) \to \Lambda^1_{\VV}(\Omega)$ is globally solvable if and only if
  \begin{equation*}
    \TR{D}{f}{\cinfty(\R)}{\left( T_{a_j}^* f - f \right)_{j = 0, \ldots, r}}{\prod_{j = 0}^r \cinfty(\R)}
  \end{equation*}
  has closed range.
\end{Cor}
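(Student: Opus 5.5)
By Corollary~\ref{cor:abstract_eq}, it suffices to show that the full map $\delta: \cinfty(\R) \to \prod_{\alpha \in \per(\zeta)} \cinfty(\R)$ has closed range if and only if $D$ does. The plan is to relate the two maps through a continuous linear map and its continuous left inverse on the relevant ranges. We then use Lemma~\ref{lem:seq_invert} to move closedness back and forth.

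Let $\pi: \prod_{\alpha} \cinfty(\R) \to \prod_{j=0}^r \cinfty(\R)$ be the projection onto the coordinates $\alpha = a_0, \ldots, a_r$. This map is continuous, and $D = \pi \circ \delta$. Conversely, every $\alpha \in \per(\zeta)$ can be written uniquely as $\alpha = \sum_j m_j a_j$ with $m_j \in \Z$, since the $a_j$ are $\Q$-linearly independent. Then $T_\alpha^* f - f$ is obtained from the components $h_j \dfn T_{a_j}^* f - f$ by a universal formula: telescoping gives
\begin{equation*}
  T^*_{a+b} f - f = T^*_a (T^*_b f - f) + (T^*_a f - f),
  \qquad
  T^*_{-a} f - f = -T^*_{-a}(T^*_a f - f).
\end{equation*}
Iterating these identities, we obtain for each $\alpha$ a finite combination $\Theta_\alpha(h_0,\ldots,h_r)$ of translates of the $h_j$. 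Each $\Theta_\alpha$ is a continuous linear map $\prod_j \cinfty(\R) \to \cinfty(\R)$, since translations are continuous on $\cinfty(\R)$. Fixing an ordering of the generators (for instance, first all $a_0$ steps, then all $a_1$ steps, and so on) makes $\Theta_\alpha$ well defined independently of $f$. Setting $\Theta \dfn (\Theta_\alpha)_\alpha$ gives a continuous map with $\Theta \circ D = \delta$ and $\pi \circ \Theta = \Id$ on the image of $D$.

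Now suppose $\ran D$ is closed. Take $\delta f_\nu \to 0$. Then $D f_\nu = \pi \delta f_\nu \to 0$, so Lemma~\ref{lem:seq_invert} yields $g_\nu \to 0$ with $D g_\nu = D f_\nu$. Applying $\Theta$ gives $\delta g_\nu = \Theta D g_\nu = \Theta D f_\nu = \delta f_\nu$. Hence $\delta$ is sequentially invertible, and so its range is closed by Lemma~\ref{lem:seq_invert}.

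For the converse, suppose $\ran \delta$ is closed and take $D f_\nu \to 0$. Then $\delta f_\nu = \Theta D f_\nu \to 0$ by continuity of $\Theta$. So there are $g_\nu \to 0$ with $\delta g_\nu = \delta f_\nu$, and projecting gives $D g_\nu = D f_\nu$. Both directions use only Fréchet spaces: countable products of $\cinfty(\R)$ are Fréchet because $\per(\zeta)$ is countable.

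The main point needing care is the construction of $\Theta$. One must check that the telescoping formula yields a well-defined continuous map on all of $\prod_j \cinfty(\R)$. It is enough that it is well defined as a formula, which the fixed-ordering choice guarantees, and that it agrees with $\delta$ on $\ran D$, which holds because the group is abelian. No cocycle condition is needed, because we only ever apply $\Theta$ to elements of the form $D f$.
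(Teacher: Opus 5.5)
Your argument is correct and is essentially the paper's proof. Your $\Theta_\alpha$ is exactly the identity \eqref{eq:recurr_form}, which writes $T_\alpha^* f - f$ as a fixed signed sum of translates of the $T_{a_j}^* f - f$, and closedness is passed between $\delta$ and $D = \pi \circ \delta$ using that identity and the projection; the only cosmetic differences are that you build the formula from the basis expansion of $\alpha$ with a fixed ordering rather than by induction on the length $|\alpha|$, and that you argue via sequential invertibility (Lemma~\ref{lem:seq_invert}) rather than by taking limits of convergent sequences $Df_\nu$, $\delta f_\nu$ directly.
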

\begin{proof}
  From Corollary~\ref{cor:abstract_eq}, we have to show that $\delta$ has closed range if and only if $D$ has closed range. First, we prove the following claim: for each $\alpha \in \per(\zeta)$ there are $N \in \Z_+$, $c_{jl} = \pm 1$, and $d_{jl} \in \per(\zeta)$, for $1 \leq l \leq N$, $0 \leq j \leq r$, such that the identity
  \begin{equation}
    \label{eq:recurr_form}
    \left( T_\alpha^* f \right)(x) - f(x) = \sum_{j=0}^r \sum_{l=1}^N c_{jl} \left( T_{a_j}^*f - f \right)(x+d_{jl})
  \end{equation}
  holds for every $f\in \cinfty(\R)$ and for every $x\in \R$. We prove it by induction on the \emph{length} of $\alpha$, that is\footnote{Recall that $a_0 = 1, a_1, \ldots, a_r$ is a basis of $\per(\zeta)$ as a $\Z$-module.}, 
  \begin{equation*}
    |\alpha| \dfn \sum_{j=0}^r |n_j|
    \quad \text{where} \quad
    \alpha = \sum_{j=0}^r n_j a_j.
  \end{equation*}
  If $|\alpha| =1$, then $\alpha = \pm a_{j_0}$, for some $j_0 \in \{0, \ldots, r \}$. If $\alpha = a_{j_0}$, the result follows trivially; otherwise,
  \begin{align*}
    \left( T_{-a_{j_0}}^* f \right)(x) - f(x)
    &= f\left(x - a_{j_0}\right) - f(x) \\
    &= - \left\{ f \left( [x - a_{j_0}] + a_{j_0} \right) - f\left(x  - a_{j_0} \right) \right\} \\
    &= - \left( T_{a_{j_0}}^* f - f \right) \left(x - a_{j_0} \right). 
  \end{align*}
  Next assume the claim holds for every element in $\per(\zeta)$ of length at most $k$ and take $\alpha = \sum_{j=0}^r n_ja_j$, with $\sum_{j = 0}^{r} |n_{j}| = k + 1$. Note that there exists an $n_{j_1}$ which is either strictly positive or strictly negative. Assume first it is positive and let
  \begin{equation*}
    (n'_0,\ldots,n'_r) \dfn (n_0,\ldots,n_{j_1 - 1},n_{j_1} -1,n_{j_1+1},\ldots,n_r).
  \end{equation*}
  Applying the induction hypothesis to $\alpha' \dfn \sum_{j=0}^r n'_j a_j$, we obtain
  \begin{align*}
    \left( T_\alpha^* f \right)(x) - f(x)
    &= f(x + \alpha' + a_{j_1}) - f(x) \\
    &= f(x + \alpha' + a_{j_1}) - f(x + \alpha') + f(x + \alpha') - f(x) \\
    &= \left(T_{a_{j_1}}^* f \right)(x + \alpha') - f(x + \alpha') + \left( T_{\alpha'}^* f \right)(x) - f(x) \\
    &= \left(T_{a_{j_1}}^* f - f \right)(x + \alpha') + \sum_{j=0}^r \sum_{l=1}^{N'} c'_{jl} \left( T_{a_j}^*f - f \right)(x + d'_{jl})
  \end{align*}
  for certain $N', c_{jl}', d_{jl}'$, which is clearly of the form~\eqref{eq:recurr_form} for an appropriate choice of parameters. Similarly, if $n_{j_1} < 0$ we apply the induction hypothesis now to $\alpha'' \dfn \alpha + a_{j_1}$, obtaining
  \begin{align*}
    \left( T_\alpha^* f \right)(x) - f(x)
    &= f(x + \alpha'' - a_{j_1}) - f(x) \\
    &= f(x + \alpha'' - a_{j_1}) - f(x + \alpha'') + f(x + \alpha'') - f(x) \\
    &= \left(T_{-a_{j_1}}^* f \right)(x + \alpha'') - f(x + \alpha'') + \left( T_{\alpha''}^* f \right)(x) - f(x) \\
    &= - \left(T_{a_{j_1}}^* f - f \right) \left(x + \alpha'' - a_{j_1} \right) + \left( T_{\alpha''}^* f \right)(x) - f(x) \\
    &= - \left(T_{a_{j_1}}^* f - f \right) \left(x + \alpha \right) + \sum_{j=0}^r \sum_{l=1}^{N''} c''_{jl} \left( T_{a_j}^*f - f \right)(x + d''_{jl})
  \end{align*}
  once more of the form~\eqref{eq:recurr_form}; hence, by induction, proving the claim.
  
  We are now ready to prove the main statement. Assume $\delta$ has closed range and let $\{ f_\nu \}_{\nu \in \N}$ be a sequence of smooth functions such that 
  \begin{equation*}
    D(f_\nu) \to \msf{g} \dfn (g_j)_{j=0,\ldots,r}
    \Longrightarrow
    T^*_{a_j} f_\nu - f_\nu \to g_j
    \quad \text{in $\cinfty(\R)$ for all $j$}.
  \end{equation*}
  Using~\eqref{eq:recurr_form}, we deduce that for each $\alpha \in \per(\zeta)$ the sequence $\left\{ T_{\alpha}^* f_\nu - f_\nu \right\}_{\nu \in \N}$ has a limit in $\cinfty(\R)$, which implies that $\{ \delta(f_\nu) \}_{\nu \in \N}$ is convergent. Since the range of $\delta$ is (by hypothesis) closed, there exists $f \in \cinfty(\R)$ such that $\lim_{\nu \to \infty} \delta(f_\nu) = \delta(f)$, that is:
  \begin{equation*}
    T_{\alpha}^* f_\nu - f_\nu \lra T_{\alpha}^* f - f,
    \quad \forall \alpha \in \per(\zeta).
  \end{equation*}
  In particular:
  \begin{equation*}
    D(f) = \left(T_{a_j}^* f - f \right)_{j=0,\ldots,r} = \lim_{\nu \to \infty} D(f_{\nu}) = \msf{g}
  \end{equation*} 
  which implies that the range of $D$ is also closed. 
  
  Conversely, assume that the range of $D$ is closed and let $\{ f_\nu \}_{\nu \in \N} \sset \cinfty(\R)$ be such that $\delta(f_\nu) \to (g_\alpha)_{\alpha \in \per}$. In particular, $D(f_\nu) \to (g_{a_j})_{j = 0, \ldots, r}$, therefore there exists $f \in \cinfty(\R)$ such that  $g_{a_j} = T_{a_j}^* f - f$ for every $j = 0,\ldots,r$. Given $\alpha \in \per(\zeta)$, we apply~\eqref{eq:recurr_form} and obtain
  \begin{equation*}
    \left( T_\alpha^* f_\nu \right)(x) - f_\nu(x) = \sum_{j=0}^r \sum_{l=1}^N c_{jl} \left( T_{a_j}^*f_\nu - f_\nu \right)(x+d_{jl})
  \end{equation*}
  where $N, c_{jl},d_{jl}$ are independent of $\nu$ and $x$. In the limit as $\nu \to \infty$:
  \begin{equation*}
    g_\alpha(x)
    = \sum_{j=0}^r \sum_{l=1}^N c_{jl} \left( T_{a_j}^*f - f \right)(x+d_{jl})
    = \left( T_\alpha^* f \right)(x) - f(x);
  \end{equation*}
  hence, $\delta(f) = (g_{\alpha})_{\alpha \in \per(\zeta)}$, which completes the proof.  
\end{proof}

Recall~\cite[Lemma~2.1]{meziani02} that $\zeta$ is irrational non-Liouville (cf.~Definition~\ref{def:liouville}) if and only if $(a_1,\ldots,a_r)$ is a \emph{non-Liouville vector}, according to the following:
\begin{Def}\label{def:liouville_vector} A vector $(a_1,\ldots,a_r)\in \R^r \setminus \Q^r$ is \emph{Liouville} if there exist $C>0$ and sequences $\{ p_\nu\}_{\nu \in \N} \sset \Z^r$ and $\{ q_\nu \}_{\nu \in \N} \sset \N$, where $q_\nu \geq 2$ is strictly increasing, such that 
  \begin{equation*}
    \sum_{j=1}^r \left| a_j - \frac{p_{\nu,j}}{q_\nu} \right| < \frac{C}{q_\nu^{\nu}},
    \quad \forall \nu \in \N,
  \end{equation*}
  where $p_\nu = (p_{\nu,1}, \ldots, p_{\nu,r})$.
\end{Def}
So, in order to prove Theorem \ref{thm:first_step_solv}, it is enough to prove the following:
\begin{Thm}
  \label{thm:first_step_solv_redux}
  Let $1, a_1, \ldots, a_r$ be positive real numbers, linearly independent over $\Q$. Then, the following are equivalent:
  \begin{enumerate}
  \item \label{thm:first_step_solv_redux-it1} $D$ has closed range.
  \item \label{thm:first_step_solv_redux-it2} $(a_1,\ldots,a_r)$ is a non-Liouville vector.
  \end{enumerate}
\end{Thm}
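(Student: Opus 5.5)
We argue by duality, as announced in the introduction. By the Homomorphism Theorem for Fr\'echet spaces (Remark~\ref{rem:seq_invert}) together with the closed range theorem, $D$ has closed range if and only if its transpose
\begin{equation*}
  \transp{D}: \bigoplus_{j=0}^r \E'(\R) \lra \E'(\R),
  \qquad (\mu_j)_{j} \longmapsto \sum_{j=0}^r (\delta_{-a_j} - \delta_0) * \mu_j ,
\end{equation*}
has weak$^*$-closed range. Equivalently, the range of $\transp{D}$ must equal the annihilator of $\ker D$. Since $1,a_1,\ldots,a_r$ are $\Q$-independent and $\per(\zeta)$ is dense, $\ker D$ consists exactly of the constants. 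So the target statement is: a compactly supported distribution $\mu$ with $\hat\mu(0)=0$ can be written as $\sum_j (e^{i a_j\xi}-1)\hat\mu_j(\xi)$ with $\mu_j\in\E'(\R)$.

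Passing to Fourier--Laplace transforms via Paley--Wiener--Schwartz, this is a division/ideal problem for entire functions of exponential type with polynomial growth on $\R$. The decisive tool is Ehrenpreis's theorem (Theorem~\ref{thm:ehrenpreis}, stated later in the paper). It says that convolution by $\nu\in\E'$ is surjective, or invertible on $\E'$ modulo the obvious obstruction, precisely when $\hat\nu$ is \emph{slowly decreasing}. Concretely, this means $\sup_{|\eta-\xi|\le C\log(2+|\xi|)}|\hat\nu(\eta)|\ge (C+|\xi|)^{-C}$ for real $\xi$.

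The plan for \eqref{thm:first_step_solv_redux-it2}$\Rightarrow$\eqref{thm:first_step_solv_redux-it1} is as follows.
\begin{enumerate}
\item Reduce to a single symbol. Set $\Delta(\xi)\dfn\sum_{j=0}^r|e^{ia_j\xi}-1|^2$, which is the symbol of $\nu\dfn\sum_j(\delta_{a_j}-\delta_0)*(\delta_{-a_j}-\delta_0)\in\E'(\R)$.
\item Use the non-Liouville condition. Its only zero is at $\xi=0$, since $\Delta(\xi)=0$ forces $\xi\in 2\pi\Z$ with $a_j\xi\in2\pi\Z$ for all $j$, contradicting irrationality. Moreover, non-Liouvilleness gives $\Delta(\xi)\ge c|\xi|^{-N}$ for $|\xi|\ge1$. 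Indeed, write $\xi=2\pi q+\epsilon$; the bound then follows from Definition~\ref{def:liouville_vector}, applied with $q$ the nearest integer to $\xi/2\pi$ and $p_j$ nearest to $a_jq$. Hence $\hat\nu$ is slowly decreasing, with only a simple real zero structure at $0$ (a double zero of $\Delta$).
\item Solve the convolution equation. Given $\mu$ with $\hat\mu(0)=0$ (and $\hat\mu'(0)$ handled by subtracting a multiple of $(\delta_{-a_0}-\delta_0)$ applied to a fixed bump), Ehrenpreis's theorem lets us solve $\nu*\lambda=\mu$ with $\lambda\in\E'$, since $\hat\mu/\hat\nu$ is entire after the adjustment. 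Finally set $\mu_j\dfn(\delta_{a_j}-\delta_0)*\lambda$.
\end{enumerate}
The main obstacle is this division step. We must check that Ehrenpreis's estimates give a compactly supported quotient, with the zero set of $\hat\nu$ in $\C$ being infinite and only its real part controlled. I would try to handle the complex zeros via the minimum-modulus form of Ehrenpreis's theorem rather than by direct factorization.

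For \eqref{thm:first_step_solv_redux-it1}$\Rightarrow$\eqref{thm:first_step_solv_redux-it2} we argue by contraposition. Suppose the vector is Liouville, with data $p_\nu,q_\nu$. Set $\xi_\nu\dfn2\pi q_\nu$, so that $|e^{ia_j\xi_\nu}-1|\lesssim q_\nu^{1-\nu}$ for every $j$. Define $f\dfn\sum_\nu e^{i\xi_\nu x}$; more precisely, take the partial sums $f_N$ and consider $D f_N$. Each term $e^{i\xi_\nu x}(e^{ia_j\xi_\nu}-1)$ has $C^k$-seminorms on compacts bounded by $q_\nu^{k+1-\nu}$, so $Df_N$ converges in $\prod_j\cinfty(\R)$ to some $\msf g$. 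If $\msf g=Df$ for some smooth $f$, we test against the compactly supported measures $\chi(x)e^{-i\xi_\nu x}\,dx$ and use the uniqueness of $f$ modulo constants. This forces the $\nu$-th Fourier-type coefficient of $f$ on compacts to be $\approx1$ for infinitely many $\nu$, which contradicts smoothness since smoothness requires rapid decay in $\xi_\nu$. Hence $\msf g\notin\ran D$, and the range is not closed. The bookkeeping with the Fourier coefficients over nonperiodic $f$ is delicate; I would instead verify non-closedness through Lemma~\ref{lem:seq_invert} applied to $f_N-f_M$.
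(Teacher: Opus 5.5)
Your necessity direction is fine and is essentially the paper's. With $\xi_\nu=2\pi q_\nu$, the $a_0=1$ component of $Df_N$ vanishes identically, so any smooth preimage of the limit is $1$-periodic. Comparing Fourier coefficients then gives $\hat f(q_\nu)=1$ for all $\nu$, so the non-periodic bookkeeping you worry about never arises. Your duality set-up and the real-line bound of step~2 also match the paper (Lemma~\ref{lem:sin_condition}).

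The sufficiency direction, however, breaks at step~3, and the problem is not a missing estimate. If $\nu*\lambda=\mu$ with $\lambda\in\E'(\R)$, then $\hat\mu=\hat\nu\,\hat\lambda$, so $\hat\mu$ must vanish at every zero of $\hat\nu(z)=\sum_{j}(e^{ia_jz}-1)(e^{-ia_jz}-1)=2(r+1)-\sum_j(e^{ia_jz}+e^{-ia_jz})$. This entire function has infinitely many zeros in $\C$. Otherwise Hadamard would give $\hat\nu=p\,e^{bz}$ with $p$ a polynomial, which is impossible for a function that is bounded on $\R$, vanishes at $0$, and is not identically zero. Its only real zero is $0$, so infinitely many of these zeros are non-real. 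For a general $\mu$ with $\hat\mu(0)=0$, $\hat\mu$ does not vanish there. Hence $\hat\mu/\hat\nu$ is not entire, no correction at the origin changes this, and no minimum-modulus argument can produce $\lambda$.

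So your reduction only reaches $\nu*\E'(\R)$, a proper subspace of $\{v\in\E'(\R)\st\hat v(0)=0\}$, and it says nothing about weak closedness of $\ran\transp{D}$. The underlying issue is that dividing by $\sum_j|P_j|^2$ is a real-variable device. Its holomorphic extension $\sum_jP_j(z)\overline{P_j(\bar z)}$ acquires zeros off the axis that the generators $P_j(z)=e^{ia_jz}-1$ do not share. Note also that slow decrease characterizes surjectivity of $\nu*$ on $\E$ or $\D'$, not division in $\E'$.

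What is missing is a B\'ezout identity for all $r+1$ generators at once. The paper obtains it from Ehrenpreis's ideal theorem (Theorem~\ref{thm:ehrenpreis}). That theorem yields $F_0,\ldots,F_r$ of exponential type, polynomially bounded on $\R$, with $\sum_j\frac{e^{ia_jz}-1}{z}F_j(z)=1$. Writing $\hat v(z)=zh(z)$, the functions $f_j(z)\dfn F_j(z)h(-z)$ then solve~\eqref{eq:ideal_eq}.

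The hypothesis of Ehrenpreis's theorem is the lower bound $\sum_j|e^{ia_jz}-1|/|z|\geq Ce^{-\epsilon|\Im z|}(1+|z|)^{-N}$ on \emph{all} of $\C$. Your real-line estimate is therefore only the arithmetic input, and you still have to propagate it off the axis. The paper does this in Theorem~\ref{thm:main_result_suff}:
\begin{itemize}
\item when $|x|\geq 1/\max_ja_j$, it uses $|e^{ia_jz}-1|\geq e^{-a_j|y|}|\sin(a_jx)|$;
\item when $|x|\leq1/\max_ja_j$ and $|y|$ is large, it uses separate lower bounds for $|\Re e^{ia_jz}-1|$.
\end{itemize}
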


\subsection{Proof of Theorem~\ref{thm:first_step_solv_redux}}

\paragraph{$\eqref{thm:first_step_solv_redux-it1} \Longrightarrow \eqref{thm:first_step_solv_redux-it2}$} We shall work in the contrapositive: assuming that $(a_1,\ldots,a_r)$ is a Liouville vector, let us show that $D$ does not have closed range. It can be easily seen (for example, \cite[Lemma 3.4]{Bergamasco2015} or \cite[Lemma 8.1]{dm16}) that our assumption entails the existence of a sequence $\{ \xi_\nu \}_{\nu \in \N} \sset \Z \setminus \{0\}$, with $|\xi_\nu| < |\xi_{\nu + 1}|$ for every $\nu \in \N$, and a constant $C>0$ such that
\begin{equation}
  \label{eq:lem_liouville}
  \max_{1 \leq j \leq r} \left|e^{2\pi i a_j \xi_\nu} - 1 \right| < \frac{1}{|\xi_\nu|^\nu},
  \quad \forall \nu \in \N.
\end{equation}
Given $N\in \N$, let
\begin{equation*}
  f_N(x) \dfn \sum_{\nu=1}^N e^{2\pi i x \xi_\nu},
  \quad x \in \R.
\end{equation*}
Clearly, $T_1^* f_N - f_N = 0$. If $j=1,\ldots,r$, then
\begin{equation*}
  \left( T_{a_j}^* f_N - f_N \right)(x) = \sum_{\nu = 1}^N \left( e^{2\pi i a_j \xi_\nu} - 1 \right) e^{2\pi i x \xi_\nu}.
\end{equation*}
Consider next 
\begin{equation*}
  g_j(x) \dfn \sum_{\nu = 1}^\infty \left( e^{2\pi i a_j \xi_\nu} - 1 \right) e^{2\pi i x \xi_\nu},
  \quad x \in \R.
\end{equation*}
Using inequality~\eqref{eq:lem_liouville}, it is not difficult to prove that $g_{j} \in \cinfty(\R)$ and 
\begin{equation*}
  T_{a_j}^* f_N - f_N \lra  g_{j}
  \quad \text{in $\cinfty(\R)$ as $N \to \infty$}
\end{equation*}
for each $j = 1, \ldots, r$. We claim there is no $f \in \cinfty(\R)$ such that $D(f) = (0,g_1,\ldots,g_r)$. Indeed, if there were such a function, it would have to be $1$-periodic, since $T_1 f - f = 0$. Writing its Fourier series
\begin{equation*}
  f(x) = \sum_{\xi\in \Z} \hat{f}(\xi) \ e^{2\pi i x \xi},
  \quad x \in \R,
\end{equation*}
we would have, for any $j=1,\ldots,r$ and $x \in \R$, 
\begin{equation*}
  \sum_{\xi\in \Z} \left( e^{2\pi i a_j \xi} - 1 \right) \hat{f}(\xi) \ e^{2\pi i x \xi}
  =  \sum_{\nu = 1}^\infty \left( e^{2\pi i a_j \xi_\nu} - 1 \right) e^{2\pi i x \xi_\nu}.
\end{equation*}
In particular, by uniqueness of Fourier coefficients, we would have $\hat{f}(\xi_\nu) = 1$ for all $\nu \in \N$, which contradicts the smoothness of $f$.

\paragraph{$\eqref{thm:first_step_solv_redux-it2} \Longrightarrow \eqref{thm:first_step_solv_redux-it1}$} Again, we assume that $a_1,\ldots,a_r$ are positive and such that $\{1,a_1,\ldots,a_r\}$ is linearly independent over $\Q$. Suppose $(a_1,\ldots,a_r)$ is non-Liouville.
We shall show $D$ has closed range by applying the \emph{Homomorphism Theorem for Fr{\'e}chet spaces}~\cite[p.~18]{{kothe_tvs2}}:
\begin{quote}
  \emph{$D$ has closed range if and only if its transpose $\transp{D}$ has weakly closed range.}
\end{quote}

Let us determine $\transp{D}$: if $(u_0,\ldots,u_r) \in \bigoplus_{j = 0}^r \E'(\R)$, for any given $\phi \in \cinfty(\R)$, one has
\begin{equation*}
  \left \langle (u_0,\ldots,u_r), D (\phi) \right \rangle
  = \sum_{j=0}^r \left \langle u_{j}, \phi(\cdot + a_{j}) - \phi \right \rangle
  =  \sum_{j=0}^r \left \langle u_{j} * ( \delta_{a_j} - \delta_0), \phi \right\rangle,
\end{equation*}
where $\delta_{x}$ represents the Dirac mass centered at $x \in \R$. Therefore:
\begin{equation*}
  \TR{\transp{D}}{(u_0,\ldots,u_r)}{\bigoplus_{j = 0}^r \E'(\R)}{\sum_{j=0}^r u_{j} * ( \delta_{a_j} - \delta_0)}{\E'(\R)}
\end{equation*}
We claim next that its range is
\begin{equation*}
  \ran (\transp{D}) = \left\{ v \in \E'(\R) \st \hat{v}(0)=0 \right\},
\end{equation*}
where $\hat{v}$ is the Fourier-Laplace transform of $v$, which we normalize as 
\begin{equation*}
  \hat{v}(z) \dfn \langle v, e^{-ix z} \rangle,
  \quad z \in \C.
\end{equation*}
If we verify this we are done, since that subspace is obviously weakly closed. Note first that if $v = \transp{D}(u_0, \ldots, u_r)$ then
\begin{equation*}
  \hat{v}(z)
  = \sum_{j=0}^r \hat{u}_{j}(z) \left( \hat{\delta}_{a_j}(z) - \hat{\delta}_0(z) \right)
  = \sum_{j=0}^r \hat{u}_{j}(z) (e^{-ia_jz} - 1),
  \quad \forall z \in \C,
\end{equation*}
which implies that $\ran (\transp{D}) \sset \{ v \in \E'(\R) \st \hat{v}(0)=0 \}$.

In order to prove the reverse inclusion, let $v \in \E'(\R)$ be such that $\hat{v}(0)=0$. From the Paley-Wiener-Schwartz Theorem, $\hat{v}\in \Ol(\C)$ is an entire function of exponential type which is bounded by a polynomial on $\R$. We factor it
\begin{equation*}
  \hat{v}(z) = z h(z),
  \quad z \in \C,
\end{equation*}
where $h\in \Ol(\C)$ is also of exponential type and bounded by a polynomial on $\R$. Therefore, in order to solve 
\begin{equation*}
  \sum_{j=0}^r u_{j} * ( \delta_{a_j} - \delta_0) = v,
  \quad u_0, \ldots, u_r \in \E'(\R),
\end{equation*}
we must solve 
\begin{equation*}
  \sum_{j=0}^r \left( \frac{e^{-ia_j z} - 1}{z} \right) g_j(z) = h(z),
\end{equation*}
where $g_0, \ldots, g_r \in \Ol(\C)$ are of exponential type, each bounded by a polynomial on $\R$; or, equivalently,
\begin{equation}
  \label{eq:ideal_eq}
  \sum_{j=0}^r \left( \frac{e^{ia_j z} - 1}{z} \right) f_j(z) = h(-z),
\end{equation}
for $f_0, \ldots, f_r$ of the same type. This kind of ``ideal equation'' was studied by L.~Ehrenpreis (see~\cite[Theorem~11.2]{ehrenpreis} and the discussion that follows it):
\begin{Thm}
  \label{thm:ehrenpreis}
  Let $P_0, \ldots, P_r \in \Ol(\C)$ be functions of exponential type, bounded by a polynomial on $\R$. Assume that there are constants $C,N,\epsilon>0$ such that
  \begin{equation*}
    \sum_{j=0}^r |P_j(z)| \geq \frac{Ce^{-\epsilon |\Im z|}}{(1+|z|)^{N}},
    \quad \forall z \in \C.
  \end{equation*}
  Then, there exist functions $F_0, \ldots, F_r \in \Ol(\C)$ of exponential type, bounded by a polynomial on $\R$, satisfying
  \begin{equation*}
    \sum_{j=0}^r P_j(z) F_j(z) = 1,
    \quad \forall z \in \C.
  \end{equation*}
\end{Thm}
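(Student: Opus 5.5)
We would not reprove Theorem~\ref{thm:ehrenpreis} by Ehrenpreis's original route. Instead we would use H\"ormander's $L^2$ method for generators of weighted rings of entire functions, which applies cleanly in one complex variable. Set $p(z) \dfn |\Im z| + \log(1+|z|)$ and let $A_p$ be the ring of entire functions $F$ with $|F(z)| \leq C e^{Cp(z)}$ for some $C$. By Paley--Wiener--Schwartz, $A_p$ is exactly the ring of entire functions of exponential type bounded by a polynomial on $\R$.

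Two properties of $p$ are used. First, $p$ is subharmonic, since both $|\Im z|$ and $\log(1+|z|)$ are. Second, $|p(z)-p(w)| \leq 2$ whenever $|z-w|\leq 1$. In this language the hypothesis reads
\begin{equation*}
  \sum_j |P_j| \geq e^{-Kp}, \qquad |P_j| \leq e^{Kp}
\end{equation*}
for some constant $K$, and the goal is to find $F_j \in A_p$ with $\sum_j P_j F_j = 1$.

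The first step is to solve the equation smoothly. Put $|P|^2 \dfn \sum_k |P_k|^2$ and $\varphi_j \dfn \overline{P_j}/|P|^2$. Then $\sum_j P_j \varphi_j = 1$. By Cauchy's estimates on unit discs together with the second property of $p$, the derivatives $P_j'$ are also bounded by $e^{K'p}$. Hence $\varphi_j$ and $\bar\partial \varphi_j$ are smooth and bounded by $e^{K''p}$.

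The second step corrects $\varphi_j$ to holomorphic functions through the Koszul complex, in Wolff's form. We look for
\begin{equation*}
  F_j = \varphi_j + \sum_k v_{jk} P_k, \qquad v_{jk} = -v_{kj}.
\end{equation*}
The identity $\sum_j P_j F_j = 1$ is then automatic. Holomorphy of the $F_j$ follows if $\bar\partial v_{jk} = \varphi_j \bar\partial\varphi_k - \varphi_k \bar\partial \varphi_j$. This works because $\sum_k P_k \bar\partial\varphi_k = \bar\partial(1) = 0$, so $\sum_k P_k\,\bar\partial v_{jk} = -\bar\partial\varphi_j$. Each of these $\bar\partial$-equations is solved by H\"ormander's $L^2$ theorem on $\C$ with weight $e^{-2Mp}(1+|z|^2)^{-2}$, which is admissible because $p$ is subharmonic. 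For $M$ large the right-hand sides lie in the weighted $L^2$ space, so we obtain $v_{jk}$ with $\int |v_{jk}|^2 e^{-2Mp}(1+|z|^2)^{-2}\,\dd\lambda < \infty$.

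The final step turns this integral bound into a pointwise one. On the unit disc around $z$ we write $F_j = \varphi_j + \sum_k v_{jk}P_k$. The term $\varphi_j$ is pointwise controlled, and $\sum_k v_{jk}P_k$ is locally $L^2$-controlled with weight $e^{-2M'p}$. Since $F_j$ is holomorphic, the mean value property on that disc, combined with the bounded oscillation of $p$ on unit discs, gives $|F_j(z)| \leq C e^{C p(z)}$, that is, $F_j \in A_p$.

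We expect the main obstacle to be the bookkeeping of weights. One must check that $\bar\partial\varphi_k$ involves only polynomially many factors $e^{\pm Kp}$, choose $M$ large enough that all right-hand sides are square-integrable, and make sure the $L^2$-to-sup passage only loses factors $e^{O(p)}$. All three points rest on the oscillation bound for $p$ on unit discs, so that is the estimate we would establish first.
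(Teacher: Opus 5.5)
Your proposal is correct, but it does not follow the paper's route: the paper gives no proof of Theorem~\ref{thm:ehrenpreis}. It only cites Ehrenpreis (Theorem~11.2 of his book and the discussion after it), whose machinery (slowly decreasing functions, the fundamental principle) is much heavier.

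What you have written is essentially H\"ormander's $L^2$ proof of such generator theorems (\emph{Generators for some rings of analytic functions}, 1967), specialized to one variable. There the Koszul complex needs only one round of $\bar\partial$-solving, because every $(0,1)$-form on $\C$ is $\bar\partial$-closed. The individual steps check out:
\begin{itemize}
\item The Wolff-type ansatz $F_j=\varphi_j+\sum_k v_{jk}P_k$, with $v_{jk}$ antisymmetric and $\bar\partial v_{jk}=\varphi_j\bar\partial\varphi_k-\varphi_k\bar\partial\varphi_j$, gives both $\sum_j P_jF_j=1$ and $\bar\partial F_j=0$.
\item The weight $2Mp$ is subharmonic.
\item The data are $O(e^{K'''p})$, so they lie in $L^2(e^{-2Mp})$ as soon as $2M-2K'''>2$.
\item The sub-mean-value passage back to pointwise bounds works, since $p$ oscillates by at most $2$ on unit discs and $(1+|z|^2)^2\le e^{4p}$.
\end{itemize}

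Your route makes the result self-contained modulo H\"ormander's existence theorem for $\bar\partial$ with subharmonic weights. It also visibly applies to any subharmonic weight that has bounded oscillation on unit discs and dominates $\log(1+|z|)$. The citation buys brevity and Ehrenpreis's several-variable generality.

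Three small points need tidying:
\begin{itemize}
\item Since $p(0)=0$, the lower bound should read $\sum_j|P_j|\ge c\,e^{-Kp}$ with a constant $c>0$, or you should replace $p$ by $p+1$.
\item The upgrade from ``exponential type and polynomially bounded on $\R$'' to $|P_j|\le Ce^{Cp}$ is a Phragm\'en--Lindel\"of argument in each quadrant, not Paley--Wiener--Schwartz proper.
\item Holomorphy of $F_j$ should be justified explicitly: $F_j\in L^2_{\mathrm{loc}}$ with $\bar\partial F_j=0$ weakly, so $F_j$ is holomorphic by ellipticity of $\bar\partial$.
\end{itemize}
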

Therefore, in order solve~\eqref{eq:ideal_eq} it is sufficient to prove:
\begin{Thm}
  \label{thm:main_result_suff}
  Let $1,a_1,\ldots,a_r$ be positive real numbers, linearly independent over $\Q$ and such that $(a_1,\ldots,a_r)$ is not a Liouville vector. Then, there exist constants $C,N,\epsilon>0$ such that 
  \begin{equation}
    \label{eq:main_result_suff}
    \left|\frac{e^{iz}-1}{z}\right|+ \sum_{j=1}^r \left|\frac{e^{ ia_j z} - 1}{z} \right| \geq \frac{Ce^{-\epsilon |\Im z|}}{(1+|z|)^N},
    \quad \forall z \in \C.
  \end{equation}
\end{Thm}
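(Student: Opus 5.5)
We will prove \eqref{eq:main_result_suff} by splitting $\C$ into a horizontal strip $|\Im z| \leq 1$ and its complement, and within the strip separating a neighbourhood of the real zeros of $e^{iz}-1$ from the rest.

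\textbf{Away from the real axis.} Write $z = x + iy$ with $|y| \geq 1$. For each $a>0$,
\begin{equation*}
  |e^{iaz} - 1| \geq \bigl| e^{-ay} - 1 \bigr|,
\end{equation*}
so with $a = a_0 = 1$ we get $|e^{iz}-1| \geq 1 - e^{-1}$. Hence
\begin{equation*}
  \left| \frac{e^{iz}-1}{z} \right| \geq \frac{1-e^{-1}}{|z|} \geq \frac{c}{1+|z|}
  \quad \text{on } \{|\Im z| \geq 1\}.
\end{equation*}
This already gives the estimate there, with any $\epsilon > 0$ and $N = 1$.

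\textbf{Inside the strip.} Now let $|y| \leq 1$. The function $|e^{iz}-1|$ is bounded below by a positive constant $c_\eta$ outside the discs $|z - 2\pi k| < \eta$, $k \in \Z$; this follows from periodicity together with compactness of one period cell of the strip.

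\begin{itemize}
\item Near $z = 0$ (the case $k = 0$) we use instead that $(e^{iz}-1)/z$ is entire with value $i$ at the origin, so it is bounded below on a small disc.
\item Outside the discs, $|(e^{iz}-1)/z| \geq c_\eta/|z|$, which suffices.
\end{itemize}

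So everything reduces to discs $|z - 2\pi k| < \eta$ with $k \neq 0$. Write $z = 2\pi k + w$ with $|w| < \eta$. Then for each $j = 1, \dots, r$,
\begin{equation*}
  e^{ia_j z} - 1 = e^{ia_j w}\, e^{2\pi i a_j k} - 1 .
\end{equation*}
Choosing $\eta$ small relative to $\max_j a_j$ and writing $\theta = 2\pi a_j k$, we obtain
\begin{equation*}
  |e^{ia_j z}-1| \geq |e^{i\theta}-1| - |e^{ia_j w}-1| \geq c\,\operatorname{dist}(a_j k, \Z) - C|w|.
\end{equation*}
At the same time, $|e^{iz}-1| = |e^{iw}-1| \geq c|w|$ for small $w$.

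Adding these up, the sum of moduli is at least
\begin{equation*}
  c|w| + \max_j \bigl( c\,\operatorname{dist}(a_jk,\Z) - C|w| \bigr).
\end{equation*}
\begin{itemize}
\item If $|w| \geq \kappa \max_j \operatorname{dist}(a_j k, \Z)$ for a suitable small constant $\kappa$, the first term alone is $\gtrsim \max_j \operatorname{dist}(a_j k,\Z)$.
\item Otherwise the second term is $\gtrsim \max_j \operatorname{dist}(a_j k, \Z)$.
\end{itemize}
In both cases the sum of moduli is bounded below by a constant multiple of $\max_j \operatorname{dist}(a_j k, \Z)$. After dividing by $|z| \sim |k|$, it therefore suffices to show a Diophantine lower bound
\begin{equation}
  \label{eq:dioph_plan}
  \max_{1\le j\le r} \operatorname{dist}(a_j k, \Z) \geq \frac{C}{|k|^{N}}, \quad k \in \Z\setminus\{0\}.
\end{equation}

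\textbf{Main step: the Diophantine bound.} Establishing \eqref{eq:dioph_plan} is the crux. We argue by contradiction. Suppose that for every $\nu$ there is $k_\nu \neq 0$ with
\begin{equation*}
  \max_j \operatorname{dist}(a_j k_\nu, \Z) < |k_\nu|^{-\nu - 1}.
\end{equation*}
Choose $p_{\nu,j} \in \Z$ nearest to $a_j k_\nu$ and set $q_\nu = |k_\nu|$, adjusting signs of the $p_{\nu,j}$ accordingly. Then
\begin{equation*}
  \sum_j \left| a_j - \frac{p_{\nu,j}}{q_\nu} \right| < \frac{r}{q_\nu^{\nu+2}} .
\end{equation*}
Since each $a_j$ is irrational (by linear independence over $\Q$), $\operatorname{dist}(a_j k, \Z) > 0$ for every $k \neq 0$. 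Consequently the $q_\nu$ must tend to infinity, because for bounded $k$ the distances are bounded below. Passing to a strictly increasing subsequence with $q_\nu \geq 2$, and reindexing so that the exponent is still at least $\nu$, yields exactly the Liouville condition of Definition~\ref{def:liouville_vector}. This contradicts the hypothesis.

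The only delicate points are the bookkeeping in that reindexing and keeping the constants uniform in the discs near $2\pi k$. Both are routine once $\eta$ is fixed small depending only on $\max_j a_j$. Collecting the three regions gives \eqref{eq:main_result_suff} with $\epsilon = 1$ and $N$ equal to the Diophantine exponent plus one.
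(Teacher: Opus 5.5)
Your argument is correct in substance but follows a different route from the paper.

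\textbf{The paper's route.} The paper first proves a real-variable lemma, $|\sin x/x| + \sum_j |\sin(a_j x)/x| \geq C(1+|x|)^{-N}$. It does this by rounding $x/\pi$ and $a_j x/\pi$ to integers $q, p_j$ and feeding the result directly into the non-Liouville inequality $\max_j |a_j - p_j/q| \geq q^{-m}$. It then passes to $\C$ via $|e^{ia_jz}-1| \geq e^{-a_j|y|}|\sin(a_j x)|$. Because that bound degenerates near $x=0$, the paper must treat $|x| \leq 1/\alpha$ separately, with real-part estimates split into $y<0$ and $y>0$. This is the source of the factor $e^{-2\alpha|y|}$ in its estimate.

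\textbf{Your route.} You observe that for $|\Im z| \geq 1$ the single term $|e^{iz}-1| \geq |e^{-y}-1| \geq 1-e^{-1}$ already suffices. Inside the strip you localize, by periodicity and compactness, to small discs around the zeros $2\pi k$ of $e^{iz}-1$. There a perturbation argument reduces everything to the homogeneous bound $\max_j \mathrm{dist}(ka_j,\Z) \geq C|k|^{-N}$.

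\textbf{Comparison.} Your route is shorter and pinpoints exactly where the arithmetic enters. It also gives the stronger bound $C(1+|z|)^{-N-1}$ on all of $\C$, with no exponential loss; this is harmless, since Ehrenpreis's theorem only needs the weaker form. The paper's version works directly from the Diophantine inequality with explicit constants, and produces the real-line lemma as a by-product.

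\textbf{A slip to fix.} Your contradiction hypothesis reads ``for every $\nu$ there is $k_\nu \neq 0$ with $\max_j \mathrm{dist}(a_jk_\nu,\Z) < |k_\nu|^{-\nu-1}$''. This is satisfied by $k_\nu = 1$ for every $\nu$, since the left side is at most $1/2 < 1$. So it yields no contradiction, and the claim that $q_\nu \to \infty$ does not follow. There are two easy repairs:
\begin{itemize}
\item Prove the Diophantine bound only for $|k| \geq 2$. Then $|k_\nu|^{-\nu-1} \leq 2^{-\nu-1} \to 0$ does force $|k_\nu| \to \infty$. Handle $k=\pm 1$ separately using $\max_j \mathrm{dist}(a_j,\Z) > 0$.
\item Alternatively, put a factor $1/\nu$ on the right-hand side of the hypothesis.
\end{itemize}
With either change, your subsequence and reindexing step goes through as written.
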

Indeed, in that case Theorem~\ref{thm:ehrenpreis} furnishes $F_0, \ldots, F_r \in \Ol(\C)$ of exponential type, bounded by a polynomial on $\R$, satisfying
\begin{equation}
  \label{eq:ideal2}
  \sum_{j=0}^r \left( \frac{e^{ia_j z} - 1}{z} \right) F_j(z) = 1.
\end{equation}
Then
\begin{equation*}
  f_{j}(z) \dfn F_j(z) h(-z),
  \quad z \in \C, \ j=0, \ldots, r,
\end{equation*}
are again entire functions of exponential type, bounded by a polynomial on $\R$, but now solve~\eqref{eq:ideal_eq}. We need a preliminary result:
\begin{Lem}
  \label{lem:sin_condition}
  There are constants $C,N>0$ such that
  \begin{equation}
    \label{eq:sin_condition}
    \left|\frac{\sin(x)}{x} \right|+\sum_{j=1}^r \left|\frac{\sin(a_j x)}{x} \right| \geq \frac{C}{(1+|x|)^N},
    \quad \forall x \in \R.
  \end{equation}
\end{Lem}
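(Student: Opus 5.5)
The plan is to split $\R$ into a bounded region and a region far from the origin, treat the bounded region by continuity and compactness, and reduce the far region to a simultaneous Diophantine approximation controlled by the non-Liouville hypothesis (Definition~\ref{def:liouville_vector}). The left-hand side of~\eqref{eq:sin_condition} is even in $x$, so we may restrict to $x \geq 0$.

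\emph{Bounded region.} Near $x = 0$ we have $\sin(x)/x \to 1$, so the left-hand side is bounded below by a positive constant for $|x| \leq 1$, say. On any compact interval $[1, R]$ the left-hand side is a continuous function. It cannot vanish there: that would require $\sin x = 0$ and $\sin(a_j x) = 0$ simultaneously, i.e.\ $x = \pi k$ with $k \neq 0$ and $a_j \pi k \in \pi \Z$, forcing $a_j \in \Q$. This contradicts the $\Q$-linear independence of $1, a_1, \ldots, a_r$ (already $r \geq 1$ suffices). Hence the left-hand side has a positive minimum on $[0,R]$, and it remains to handle large $x$.

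\emph{Large $x$.} Since the factor $1/|x|$ only costs one power of $(1+|x|)$, it suffices to show
\begin{equation*}
  |\sin x| + \sum_{j=1}^r |\sin(a_j x)| \geq \frac{C}{(1+|x|)^{N}}, \qquad x \geq R.
\end{equation*}
Write $x = \pi q + t$ with $q \in \Z$ the nearest integer to $x/\pi$, so $|t| \leq \pi/2$. Then $|\sin x| = |\sin t| \geq \tfrac{2}{\pi}|t|$. If $|t| \geq (1+x)^{-N}$ we are done. Otherwise $x$ lies within $(1+x)^{-N}$ of $\pi q$, with $q \sim x/\pi$. In this case, writing $\|\cdot\|$ for the distance to the nearest integer, we get
\begin{equation*}
  |\sin(a_j x)| \geq 2\|a_j x/\pi\| \geq 2\|a_j q\| - 2a_j |t|/\pi .
\end{equation*}
So it suffices to have, for some $M$ and all large integers $q$, a lower bound $\max_j \|a_j q\| \geq c\, q^{-M}$. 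Then choosing $N > M$ makes the error term $a_j|t|$ negligible, and the bound follows with exponent about $M+1$.

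\emph{Main obstacle: the Diophantine lower bound.} The key input is $\max_j \|q a_j\| \geq c\, q^{-M}$ for all integers $q \geq 2$. I would derive it from the non-Liouville property by contradiction. Suppose that for every $M$ there are infinitely many $q$ with $\max_j \|q a_j\| < q^{-M}$. Let $p_{j}$ be the nearest integers to $q a_j$. Then $\sum_j |a_j - p_j/q| < r\, q^{-M-1}$. Taking $M = \nu$ and choosing the $q$'s strictly increasing (possible since infinitely many are available at each stage, each $\geq 2$) produces sequences $q_\nu$, $p_\nu$ satisfying Definition~\ref{def:liouville_vector} with $C = r$. This contradicts the hypothesis.

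One care point is that the failure of a uniform polynomial bound must be turned into infinitely many $q$ for each fixed $M$. This holds because any finite set of $q$'s can be absorbed into the constant $c$: each $\|q a_j\| > 0$ by irrationality. I expect this bookkeeping, together with tracking the exponents so that the $|t|$ error is absorbed, to be the only delicate part; the rest is routine.
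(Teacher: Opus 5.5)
Your proof is correct and follows essentially the paper's argument. You use the nearest integer $q$ to $x/\pi$, the non-Liouville bound $\max_j\|qa_j\|\ge c\,q^{-M}$, a dichotomy, and Jordan's inequality; your threshold on $|t|$ plays the role of the paper's triangle-inequality alternative. Your explicit derivation of the uniform Diophantine bound from Definition~\ref{def:liouville_vector}, including the ``infinitely many $q$'' bookkeeping, fills in a step the paper states without proof.
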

\begin{Rem} The $r=1$ version of this result appeared in \cite[Theorem 3.18]{christensen2024}.
\end{Rem}
\begin{proof}
  Note that the left-hand side of~\eqref{eq:sin_condition} is never vanishing (since $a_j$ is irrational for $j=1,\ldots,r$), so it attains a positive minimum on every compact interval of the line. Moreover, the functions $\sin(a_jx)/x$ are even. Therefore, it is enough to prove an estimate of the kind~\eqref{eq:sin_condition} for $x>2\pi$. 

  Since $(a_1,\ldots,a_r)$ is a non-Liouville vector there exists $m\in \N$ such that
  \begin{equation}
    \label{eq:dioph_def}
    \max_{1 \leq j \leq r} \left| a_j - \frac{p_j}{q} \right| \geq \frac{1}{q^m},
    \quad \forall (p_1,\ldots,p_r) \in \Z^r, \ q \geq 2.
  \end{equation}
  Fix $x>2\pi$ and let $q,p_1,\ldots,p_r$ be integers which minimize the distance to $x/\pi,a_1 x/\pi,\ldots,a_r x/\pi$, respectively. Since $x>2\pi$ we have $q\geq 2$, and also
  \begin{equation}
    \label{eq:dioph_lem}
    \left| \frac{x}{\pi} - q \right| \leq \frac{1}{2},
    \quad \left|\frac{x}{\pi} - \frac{p_j}{a_j} \right| \leq \frac{1}{2a_j},
    \quad 1 \leq j \leq r.
  \end{equation}
  From~\eqref{eq:dioph_def}, there exists $j_{0}\in \{1,\ldots,r\}$ such that
  \begin{equation*}
    \frac{1}{q^m} \leq \left|a_{j_{0}}-\frac{p_{j_{0}}}{q} \right|
    \Longrightarrow
    \frac{1}{a_{j_{0}}q^{m-1}} \leq \left|q-\frac{p_{j_{0}}}{a_{j_{0}}} \right| \leq \left|\frac{x}{\pi}-q \right| + \left|\frac{x}{\pi}-\frac{p_{j_{0}}}{a_{j_{0}}} \right|
  \end{equation*}
  and, consequently,
  \begin{equation*}
    \text{either}
    \quad
    \left|\frac{x}{\pi}-q \right| \geq \frac{1}{2a_{j_{0}}q^{m-1}}
    \quad
    \text{or}
    \quad
    \left|\frac{x}{\pi}-\frac{p_{j_{0}}}{a_{j_{0}}} \right| \geq \frac{1}{2a_{j_{0}}q^{m-1}}.
  \end{equation*}
  Writing $D \dfn \min\left\{\frac{\pi}{2}, \frac{\pi}{2 a_{1}}, \ldots, \frac{\pi}{2 a_{r}}\right\} > 0$, we conclude that
  \begin{equation}
    \label{alternatives}
     \text{either}
     \quad
     \left| x - q\pi \right| \geq \frac{D}{q^{m-1}}
     \quad
     \text{or}
     \quad
     \left| a_{j_{0}} x - p_{j_{0}}\pi \right| \geq \frac{D}{q^{m-1}}.
  \end{equation}
  Note that, even though $j_{0}$ depends on $x$, the constant $D$ \emph{does not}.

  Moreover, from~\eqref{eq:dioph_lem} it follows that 
  \begin{equation}
    \label{small distance}
    | x - q \pi | \leq \frac{\pi}{2},
    \quad | a_j x - p_j \pi | \leq \frac{\pi}{2},
    \quad 1 \leq j \leq r,
  \end{equation}
  and since $|\sin t| \geq |t|/2$ if $|t| \leq \pi/2$, it follows from~\eqref{alternatives}-\eqref{small distance} that
  \begin{align*}
    \left| \frac{\sin x}{x} \right| + \sum_{j=1}^r \left| \frac{\sin(a_jx)}{x} \right|
    &= \left| \frac{\sin (x - q\pi)}{x} \right| + \sum_{j = 1}^r \left| \frac{\sin(a_j x - p_j\pi)}{x} \right| \\
    &\geq \left| \frac{\sin(x - q\pi)}{x} \right| + \left| \frac{\sin(a_{j_{0}} x - p_{j_{0}}\pi)}{x} \right| \\
    &\geq \frac{1}{x} \max \left\{ | \sin(x - q\pi)|, |\sin(a_{j_{0}} x - p_{j_{0}}\pi)| \right\} \\
    &\geq \frac{1}{2x}\max \left\{ |x - q\pi|, |a_{j_{0}}x - p_{j_{0}}\pi|  \right\} \\
    &\geq \frac{D/2}{(1 + x) q^{m-1}}.
  \end{align*}
  Finally, note that~\eqref{eq:dioph_lem} further implies that
  \begin{equation*}
    q \leq \frac{x}{\pi} + \frac{1}{2} \leq \frac{1}{2} (1+x) \Longrightarrow \frac{1}{q^{m-1}} \geq \frac{2^{m-1}}{(1+x)^{m-1}}
  \end{equation*}
  hence
  \begin{equation*}
    \left| \frac{\sin x}{x} \right| + \sum_{j=1}^r \left| \frac{\sin(a_jx)}{x} \right| \geq \frac{D 2^{m - 2}}{(1+x)^{m}},
    \quad \forall x > 2\pi.
  \end{equation*}
\end{proof}

\begin{proof}[Proof of Theorem~\ref{thm:main_result_suff}]
  Observe that the left-hand side of \eqref{eq:main_result_suff} is never vanishing. Indeed, note that $z = 0$ is a removable singularity of $\frac{e^{iz}-1}{z}$ and its extension equals $i$ at the origin, which proves the assertion for this particular case.  If it vanished at any other point, one would have
  \begin{equation*}
    e^{i a_j z} = 1,
    \quad \forall j=0,\ldots,r.
  \end{equation*}
  When $j = 0$, the identity above would imply that $z = 2 \pi k$ for some $k \in \Z \setminus \{0\}$. On the other hand, the case $j = 1$ already would allow us to deduce a contradiction:
  \begin{equation*}
    \text{$a_{1} z = 2\pi \ell$ for some $\ell \in \Z$} \Longrightarrow  a_{1} = \frac{\ell}{k} \in \Q.
  \end{equation*}
  Therefore, by compactness it suffices to prove the result for $|z| \geq R$, where $R>0$ is yet to be chosen. Let $z = x + iy \in \C$ and $j \in \{0,\ldots,r\}$. Then
  \begin{equation*}
    \left|e^{i a_j z} - 1 \right| \geq \left| \Im \left(e^{i a_j z} \right)\right| = e^{- a_{j} y} |\sin (a_{j} x ) | \geq e^{- a_{j} |y|} |\sin (a_{j} x )|. 
  \end{equation*}
  Let $\alpha \dfn \max \{a_0, a_{1}, \ldots, a_{r} \} \geq 1$. By summing the latter with respect to $j$ and applying Lemma~\ref{lem:sin_condition}, we deduce that
  \begin{equation}
    \label{first estimate}
    \sum_{j = 0}^{r} \left|e^{i a_j z} - 1 \right|
    \geq e^{- \alpha |y|} \sum_{j = 0}^{r} |\sin (a_{j} x) | 
    \geq C e^{- \alpha |y|} \frac{|x|}{(1+|x|)^N}
  \end{equation} 
  Note that the estimate does not completely provide~\eqref{eq:main_result_suff} yet, since $x$ may vanish. Nevertheless, suppose the existence $c > 0$ for which $|x| \geq c$. Using the inequality $e^t \geq t$ for $t \geq 0$, we conclude that
  \begin{equation*}
    e^{- \alpha |y|} |x|
    \geq c e^{- \alpha |y|} 
    \geq c \alpha e^{- 2\alpha |y|}  |y|
    \geq e^{- 2\alpha |y|}  |y|,
  \end{equation*} 
  assuming that $c\alpha \geq 1$. By splitting both expressions and putting the inequality into~\eqref{first estimate}, we deduce that for $|x| \geq 1/\alpha$:
  \begin{align}
    \label{estimate real part big}
    \sum_{j = 0}^{r} \left| e^{i a_j z} - 1 \right|
    &\geq \frac{C}{(1+|x|)^N} \left[ \frac{1}{2} e^{- \alpha |y|} |x| + \frac{1}{2} e^{- \alpha |y|} |x| \right] \nonumber \\
    &\geq \frac{C}{(1+|x|)^N} \left[ \frac{1}{2} e^{-\alpha |y|} |x| + \frac{1}{2} e^{- 2\alpha |y|} |y| \right] \nonumber \\
    &\geq \frac{C}{(1+|x|)^N} \left[ \frac{1}{2} e^{-2\alpha |y|} |x| + \frac{1}{2} e^{- 2\alpha |y|} |y| \right] \nonumber \\
    &\geq \frac{C}{2(1+|x|)^N} e^{- 2\alpha |y|} |z| \nonumber \\
    &\geq \frac{C e^{- 2\alpha |y|}}{2(1+|z|)^N} |z|. 
  \end{align}

  Next, in order to deal with the case $|x| \leq 1/\alpha$, we must estimate with respect to the real part:
  \begin{equation}
    \label{real part estimate}
    \left|e^{i a_j z} - 1 \right| \geq \left| \Re \left(e^{i a_j z} \right) - 1 \right| = \left| e^{- a_{j} y}  \cos (a_{j}x) - 1  \right|.
  \end{equation} 
  Since $|x| \leq \frac{1}{\alpha}$, we have:
  \begin{equation*} 
    a_{j} |x| \leq 1 \leq \frac{\pi}{3} \Longrightarrow \cos (a_{j} x) \geq \frac{1}{2},
    \quad \forall j \in \{0, \ldots, r\}.
  \end{equation*}

  Let $\beta \dfn \min \{a_0, a_{1}, \ldots, a_{r} \} > 0$. If $y < 0$, it follows that
  \begin{equation}
    \label{estimate real part exponential}
    e^{- a_{j} y}  \cos (a_{j} x) = e^{a_{j} |y|} \cos (a_{j} x) \geq \frac{e^{\beta |y|}}{2}.  
  \end{equation} 
  Now note that if $|z| \geq  R$ then $\max\{|x|, |y| \} \geq R / \sqrt{2}$; in particular, if $R > \sqrt{2}$ and $|x| \leq \frac{1}{\alpha} \leq 1$, it follows that $|y| \geq \frac{|z|}{\sqrt{2}} \geq \frac{R}{\sqrt{2}}$. Therefore, 
  \begin{equation*}
    |z| \geq \frac{2\sqrt{2}}{\beta} \Longrightarrow |y| \geq \frac{|z|}{\sqrt{2}} \geq \frac{2}{\beta};
  \end{equation*}
  and, in that case, we infer from~\eqref{estimate real part exponential} that
  \begin{equation*}
    e^{- a_{j} y}  \cos (a_{j} x) - 1  \geq \frac{e^{2}}{2} - 1 > 1. 
  \end{equation*}
  By associating the inequality above to~\eqref{real part estimate} and~\eqref{estimate real part exponential}, we deduce that
  \begin{equation*} 
    \left|e^{i a_j z} - 1 \right| 
    \geq \frac{e^{\beta |y|}}{2} - 1
    \geq \frac{1}{2}(e^{\beta |y|} - 1)
    = e^{\frac{\beta |y|}{2}} \sinh \left(\frac{\beta |y|}{2}\right).
  \end{equation*}
  Since $\sinh (t) \geq t$ for $t \geq 0$, we deduce that
  \begin{equation*} 
    \left|e^{i a_j z} - 1 \right| 
    \geq \frac{\beta |y|}{2}   e^{\frac{\beta |y|}{2}}
    \geq \frac{\beta}{2 \sqrt{2}} |z| e^{\frac{\beta |y|}{2}}
    \geq  \frac{\beta}{2 \sqrt{2}} |z| e^{\frac{- \beta |y|}{2}}.
  \end{equation*}
  By summing with respect to $j$ we get, when $y < 0$ and $|z| \geq \frac{2\sqrt{2}}{\beta}$:
  \begin{equation}
    \label{estimate imaginary part negative}     
    \sum_{j = 0}^{r} \left|e^{i a_j z} - 1 \right| \geq  \frac{(r+1)\beta |z|}{2 \sqrt{2}} e^{\frac{- \beta |y|}{2}}.
  \end{equation}

  Finally, we consider the case where $y > 0$. In this context, 
  \begin{equation*}
    e^{- a_{j} y}  \cos (a_{j} x) \leq e^{- a_{j} y} < 1, 
  \end{equation*}
  which in addition to~\eqref{real part estimate} implies that
  \begin{multline*}
    \left|e^{i a_j z} - 1 \right|
    \geq  1 - e^{- a_{j} y}  \cos (a_{j} x)
    \geq 1 - e^{- \alpha y} \\
    = 2 e^{- \frac{\alpha y}{2}} \sinh \left( \frac{\alpha y}{2} \right)
    \geq \alpha y e^{- \frac{\alpha y}{2}}
    \geq \frac{\alpha |z|}{\sqrt{2}} e^{- \frac{\alpha y}{2}}.
  \end{multline*}
  By summing with respect to $j$, we deduce for $y > 0$ and $|z| \geq \frac{2\sqrt{2}}{\beta}$:
  \begin{equation}
    \label{estimate imaginary part positive}     
    \sum_{j = 0}^{r} \left|e^{i a_j z} - 1 \right|
    \geq  \frac{(r+1)\alpha |z|}{\sqrt{2}} e^{\frac{- \alpha |y|}{2}}.
  \end{equation}
  By putting together~\eqref{estimate real part big}, \eqref{estimate imaginary part negative} and~\eqref{estimate imaginary part positive}, we obtain~\eqref{eq:main_result_suff}.
\end{proof}

\section{The irrational case in top degree}
\label{sec:irr_degn}

In this section, we work on the degree $n$ part of Theorem~\ref{thm:degs1n_mainthm}, which we restate here for convenience:
\begin{Thm}
  \label{thm:last_step_solv}
  Let $\Omega$ be an oriented $(n+1)$-dimensional compact manifold and $\zeta \in F^1(\Omega)$ a closed, non-singular real $1$-form. Then, the following are equivalent:
  \begin{enumerate}
  \item $\zeta$ is globally solvable in degree $n$.
  \item $\zeta$ is either rational or an irrational non-Liouville form.
  \end{enumerate}
\end{Thm}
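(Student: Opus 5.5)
We would proceed by duality, reducing solvability in top degree to a closed-range statement for $\dd'_0$ acting on distributions, and then rerunning the machinery of Section~\ref{sec:irr_deg1} at the distributional level. The rational case is already covered by Theorem~\ref{thm:rational_mainthm}, so we assume $\zeta$ irrational, normalized so that $\per(\zeta)$ is generated by $a_0=1,a_1,\ldots,a_r$.

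\emph{Step 1: duality.} Since $\Omega$ is oriented, $\Lambda^n_\VV(\Omega)\simeq F^n(\Omega)/\NN^n_\VV(\Omega)$ is the space of sections of a line bundle. Its dual is identified with $\D'(\Omega)$ through the pairing $\langle u,\pmb{f}\rangle=\int_\Omega u\,\zeta\wedge f$. This is well defined because $\NN^n_\VV=\zeta\wedge F^{n-1}$ and $\zeta\wedge\zeta=0$. Likewise, the dual of $\Lambda^{n-1}_\VV(\Omega)$ is the space of distributional sections of $\Lambda^1_\VV$. Using Stokes' theorem and $\dd\zeta=0$, the transpose of $\dd'_{n-1}$ is, up to sign, $\dd'_0:\D'(\Omega)\to\D'(\Omega;\Lambda^1_\VV)$. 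By the Homomorphism Theorem~\cite[p.~18]{kothe_tvs2}, degree-$n$ solvability is equivalent to this transpose having weakly (equivalently, strongly) closed range.

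\emph{Step 2: reduction to the line.} We redo Proposition~\ref{prop:pullback_map}, Lemma~\ref{lem:delta_acyclic} and Theorem~\ref{thm:abstract_eq} with distributional coefficients on $\R\times\widehat L$. The pullback by $g$ identifies $\D'(\Omega)$ with the $\per(\zeta)$-invariant distributions. The acyclicity of $\delta$ is unchanged, because the sheaf argument only uses multiplication by $g^*\cinfty(\Omega)$ and the distinguished neighborhoods. Since $\widehat L$ is connected, $\{u\st\dd_{\widehat L}u=0\}=\D'(\R)$. We thus obtain the analogues of Corollaries~\ref{cor:abstract_eq} and~\ref{cor:reduction_to_generators}: degree-$n$ solvability holds iff
\begin{equation*}
\TR{D}{u}{\D'(\R)}{(T_{a_j}^*u-u)_{j=0,\ldots,r}}{\prod_{j=0}^r\D'(\R)}
\end{equation*}
has closed range. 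The identity~\eqref{eq:recurr_form} holds verbatim for distributions.

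\emph{Step 3: the one-dimensional problem.} For sufficiency (non-Liouville), we again pass to the transpose $\bigoplus_j\cinfty_c(\R)\to\cinfty_c(\R)$, $(\phi_j)\mapsto\sum_j\phi_j*(\delta_{-a_j}-\delta_0)$. We claim its range is the closed subspace $\{\psi\in\cinfty_c(\R)\st\hat\psi(0)=0\}$. To prove this, write $\hat\psi(z)=z\,h(z)$, where $h$ is now of exponential type and rapidly decreasing on $\R$. Theorem~\ref{thm:main_result_suff} together with Theorem~\ref{thm:ehrenpreis} gives $F_j$ of exponential type, polynomially bounded, with $\sum_j \frac{e^{ia_jz}-1}{z}F_j=1$. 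Then $F_j(z)h(-z)$ is of exponential type and rapidly decreasing on $\R$. By Paley--Wiener it is the transform of a $\cinfty_c$ function, so the range is exactly that subspace.

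For necessity (Liouville), we take $\xi_\nu$ as in~\eqref{eq:lem_liouville} and set $u_N=\sum_{\nu\le N}c_\nu e^{2\pi ix\xi_\nu}$ with $c_\nu=|\xi_\nu|^{\nu/2}$. Then $T_1^*u_N=u_N$, and $D u_N$ still converges, even in $\cinfty$, because $c_\nu|\xi_\nu|^{-\nu}$ decays faster than any power. Any $u$ with $Du=\lim Du_N$ would be $1$-periodic with Fourier coefficients $\hat u(\xi_\nu)=c_\nu$, which are not polynomially bounded. This is impossible for a distribution.

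\emph{Main obstacle.} We expect the main obstacle to be the functional analysis in Step~2. The spaces $\D'$ are not Fréchet, so Lemma~\ref{lem:seq_invert} and the Homomorphism Theorem must be replaced by their versions for strong duals of Fréchet--Schwartz spaces, i.e.\ (DFS) spaces. We also need the distributional counterpart of Proposition~\ref{prop:partial_derham_solvable} on the non-compact product $\R\times\widehat L$ for $q=0,1$. We would try to obtain it by transposition of the smooth compactly supported leafwise de Rham complex, using that leafwise closed range for $\cinfty_c$ forms in degrees $n,n-1$ follows from the tensor-product argument with $\cinfty_c(\R)$. Alternatively, we would obtain it directly, in degree $0$, by integrating along paths in $\widehat L$.
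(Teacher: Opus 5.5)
Your architecture matches the paper's. Step~1 is Proposition~\ref{prop:duality_basic}. Step~3 is exactly the paper's closing argument: the range of the transpose is $\{\hat\psi(0)=0\}\subset\cinfty_c(\R)$ via Theorems~\ref{thm:ehrenpreis} and~\ref{thm:main_result_suff}, and the coefficients $|\xi_\nu|^{\nu/2}$ give necessity. The $\delta_{-a_j}$ in your transpose should be $\delta_{a_j}$, which is harmless.

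The genuine gap is Step~2, which carries essentially all of the new content of Section~\ref{sec:irr_degn}. Rerunning Theorem~\ref{thm:abstract_eq} with distributions requires, in each direction, replacing a net whose images converge by a convergent net of preimages. So you need both $\dd_{\widehat L}:\Lambda^0_{\widehat{\VV}}(\R\times\widehat L;\D')\to\Lambda^1_{\widehat{\VV}}(\R\times\widehat L;\D')$ and $\delta:\D'(\R\times\widehat L)\to\prod_{\alpha}\D'(\R\times\widehat L)$ to be topological homomorphisms onto their ranges. In the smooth case this came for free from closed range plus Lemma~\ref{lem:seq_invert} for Fr\'echet spaces. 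Here your ``acyclicity is unchanged'' is correct but purely algebraic: it gives $\ran\delta=\{\text{cocycles}\}$, not openness onto it.

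The remedy you propose, DFS versions of the homomorphism theorem, does not apply. Because $\R\times\widehat L$ and $\R$ are non-compact, $\D'(\R\times\widehat L)$, $\D'(\R)$ and their countable products are not (DFS) spaces. They are not even (DF): $(c_k)\mapsto\sum_k c_k\delta_k$ embeds the metrizable, non-normable space $\C^{\N}$ as a complemented subspace of $\D'(\R)$. As written, the equivalence between degree-$n$ solvability and closed range of $D$ on $\D'(\R)$ is therefore not established.

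The paper closes this by working throughout in the weak dual topologies and proving the two homomorphism properties by hand.
\begin{itemize}
\item For $\delta$ (Proposition~\ref{prop:delta_dist}), the stalk formula $\pmb u_\alpha=-\pmb f_{-\alpha,\alpha}$ of Lemma~\ref{lem:delta_acyclic} is globalized into an explicit right inverse on the cocycle space, $S(\msf u)=-\sum_j\sum_{\alpha}\widehat\rho_{j,\alpha}\,\pmb u_{-\alpha}$ with $\widehat\rho_{j,\alpha}=\rho_j\circ g|_{\til U_{j,\alpha}}$. It is weakly continuous because $\{\til U_{j,\alpha}\}$ is locally finite.
\item For $\dd_{\widehat L}$ (Proposition~\ref{prop:partialdr_dist}), the first equivalence of Proposition~\ref{prop:duality_basic} reduces the question to closed range of $\dd_{\widehat L}$ on compactly supported leafwise forms from degree $n-1$ to $n$. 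This is proved by characterizing that range as the forms with vanishing fiber integrals (Lemma~\ref{lem:partialdr_char}: a compactly supported Poincar\'e lemma with parameters plus an exhaustion of $\widehat L$ by coordinate balls).
\end{itemize}
Your tensor-product route for the second point would need separate justification. The exactness of completed tensor products used in Proposition~\ref{prop:partial_derham_solvable} is for Fr\'echet-nuclear spaces, whereas $\cinfty_c(\R)$ and $F^q_c(\widehat L)$ are LF spaces.

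With these two properties, the net version of sequential invertibility for homomorphisms (Remark~\ref{rem:seq_invert}) runs the double-complex argument (Proposition~\ref{prop:dist_equiv1}). Finally, the identifications you only assert must also be topological:
\begin{itemize}
\item $g^*$ must be a weak isomorphism onto the invariant currents; the paper builds it from integration along the fibers $g_*$ and its right inverse $g^!$ (Proposition~\ref{prop:pullback_iso_dist}).
\item $\ker\dd_{\widehat L}\cong\D'(\R)$ must be a weak isomorphism intertwining the translations $T^*_\alpha$.
\end{itemize}
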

\begin{Rem}
  This proves the equivalence $\eqref{thm:degs1n_mainthm-it2} \Longleftrightarrow \eqref{thm:degs1n_mainthm-it3}$ in Theorem~\ref{thm:degs1n_mainthm}, and consequently completes its proof.
\end{Rem}

We proceed by duality, which requires us to study the $\dd'$ complex (in the first level) acting on distributions, or currents. The strategy is the same as the one used to prove the result in the first degree, so we shall start with some preliminaries regarding the differential complexes induced by closed, non-singular $1$-forms acting on these generalized objects. Our main reference regarding the theory of currents is~\cite[Section~2, Chapter~I]{demailly}.

\subsection{The differential complex acting on currents}

For $\Omega$ a smooth, connected and oriented $(n+1)$-dimensional manifold (not necessarily compact\footnote{We shall apply these results in the non-compact case only for $\Omega=\R \times \widehat{L}$ and $\zeta=\dd t$, where $(t,x)\in \R \times \widehat{L}$.}), let $F^q_c(\Omega)$ be the space of compactly supported smooth $q$-forms on $\Omega$, endowed with the usual strict LF topology. We write
\begin{equation*}
  \D'(\Omega;\Lambda^q) \dfn F^{n+1-q}_c(\Omega)'
\end{equation*}
for the space of $q$-currents on $\Omega$. We shall consider mostly the \emph{weak dual topology} on $\D'(\Omega;\Lambda^q)$, with a basis of seminorms given by
\begin{equation*}
  \rho^{\sigma}_S(u) \dfn \sup_{\omega \in S} |\langle u, \omega\rangle|,
  \quad u \in \D'(\Omega;\Lambda^q),
\end{equation*}
where $S \subset F^{n+1-q}_c(\Omega)$ is finite. It will also be important to consider on $\D'(\Omega;\Lambda^q)$ the \emph{strong dual topology}, with a basis of seminorms given by
\begin{equation*}
  \rho^{b}_S(u) \dfn \sup_{\omega \in S} | \langle u,\omega \rangle |,
  \quad u \in \D'(\Omega;\Lambda^q),
\end{equation*}
where $S \subset F^{n+1-q}_c(\Omega)$ is a bounded set. It is well-known that elements of $\D'(\Omega;\Lambda^q)$ can be represented locally as $q$-forms with distributional coefficients~\cite[Proposition~(2.9), Chapter~I]{demailly}.

Let $\zeta \in F^1(\Omega)$ be a real, closed, non-singular 1-form defined on $\Omega$. Take
\begin{equation*}
  \NN^q_\VV(\Omega;\D') \dfn
  \left\{ u\in \D'(\Omega; \Lambda^q) \st \langle u, \zeta \wedge \omega \rangle = 0, \ \forall \omega \in F^{n-q}_c(\Omega) \right\}
\end{equation*}
for $0 \leq q \leq n$, whereas $\NN^{n+1}_\VV(\Omega;\D') \dfn \{0\}$. For each $q$, this is a weakly (and therefore strongly) closed subspace of $\D'(\Omega;\Lambda^q)$: we define
\begin{equation*}
  \Lambda^q_{\VV}(\Omega;\D') \dfn
  \frac{\D'(\Omega;\Lambda^q)}{\NN^q_\VV(\Omega;\D')},
  \quad 0 \leq q \leq n + 1.
\end{equation*}
Clearly, this construction can be localized to open subsets $U \subset \Omega$; this procedure produces the space of distribution sections of the vector bundle $\Lambda^q_{\VV} \to \Omega$ over $U$.

Note also that, since $\zeta$ is closed, the exterior derivative satisfies
\begin{equation*}
  \langle \dd u,\zeta \wedge \omega \rangle
  = (-1)^{q+1}\langle u,\dd (\zeta \wedge \omega) \rangle
  = (-1)^{q}\langle u,\zeta \wedge \dd \omega \rangle
\end{equation*}
for all $\omega \in F^{n-q - 1}_c(\Omega)$, which equals zero provided $u \in \NN^q_\VV(\Omega;\D')$; therefore, it descends to the quotient and we obtain a differential complex
\begin{equation*}
  \dd'_q:\Lambda^q_\VV(\Omega;\D') \lra \Lambda^{q+1}_\VV(\Omega;\D'),
  \quad 0 \leq q \leq n.
\end{equation*}
\begin{Prop}
  \label{prop:duality_basic}
  The following are equivalent:
  \begin{enumerate}
  \item \label{it:1noprime} $\dd'_{n-1}:\Lambda^{n-1}_{c,\VV}(\Omega) \to \Lambda^n_{c,\VV}(\Omega)$ has closed range.
  \item \label{it:2noprime} $\dd'_0:\Lambda^0_\VV(\Omega;\D') \to \Lambda^1_\VV(\Omega;\D')$ is a homomorphism for the weak dual topology.
  \end{enumerate}
  Moreover, if $\Omega$ is compact, then the following are equivalent:
  \begin{enumerate}[label=(\arabic*')]
  \item \label{it:1prime} $\dd'_{n-1}:\Lambda^{n-1}_\VV(\Omega)\to \Lambda^n_{\VV}(\Omega)$ has closed range.
  \item \label{it:2prime} $\dd'_0:\Lambda^0_\VV(\Omega;\D')\to \Lambda^1_\VV(\Omega;\D')$ has weakly closed range.
  \item \label{it:3prime} $\dd'_0:\Lambda^0_\VV(\Omega;\D')\to \Lambda^1_\VV(\Omega;\D')$ has strongly closed range.
  \end{enumerate}
\end{Prop}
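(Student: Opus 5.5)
The plan is to realize $\Lambda^q_\VV(\Omega;\D')$ as the dual of a space of compactly supported smooth sections, and $\dd'_0$ on currents as (up to sign) the transpose of $\dd'_{n-1}$ acting on compactly supported sections. Once this is in place, both equivalences follow from the closed range and homomorphism theorems for duals of LF/Fréchet spaces.

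\textbf{Step 1 (identify duals).} Since $\VV$ is the annihilator of $\zeta$, we have $\NN^q_\VV = \zeta\wedge F^{q-1}$, locally and also for compact supports (use a transversal $X$ and contraction, $f = \zeta\wedge(X\iprod f) + X\iprod(\zeta\wedge f)$). Hence $\Lambda^{q}_{c,\VV}(\Omega) \simeq F^q_c(\Omega)/\zeta\wedge F^{q-1}_c(\Omega)$, and the map $\omega\mapsto \zeta\wedge\omega$ identifies this quotient with the closed subspace $\zeta\wedge F^q_c(\Omega)\subset F^{q+1}_c(\Omega)$. The isomorphism is topological, because $X\iprod$ gives a continuous inverse on that subspace. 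The annihilator of $\zeta\wedge F^{n-q}_c$ in $\D'(\Omega;\Lambda^q)=F^{n+1-q}_c(\Omega)'$ is by definition $\NN^q_\VV(\Omega;\D')$. Therefore the quotient $\Lambda^q_\VV(\Omega;\D')$ is the dual of $\zeta\wedge F^{n-q}_c(\Omega)\simeq \Lambda^{n-q}_{c,\VV}(\Omega)$. This uses Hahn–Banach: every functional on the closed subspace extends. The quotient weak topology coincides with the weak dual topology $\sigma$ for this pairing, since weak topologies pass to quotients by annihilators. When $\Omega$ is compact, the strong topologies correspond as well, because every bounded set of the closed subspace of a Fréchet space is bounded in the ambient space, and conversely bounded sets lift through $X\iprod$.

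\textbf{Step 2 (transpose).} For $u\in\D'(\Omega)$ and $\omega\in F^{n-1}_c$ we have $\langle \dd u,\zeta\wedge\omega\rangle = -\langle u,\dd(\zeta\wedge\omega)\rangle = \langle u, \zeta\wedge \dd\omega\rangle$, since $\zeta$ is closed. Under the identifications of Step 1, $\dd'_0$ on currents is therefore exactly the transpose of $\dd'_{n-1}:\Lambda^{n-1}_{c,\VV}\to\Lambda^n_{c,\VV}$, up to sign.

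\textbf{Step 3 (apply duality theorems).} For the first equivalence, $\Lambda^q_{c,\VV}(\Omega)$ is a strict LF space (a closed subspace of $F^{q+1}_c$) and the operator is continuous. I would invoke the general fact, Köthe's homomorphism theorem applied to the pair $(E,E')$ with weak topologies, that a continuous map $T$ between such spaces has $\transp{T}$ a weak homomorphism if and only if $\ran T$ is weakly closed. Since $\ran T$ is a subspace, weakly closed is the same as closed. The delicate point I expect is precisely that, in the LF setting, closed range of $T$ need not by itself make $T$ a homomorphism. I would therefore cite the version of the theorem stating that \emph{$\transp{T}$ is a weak homomorphism iff $\ran T$ is closed} (Köthe, \S32). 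That version holds for arbitrary locally convex spaces and needs no completeness assumptions.

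For the compact case, the spaces in \ref{it:1prime} are Fréchet, and the Homomorphism Theorem already quoted in the paper gives the equivalence \ref{it:1prime}$\Leftrightarrow$\ref{it:2prime}: $T$ has closed range iff $\transp{T}$ has weakly closed range. For \ref{it:2prime}$\Leftrightarrow$\ref{it:3prime}, the same Fréchet theorem (Köthe, p.~18) also states that $\ran T$ is closed iff $\ran\transp{T}$ is strongly closed. In fact, in the dual of a Fréchet–Montel space (our spaces are nuclear, hence Montel) the strong dual is reflexive, so weakly and strongly closed subspaces coincide by Hahn–Banach. Either route closes the argument. The only genuine care needed is the compatibility of the quotient and subspace topologies in Step 1, which is where the explicit transversal $X$ is used.
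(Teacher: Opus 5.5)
Your proposal is correct and follows essentially the same route as the paper. It uses the pairing $\langle u, \zeta \wedge \omega \rangle$ with an inverse built from $X \iprod$ (your complemented-subspace/annihilator identification is precisely the paper's explicit pair $\mathfrak{I}_q$, $\mathfrak{Q}_q$), the transpose identity coming from $\dd \zeta = 0$, the general weak homomorphism theorem for the first equivalence, and, in the compact case, the Fr\'echet--Montel homomorphism theorem with strong topologies matched via boundedness of $\zeta \wedge B$ and $X \iprod B$.

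The only difference is organizational. You get (1')$\Leftrightarrow$(2') directly from the Fr\'echet closed range theorem and (2')$\Leftrightarrow$(3') from Montel reflexivity; rely on that reflexivity argument rather than on the strong-closedness clause you attribute to K\"othe for general Fr\'echet spaces. The paper instead runs the cycle (1')$\Rightarrow$(2')$\Rightarrow$(3')$\Rightarrow$(1') and derives the first arrow from the weak-homomorphism property, a step your direct appeal to the Fr\'echet theorem makes unnecessary.
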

\begin{Rem}
  Note that $\NN^0_\VV(\Omega;\D') = \{0\}$ (since every top form is locally of the form $\zeta \wedge \omega$), so $\Lambda^0_\VV(\Omega; \D') \simeq \D'(\Omega; \Lambda^0) \simeq \cinfty_c(\Omega)' \simeq \D'(\Omega)$. We also let
  \begin{equation*}
    \Lambda^{q}_{c,\VV}(\Omega) \dfn
    \frac{F^q_c(\Omega)}{\NN^q_{\VV}(\Omega) \cap F^q_c(\Omega)}
  \end{equation*}
  endowed with the quotient topology; note that $\NN^q_\VV(\Omega) \cap F^q_c(\Omega)$ is a closed subspace of $F^q_c(\Omega)$.
\end{Rem}
\begin{proof}
  In this proof, it will be necessary to be more explicit about elements of $\Lambda^q_{\VV}(\Omega;\D'),\Lambda^q_\VV(\Omega)$ and their representatives. We shall use the notation $[u]\in \Lambda^{q}_\VV(\Omega;\D')$, where $u\in \D'(\Omega;\Lambda^q)$ is a representative, and $[\omega]\in \Lambda^q_{\VV}(\Omega)$ (respectively, $[\omega]\in \Lambda^q_{c,\VV}(\Omega)$), where $\omega \in F^{q}(\Omega)$ (respectively, $\omega \in F^q_c(\Omega)$) is a representative.

  Let $0 \leq q \leq n$ and consider the bilinear pairing
  \begin{equation*}
    \TR{\mathfrak{B}_q}
    {\left( [\omega], [u] \right)}{\Lambda^{q}_{c,\VV}(\Omega) \times \Lambda^{n-q}_{\VV}(\Omega; \D')}
    {\langle u,\zeta \wedge \omega \rangle}{\C}
  \end{equation*}
  which is well-defined. Indeed, let $\omega_1,\omega_2 \in F^{q}_c(\Omega)$ be such that $\omega_1-\omega_2 \in \NN^{q}_\VV(\Omega)$ and consider $u_1,u_2 \in \D'(\Omega;\Lambda^{n-q})$ such that $u_1-u_2 \in \NN^{n-q}_\VV(\Omega;\D')$. Observe that $\zeta \wedge (\omega_1-\omega_2) \equiv 0$, since $\omega_1 - \omega_2$ is, locally, a multiple of $\zeta$. Then,
    \begin{equation*}
      \langle u_1,\zeta\wedge \omega_1 \rangle = \langle u_1,\zeta \wedge \omega_2 \rangle = \langle u_2,\zeta \wedge \omega_2 \rangle,
    \end{equation*}
    since $\langle u_1-u_2,\zeta \wedge \omega_2 \rangle = 0$. Given $[u]\in \Lambda^{n-q}_\VV(\Omega;\D')$, let $\mathfrak{I}_q[u]: \Lambda^q_{c,\VV}(\Omega) \to \C$ be defined by
  \begin{equation*}
    \langle \mathfrak{I}_q[u], [\omega] \rangle \dfn \mathfrak{B}_q\left( [\omega], [u] \right) = \langle u,\zeta \wedge \omega\rangle.
  \end{equation*}
  It is clear that $\mathfrak{I}_q[u]$ is continuous, so it defines an element of $\Lambda^q_{c,\VV}(\Omega)'$. We obtain, then, a linear map
  \begin{equation*}
    \TR{\mathfrak{I}_q}{[u]}{\Lambda^{n-q}_\VV(\Omega;\D')}
    {\mathfrak{I}_q[u]}{\Lambda^q_{c,\VV}(\Omega)'}
  \end{equation*}
  which is clearly injective. A simple verification shows that $\mathfrak{I}_q$ is continuous when both spaces are endowed with the weak dual topology. We shall construct an inverse for the map $\mathfrak{I}_q$.

  Fix $X$ a smooth, transversal vector field for $\zeta$ over $\Omega$. For $\phi \in \Lambda^q_{c,\VV}(\Omega)'$ let $\mathfrak{Q}_q^\bb(\phi) \in \D'(\Omega;\Lambda^{n-q})$ be defined by
  \begin{equation*}
    \langle \mathfrak{Q}_q^\bb(\phi), \omega \rangle \dfn \langle \phi, [X \iprod \omega] \rangle,
    \quad \omega \in F^{q+1}_c(\Omega),
  \end{equation*}
  and then take $\mathfrak{Q}_q(\phi) \dfn [\mathfrak{Q}^{\bb}_q(\phi)] \in \Lambda^{n-q}_\VV(\Omega;\D')$. This defines a linear map
  \begin{equation*}
    \TR{\mathfrak{Q}_q}{\phi}{\Lambda^q_{c,\VV}(\Omega)'}
    {\mathfrak{Q}_q(\phi)}{\Lambda^{n-q}_\VV(\Omega;\D')}
  \end{equation*}
  which is also continuous when both spaces carry the weak dual topology. Simple computations show that $\mathfrak{I}_q$ and $\mathfrak{Q}_q$ are inverses of one another.

Indeed, given $[u] \in \Lambda^{n-q}_\VV(\Omega;\D')$ we have, for $\omega \in F^{q+1}_c(\Omega)$:
    \begin{align*}
      \langle \mathfrak{Q}^{\bb}_q( \mathfrak{I}_q[u]) , \omega \rangle
      &= \langle \mathfrak{I}_q[u], [X \iprod \omega] \rangle \\
      &= \langle u, \zeta \wedge (X \iprod \omega) \rangle \\
      &= \langle u, (X \iprod \zeta) \wedge \omega - X \iprod(\zeta \wedge \omega) \rangle \\
      &= \langle u, \omega \rangle - \langle u, X \iprod(\zeta \wedge \omega) \rangle.
    \end{align*}
    However, the distribution $F^{q+1}_c(\Omega) \ni \omega \mapsto \langle u,X \iprod (\zeta \wedge \omega)\rangle$ clearly belongs to $\NN^{n-q}_{\VV}(\Omega;\D')$, so we conclude that $\mathfrak{Q}_q( \mathfrak{I}_q[u]) = [u]$. For the other direction, given $\phi \in \Lambda^q_{c,\VV}(\Omega)'$ and $[\omega]\in \Lambda^q_{c,\VV}(\Omega)$:
    \begin{align*}
      \langle \mathfrak{I}_q (\mathfrak{Q}_q (\phi)), [\omega] \rangle
      &= \langle \mathfrak{I}_q [\mathfrak{Q}_q^\bb (\phi)], [\omega] \rangle \\
      &= \langle \mathfrak{Q}_q^\bb (\phi), \zeta \wedge \omega \rangle \\
      &= \langle \phi, [X \iprod (\zeta \wedge \omega)] \rangle \\
      &= \langle \phi, [\omega - \zeta \wedge (X\iprod \omega)] \rangle \\
      &= \langle \phi, [\omega] \rangle,
    \end{align*}
    since $\zeta \wedge (X\iprod \omega) \in \NN^{q}_\VV(\Omega)$.

  We conclude that $\mathfrak{I}_q$ is a weak topological isomorphism. From the general Homomorphism Theorem~\cite[Proposition~35.7]{treves_tvs}, the map
  \begin{equation*}
    \dd'_{n-1}: \Lambda^{n-1}_{c,\VV}(\Omega) \lra \Lambda^n_{c,\VV}(\Omega)
  \end{equation*}
  has closed range if and only if its transpose
  \begin{equation*}
    \transp{\dd}'_{n-1}: \Lambda^n_{c,\VV}(\Omega)' \lra \Lambda^{n-1}_{c,\VV}(\Omega)'
  \end{equation*}
  is a homomorphism onto its range for the weak dual topology. Using that $\zeta$ is closed, it is easy to check that the diagram
  \begin{equation*}
    \begin{tikzcd}
      \Lambda^0_{\VV}(\Omega;\D')  & \Lambda^1_{\VV}(\Omega;\D') \\
      \Lambda^{n}_{c,\VV}(\Omega)'  & \Lambda^{n-1}_{c,\VV}(\Omega)'
      \arrow["{-\dd'_0}", from=1-1, to=1-2]
      \arrow["{\mathfrak{I}_0}", from=1-1, to=2-1]
      \arrow["{\mathfrak{I}_1}", from=1-2, to=2-2]
      \arrow["{\transp{\dd}'_{n-1}}", from=2-1, to=2-2]
    \end{tikzcd}
  \end{equation*}
  is commutative. Indeed, let $[u]\in \Lambda^0_{\VV}(\Omega;\D')$ and $[\omega]\in \Lambda^{n-1}_{c,\VV}(\Omega)$. Then,
    \begin{align*}
      - \langle \mathfrak{I}_1 (\dd'_0[u]), [\omega] \rangle
      &= - \langle \mathfrak{I}_1 [\dd u], [\omega] \rangle \\
      &= - \langle \dd u, \zeta \wedge \omega \rangle \\
      &= - \langle u, \dd (\zeta \wedge \omega) \rangle \\
      &= \langle u, \zeta \wedge \dd \omega \rangle \\
      &= \langle \mathfrak{I}_0[u], [\dd \omega] \rangle \\
      &= \langle \mathfrak{I}_0[u], \dd'_{n - 1} [\omega] \rangle \\
      &= \langle \transp{\dd}'_{n - 1} (\mathfrak{I}_0[u]), [\omega] \rangle.
    \end{align*}

  Since $\mathfrak{I}_0$ and $\mathfrak{I}_1$ are isomorphisms for the weak dual topology, we conclude that $\transp{\dd}'_{n-1}$ is a weak homomorphism if and only if $\dd'_0$ is a weak homomorphism, thus proving~$\eqref{it:1noprime} \Longleftrightarrow \eqref{it:2noprime}$.

  The remaining items are well-known and can be treated in the following way: if $\Omega$ is compact, then the $\Lambda^{q}_{c,\VV}(\Omega)=\Lambda^q_{\VV}(\Omega)$ are Fr{\'e}chet-Schwartz (FS) spaces, and the $\Lambda^{q}_{\VV}(\Omega;\D')$ are dual Fr{\'e}chet-Schwartz (DFS) spaces, respectively. From $\eqref{it:1noprime} \Longrightarrow \eqref{it:2noprime}$, we already know that $\ref{it:1prime} \Longrightarrow \ref{it:2prime}$, since $\dd'_0$, as a weak homomorphism, must have weakly closed range. Clearly, $\ref{it:2prime} \Longrightarrow \ref{it:3prime}$, since every weakly closed subspace must be strongly closed~\cite[Proposition~35.2]{treves_tvs}.

  For the final implication $\ref{it:3prime} \Longrightarrow \ref{it:1prime}$, we observe that, in the compact case, the maps $\mathfrak{I}_q$ are also isomorphisms for the strong dual topologies. Indeed, one just has to show strong continuity of $\mathfrak{I}_q$ and $\mathfrak{Q}_q$. Regarding $\mathfrak{I}_q$, let $\{[u_\kappa]\}_\kappa$ be a net in $\Lambda^{n-q}_{\VV}(\Omega;\D')$ that converges strongly to zero. The meaning of this is the following: for each $\kappa$ there is $v_\kappa \in \NN^q_{\VV}(\Omega;\D')$ such that the net $\{u_\kappa-v_\kappa\}_\kappa$ in $\D'(\Omega; \Lambda^{n - q}) = F^{q+1}(\Omega)'$ converges strongly to zero; the latter means that for every bounded set $B \subset F^{q+1}(\Omega)$ we have
  \begin{equation*}
    \sup_{\eta \in B} | \langle u_\kappa - v_{\kappa}, \eta \rangle | \lra 0
    \quad \text{as $\kappa \to \infty$ (in the sense of nets)}.
  \end{equation*}
  We want to show that the net $\{ \mathfrak{I}_q[u_\kappa] \}_\kappa$ converges strongly to zero in $\Lambda^{q}_{\VV}(\Omega)'$, that is: given $\pmb{B} \subset \Lambda^{q}_{\VV}(\Omega)$ bounded we have
  \begin{equation*}
    \sup_{\pmb{f} \in \pmb{B}} \left| \langle \mathfrak{I}_q[u_\kappa], \pmb{f} \rangle \right| \lra 0
    \quad \text{as $\kappa \to \infty$}.
  \end{equation*}
  Let $\pi: F^q(\Omega) \to \Lambda^{q}_{\VV}(\Omega)$ denote the standard projection. By definition, for every $\pmb{f} \in \pmb{B}$ there is a corresponding $\omega_{\pmb{f}} \in \pi^{-1}(\pmb{f})$ such that the set $B \dfn \{\omega_{\pmb{f}} \st \pmb{f} \in \pmb{B} \} \subset F^{q}(\Omega)$ is bounded. Therefore, $\zeta \wedge B = \{\zeta \wedge \omega_{\pmb{f}} \st \pmb{f} \in \pmb{B} \}$ is bounded in $F^{q+1}(\Omega)$, hence
  \begin{align*}
    \sup_{\pmb{f} \in \pmb{B}} \left| \langle \mathfrak{I}_q[u_\kappa], \pmb{f} \rangle \right|
    &= \sup_{\pmb{f} \in \pmb{B}} \left| \langle \mathfrak{I}_q[u_\kappa], [\omega_{\pmb{f}}] \rangle \right| \\
    &= \sup_{\pmb{f} \in \pmb{B}} \left| \langle u_\kappa, \zeta \wedge \omega_{\pmb{f}} \rangle \right| \\
    &=\sup_{\eta \in \zeta \wedge B} \left| \langle u_\kappa - v_\kappa, \eta \rangle \right| \lra 0
  \end{align*}
  as $\kappa \to \infty$, as desired. In a very similar way, one proves that $\mathfrak{Q}_q$ is also strongly continuous (by using the fact that $X\iprod B$ is bounded whenever $B \subset F^{q}(\Omega)$ is bounded).

Indeed, let $\{\phi_\kappa\}_\kappa$ be a net that converges strongly to zero in $\Lambda^{q}_{\VV}(\Omega)'$. Hence, for any bounded $\pmb{B} \subset \Lambda^{q}_{\VV}(\Omega)$ we have
    \begin{equation*}
      \sup_{\pmb{f} \in \pmb{B}} \left| \langle \phi_\kappa, \pmb{f} \rangle \right| \lra 0,
      \quad \kappa \to \infty.
    \end{equation*}
    If $B \sset F^{q+1}(\Omega)$ is bounded then $X \iprod B$ is bounded in $F^{q}(\Omega)$, which implies that $\pi (X \iprod B)$ is bounded in $\Lambda^{q}_{\VV}(\Omega)$. Hence:
    \begin{align*}
      \sup_{\omega \in B} \left| \langle \mathfrak{Q}^{\bb}_q(\phi_{k}), \omega \rangle \right|
      &= \sup_{\omega \in B} \left| \langle \phi_{k}, [X \iprod \omega] \rangle \right| \\
      &=\sup_{\pmb{f} \in \pi (X \iprod B)} \left| \langle \phi_{k}, \pmb{f} \rangle \right| \lra 0,
        \quad \kappa \to \infty.
    \end{align*}
    Therefore, $\mathfrak{Q}^{\bb}_q$ is strongly continuous, which allows us to deduce that $\mathfrak{Q}_q$ is strongly continuous as well (it is just its projection to a quotient space).
  
  Returning to the proof, the Homomorphism Theorem for Fr{\'e}chet-Montel implies that $\dd'_{n-1}: \Lambda^{n-1}_\VV(\Omega) \to \Lambda^{n}_{\VV}(\Omega)$ has closed range if and only if its transpose $\transp{\dd'_{n-1}}$ has \emph{strongly} closed range. Since $\mathfrak{I}_{q}$ is also an isomorphism for the strong dual topology, this is equivalent with $\dd'_0:\Lambda^0_{\VV}(\Omega;\D') \to \Lambda^1_{\VV}(\Omega;\D')$ having strongly closed range, which is condition $\ref{it:3prime}$. The proof is complete.

\end{proof}

\subsection{Applications to specific structures}

Consider now $\Omega$ compact and $\zeta \in F^1(\Omega)$ irrational, and let $g:\R \times \widehat{L} \to \Omega$ be the covering constructed in Section~\ref{sec:irr_deg1}. We recall the definition of the \emph{integration along the fibers map} (see \cite[Section~(2.15), Chapter~I]{demailly}):
\begin{equation*}
  g_*: F^{q}_c(\R \times \widehat{L}) \lra F^q(\Omega).
\end{equation*}
Given $\omega \in F^{q}_c(\R \times \widehat{L})$, let $g_* \omega \in F^q(\Omega)$ be defined by the rule\footnote{The sum is finite by compactness of $\supp (\omega)$.}
\begin{equation}
  \label{eq:int_fibers}
  (g_*\omega)|_x (v_1, \ldots, v_q) \dfn \sum_{p \in g^{-1}(x)} \omega|_p \left( v_{1,p}^\sharp, \ldots, v_{q,p}^\sharp \right),
\end{equation}
where $v_{j,p}^\sharp \in T_p (\R \times \widehat{L})$ in the unique lift of $v_j \in T_x \Omega$ by $g_*: T_p (\R \times \widehat{L}) \to T_x \Omega$, for each $p \in g^{-1}(x)$ and $j = 1, \ldots, q$. We will show that this map is surjective by exhibiting a right inverse for it.

Fix a finite covering $\{U_j\}$ of $\Omega$ by distinguished open sets: we have $g^{-1}(U_j) = \bigsqcup_{\alpha \in \mathcal{P}(\zeta)} \til{U}_{j, \alpha}$, where $g|_{\til{U}_{j, \alpha}}: \til{U}_{j, \alpha} \to U_j$ is a diffeomorphism and $T_\alpha(\til{U}_{j, \beta}) = \til{U}_{j, \alpha + \beta}$ for all $\alpha, \beta, j$. We assume, by refining the cover, that the pairwise intersections $U_j \cap U_k$ are connected for all $j,k$. Notice that (this will be important later on), with such choices, one can prove that $\{  \til{U}_{j, \alpha} \}$ is a locally finite open covering of $\R \times \widehat{L}$.

For each $j$, let $\til{V}_j\dfn \til{U}_{j, 0} \subset \R \times \widehat{L}$. Fix also a partition of unity $\{\rho_j\}$ subordinated to the covering $\{U_j\}$, and define the map
\begin{equation*}
  g^{!}: F^q(\Omega) \lra F^q_c(\R \times \widehat{L})
\end{equation*}
by
\begin{equation}
  \label{eq:pullback_cpct_supp}
  g^{!} \omega \dfn \sum_j \left( g |_{\til{V}_j} \right)^* (\rho_j \omega),
  \quad \omega \in F^q(\Omega);
\end{equation}
notice that $\left( g |_{\til{V}_j} \right)^* (\rho_j \omega)$ is compactly supported in $\til{V}_j$ for each $j$.
\begin{Lem}
  \label{lem:astexc}
  The maps $g_*:F^q_{c}(\R \times \widehat{L})\to F^q(\Omega)$ and $g^{!}:F^q(\Omega)\to F^q_c(\R \times \widehat{L})$ are both continuous and $g_* \circ g^! = \text{identity}$.
\end{Lem}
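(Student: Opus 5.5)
The proof splits into three independent checks: continuity of $g_*$, continuity of $g^!$, and the identity $g_* \circ g^! = \Id$. Each is local, and each uses only that $g$ is a covering whose restrictions to the sheets $\til{U}_{j,\alpha}$ are diffeomorphisms onto $U_j$.

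\emph{Continuity of $g_*$.} Since $F^q_c(\R \times \widehat{L})$ is a strict LF space, it suffices to prove that $g_*$ is continuous on each Fr\'echet step $F^q_K$ of forms supported in a fixed compact $K \sset \R \times \widehat{L}$. The covering $\{\til{U}_{j,\alpha}\}$ is locally finite, so $K$ meets only finitely many sheets $\til{U}_{j,\alpha}$.

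Over $U_j$, the definition \eqref{eq:int_fibers} reads
\begin{equation*}
  (g_*\omega)|_{U_j} = \sum_{\alpha} \left( (g|_{\til{U}_{j,\alpha}})^{-1} \right)^* \left( \omega|_{\til{U}_{j,\alpha}} \right),
\end{equation*}
because the unique lift of $v \in T_x\Omega$ to $p \in \til{U}_{j,\alpha}$ is $(g|_{\til{U}_{j,\alpha}})^{-1}_* v$. For $\omega \in F^q_K$ this is a sum over a fixed finite set of $\alpha$'s, independent of $\omega$. Each term is a pullback by a diffeomorphism, which is continuous in the $\cinfty$ topology. Controlling the $\cinfty$ seminorms of $g_*\omega$ on compacts of each $U_j$ then gives continuity on $F^q_K$, hence on $F^q_c$.

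\emph{Continuity of $g^!$.} The map $\omega \mapsto \rho_j \omega$ is continuous from $F^q(\Omega)$ into the forms supported in $\supp \rho_j$. Pulling back by the diffeomorphism $g|_{\til{V}_j}: \til{V}_j \to U_j$ produces forms supported in the fixed compact set $(g|_{\til{V}_j})^{-1}(\supp\rho_j)$. Extending by zero, each term is continuous into the Fr\'echet step $F^q_{K_j}$, where $K_j$ is that compact. The sum over $j$ is finite, so $g^!$ maps continuously into $F^q_{\cup K_j} \hookrightarrow F^q_c$.

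\emph{The identity $g_* \circ g^! = \Id$.} By linearity of $g_*$ it suffices to show
\begin{equation*}
  g_* \left( (g|_{\til{V}_j})^* (\rho_j\omega) \right) = \rho_j \omega
\end{equation*}
for each $j$; summing over $j$ and using $\sum_j \rho_j = 1$ then finishes the proof. Fix $x \in \Omega$. The form $\eta_j \dfn (g|_{\til{V}_j})^*(\rho_j\omega)$ is supported in $\til{V}_j$, and $g^{-1}(x) \cap \til{V}_j$ has at most one point, namely $p = (g|_{\til{V}_j})^{-1}(x)$ when $x \in U_j$. So the sum in \eqref{eq:int_fibers} has at most one nonzero term.

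If $x \in U_j$, that term equals $\eta_j|_p(v^\sharp_{1,p},\dots) = (\rho_j\omega)|_x(v_1,\dots)$, because $g_* v^\sharp_{i,p} = v_i$. If $x \notin U_j$, then $\rho_j(x) = 0$ and there is no contribution, consistent with the right-hand side. One subtlety: $\eta_j$ is extended by zero outside $\til{V}_j$, so it vanishes at every other point of the fiber $g^{-1}(x)$; this is legitimate because $\supp \rho_j$ is compact in $U_j$, so the extension is smooth.

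No step is a real obstacle. The only point requiring care is the finiteness claim behind the continuity of $g_*$: a fixed compact $K$ must meet only finitely many sheets, uniformly in $\omega \in F^q_K$. This follows from local finiteness of $\{\til{U}_{j,\alpha}\}$, or directly from properness of the deck action.
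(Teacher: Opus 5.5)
Your proposal is correct and follows the same route as the paper. The identity is proved term by term, using that each fiber of $g$ over $U_j$ meets $\til{V}_j$ in exactly one point and does not meet it at all outside $U_j$; the terms are then summed using $\sum_j \rho_j = 1$. Your continuity arguments only spell out what the paper says follows from the definitions: for $g_*$ you reduce to the Fr\'echet steps $F^q_K$ via local finiteness of $\{\til{U}_{j,\alpha}\}$, and for $g^!$ you use the fixed compact supports $(g|_{\til{V}_j})^{-1}(\supp \rho_j)$.
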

\begin{proof} Continuity of $g_*$, $g^!$ follows from their definitions~\eqref{eq:int_fibers},~\eqref{eq:pullback_cpct_supp}. Given $j$, in the context of~\eqref{eq:int_fibers}, note that
  \begin{multline*}
    \left. \left( g_* \left[ \left( g |_{\til{V}_j} \right)^* (\rho_j \omega) \right] \right) \right|_x (v_1, \ldots, v_q) \\
    = \sum_{p \in g^{-1}(x) \cap \til{V}_j} \left. \left[ \left( g |_{\til{V}_j} \right)^* (\rho_j \omega) \right] \right|_p \left( v_{1,p}^\sharp, \ldots, v_{q,p}^\sharp \right).    
  \end{multline*}
  If $x \notin U_j$ then $g^{-1}(x) \cap \til{V}_j = \emptyset$, so the form must be supported in $U_j$. But, over $U_j$, there is exactly one point in each fiber of $g$ that belongs to $\til{V}_j$. This implies that
  \begin{equation*}
    g_* \left[ \left( g |_{\til{V}_j} \right)^* (\rho_j \omega) \right] = \rho_j \omega
  \end{equation*}
  and the final claim follows from linearity.
\end{proof}

Thanks to continuity of $g_*$, we can, by duality, define the pullback of currents $g^*: \D'(\Omega; \Lambda^q) \to \D'(\R \times \widehat{L};\Lambda^q)$:
\begin{equation*}
  \langle g^*u,\omega \rangle \dfn \langle u, g_* \omega \rangle,
  \quad \omega \in F^{n+1-q}_c(\R\times \widehat{L}).
\end{equation*}
If $u \in \NN^q_{\widehat{\VV}}(\Omega; \D')$, then~\cite[Theorem~(2.14), Chapter~I]{demailly}
\begin{equation*}
  \langle g^*u, \dd t \wedge \omega \rangle
  = \langle u, g_* (g^* \zeta \wedge \omega) \rangle
  = \langle u,\zeta \wedge g_* \omega \rangle = 0.
\end{equation*}
hence $g^*$ descends to a well-defined and weakly continuous pullback map
\begin{equation*}
  g^*:\Lambda^q_{\VV}(\Omega;\D') \lra \Lambda^q_{\widehat{\VV}}(\R \times \widehat{L};\D').
\end{equation*}
By a similar argument, for each $\alpha \in \per(\zeta)$ one checks that $T_\alpha^*$ induces a map
\begin{equation*}
T_\alpha^*:\Lambda^q_{\widehat{\VV}}(\R \times \widehat{L}; \D' ) \lra \Lambda^q_{\widehat{\VV}}(\R \times \widehat{L}; \D'),
\end{equation*}
which allows us to define
\begin{equation*}
  \Lambda^q_{\widehat{\VV}}(\R \times \widehat{L}; \D' )^{\per(\zeta)} \dfn
  \left\{ \pmb{f} \in \Lambda^q_{\widehat{\VV}}(\R \times \widehat{L}; \D' ) \st T_\alpha^* \pmb{f} = \pmb{f}, \quad \forall \alpha \in \per(\zeta) \right\}
\end{equation*}
endowed with the subspace topology.

\begin{Prop}
  \label{prop:pullback_iso_dist}
  The map
  \begin{equation}
    \label{eq:gstardistributions}
    g^*:\Lambda^q_\VV(\Omega;\D') \lra \Lambda^q_{\widehat{\VV}}(\R \times \widehat{L}; \D' )^{\per(\zeta)}
  \end{equation}
  is a topological isomorphism for the weak dual topologies.
\end{Prop}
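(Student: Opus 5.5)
I will follow the same scheme as in the proof of Proposition~\ref{prop:pullback_map}, but now using duality with the maps $g_*$ and $g^!$ of Lemma~\ref{lem:astexc}. First, I check that \eqref{eq:gstardistributions} is well defined. For $\alpha \in \per(\zeta)$ and $\omega \in F^{n+1-q}_c(\R\times\widehat{L})$ we have $g_*(T_\alpha^{-1})^*\omega = g_*\omega$. This holds because $g\circ T_\alpha = g$, so $T_\alpha$ permutes each fiber $g^{-1}(x)$ and \eqref{eq:int_fibers} is unchanged. Hence $T_\alpha^* g^* u = g^*u$, where $T_\alpha^*$ on currents is defined by duality using $(T_\alpha^{-1})^*$ on test forms, possibly up to an orientation sign, which is $+1$ since $T_\alpha$ preserves $\dd t$ and the orientation of $\widehat{L}$ pulled back from $\Omega$. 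Weak continuity of $g^*$ was already observed.

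Next comes injectivity. Suppose $g^*[u]=0$, i.e.\ $\langle u, g_*(\dd t\wedge\omega)\rangle = \langle u,\zeta\wedge g_*\omega\rangle = 0$ for every $\omega \in F^{n-q}_c(\R\times\widehat{L})$. By Lemma~\ref{lem:astexc}, every $\eta\in F^{n-q}(\Omega)$ equals $g_*g^!\eta$. Hence $\langle u,\zeta\wedge\eta\rangle=0$ for all $\eta$, so $[u]=0$.

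For surjectivity together with weak continuity of the inverse, I define the candidate inverse
\begin{equation*}
  \mathfrak{S}: \Lambda^q_{\widehat{\VV}}(\R\times\widehat{L};\D')^{\per(\zeta)} \lra \Lambda^q_\VV(\Omega;\D'),
  \qquad \langle \mathfrak{S}[w], \eta\rangle \dfn \langle w, g^!\eta\rangle,
\end{equation*}
on the level of representatives. It is well defined on classes because $g^!(\zeta\wedge\eta') = \dd t\wedge g^!\eta'$; this holds since $(g|_{\til V_j})^*\zeta = \dd t$. It is weakly continuous because $g^!$ is continuous, as a composition of duals. Then $\mathfrak{S}\circ g^* = \Id$ follows at once from $g_*\circ g^! = \Id$. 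The remaining and main point is $g^*\circ\mathfrak{S}=\Id$ on invariant classes. I need $\langle w, g^! g_*\omega\rangle = \langle w,\omega\rangle$ for $\omega = \dd t\wedge\omega'$ with $\omega'$ compactly supported, whenever $[w]$ is $\per(\zeta)$-invariant. By linearity and a partition of unity subordinate to the locally finite cover $\{\til U_{k,\alpha}\}$, I can reduce to $\omega$ supported in a single $\til U_{k,\alpha}$. Then $g_*\omega = (g|_{\til U_{k,\alpha}}^{-1})^*\omega$ is supported in $U_k$. Moreover, $g^!g_*\omega = \sum_j (g|_{\til V_j})^*(\rho_j\, g_*\omega)$, and on $\til V_j\cap g^{-1}(U_k)$ this equals $T_{\beta_{jk}}^*$-translates of $\rho_j\circ g$ times $\omega$. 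Here I use the connectedness of $U_j\cap U_k$: the sheet of $g^{-1}(U_j\cap U_k)$ lying in $\til V_j$ lies in a single $\til U_{k,\alpha-\beta}$, so the relevant deck transformation is one $T_\beta$ per $j$. Invariance of $[w]$ then gives $\langle w, T_\beta\text{-translate of } (\rho_j\circ g)\omega\rangle = \langle w,(\rho_j\circ g)\omega\rangle$, where the pairing against $\dd t\wedge(\cdot)$ only sees the class. Summing over $j$ with $\sum_j\rho_j\circ g = 1$ yields $\langle w,\omega\rangle$. I expect this bookkeeping of sheets and deck translates, together with checking that the test forms involved remain of the form $\dd t\wedge(\cdot)$, to be the main obstacle. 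Once it is done, $g^*$ and $\mathfrak{S}$ are mutually inverse weakly continuous maps, which proves the proposition.
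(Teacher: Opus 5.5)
Your proposal is correct and follows essentially the same route as the paper's proof. Your inverse $\mathfrak{S}$ is the paper's $h$, the transpose of $g^!$. The identity $\mathfrak{S}\circ g^*=\Id$ comes from $g_*\circ g^!=\Id$. The identity $g^*\circ\mathfrak{S}=\Id$ on $\per(\zeta)$-invariant classes is obtained, as in the paper, by reducing to test forms $\dd t\wedge\omega'$ supported in a single sheet $\til U_{k,\alpha}$. Connectedness of $U_j\cap U_k$ then singles out one deck transformation per index, and invariance is applied against forms of type $\dd t\wedge(\cdot)$. Your separate injectivity argument is harmless but redundant, since injectivity already follows from $\mathfrak{S}\circ g^*=\Id$.
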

\begin{proof}
  If $[u] \in \Lambda^q_{\VV}(\Omega;\D')$, then $g^*[u] \in \Lambda^q_{\widehat{\VV}}(\R \times \widehat{L}; \D' )^{\per(\zeta)}$: for every $\alpha \in \per(\zeta)$ and $\omega \in F^{n+1-q}_c(\R \times \widehat{L})$ we have~\cite[Theorem~(2.14), Chapter~I]{demailly}
  \begin{equation*}
    \langle T_\alpha^*(g^*u), \omega \rangle
    = \langle u, g_*(T_\alpha)_* \omega \rangle
    = \langle u, (g \circ T_\alpha)_* \omega \rangle
    =\langle u, g_* \omega \rangle
    =\langle g^* u,\omega \rangle,
  \end{equation*}
  so~\eqref{eq:gstardistributions} is well-defined and continuous: we shall construct its inverse explicitly. Let $h: \D'(\R \times \widehat{L}; \Lambda^q) \to \D'(\Omega; \Lambda^q)$ be the transpose of $g^!$; that is, given for $u \in \D'(\R \times \widehat{L}; \Lambda^q)$:
  \begin{equation*}
    \langle h(u),\omega \rangle \dfn \langle u,g^{!}\omega \rangle,
    \quad \omega \in F^{n+1-q}(\Omega).
  \end{equation*}
  As such, it is weakly continuous, and moreover descends to a continuous linear map $h: \Lambda_{\widehat{\VV}}^q (\R \times \widehat{L}; \D') \to \Lambda_{\VV}^q (\Omega; \D')$.

Indeed, if $v \in \NN^q_{\widehat{\VV}}(\R \times \widehat{L};\D')$ we will show that
    \begin{equation*}
      \langle h(v), \zeta \wedge \eta \rangle = \langle v, g^! (\zeta \wedge \eta) \rangle = 0,
      \quad \forall \eta \in F^{n-q}(\Omega).
    \end{equation*}
    For that matter, first observe that for each $j$ one has
    \begin{equation*}
      \left( g |_{\til{V}_j} \right)^* (\rho_j \zeta \wedge \eta)
      = \left( g |_{\til{V}_j} \right)^* \zeta \wedge \left( g |_{\til{V}_j} \right)^* (\rho_j \eta)
      = \dd t \wedge \left( g |_{\til{V}_j} \right)^* (\rho_j \eta);
    \end{equation*}
    therefore,
    \begin{equation*}
      \langle v, g^! (\zeta \wedge \eta) \rangle
      = \sum_j \left \langle v, \left( g |_{\til{V}_j} \right)^* (\rho_j \zeta \wedge \eta) \right \rangle
      = \sum_j \left \langle v,  \dd t \wedge \left( g |_{\til{V}_j} \right)^* (\rho_j \eta) \right \rangle
      = 0
    \end{equation*}
    since $v \in \NN^q_{\widehat{\VV}}(\R \times \widehat{L}; \D')$.

  When restricted to $\per(\zeta)$-invariant currents, $h$ and $g^*$ are mutual inverses. Let us prove this. Given $u \in \D'(\Omega; \Lambda^q)$ we have, by Lemma~\ref{lem:astexc},
  \begin{equation*}
    \langle h(g^*u),\omega \rangle = \langle u, g_*(g^{!}\omega) \rangle = \langle u, \omega \rangle,
    \quad \forall \omega \in F^{n+1-q}(\Omega),
  \end{equation*}
  hence $h \circ g^*$ is the identity already on $\D'(\Omega; \Lambda^q)$. For the other composition, let $[u]\in \widehat{\Lambda}^q(\R \times \widehat{L};\D')^{\mathcal{P}(\zeta)}$: we want to prove that $[g^*h(u)] = [u]$, or in other words, that
  \begin{equation}
    \label{eq:maineq_ast}
    \langle g^*h(u) - u, \dd t \wedge \eta \rangle = 0
  \end{equation}
  for all $\eta \in F^{n-q}_c(\R \times \widehat{L})$. Since the relation~\eqref{eq:maineq_ast} is linear, it suffices to prove it for $\eta$ supported in some $\til{U}_{j, \alpha} \subset \R \times \widehat{L}$, since such open sets cover $\R \times \widehat{L}$. We obtain
  \begin{align*}
    \langle g^*h(u) - u, \dd t \wedge \eta \rangle
    &= \langle u, g^{!} \left( g_*(\dd t \wedge \eta) \right) - \dd t \wedge \eta \rangle \\
    &= \langle u, g^{!} \left( g_* (g^*\zeta \wedge \eta) \right) - \dd t \wedge \eta \rangle \\
    &= \langle u, g^{!} \left( \zeta \wedge g_* \eta \right) - \dd t \wedge \eta \rangle.
  \end{align*}
  Since $\supp (\eta) \subset \til{U}_{j, \alpha}$, it follows that $g_* \eta = \left( g |_{\til{U}_{j, \alpha}} \right)_* \eta$ is supported in $U_j$, hence~\cite[Theorem~(2.14), Chapter~I]{demailly}
  \begin{align*}
    g^{!} \left( \zeta \wedge g_* \eta \right)
    &= \sum_k \left( g |_{\til{V}_k} \right)^* ( \rho_k \zeta \wedge g_* \eta ) \\
    &= \sum_{U_k \cap U_j \neq \emptyset} \dd t \wedge \left( g |_{\til{V}_k} \right)^* (\rho_k g_* \eta) \\
    &= \sum_{U_k \cap U_j \neq \emptyset} \dd t \wedge \left( g |_{\til{V}_k} \right)^* \left( \rho_k \left( g |_{\til{U}_{j, \alpha}} \right)_* \eta \right) \\
    &= \sum_{U_k \cap U_j \neq \emptyset} \dd t \wedge \left( g |_{\til{V}_k} \right)^* \left( g |_{\til{U}_{j, \alpha}} \right)_* \left( \left( \rho_k \circ  g |_{\til{U}_{j, \alpha}} \right) \eta \right).
  \end{align*}
  For each $k$ such that $U_k \cap U_j \neq \emptyset$ (recall this intersection is connected) there is exactly one $\beta_k \in \mathcal{P}(\zeta)$ such that $\til{U}_{j,\alpha} \cap g^{-1}(U_k) = \til{U}_{j,\alpha} \cap \til{U}_{k, \beta_k}$. Since, moreover, $\supp \left( \left( \rho_k \circ  g |_{\til{U}_{j, \alpha}} \right) \eta \right) \subset \til{U}_{j,\alpha} \cap \til{U}_{k, \beta_k}$, we obtain
  \begin{equation*}
    \left( g |_{\til{U}_{j, \alpha}} \right)_* \left( \left( \rho_k \circ  g |_{\til{U}_{j, \alpha}} \right) \eta \right)
    =
    \left( g |_{\til{U}_{k, \beta_k}} \right)_* \left( \left( \rho_k \circ  g |_{\til{U}_{j, \alpha}} \right) \eta \right).
  \end{equation*}
  We conclude that 
  \begin{align*}
    g^{!} \left( \zeta \wedge g_* \eta \right)
    &= \sum_{U_k \cap U_j \neq \emptyset} \dd t \wedge \left( g |_{\til{V}_k} \right)^* \left( g |_{\til{U}_{j, \alpha}} \right)_* \left( \left( \rho_k \circ  g |_{\til{U}_{j, \alpha}} \right) \eta \right) \\
    &= \sum_{U_k \cap U_j \neq \emptyset} \dd t \wedge \left( g |_{\til{U}_{k,0}} \right)^* \left( g |_{\til{U}_{k, \beta_k}} \right)_* \left( \left( \rho_k \circ  g |_{\til{U}_{j, \alpha}} \right) \eta \right),
  \end{align*}
  and since $T_{-\beta_k}(\til{U}_{k, \beta_k}) = \til{U}_{k, 0}$ and $g = g \circ T_{-\beta_k}$, one has $g|_{\til{U}_{k, \beta_k}} = g|_{\til{U}_{k, 0}} \circ T_{-\beta_k}$, which further implies that
  \begin{align*}
    g^{!} \left( \zeta \wedge g_* \eta \right)
    &= \sum_{U_k \cap U_j \neq \emptyset} \dd t \wedge \left( T_{-\beta_k} \right)_* \left( \left( \rho_k \circ  g |_{\til{U}_{j, \alpha}} \right) \eta \right) \\
    &= \sum_{U_k \cap U_j \neq \emptyset} \dd t \wedge T_{\beta_k}^* \left( \left( \rho_k \circ  g |_{\til{U}_{j, \alpha}} \right) \eta \right).
  \end{align*}
  The identity above tells us that $g^{!}(\zeta \wedge g_* \eta) - \dd t \wedge \eta$ equals
  \begin{equation}
    \label{eq:finalform}
    \sum_{U_k \cap U_j \neq \emptyset} \dd t \wedge \left\{ T_{\beta_k}^* \left( \left( \rho_k \circ  g |_{\til{U}_{j, \alpha}} \right) \eta \right) - \left( \rho_k \circ  g |_{\til{U}_{j, \alpha}} \right) \eta \right\}.
  \end{equation}
  However, since $[u]$ is $\per(\zeta)$-invariant, we have
  \begin{equation*}
    \langle u, \dd t \wedge (T_\beta^* \omega - \omega) \rangle = 0,
    \quad \forall \beta \in \per(\zeta), \ \omega \in F^{n-q}_c(\R \times \widehat{L}),
  \end{equation*}
  hence evaluating $u$ on~\eqref{eq:finalform} yields the result.
\end{proof}

Next, we touch upon a mild form of the distributional version of Proposition~\ref{prop:partial_derham_solvable}. We keep the notation and context (and refer the reader to that statement), yet for simplicity we deal only with the case $M = \R$ and care specifically with the first degree of that complex. Explicitly:
\begin{Prop}
  \label{prop:partialdr_dist}
  The partial exterior derivative
  \begin{equation*}
    \dd_{N}: \Lambda_{\WW}^0 (\R \times N; \D') \lra \Lambda_{\WW}^1 (\R \times N; \D')
  \end{equation*}
  has weakly closed range and, moreover, is a homomorphism for the weak dual topologies.
\end{Prop}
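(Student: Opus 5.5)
The plan is to reduce to the smooth, compactly supported side by duality, using the first half of Proposition~\ref{prop:duality_basic}. That part of the proposition does not require compactness. We apply it to the manifold $\R \times N$ with the closed, non-singular form $\zeta \dfn \dd t$; its annihilator is exactly $\WW = \pr_2^*(TN)$, and $\dd'$ becomes $\dd_N$. By the equivalence $\eqref{it:1noprime} \Longleftrightarrow \eqref{it:2noprime}$, the map $\dd_N : \Lambda^0_\WW(\R\times N;\D') \to \Lambda^1_\WW(\R\times N;\D')$ is a weak homomorphism if and only if
\begin{equation*}
  \dd_N : \Lambda^{n-1}_{c,\WW}(\R \times N) \lra \Lambda^{n}_{c,\WW}(\R\times N)
\end{equation*}
has closed range, where $n = \dim N$. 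Weak closedness of the range then follows automatically: a weak homomorphism is the transpose of a map with closed range, and its range is the annihilator of a kernel, hence weakly closed. As in Section~\ref{sec:rational}, I identify $\Lambda^q_{c,\WW}(\R\times N)$ with $\cinfty_c(\R\times N;\pr_2^*(\wedge^q T^*N))$. So the whole proof reduces to a statement about the compactly supported partial de Rham operator in top degree. We may take $N$ connected, which is the case of interest, $N = \widehat{L}$.

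The next step is to describe the range explicitly as a closed subspace, splitting into two cases. If $N$ is compact, I claim the range equals
\begin{equation*}
  \Big\{ f \st \int_N f(t,\cdot) = 0 \ \text{for all } t \in \R \Big\},
\end{equation*}
which is clearly closed, being an intersection of kernels of continuous functionals. If $N$ is non-compact, I claim the range is the whole space $\Lambda^n_{c,\WW}(\R\times N)$. In both cases the inclusion of the range into the described set is immediate from Stokes' theorem on each slice $\{t\}\times N$.

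The reverse inclusion is a parametric version of the computation of $H^n_c(N)$, and I would argue as follows. Fix an exhaustion of $N$ by compact sets and a finite chain of coordinate balls joining any given compact set to a fixed ball $B_0$. When $N$ is non-compact, the chain is also continued to a ball near infinity; that continuation uses a proper ray in $N$. On a single ball, the compactly supported Poincaré lemma in top degree has an explicit linear solution operator. Given $f$ supported in $[-T,T]\times K$, its fiber integrals $c(t)$ are smooth in $t$. Three moves are then available, each performed with operators that are linear, continuous, and commute with multiplication by functions of $t$, with $t$ treated as a smooth parameter:
\begin{itemize}
  \item cut $f$ by a partition of unity on $K$;
  \item transport the mass of each piece along the chain into $B_0$;
  \item either subtract $c(t)\theta$, for a fixed bump $n$-form $\theta$ on $B_0$ with $\int_N \theta = 1$ (compact case), or push the mass off to infinity along the ray (non-compact case).
\end{itemize}
The resulting primitive $u$ is supported in $[-T,T]\times K'$ for a compact $K'$ that depends only on $K$.

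The main obstacle is keeping track of supports. Closedness of the range in a strict LF space is a delicate point, and I would handle it through this support control. Concretely, the construction above gives, for each compact $K$, a compact $K'$ and a continuous linear map
\begin{equation*}
  S_K : \{ f \in \text{range},\ \supp f \subset K\} \lra \{ u \st \supp u \subset K' \}
\end{equation*}
satisfying $\dd_N S_K = \Id$ on its domain. Now, if $\dd_N u_\nu \to f$ in the LF topology, all the $\dd_N u_\nu$ have supports in one fixed compact set, by strictness of the inductive limit. The Fréchet subspace of forms supported in that compact set and lying in the claimed range is closed, and $f$ belongs to it, so $f = \dd_N S_K f$ lies in the range. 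This yields closed range, and Proposition~\ref{prop:duality_basic} then finishes the proof.
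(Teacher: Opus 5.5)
You correctly reduce, via the non-compact half of Proposition~\ref{prop:duality_basic} with $\zeta=\dd t$ on $\R\times N$, to closedness of the range of $\dd_N:\Lambda^{n-1}_{c,\WW}(\R\times N)\to\Lambda^{n}_{c,\WW}(\R\times N)$; this is the paper's first step. Your description of the range when $N$ is compact is also correct. The gap is the non-compact case, and that is the only case that matters here: $N=\widehat{L}$ is diffeomorphic to a leaf of an irrational foliation, so it is non-compact.

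\textbf{Your claim that the range is all of $\Lambda^{n}_{c,\WW}(\R\times N)$ for non-compact $N$ is false.} Take any $u\in\Lambda^{n-1}_{c,\WW}(\R\times N)$. Each slice $u(t,\cdot)$ is a compactly supported $(n-1)$-form on $N$, so Stokes' theorem gives $\int_N \dd_N u(t,\cdot)=0$ for every $t$. Compactness of $N$ plays no role. For example, $\phi(t)\theta$ with $\phi\in\cinfty_c(\R)$ nonzero and $\int_N\theta=1$ is never in the range. You have conflated $H^n_{\dR}(N)=0$ for non-compact $N$ with $H^n_c(N)$, which is one-dimensional (via integration) for every connected oriented $N$.

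For the same reason, ``pushing the mass off to infinity along a ray'' cannot work. It yields a primitive supported along the whole ray. That primitive is not compactly supported, so it is not in the domain $\Lambda^{n-1}_{c,\WW}(\R\times N)$, and it contradicts your support bound $[-T,T]\times K'$. The operators $S_K$ you describe therefore do not exist for non-compact $N$.

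The repair is to treat the non-compact case exactly like the compact one: subtract $c(t)\theta$ after transporting mass along finite chains of balls. This shows that for every connected oriented $N$ the range is
\begin{equation*}
  \Big\{ f \in \Lambda^{n}_{c,\WW}(\R\times N) \st \int_N f(t,\cdot)=0 \ \text{for all } t\in\R \Big\}.
\end{equation*}
This is precisely Lemma~\ref{lem:partialdr_char}. The paper proves it by induction over an exhaustion $N_k=U_1\cup\cdots\cup U_k$ by coordinate balls, which is your chain argument in another form. Once the range is identified with this intersection of kernels of the continuous functionals $f\mapsto\int_N f(t,\cdot)$, it is closed outright. Your support-control argument with $S_K$ is then unnecessary.
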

This result will be a consequence of the following lemma:
\begin{Lem}
  \label{lem:partialdr_char}
  Let $N$ be an $n$-dimensional connected, smooth, oriented and non-compact manifold. Then an $\omega \in \Lambda^n_{c, \WW} (\R \times N)$ is of the form $\dd_{N} \eta$ for some $\eta \in \Lambda^{n - 1}_{c, \WW} (\R \times N)$ if and only if
  \begin{equation}
    \label{eq:topdegree_zeroint}
    \int_N \omega(t,\cdot) = 0,
    \quad \forall t \in \R.
  \end{equation}
\end{Lem}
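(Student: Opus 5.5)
The plan is to prove the compactly supported Poincaré lemma in top degree with a smooth parameter $t$. First I identify $\Lambda^q_{c,\WW}(\R\times N)$ with the space of compactly supported smooth sections of $\pr_2^*(\wedge^q T^*N)$ over $\R\times N$, as in the discussion preceding Proposition~\ref{prop:partial_derham_solvable}. An element $\omega\in\Lambda^n_{c,\WW}(\R\times N)$ is then a smooth family $t\mapsto\omega(t,\cdot)$ of compactly supported top-degree forms on $N$, with support contained in $[-T,T]\times K$ for some $T>0$ and some compact $K\subset N$. In these terms $\dd_N$ acts slice by slice.

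\emph{Necessity.} If $\omega=\dd_N\eta$ with $\eta$ compactly supported, then for each fixed $t$ the form $\eta(t,\cdot)$ is a compactly supported $(n-1)$-form on $N$. Stokes' theorem gives $\int_N\dd\eta(t,\cdot)=0$, which is~\eqref{eq:topdegree_zeroint}.

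\emph{Sufficiency.} Assume~\eqref{eq:topdegree_zeroint}.

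Step 1 (localization). Cover $K$ by finitely many coordinate balls $B_1,\ldots,B_m\subset N$. Take functions $\chi_k\in\cinfty_c(B_k)$ with $\sum_k\chi_k=1$ near $K$, and split $\omega=\sum_k\omega_k$ with $\omega_k\dfn(\chi_k\circ\pr_2)\,\omega$. Each $\omega_k$ is supported in $[-T,T]\times B_k$.

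Step 2 (reference forms). In each $B_k$ fix an $n$-form $\theta_k\in F^n_c(B_k)$ with $\int\theta_k=1$. Set $c_k(t)\dfn\int_N\omega_k(t,\cdot)$; these functions are smooth and supported in $[-T,T]$.

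Step 3 (parametric Poincaré lemma on a ball). I claim that $\omega_k-c_k(t)\theta_k=\dd_N\eta_k$ for some $\eta_k$ compactly supported in $[-T,T]\times B_k$. This follows from the standard inductive proof of $H^n_c(\R^n)\simeq\C$, which integrates in one coordinate at a time and subtracts a bump times the partial integral. All its operations are explicit integral operators in the $x$-variables, so they preserve smooth dependence on $t$. They also preserve support in $t$, since every function produced is a linear combination of integrals of $\omega_k(t,\cdot)$ and therefore vanishes when $\omega_k(t,\cdot)=0$. Supports in $x$ stay in a fixed compact subset of the ball.

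Step 4 (gluing). By~\eqref{eq:topdegree_zeroint}, $\sum_k c_k(t)=0$. Hence
\begin{equation*}
  \sum_k c_k(t)\,\theta_k=\sum_k c_k(t)\,(\theta_k-\theta_1).
\end{equation*}
Since $N$ is connected, each $B_k$ can be joined to $B_1$ by a finite chain of coordinate balls (possibly enlarging the cover), consecutive members of which intersect. For two intersecting balls $B,B'$ with reference forms $\theta,\theta'$, choose $\theta''$ supported in $B\cap B'$ with total integral $1$. Then $\theta-\theta''$ has zero integral and is compactly supported in $B$, so by the compactly supported Poincaré lemma on $B$ it equals $\dd\beta$ with $\beta\in F^{n-1}_c(B)$; similarly $\theta'-\theta''$ is exact inside $B'$. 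Chaining these relations gives $\theta_k-\theta_1=\dd\beta_k$ with $\beta_k\in F^{n-1}_c(N)$ independent of $t$. Finally set
\begin{equation*}
  \eta\dfn\sum_k\eta_k+\sum_k c_k(t)\,\beta_k,
\end{equation*}
which is compactly supported in $\R\times N$ and satisfies $\dd_N\eta=\omega$.

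The main obstacle is Step 3: one must check that the inductive fiber-integration proof of the compactly supported Poincaré lemma yields a primitive that is jointly smooth in $(t,x)$ and supported in $[-T,T]$ times a compact set. I would verify this by writing out the one-step operator on $\R^n$ explicitly. Given $f(t,x',x_n)$, form $F(t,x')=\int f\,\dd x_n$ and $e(x_n)$ a fixed bump with integral $1$, and check that $x_n\mapsto\int_{-\infty}^{x_n}\bigl(f(t,x',s)-F(t,x')e(s)\bigr)\dd s$ has the required support and smoothness properties. The remaining steps are routine.
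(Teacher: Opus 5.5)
Your proof is correct. It uses the same ingredients as the paper: Stokes for necessity, a parametric compactly supported Poincar\'e lemma on $\R^n$, and unit-mass bump forms plus connectedness to globalize. Both the gluing and the local step are organized differently, though.

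The paper globalizes by exhaustion. It orders charts $U_j\simeq\R^n$ so that $N_k\cap U_{k+1}\neq\emptyset$ for $N_k=U_1\cup\cdots\cup U_k$, and inducts on $k$. At each step it splits $\omega=\phi\omega+\psi\omega$ and moves the mass $c(t)=\int\phi\,\omega(t,\cdot)$ across the overlap with a bump $\theta$ supported in $N_k\cap U_{k+1}$. The parameter $t$ is thus carried through every inductive step. You instead take one finite partition over the support, reduce each piece to $c_k(t)\theta_k$, and collapse all reference forms onto $\theta_1$ through $t$-independent primitives $\beta_k$ of $\theta_k-\theta_1$ obtained by chaining balls. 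This confines all $t$-dependence to the scalars $c_k(t)$. The only parametric input is then the lemma on a single ball, and the connectedness argument is the classical parameter-free one.

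For the local step, the paper cites a homotopy formula from BCH with base point off $\supp f$. It then asserts that the zero-integral condition makes the primitive compactly supported. Your coordinate-by-coordinate fiber integration makes that compactness, and the support in $[-T,T]$, explicit. This is a genuine gain: for a radial-type homotopy operator, vanishing of the total integral alone does not force the primitive to vanish far out when $n\geq 2$.

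One small point: run Step~3 after transporting each ball to $\R^n$ by a diffeomorphism, or use coordinate cubes. The box-shaped supports produced by the inductive step need not stay inside a round ball.
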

\begin{Rem}
  This is simply a version with parameters of the classical result concerning compactly supported differential forms of top degree (see, for example, \cite[Theorem~8.6.4]{conlon_manifolds}); their proofs go hand in hand.
\end{Rem}

  \begin{proof}
    The direct implication is an immediate consequence of Stokes Theorem, so we just have to prove the converse. It is clear that~\eqref{eq:topdegree_zeroint} is preserved if we apply a diffeomorphism to $N$.
    
    When $N = \R^n$, this is Poincar{\'e} Lemma with parameters~\cite[Section~VII.3]{bch_iis}: given $\omega \in \Lambda^n_{c, \WW} (\R \times \R^n)$, represented as $\omega = f(t,x) \dd x_1 \wedge \cdots \wedge \dd x_n$  with $f \in \cinfty_c(\R \times \R^n)$, choose $(t_0,x_0) \in \R \times \R^{n} \setminus \supp (f)$ and define $\mathrm{G}(f) \in \Lambda^{n - 1}_{\WW} (\R \times \R^n)$ by~\cite[eqn.~(VII.10)]{bch_iis}. It follows that $\dd_{\R^n}\mathrm{G}(f) = \omega$, and~\eqref{eq:topdegree_zeroint} implies that $\mathrm{G}(f)$ has compact support.
    
    Now, for the case of general $N$, choose $\{U_j\}_{j\in \N}$ a countable cover of $N$ by open subsets that are diffeomorphic to $\R^n$ and let $N_k \dfn U_1 \cup \cdots \cup U_k$ for each $k \geq 1$. Since $N$ is connected, we can assume that $N_k \cap U_{k+1} \neq \emptyset$ for every $k$. If $\omega \in \Lambda^n_{c, \WW} (\R \times N)$ then $\supp(\omega) \sset \R \times N_k$ for some $k$: it suffices to show that, if~\eqref{eq:topdegree_zeroint} holds, then there exists $\eta \in \Lambda^{n - 1}_{c, \WW} (\R \times N_k)$ such that $\dd_N \eta = \omega$. We prove this claim by induction in $k$.
    
    For $k = 1$, since $N_1 = U_1$ is diffeomorphic to $\R^n$, the claim reduces to the first paragraph. Assume the claim is true for some $k \geq 1$, and suppose $\omega \in \Lambda^n_{c, \WW} (\R \times N_{k + 1})$ satisfies~\eqref{eq:topdegree_zeroint}, where $N_{k+1} = N_k \cup U_{k+1}$. Take $\theta \in F^n_{c}(N_{k+1})$ supported in $N_k \cap U_{k+1}$ and satisfying $\int_{N_{k+1}} \theta = 1$, let $\{\phi,\psi\}$ be a smooth partition of unity for $N_{k+1}$ subordinated to the cover $\{N_k, U_{k+1}\}$, and let $c = c(t) \dfn \int_{N_{k+1}} \phi \omega(t,\cdot) \in \cinfty_c(\R)$. Hence $\phi \omega - c \theta \in \Lambda^n_{c, \WW} (\R \times N_{k})$ and satisfies~\eqref{eq:topdegree_zeroint}, so by the induction hypothesis there exists $\eta \in \Lambda^{n - 1}_{c, \WW} (\R \times N_{k})$ such that $\dd_{N} \eta = \phi \omega - c \theta$. Similarly, $\psi \omega + c\theta$ is supported in $\R \times U_{k+1}$ and
    \begin{align*}
      \int_{U_{k+1}} \psi \omega(t, \cdot) + c(t) \theta
      &= \int_{N_{k+1}} (1 - \phi) \omega(t, \cdot) + c(t) \\
      &= \int_{N_{k+1}} \omega(t,\cdot) - \int_{N_{k+1}}\phi \omega(t, \cdot) + c(t) \\
      &=0
    \end{align*}
    for every $t \in \R$: since $U_{k + 1}$ is diffeomorphic to $\R^n$, there is $\alpha \in \Lambda^{n - 1}_{c, \WW} (\R \times U_{k + 1})$ such that $\dd_N \alpha = \psi \omega + c\theta$. Clearly $\eta + \alpha$ is supported in $\R \times N_{k + 1}$ and solves $\dd_N (\eta + \alpha) = \omega$, as desired.
  \end{proof}

\begin{proof}[Proof of Proposition~\ref{prop:partialdr_dist}]
  By Proposition~\ref{prop:duality_basic}, we must show that the range of $\dd_{N}: \Lambda^{n - 1}_{c, \WW} (\R \times N) \to \Lambda^{n}_{c, \WW} (\R \times N)$ is closed, but this follows immediately from Lemma~\ref{lem:partialdr_char}.
\end{proof}

The final result we require is the following:
\begin{Prop}
  \label{prop:delta_dist}
  The map 
  \begin{equation*}
    \TR{\delta}{\pmb{u}}{\Lambda^0_{\widehat{\VV}}(\R \times \widehat{L}; \D')}
    {\left( T^*_\alpha \pmb{u} - \pmb{u} \right)_{\alpha \in \per(\zeta)}}{\prod_{\alpha \in \per(\zeta)} \Lambda^0_{\widehat{\VV}}(\R \times \widehat{L}; \D')}
  \end{equation*}
  has weakly closed range and, moreover, is a homomorphism for the weak dual topologies (and the product topology).
\end{Prop}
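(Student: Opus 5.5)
The plan is to mimic the proof of Lemma~\ref{lem:delta_acyclic} at the level of currents, but globally rather than only on stalks. The goal is to show that $\delta$ is injective modulo its kernel with a continuous inverse on its range, and that its range is weakly closed. Since $\Lambda^0_{\widehat{\VV}}(\R \times \widehat{L};\D') \simeq \D'(\R \times \widehat{L})$, we work with scalar distributions.

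\textbf{Step 1: identify the range.} We expect
\begin{equation*}
  \ran \delta = \ker \delta^{1}
  = \left\{ (\pmb{f}_\alpha)_\alpha \st T^*_\alpha \pmb{f}_\beta - \pmb{f}_{\alpha+\beta} + \pmb{f}_\alpha = 0 \ \forall \alpha,\beta \right\}.
\end{equation*}
This set is clearly weakly closed in the product, being cut out by weakly continuous linear equations. To show every cocycle is a coboundary, we use a partition of unity adapted to the covering. Choose a locally finite family of compactly supported smooth cutoffs on $\Omega$, lift them through the distinguished sets $\til{U}_{j,0}$, and obtain $\chi \in \cinfty(\R \times \widehat{L})$ with locally finite translates satisfying
\begin{equation*}
  \sum_{\alpha \in \per(\zeta)} T^*_\alpha \chi = 1 .
\end{equation*}
Concretely, take $\chi \dfn g^! 1$ viewed as a function, that is $\chi = \sum_j \rho_j \circ g|_{\til{V}_j}$, extended by zero; Lemma~\ref{lem:astexc} yields the displayed identity, since the $\per(\zeta)$-translates of the $\til{V}_j$ form the locally finite covering $\{\til{U}_{j,\alpha}\}$. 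Given a cocycle $\msf{f}$, define
\begin{equation*}
  \pmb{u} \dfn -\sum_{\beta} T^*_{-\beta}\chi \cdot \msf{f}(\beta).
\end{equation*}
This sum is locally finite, hence a well-defined distribution. The standard averaging computation, using the cocycle identity \eqref{eq:delta_rel} with $\alpha,\beta$ appropriately renamed and $\sum_\beta T^*_{-\beta}\chi = 1$, gives $T^*_\alpha \pmb{u} - \pmb{u} = \msf{f}(\alpha)$. This is the explicit homotopy underlying $\msf{H}^1 = 0$.

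\textbf{Step 2: homomorphism property.} The formula in Step 1 defines a linear map $S: \ker\delta^1 \to \D'(\R \times \widehat{L})$ with $\delta S = \Id$. It suffices to see that $S$ is weakly continuous for the product topology. Testing against $\omega \in \cinfty_c(\R \times \widehat{L})$ gives
\begin{equation*}
  \langle S\msf{f}, \omega\rangle = -\sum_{\beta} \langle \msf{f}(\beta), T^*_{-\beta}\chi \, \omega\rangle ,
\end{equation*}
and only finitely many $\beta$ contribute, because $\supp\omega$ is compact and the supports of the $T^*_{-\beta}\chi$ are locally finite. Hence this is a finite combination of coordinate evaluations, so $S$ is weakly continuous. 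It follows that $\delta$ induces a weak isomorphism of $\D'/\ker\delta$ onto its range: the inverse is $S$ composed with the quotient map, and continuity of $\delta$ itself is clear from the weak continuity of each $T^*_\alpha$.

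\textbf{Main obstacle.} The delicate point is producing $\chi$ with locally finite translates summing to $1$. This requires that $\{\til{U}_{j,\alpha}\}$ be locally finite, equivalently that the action of $\per(\zeta)$ be proper, which the paper already records. We also need $\chi$ to be compactly supported, since the $\til{V}_j$ may not be relatively compact; if necessary, shrink the distinguished sets beforehand so that each $\til{V}_j$ has compact closure mapped diffeomorphically. Once $\chi$ is in hand, the remaining verifications are formal.
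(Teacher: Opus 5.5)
\textbf{Index error in the right inverse.} Your plan is the paper's own proof. You identify the range with the weakly closed space of cocycles and build a weakly continuous right inverse $S$ from a partition of unity lifted to the sheets. Your cutoffs are exactly the paper's: $T^*_{-\beta}\chi=\sum_j\widehat{\rho}_{j,\beta}$, where $\widehat{\rho}_{j,\beta}=\rho_j\circ g|_{\til{U}_{j,\beta}}$ extended by zero. Your local-finiteness and weak-continuity argument is also the paper's. However, the explicit formula you wrote has the group index on the wrong side, and the ``standard averaging computation'' you invoke does not go through for it.

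Take $\pmb{u}=-\sum_\beta (T^*_{-\beta}\chi)\,\msf{f}(\beta)$. Using $T^*_\alpha(\phi\,w)=(T^*_\alpha\phi)(T^*_\alpha w)$, the cocycle identity $T^*_\alpha\msf{f}(\beta)=\msf{f}(\alpha+\beta)-\msf{f}(\alpha)$, and $\sum_\gamma T^*_\gamma\chi=1$, one finds
\[
T^*_\alpha\pmb{u}-\pmb{u}=\msf{f}(\alpha)-\sum_{\gamma}\bigl(T^*_{2\alpha-\gamma}\chi-T^*_{-\gamma}\chi\bigr)\,\msf{f}(\gamma).
\]
The error term does not vanish in general. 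Consider the coboundary $\msf{f}(\beta)\equiv\beta$ of the smooth function $t$ on $\R\times\widehat{L}$. There the error term equals $2\alpha$, so your $\pmb{u}$ satisfies $T^*_\alpha\pmb{u}-\pmb{u}=-\alpha$ instead of $\alpha$. As written, $\delta\circ S\neq\Id$, and the homomorphism conclusion is not yet established.

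\textbf{The fix.} Pair $\msf{f}(\beta)$ with the cutoff on the opposite sheets:
\[
\pmb{u}=-\sum_\beta (T^*_{\beta}\chi)\,\msf{f}(\beta)=-\sum_\beta (T^*_{-\beta}\chi)\,\msf{f}(-\beta).
\]
Then $T^*_\alpha\pmb{u}=-\sum_\beta (T^*_{\alpha+\beta}\chi)\bigl(\msf{f}(\alpha+\beta)-\msf{f}(\alpha)\bigr)=\pmb{u}+\msf{f}(\alpha)$. This is the global version of the stalk formula $\pmb{u}_\alpha=-\pmb{f}_{-\alpha,\alpha}$ in the proof of Lemma~\ref{lem:delta_acyclic}, which you cited but did not transcribe correctly. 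It is exactly the paper's $S(\msf{u})=-\sum_j\sum_\alpha\widehat{\rho}_{j,\alpha}\,\pmb{u}_{-\alpha}$, and with it the rest of your argument goes through unchanged.

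\textbf{Minor point.} The shrinking you contemplate in your last paragraph is unnecessary. Each $\supp\rho_j$ is compact in $U_j$, and $g|_{\til{V}_j}$ is a diffeomorphism onto $U_j$. Hence $\chi=g^{!}1$ is automatically compactly supported.
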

\begin{proof}
  Consider the subspace
  \begin{equation*}
    E \dfn \left \{ (\pmb{u}_\alpha) \in \prod_{\alpha \in \per(\zeta)} \Lambda^0_{\widehat{\VV}}(\R \times \widehat{L}; \D') \st
      \pmb{u}_{\alpha + \beta} = T^*_\alpha \pmb{u}_\beta + \pmb{u}_\alpha, \quad \alpha, \beta \in \per(\zeta) \right\},
  \end{equation*}
  which is weakly closed in that product. Note that, if $\pmb{u} \in \Lambda^0_{\widehat{\VV}}(\R \times \widehat{L}; \D')$, then setting $\pmb{u}_\alpha \dfn T^*_\alpha \pmb{u} - \pmb{u}$ we have
  \begin{equation*}
    T^*_\alpha \pmb{u}_\beta + \pmb{u}_\alpha = T^*_{\alpha + \beta} \pmb{u} - T^*_\alpha \pmb{u} + T^*_\alpha \pmb{u} - \pmb{u} = T^*_{\alpha + \beta} \pmb{u} - \pmb{u} = \pmb{u}_{\alpha+\beta},
  \end{equation*}
  so $\ran (\delta) \subset E$. We will construct a weakly continuous linear map $S: E \to \Lambda^0_{\widehat{\VV}}(\R \times \widehat{L}; \D')$ such that $\delta \circ S = \Id_E$. This implies that $\delta$ is a homomorphism onto its range, which equals $E$.

  We work with the covering $\{U_j\}$ of $\Omega$ fixed on the occasion of the definition of $g^!$, and all the related choices and notation. Let $\widehat{\rho}_j \dfn \rho_j \circ g \in \cinfty_c(g^{-1}(U_j))$ and $\widehat{\rho}_{j, \alpha} \dfn \rho_j \circ g|_{\til{U}_{j,\alpha}} \in \cinfty_c(\til{U}_{j,\alpha})$, which we can extend by zero and regard as an element of $\cinfty_c(\R \times \widehat{L})$. Let $\msf{u} \dfn (\pmb{u}_\alpha) \in E$ and define
  \begin{equation*}
    S(\msf{u}) \dfn -\sum_j \sum_{\alpha \in \per(\zeta)} \pmb{u}^j_{-\alpha,\alpha} \in \Lambda^0_{\widehat{\VV}}(\R \times \widehat{L}; \D')
  \end{equation*}
  where
  \begin{equation*}
    \pmb{u}^j_{\alpha,\beta} \dfn \widehat{\rho}_{j, \beta} \ \pmb{u}_\alpha \in \Lambda^0_{c, \widehat{\VV}}( \til{U}_{j,\beta}; \D'),
    \quad \beta\in \per(\zeta).
  \end{equation*}
  Since $\{ \til{U}_{j,\alpha} \}$ is locally finite, $S(\msf{u})$ well-defined. It is easily seen that $S$ is weakly continuous: let $\msf{u}^{\kappa} = (u^{\kappa}_\alpha)$ be a net in $E$ which converges to $\msf{u} = (\pmb{u}_\alpha) \in E$, which is the same as saying $\pmb{u}^{\kappa}_\alpha \to \pmb{u}_\alpha$ weakly for every $\alpha$. Given a test form $\pmb{\phi} \in \Lambda_{c, \widehat{\VV}}^n (\R \times \widehat{L})$, it is clear that $\langle S(\msf{u}^\kappa), \pmb{\phi} \rangle \to \langle S(\msf{u}), \pmb{\phi} \rangle$ by considering only those $\alpha$ such that $\supp(\pmb{\phi})$ intersects $\til{U}_{j,\alpha}$.

  An easy calculation shows that $\delta(S(\msf{u})) = \msf{u}$ for all $\msf{u} \in E$. Indeed, given $\msf{u} = (\pmb{u}_\alpha) \in E$ we must to compute
  \begin{equation*}
    T^*_\alpha S(\msf{u}) - S(\msf{u}) = \sum_{j} \sum_{\beta \in \per(\zeta)} \pmb{u}^{j}_{-\beta,\beta} - T^*_\alpha \pmb{u}^{j}_{-\beta,\beta}.
  \end{equation*}
  for each $\alpha \in \per(\zeta)$. Given $\beta \in \per(\zeta)$ we have
  \begin{equation*}
    T^*_\alpha \pmb{u}^{j}_{-\beta,\beta} = (T^*_\alpha \widehat{\rho}_{j, \beta} ) \ T^*_\alpha \pmb{u}_{-\beta},
  \end{equation*}
  and since $T^*_\alpha \widehat{\rho}_{j,\beta} =  (\rho_j \circ g|_{\til{U}_{j,\beta}}) \circ T^\alpha = \rho_j \circ g|_{\til{U}_{j,\beta - \alpha}} = \widehat{\rho}_{j, \beta - \alpha}$, we conclude that 
  \begin{align*}
    T^*_\alpha S(\msf{u}) - S(\msf{u})
    &= \sum_{j} \sum_{\beta \in \per(\zeta)} \widehat{\rho}_{j, \beta} \ \pmb{u}_{-\beta} - \widehat{\rho}_{j, \beta - \alpha} \ T^*_\alpha \pmb{u}_{-\beta} \\
    &= \sum_{j}\sum_{\beta \in \per(\zeta)} \widehat{\rho}_{j, \beta} \left( \pmb{u}_{-\beta} - T^*_\alpha \pmb{u}_{-(\alpha+\beta)} \right)
  \end{align*}
  by relabeling the terms in the second sum. But $\msf{u} \in E$ means that $T^*_\alpha \pmb{u}_{-(\alpha+\beta)} = \pmb{u}_{-\beta}-\pmb{u}_\alpha$, hence
  \begin{equation*}
    T^*_\alpha S(\msf{u}) - S(\msf{u}) = \left( \sum_{j}\sum_{\beta \in \per(\zeta)} \widehat{\rho}_{j, \beta} \right) \pmb{u}_\alpha = \pmb{u}_\alpha
  \end{equation*}
  whatever $\alpha \in \per(\zeta)$, which completes the proof. 
\end{proof}

\subsection{Proof of Theorem~\ref{thm:last_step_solv}}

We recall first that, thanks to Proposition~\ref{prop:duality_basic}, $\zeta$ is globally solvable in degree $n$ if and only if $\dd'_0: \Lambda^0_\VV(\Omega;\D') \to \Lambda^1_\VV(\Omega;\D')$ has weakly closed range; which, in turn, is equivalent (Proposition~\ref{prop:pullback_iso_dist}) to
\begin{equation}
  \label{eq:first-partial-d-distributions}
  \dd_{\widehat{L}}: \Lambda^0_{\widehat{\VV}} (\R \times \widehat{L};\D')^{\per(\zeta)} \lra \Lambda^1_{\widehat{\VV}} (\R \times \widehat{L};\D')^{\per(\zeta)}
\end{equation}
having weakly closed range. Let
\begin{equation*}
  W \dfn \left\{ \pmb{u} \in \Lambda^0_{\widehat{\VV}} (\R \times \widehat{L};\D') \st \dd_{\widehat{L}} \pmb{u} = 0 \right\}.
\end{equation*}
\begin{Prop}
  \label{prop:dist_equiv1}
  The map~\eqref{eq:first-partial-d-distributions} has weakly closed range if and only if so does the map
  \begin{equation}
    \label{eq:first-delta-distributions}
    \delta: W \lra \prod_{\alpha \in \per(\zeta)} W.
  \end{equation}
\end{Prop}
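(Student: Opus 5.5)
The plan is to repeat the double-complex argument of Theorem~\ref{thm:abstract_eq}, now for currents with the weak dual topology. Sequences are no longer enough there, so I will argue directly with the homomorphism property. The two inputs replacing Proposition~\ref{prop:partial_derham_solvable} and Lemma~\ref{lem:delta_acyclic} are Proposition~\ref{prop:partialdr_dist}, which says $\dd_{\widehat{L}}$ is a weak homomorphism with weakly closed range on $\Lambda^0_{\widehat{\VV}}(\R\times\widehat{L};\D')$, and Proposition~\ref{prop:delta_dist}. The latter says $\delta$ is a weak homomorphism onto the closed space $E$ of cocycles, with a continuous right inverse $S\colon E\to\Lambda^0_{\widehat{\VV}}(\R\times\widehat{L};\D')$. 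Throughout I write $\Lambda^q$ for $\Lambda^q_{\widehat{\VV}}(\R\times\widehat{L};\D')$ and $E^q$ for the $\per(\zeta)$-invariant subspace.

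For ($\Leftarrow$), assume $\delta\colon W\to\prod W$ has weakly closed range, and let $\pmb{f}\in E^1$ lie in the weak closure of $\dd_{\widehat{L}}(E^0)$. Since $\dd_{\widehat{L}}(\Lambda^0)$ is weakly closed, there is $\pmb{u}\in\Lambda^0$ with $\dd_{\widehat{L}}\pmb{u}=\pmb{f}$. Take a net $\pmb{u}_\kappa\in E^0$ with $\dd_{\widehat{L}}\pmb{u}_\kappa\to\pmb{f}$. Because $\dd_{\widehat{L}}$ is a homomorphism, $\dd_{\widehat{L}}(\pmb{u}-\pmb{u}_\kappa)\to0$ gives, after passing to a subnet if needed, elements $\pmb{v}_\kappa\to0$ with $\pmb{w}_\kappa\dfn\pmb{u}-\pmb{u}_\kappa-\pmb{v}_\kappa\in W$. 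Then $\delta\pmb{w}_\kappa=\delta\pmb{u}-\delta\pmb{v}_\kappa\to\delta\pmb{u}$. The limit $\delta\pmb{u}$ lies in $\prod W$, because $\dd_{\widehat{L}}$ commutes with each $T^*_\alpha$ and $\dd_{\widehat{L}}\pmb{u}=\pmb{f}$ is invariant. Closedness then gives $\pmb{v}\in W$ with $\delta\pmb{v}=\delta\pmb{u}$. Setting $\pmb{u}-\pmb{v}\in E^0$, we get $\dd_{\widehat{L}}(\pmb{u}-\pmb{v})=\pmb{f}$.

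For ($\Rightarrow$), let $\msf{f}\in\prod W$ be a weak limit of $\delta\pmb{u}_\kappa$ with $\pmb{u}_\kappa\in W$. The space $E$ is weakly closed, so $\msf{f}\in E$, and we set $\pmb{u}\dfn S\msf{f}$, so that $\delta\pmb{u}=\msf{f}$. Then $\delta(\pmb{u}-\pmb{u}_\kappa)\to0$ in $E$, and continuity of $S$ gives $\pmb{v}_\kappa\dfn S\delta(\pmb{u}-\pmb{u}_\kappa)\to0$. The element $\pmb{w}_\kappa\dfn\pmb{u}-\pmb{u}_\kappa-\pmb{v}_\kappa$ lies in $\ker\delta=E^0$. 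Moreover $\dd_{\widehat{L}}\pmb{w}_\kappa=\dd_{\widehat{L}}\pmb{u}-\dd_{\widehat{L}}\pmb{v}_\kappa\to\dd_{\widehat{L}}\pmb{u}$, by weak continuity of $\dd_{\widehat{L}}$. The limit $\dd_{\widehat{L}}\pmb{u}$ is $\per(\zeta)$-invariant, because $T^*_\alpha\dd_{\widehat{L}}\pmb{u}-\dd_{\widehat{L}}\pmb{u}=\dd_{\widehat{L}}\msf{f}(\alpha)=0$. By hypothesis there is then $\pmb{v}\in E^0$ with $\dd_{\widehat{L}}\pmb{v}=\dd_{\widehat{L}}\pmb{u}$. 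Finally $\pmb{u}-\pmb{v}\in W$ and $\delta(\pmb{u}-\pmb{v})=\msf{f}$.

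I expect the main obstacle to be the net step in ($\Leftarrow$). It needs the homomorphism property of $\dd_{\widehat{L}}$ to lift a null net of images to a null net of preimages, which is the weak-topology analogue of Lemma~\ref{lem:seq_invert}. To handle it, I would compose with a weakly continuous inverse of the induced isomorphism $\Lambda^0/\ker\to\ran$. I would then pick representatives converging to $0$ in the quotient topology by the standard neighborhood-base argument, reindexing the net over pairs (index, neighborhood) if necessary. In ($\Rightarrow$) the continuous right inverse $S$ removes this difficulty.
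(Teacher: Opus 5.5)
Your proof is correct and follows the paper's argument essentially step for step. It runs the same chase between $W$ and the $\per(\zeta)$-invariant subspace, using Proposition~\ref{prop:partialdr_dist} in one direction and Proposition~\ref{prop:delta_dist} in the other. Your only deviations are refinements: you use the continuous right inverse $S$ directly rather than the net-lifting of Remark~\ref{rem:seq_invert}, and you explicitly allow a subnet when lifting null nets through the homomorphism $\dd_{\widehat{L}}$, a point the paper's remark passes over.
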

\begin{proof}
  Suppose that~\eqref{eq:first-delta-distributions} has weakly closed range and let $\{\pmb{u}_\kappa\}$ be a net in $\Lambda^0_{\widehat{\VV}} (\R \times \widehat{L};\D')^{\per(\zeta)}$ such that $\dd_{\widehat{L}} \pmb{u}_\kappa \to \pmb{f} \in \Lambda^1_{\widehat{\VV}} (\R \times \widehat{L};\D')^{\per(\zeta)}$ weakly. By Proposition~\ref{prop:partialdr_dist}, we can find $\pmb{u} \in \Lambda^0_{\widehat{\VV}} (\R \times \widehat{L};\D')$ such that $\pmb{f} = \dd_{\widehat{L}} \pmb{u}$: our goal is to find $\pmb{u}^{\bb} \in \Lambda^0_{\widehat{\VV}} (\R \times \widehat{L};\D')$ such that $\delta \pmb{u}^\bb = 0$ and $\dd_{\widehat{L}} \pmb{u}^\bb = \dd_{\widehat{L}} \pmb{u}$.

  Since $\dd_{\widehat{L}}(\pmb{u} - \pmb{u}_\kappa) \to 0$ and $\dd_{\widehat{L}}: \Lambda^0_{\widehat{\VV}} (\R \times \widehat{L};\D') \to \Lambda^1_{\widehat{\VV}} (\R \times \widehat{L};\D')$ is a weak homomorphism (Proposition~\ref{prop:partialdr_dist}), by Remark~\ref{rem:seq_invert} we can find a net $\{\pmb{v}_\kappa\}$ converging to zero in  $\Lambda^0_{\widehat{\VV}} (\R \times \widehat{L};\D')$ such that $\pmb{w}_\kappa \dfn \pmb{u} - \pmb{u}_\kappa - \pmb{v}_\kappa$ satisfies $\dd_{\widehat{L}} \pmb{w}_\kappa = 0$. Therefore, $\{\pmb{w}_\kappa\}$ is a net in $W$ such that $\delta \pmb{w}_\kappa = \delta \pmb{u} - \delta \pmb{v}_\kappa \to \delta \pmb{u}$. By hypothesis, there is $\pmb{w} \in W$ such that $\delta (\pmb{u} - \pmb{w}) = 0$: setting $\pmb{u}^\bb \dfn \pmb{u} - \pmb{w}$ yields the result.

  Conversely, suppose now that~\eqref{eq:first-partial-d-distributions} has weakly closed range and let $\{ \pmb{w}_\kappa\}$ be a net in $W$ such that $\delta \pmb{w}_\kappa \to \msf{w}$ weakly. By Proposition~\ref{prop:delta_dist}, there exists $\pmb{w} \in \Lambda^0_{\widehat{\VV}} (\R \times \widehat{L};\D')$ such that $\delta \pmb{w} = \msf{w}$, so in particular $\delta(\pmb{w} - \pmb{w}_\kappa) \to 0$. Applying once again Proposition~\ref{prop:delta_dist} and Remark~\ref{rem:seq_invert}, we are able to find a net $\{ \pmb{v}_\kappa\}$ converging to zero in $\Lambda^0_{\widehat{\VV}} (\R \times \widehat{L};\D')$ and such that $\delta(\pmb{w} - \pmb{w}_\kappa - \pmb{v}_\kappa ) = 0$, that is, $\pmb{u}_\kappa \dfn \pmb{w} - \pmb{w}_\kappa - \pmb{v}_\kappa \in \Lambda^0_{\widehat{\VV}} (\R \times \widehat{L};\D')^{\per(\zeta)}$. Clearly $\dd_{\widehat{L}} \pmb{u}_\kappa = \dd_{\widehat{L}} \pmb{w} - \dd_{\widehat{L}} \pmb{v}_\kappa \to \dd_{\widehat{L}} \pmb{w}$, hence, by hypothesis, there exists $\pmb{u} \in \Lambda^0_{\widehat{\VV}} (\R \times \widehat{L};\D')^{\per(\zeta)}$ such that $\dd_{\widehat{L}} \pmb{u} = \dd_{\widehat{L}} \pmb{w}$. Thus $\pmb{w}^\bb \dfn \pmb{w} - \pmb{u} \in W$ solves $\delta \pmb{w}^\bb = \delta \pmb{w} - \delta \pmb{u} = \msf{w}$.
\end{proof}

As a consequence of Proposition~\ref{prop:dist_equiv1} and previous observations, we have thus far determined that
\begin{equation*}
  \text{$\zeta$ is globally solvable in degree $n$}
  \Longleftrightarrow
  \text{\eqref{eq:first-delta-distributions} has weakly closed range}.
\end{equation*}

Next we identify $W$ as the space of distributions on $\R \times \widehat{L}$ which are independent of the $\widehat{L}$-variables. That is, we are going to explicitly construct a weak topological isomorphism $S: W \to \D'(\R)$. First, write $W$ as
\begin{equation}
  \label{eq:dist_partial_closed}
  W = \left\{ u \in F^{n+1}_c(\R \times \widehat{L})' \st
    \langle u,\dd t \wedge \dd \phi \rangle = 0, \ \forall \phi \in F^{n-1}_c(\R \times \widehat{L}) \right\}.
\end{equation}
Now, fix a non-singular $\mu \in F^n(\widehat{L})$ (orientability of $\widehat{L} \simeq L$ is granted by that of $\Omega$ and the fact that $L$ admits a transversal vector field). Then, there is a topological isomorphism $T:F^{n+1}_c(\R \times \widehat{L}) \to \cinfty_c(\R \times \widehat{L})$ such that
\begin{equation*}
  \omega = T(\omega) \dd t \wedge \mu,
  \quad \omega \in F^{n+1}_c(\R \times \widehat{L}),
\end{equation*}
and for such $\omega$ define the function in $\cinfty_c(\R)$
\begin{equation*}
  t \longmapsto \int_{\widehat{L}}T(\omega)(t,x) \mu(x)
\end{equation*}
which we denote simply by $\int_{\widehat{L}}T(\omega) \mu$. Choosing an $\omega_0 \in F^n_c(\widehat{L})$ such that $\int_{\widehat{L}}\omega_0 = 1$, Lemma~\ref{lem:partialdr_char} ensures that for each $\omega \in F^{n+1}_c(\R \times \widehat{L})$ we can find $\alpha \in \Lambda^{n-1}_{\widehat{\VV}, c} (\R \times \widehat{L})$ such that
\begin{equation*}
  \dd_{\widehat{L}} \alpha = T(\omega) \mu  - \left( \int_{\widehat{L}} T(\omega) \mu \right) \omega_0.
\end{equation*}
In particular, we obtain
\begin{align*}
  \dd t \wedge \dd \alpha
  &= \dd t \wedge T(\omega)\mu - \dd t \wedge  \left( \int_{\widehat{L}} T(\omega) \mu \right) \omega_0 \\
  &= \omega - \left( \int_{\widehat{L}} T(\omega) \mu \right) \dd t \wedge \omega_0,
\end{align*}
where $\alpha$ can now be seen as a compactly supported $(n-1)$-form in $\R \times \widehat{L}$. From~\eqref{eq:dist_partial_closed}, we have, for every $u \in W$ and $\omega \in F^{n+1}_c(\R \times \widehat{L})$:
\begin{equation}
  \label{identity for u in W}
  \langle u, \omega \rangle = \left \langle u, \left( \int_{\widehat{L}} T(\omega) \mu \right) \dd t \wedge \omega_0 \right\rangle.
\end{equation}
This gives rise to a continuous mapping $S: W \to \D'(\R)$ defined by
\begin{equation*}
  \langle S(u), \phi \rangle \dfn \langle u, \phi \ \dd t \wedge \omega_0 \rangle,
  \quad \phi \in \cinfty_c(\R)
\end{equation*}
(i.e.,~the transpose of the multiplication map $\phi \mapsto \phi \ \dd t \wedge \omega_0$), whose inverse $R:\D'(\R)\to W$ we can explicitly construct:
\begin{equation*}
  \langle R(v), \omega \rangle \dfn \left \langle v,  \int_{\widehat{L}}T(\omega) \mu \right \rangle,
  \quad \omega \in F^{n+1}_c(\R \times \widehat{L}),
\end{equation*}
which is also continuous.

Indeed, notice that for $\phi \in F^{n-1}_c(\R \times \widehat{L})$ we have $\dd t \wedge \dd \phi = T(\dd t \wedge \dd \phi) \dd t \wedge \mu$, which implies $\dd_{\widehat{L}} \phi = T(\dd t \wedge \dd \phi) \mu$, hence $\int_{\widehat{L}}T(\dd t \wedge \dd \phi) \mu = 0$ by Stokes Theorem. It follows that $\langle R(v), \dd t \wedge \dd \phi \rangle = 0$ for all such $\phi$, that is, $R(v) \in W$. Moreover, $R$ is clearly continuous, and for any $u \in W$ and $\omega \in F^{n+1}_c(\R \times \widehat{L})$ we have, by~\eqref{identity for u in W}, that
  \begin{equation*}
    \langle R \circ S (u), \omega \rangle
    = \left \langle S(u),  \int_{\widehat{L}} T(\omega) \mu \right \rangle 
    = \left \langle u, \left( \int_{\widehat{L}} T(\omega) \mu \right) \dd t \wedge \omega_0 \right \rangle 
    = \langle u, \omega \rangle,
  \end{equation*}
  which implies that $R \circ S$ is the identity. On the other hand, notice that for any given $\phi \in C_{c}^{\infty}(\R)$ we have
  \begin{align*}
    \phi \ \dd t \wedge \omega_0 = T(\phi \ \dd t \wedge \omega_0) \dd t \wedge \mu
    &\Longrightarrow
      \phi \omega_0 = T(\phi \ \dd t \wedge \omega_0) \mu \\
    &\Longrightarrow
      \int_{\widehat{L}} T(\phi \ \dd t \wedge \omega_0) \mu =  \int_{\widehat{L}} \phi \omega_0 = \phi,
  \end{align*}
  hence for any $v \in \D'(\R)$ we get
  \begin{equation*}
    \langle S \circ R (v), \phi \rangle
    = \langle R(v), \phi \ \dd t \wedge \omega_0 \rangle 
    = \left \langle v, \int_{\widehat{L}} T(\phi \ \dd t \wedge \omega_0) \mu \right \rangle 
    = \langle v, \phi \rangle, 
  \end{equation*}
  which implies that $S \circ R$ is again the identity.
  
This shows that $S$ is an isomorphism for the weak dual topologies. It is, moreover, compatible with translations, in the sense that $T_\alpha^*S(u) = S(T^*_\alpha u)$ for every $u \in W$ and $\alpha \in \per(\zeta)$. For that matter, notice first that for each $\phi \in \cinfty_c(\R)$ we have
\begin{equation*}
  T_\alpha^* (\phi \ \dd t \wedge \omega_0) = T_\alpha^* (\phi \dd t) \wedge T_\alpha^* \omega_0 = \phi (\cdot + \alpha) \dd t \wedge T_\alpha^* \omega_0.
\end{equation*}
Also, note that $T^*_\alpha \omega_0 - \omega_0$ is exact on $\widehat{L}$ by de Rham's Theorem (its integral is zero); actually, $\phi(\cdot + \alpha) (T^*_\alpha \omega_0 - \omega_0)$ is $\dd_{\widehat{L}}$-exact, hence
\begin{equation*}
  \left \langle u, \phi (\cdot + \alpha) \dd t \wedge T^*_\alpha \omega_0 \right \rangle = \langle u, \phi(\cdot + \alpha) \dd t \wedge \omega_0 \rangle
\end{equation*}
thanks to~\eqref{eq:dist_partial_closed}, and the latter can thus be rewritten as
\begin{align*}
  \left \langle T_\alpha^* S(u), \phi \right \rangle
  &= \left \langle S(u), \phi(\cdot + \alpha) \right \rangle \\
  &= \langle u, \phi(\cdot + \alpha) \dd t \wedge \omega_0 \rangle \\
  &= \left \langle u, \phi (\cdot + \alpha) \dd t \wedge T^*_\alpha \omega_0 \right \rangle \\
  &= \left \langle u, T_\alpha^* (\phi \ \dd t \wedge \omega_0) \right \rangle \\
  &= \left \langle T_\alpha^* u, \phi \ \dd t \wedge \omega_0 \right \rangle \\
  &= \left \langle S(T_\alpha^* u), \phi \right \rangle.
\end{align*}

Therefore, to show that~\eqref{eq:first-delta-distributions} has a weakly closed range if the same is true for the map
\begin{equation}
  \label{eq:delta_map}
  \TR{\delta}{u}{\D'(\R)}
  {\left( T^*_\alpha u - u \right)_{\alpha \in \per(\zeta)}}{\prod_{\alpha \in \per(\zeta)}\D'(\R)},
\end{equation}
which, by the same argument used in the $\cinfty$ case (Corollary~\ref{cor:reduction_to_generators}), is equivalent to
\begin{equation}
  \label{eq:Donditros}
  \TR{D}{u}{\D'(\R)}{\left( T_{a_j}^* u - u \right)_{j = 0, \ldots, r}}{\prod_{j = 0}^r \D'(\R)}
\end{equation}
having weakly closed range. The proof will thus be complete provided we show that:
\begin{Thm}
  The following are equivalent:
  \begin{enumerate}
  \item The map~\eqref{eq:Donditros} has weakly closed range.
  \item $(a_1, \ldots, a_r)$ is a non-Liouville vector.
  \end{enumerate}
\end{Thm}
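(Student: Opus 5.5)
Both implications will be handled by transposition, mirroring the smooth case, now with the roles of $\cinfty$ and $\E'$ replaced by $\D'$ and $\cinfty_c$. Concretely, $D:\D'(\R)\to\prod_{j=0}^r\D'(\R)$ is the transpose of
\begin{equation*}
  \transp{D}:\bigoplus_{j=0}^r \cinfty_c(\R) \lra \cinfty_c(\R),
  \quad (\phi_0,\ldots,\phi_r) \longmapsto \sum_{j=0}^r \phi_j * (\delta_{-a_j}-\delta_0),
\end{equation*}
up to the sign convention for translation. Since $D$ is the transpose of a continuous map between LF spaces, the general Homomorphism Theorem~\cite[Proposition~35.7]{treves_tvs}, as already used in Proposition~\ref{prop:duality_basic}, gives the following: $D$ has weakly closed range and is a weak homomorphism if and only if $\transp{D}$ has closed range in $\cinfty_c(\R)$. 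For the direction (2)$\Rightarrow$(1) I will therefore show that the range of $\transp{D}$ is exactly $\{\psi\in\cinfty_c(\R) \st \hat\psi(0)=0\}$. This subspace is obviously closed.

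The inclusion $\subset$ is immediate from the Fourier--Laplace transform. For the reverse inclusion, take $\psi\in\cinfty_c(\R)$ with $\hat\psi(0)=0$ and write $\hat\psi(z)=z h(z)$. By Paley--Wiener, $h$ is entire of exponential type with rapid decay on $\R$, so it is the transform of a test function. Theorem~\ref{thm:main_result_suff} and Ehrenpreis's Theorem~\ref{thm:ehrenpreis} furnish $F_0,\ldots,F_r$ of exponential type, polynomially bounded on $\R$, solving~\eqref{eq:ideal2}. I will then set $\hat\phi_j(z)\dfn F_j(\pm z)h(\pm z)$, with signs adjusted to the convention, which gives $\sum_j(e^{\mp ia_jz}-1)\hat\phi_j = zh=\hat\psi$. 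Each $\hat\phi_j$ is entire of exponential type. On the real line it is a product of a polynomially bounded function and a rapidly decreasing one, so it is rapidly decreasing; the same holds on horizontal strips, by a Phragmén--Lindelöf argument or by the standard estimates accompanying Ehrenpreis's theorem. Hence $\phi_j\in\cinfty_c(\R)$ by Paley--Wiener.

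The step I expect to be the main obstacle is exactly this one. The Paley--Wiener--Schwartz theorem for test functions requires bounds of the form $|\hat\phi_j(z)|\le C_N(1+|z|)^{-N}e^{B|\Im z|}$ on all of $\C$, and not only on $\R$. I would first try to obtain these directly from the pointwise estimates $|F_j(z)|\le C(1+|z|)^Me^{B|\Im z|}$, which Ehrenpreis's construction provides, multiplied by the corresponding decay of $h$.

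For (1)$\Rightarrow$(2) I argue by contraposition, reusing the construction from the smooth case. Assume the vector is Liouville. Then~\eqref{eq:lem_liouville} gives integers $\xi_\nu$, and I form $f_N=\sum_{\nu\le N}e^{2\pi i x\xi_\nu}$. We have $D f_N\to(0,g_1,\ldots,g_r)$ in $\cinfty$, hence weakly in $\D'$. Suppose some $u\in\D'(\R)$ satisfied $Du=(0,g_1,\ldots,g_r)$. Then $u$ would be $1$-periodic, and comparing Fourier coefficients as before gives $\hat u(\xi_\nu)=1$ for all $\nu$. This alone is not a contradiction, because periodic distributions may have bounded coefficients.

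To repair this, I will instead use $f_N=\sum_{\nu\le N}c_\nu e^{2\pi i x\xi_\nu}$ with coefficients $c_\nu=|\xi_\nu|^{\nu/2}$. Then $(e^{2\pi i a_j\xi_\nu}-1)c_\nu=O(|\xi_\nu|^{-\nu/2})$, so $D f_N$ still converges in $\cinfty$. Any solution $u$ would have $\hat u(\xi_\nu)=c_\nu$, which grows faster than every polynomial, contradicting the fact that $u$ is a periodic distribution. This shows that the range of $D$ is not weakly closed.
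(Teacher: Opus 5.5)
Your proposal is correct and follows essentially the same route as the paper. For necessity, the paper uses exactly your weighted sums $f_N=\sum_{\nu\le N}|\xi_\nu|^{\nu/2}e^{2\pi i x\xi_\nu}$ and the non-temperedness of the forced Fourier coefficients; for sufficiency, it transposes to $\bigoplus_j\cinfty_c(\R)\to\cinfty_c(\R)$, identifies the range as $\{\hat v(0)=0\}$, and solves via Theorem~\ref{thm:ehrenpreis} and Theorem~\ref{thm:main_result_suff}. The Paley--Wiener point you flag is glossed over in the paper and resolves just as you suggest: Phragm\'en--Lindel\"of upgrades ``exponential type and polynomially bounded on $\R$'' to $|F_j(z)|\le C(1+|z|)^M e^{B|\Im z|}$, and multiplying by the test-function bound for $h$ gives the required estimate on all of $\C$.
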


\begin{proof}
  Let us prove the necessity first. Assuming $(a_1, \ldots, a_r)$ is a Liouville vector, let $\{ \xi_\nu \}_{\nu \in \N}$ be the sequence of integers furnished in the proof of Theorem~\ref{thm:first_step_solv_redux}. Define the sequence of smooth functions
  \begin{equation*}
    f_N(x) \dfn \sum_{\nu = 1}^N | \xi_{\nu}|^{\frac{\nu}{2}} e^{2\pi i x \xi_{\nu}},
    \quad x \in \R.
  \end{equation*}
  For each $j = 0, 1, \ldots, r$, it is not difficult to check that $T^*_{a_j} f_N - f_N$ converges in $\cinfty(\R)$ to
  \begin{equation*}
    g_j(x) \dfn \sum_{\nu = 1}^\infty \left( e^{2\pi i a_j \xi_\nu} - 1 \right) | \xi_{\nu}|^{\frac{\nu}{2}} e^{2\pi i x \xi_{\nu}},
    \quad x \in \R,
  \end{equation*}
  as $N \to \infty$; keep in mind that
  \begin{equation*}
    |\xi_\nu| < |\xi_{\nu + 1}|
    \quad \text{and} \quad
    \max_{1 \leq j \leq r} \left| e^{2\pi i a_j \xi_\nu} - 1 \right| < \frac{1}{|\xi_\nu|^\nu},
    \quad \forall \nu \in \N.
  \end{equation*} 
  Yet, if there were an $f \in \D'(\R)$ such that $T^*_{a_j}f - f = g_j$ for all $j$, it would have to be $1$-periodic since $a_0 = 1$ and $g_0 = 0$. On the other hand,  the $\nu$-th Fourier coefficient of $f$ would have to be $|\xi_\nu|^{\frac{\nu}{2}}$, and these do not form a tempered sequence, proving that~\eqref{eq:Donditros} does not have weakly closed range.  

  Now we verify the converse. The transpose of~\eqref{eq:Donditros} is clearly
  \begin{equation*}
    \TR{\transp{D}}{(u_0,\ldots,u_r)}{\bigoplus_{j = 0}^r \cinfty_c(\R)}{\displaystyle \sum_{j=0}^r u_{j} * ( \delta_{a_j} - \delta_0)}{\cinfty_c(\R)},
  \end{equation*}
  which we will prove to have closed range provided $(a_1, \ldots, a_r)$ is a non-Liouville vector: the desired conclusion will follow once more from the general Homomorphism Theorem~\cite[Proposition~35.7]{treves_tvs}.
  
  From the same calculations done previously in the smooth case we have $\ran (\transp{D}) \sset \{v \in \cinfty_c(\R) \st \hat{v}(0) = 0\}$. In order to prove the reverse inclusion -- which finishes the proof of the theorem --, let $v \in C_{c}^{\infty}(\R)$ be such that $\hat{v}(0) = 0$. By the Paley-Wiener-Schwartz Theorem, $\hat{v}\in \Ol(\C)$ is an entire function of exponential type which has \emph{fast decay} on $\R$, which we factor as $\hat{v}(z) = z h(z)$ with $h \in \Ol(\C)$ of exponential type and fast decay on $\R$: in order to solve 
  \begin{equation*}
    \sum_{j=0}^r u_{j} * ( \delta_{a_j} - \delta_0) = v,
    \quad u_0, \ldots, u_r \in \cinfty_c(\R),
  \end{equation*}
  we now must solve~\eqref{eq:ideal_eq} with $f_0, \ldots, f_r \in \Ol(\C)$ of exponential type and \emph{fast decay} on $\R$.
  
  By associating Theorems~\ref{thm:ehrenpreis} and~\ref{thm:main_result_suff}, there are $F_{0}, \ldots, F_{r} \in \Ol(\C)$ of exponential type and bounded by a polynomial on $\R$ such that~\eqref{eq:ideal2} holds. Define then $f_j(z) \dfn F_j(z)h(-z)$, $j = 0, \ldots, r$: since the $F_j$ are of exponential type and bounded by a polynomial on $\R$, whereas $h(-z)$ is of exponential type and has fast decay on $\R$, it follows that $f_0, \ldots, f_r$ enjoy the latter property, and solve~\eqref{eq:ideal_eq}.
\end{proof}

\section{Global hypoellipticity}
\label{sec:hypo}

The purpose of this section is to prove Theorem~\ref{thm:hypo_mainthm}. The direct implication was proved by Meziani~\cite[Theorem~3.1]{meziani02}: if $\zeta$ is rational or Liouville then $\dd'_0$ is not globally hypoelliptic. Our goal is to supplement his results by proving the following:
\begin{Thm}
  \label{thm:hypo_suff}
  If $\zeta$ is irrational and non-Liouville then $\dd'_0$ is globally hypoelliptic.
\end{Thm}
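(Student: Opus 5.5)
The plan is to follow the same lift-and-reduce scheme as in Sections~\ref{sec:irr_deg1} and~\ref{sec:irr_degn}, which turns global hypoellipticity of $\dd'_0$ into hypoellipticity of the difference operator $D$ on $\D'(\R)$. That operator is then handled with the Ehrenpreis division furnished by Theorems~\ref{thm:ehrenpreis} and~\ref{thm:main_result_suff}.

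\textbf{Lifting and subtracting a smooth solution.} Let $u\in\D'(\Omega)$ with $\dd'_0u=\pmb f\in\Lambda^1_\VV(\Omega)$ smooth.
\begin{itemize}
\item By Proposition~\ref{prop:pullback_iso_dist}, $U\dfn g^*u$ is a $\per(\zeta)$-invariant distribution on $\R\times\widehat L$.
\item Since pullbacks commute with $\dd$, we have $\dd_{\widehat L}U=g^*\pmb f$, which is smooth (and invariant).
\item By Proposition~\ref{prop:partial_derham_solvable} (with $M=\R$, $N=\widehat L$), choose $v\in\cinfty(\R\times\widehat L)$ with $\dd_{\widehat L}v=g^*\pmb f$. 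This $v$ need not be invariant.
\item Set $w\dfn U-v$. Then $\dd_{\widehat L}w=0$, so $w\in W$. Moreover $T_\alpha^*w-w=-(T_\alpha^*v-v)$ is smooth for every $\alpha\in\per(\zeta)$, because $U$ is invariant.
\end{itemize}
Since $g$ is a local diffeomorphism, $u$ is smooth as soon as $U$ is. So it suffices to show that $w$ is smooth.

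\textbf{Passing to $\D'(\R)$.} Use the isomorphism $S:W\to\D'(\R)$ constructed before Theorem~\ref{thm:last_step_solv}. Write $w_0\dfn S(w)$. By the translation compatibility $T_\alpha^*S=ST_\alpha^*$, we get $T_{a_j}^*w_0-w_0=S(T_{a_j}^*w-w)$. I need to check that $S$ and $R=S^{-1}$ preserve smoothness, and I expect this to be straightforward from the formulas:
\begin{itemize}
\item $R(\phi)$, for $\phi\in\cinfty(\R)$, is just the function $\phi(t)$ regarded on $\R\times\widehat L$.
\item For $w'\in W$ smooth, $S(w')$ is obtained by integrating against $\omega_0$ along $\widehat L$, which yields a smooth function of $t$.
\end{itemize}
Hence each $T_{a_j}^*w_0-w_0$ is smooth. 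Conversely, once $w_0$ is shown smooth, $w=R(w_0)$ is smooth. The problem is thus reduced to the following claim: if $w_0\in\D'(\R)$ and $T_{a_j}^*w_0-w_0\in\cinfty(\R)$ for $j=0,\dots,r$, then $w_0\in\cinfty(\R)$.

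\textbf{Main step: hypoellipticity of $D$ on $\D'(\R)$.} This is where non-Liouvilleness enters.
\begin{itemize}
\item By Theorem~\ref{thm:main_result_suff} and Theorem~\ref{thm:ehrenpreis}, there are entire $F_j$ of exponential type, polynomially bounded on $\R$, with $\sum_j\frac{e^{ia_jz}-1}{z}F_j(z)=1$.
\item Replace $z$ by $-z$ and multiply by $-z$. This gives entire $G_j(z)\dfn F_j(-z)$, still of exponential type and polynomially bounded on $\R$, such that
\begin{equation*}
  \sum_j(e^{-ia_jz}-1)G_j(z)=-z .
\end{equation*}
\item By Paley--Wiener--Schwartz, $G_j=\hat E_j$ for some $E_j\in\E'(\R)$. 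Since $\widehat{\delta_{a_j}}=e^{-ia_jz}$ and $z$ corresponds to $-i\,\delta_0'$ up to normalization, the identity reads
\begin{equation*}
  \sum_j E_j*(\delta_{a_j}-\delta_0)=c\,\delta_0'
\end{equation*}
for some nonzero constant $c$.
\end{itemize}
Convolving $w_0$ with both sides, and using associativity and commutativity of convolution when all but one factor is compactly supported, gives
\begin{equation*}
  c\,w_0'=\sum_jE_j*\bigl(w_0*(\delta_{a_j}-\delta_0)\bigr).
\end{equation*}
Here $w_0*(\delta_{a_j}-\delta_0)$ equals $T^*_{\mp a_j}w_0-w_0$, depending on the sign convention. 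If the sign is the wrong one, I would either use the reflected Ehrenpreis identity or note that $T^*_{-a}w_0-w_0=-T^*_{-a}(T^*_aw_0-w_0)$ is also smooth. The right-hand side is then a convolution of compactly supported distributions with smooth functions, hence smooth. So $w_0'$ is smooth, and therefore $w_0$ is smooth.

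The main obstacle I anticipate is the bookkeeping in this last step: getting the Fourier sign conventions consistent, and confirming that the Ehrenpreis solutions are honest Fourier--Laplace transforms of elements of $\E'(\R)$ (polynomial bound on $\R$ plus exponential type). A second point is making sure the smoothness claims for $S$ and $R$ hold for non-compactly supported elements of $W$, which I would verify directly from~\eqref{identity for u in W}.
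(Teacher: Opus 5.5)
Your strategy is the paper's own: lift to $\R\times\widehat L$, subtract a smooth $\dd_{\widehat L}$-primitive, pass to $\D'(\R)$, and conclude with the Ehrenpreis identity rewritten as $\sum_j E_j*(\delta_{a_j}-\delta_0)=c\,\delta_0'$ (with the paper's normalization, $c=i$). The transfer through $S$ and $R$, the sign bookkeeping and the final convolution argument are correct.

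The gap is the step where you invoke Proposition~\ref{prop:partial_derham_solvable} to choose $v\in\cinfty(\R\times\widehat L)$ with $\dd_{\widehat L}v=g^*\pmb f$. That proposition only says two things: $\dd_{\widehat L}$ has closed range on smooth forms, and the obstruction space is $\cinfty(\R;H^1_{\dR}(\widehat L))$. Since $H^1_{\dR}(\widehat L)$ need not vanish, it is not enough to know that $g^*\pmb f$ is smooth and $\dd_{\widehat L}$-closed. You must show that a smooth form which is $\dd_{\widehat L}$ of a \emph{distribution} is already $\dd_{\widehat L}$ of a smooth function. This is not formal. You cannot restrict $U$ to the slices $\{t\}\times\widehat L$ to see that the class vanishes. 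Approximating $U$ by arbitrary smooth functions only makes their $\dd_{\widehat L}$ converge in $\D'$, not in the $\cinfty$ topology where the range is known to be closed.

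The missing ingredient is Lemma~\ref{lem:agh_partialderham} (almost global hypoellipticity of $\dd_{\widehat L}$), which the paper proves for exactly this purpose. The argument runs as follows.
\begin{enumerate}
\item Work in product charts $\R\times V_j$ with $V_j$ convex. Smoothness of the $\partial_{x_l}U$ gives $U=\Psi_j+u^0_j\otimes 1$, with $\Psi_j$ smooth and $u^0_j\in\D'(\R)$.
\item Mollify only in $t$. The resulting smooth local functions glue to some $U^\varepsilon\in\cinfty(\R\times\widehat L)$.
\item Since $\partial_{x_l}U^\varepsilon=(\partial_{x_l}\Psi_j)*_t\rho_\varepsilon$, one gets $\dd_{\widehat L}U^\varepsilon\to\dd_{\widehat L}U$ in the $\cinfty$ topology.
\item Only now does the closed range from Proposition~\ref{prop:partial_derham_solvable} produce your smooth $v$.
\end{enumerate}
With this lemma inserted, the rest of your argument goes through and agrees with the paper's proof. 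The only difference is that the paper writes $U-v=\phi\otimes 1$ directly, using connectedness of $\widehat L$, instead of going through $S$; the two are equivalent.
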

We shall apply the constructions used to prove solvability. We have the standard partial de Rham complex on $\R \times \widehat{L}$ acting on currents:
\begin{equation*}
  \begin{tikzcd}
    \D'(\R \times \widehat{L}) \arrow[r, "\dd_{\widehat{L}}"] &
    \Lambda^1_{\widehat{\VV}} (\R \times \widehat{L}; \D') \arrow[r, "\dd_{\widehat{L}}"] &
    \cdots                                                     \arrow[r, "\dd_{\widehat{L}}"] &
    \Lambda^n_{\widehat{\VV}} (\R \times \widehat{L}; \D')
  \end{tikzcd} .
\end{equation*}
The group of periods $\per(\zeta)$ acts on these spaces via pullback in a similar way, and we can regard them as $\per(\zeta)$-modules. The first lemma we prove concerns \emph{almost global hypoellipticity} (in the sense of \cite{afr22}) for $\dd_{\widehat{L}}$:
\begin{Lem}
  \label{lem:agh_partialderham}
  If $u \in \D'(\R \times \widehat{L})$ is such that $\dd_{\widehat{L}} u$ is smooth then there exists $v \in \cinfty(\R \times \widehat{L})$ such that $\dd_{\widehat{L}} v = \dd_{\widehat{L}}u$.
\end{Lem}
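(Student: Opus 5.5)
We want: if $u\in\D'(\R\times\widehat{L})$ and $\pmb{f}\dfn\dd_{\widehat{L}}u$ is smooth, then $\pmb{f}=\dd_{\widehat{L}}v$ for some smooth $v$. The plan is to combine a local Poincaré lemma with parameters (for smooth and for distributional data) with the global structure given by Proposition~\ref{prop:partial_derham_solvable}, i.e.\ a comparison of smooth and distributional partial de~Rham cohomology in degree one.

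First observe that $\pmb{f}\in\Lambda^1_{\widehat{\VV}}(\R\times\widehat{L})$ is smooth and $\dd_{\widehat{L}}$-closed. By Proposition~\ref{prop:partial_derham_solvable} with $M=\R$ and $N=\widehat{L}$, the class of $\pmb{f}$ in smooth cohomology is an element of $\cinfty(\R)\widehat{\otimes}H^1_{\dR}(\widehat{L})$. The obstruction is therefore the family of its periods: for each $t$ and each smooth loop $\gamma$ in $\widehat{L}$ we set $P_\gamma(t)\dfn\int_{\{t\}\times\gamma}\pmb{f}$. Concretely, it suffices to show that all these periods vanish. Then fixing a base point $x_0$ and setting $v(t,x)\dfn\int_{x_0}^x\pmb{f}(t,\cdot)$ along any path in $\widehat{L}$ gives a well-defined smooth $v$ with $\dd_{\widehat{L}}v=\pmb{f}$. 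Equivalently, $\pmb{f}$ lies in the smooth range, since the range is closed by Proposition~\ref{prop:partial_derham_solvable} and the class is determined by these periods.

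To see that the periods vanish, we exploit $\pmb{f}=\dd_{\widehat{L}}u$ with $u$ a distribution. For a loop $\gamma$, thicken it: choose a compactly supported $n$-form $\Theta_\gamma$ on $\widehat{L}$, namely a Poincaré-dual representative of a closed transversal tubular neighbourhood of $\gamma$ (orientability of $\widehat{L}$ is granted, as noted above). This gives
\begin{equation*}
  \int_{\widehat{L}}\pmb{f}(t,\cdot)\wedge\eta_\gamma=P_\gamma(t)
\end{equation*}
for a closed compactly supported $(n-1)$-form $\eta_\gamma$ on $\widehat{L}$. Pairing with a test function $\phi\in\cinfty_c(\R)$, we get
\begin{equation*}
  \int_\R\phi\,P_\gamma\,\dd t=\langle \pmb{f},\phi\,\dd t\wedge\eta_\gamma\rangle=\pm\langle u,\dd(\phi\,\dd t\wedge\eta_\gamma)\rangle=\pm\langle u,\phi\,\dd t\wedge\dd\eta_\gamma\rangle=0.
\end{equation*}
The middle equality uses that the pairing of $\dd_{\widehat{L}}u$ on $\dd t\wedge(\cdot)$ agrees with that of $\dd u$, because the $\dd t$-component is killed. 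The last equality holds because $\eta_\gamma$ is closed. Hence $P_\gamma\equiv0$ for every $\gamma$, and the primitive $v$ above is smooth with $\dd_{\widehat{L}}v=\pmb{f}$.

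The main obstacle is making rigorous the duality between loops and compactly supported closed $(n-1)$-forms on the non-compact leaf $\widehat{L}$, namely that $\int_\gamma\alpha=\int_{\widehat{L}}\alpha\wedge\eta_\gamma$ for every closed $1$-form $\alpha$. This is standard Poincaré duality, pairing $H^1_{\dR}$ with $H^{n-1}_c$ via the Thom form of the normal bundle of an embedded loop. When $n\ge 3$ we can take $\gamma$ embedded after a generic perturbation. In low dimensions, where immersed loops occur, we would instead decompose $\gamma$ into arcs contained in coordinate charts. An alternative is to pass through the distributional version of the argument of Proposition~\ref{prop:partialdr_dist}, in the spirit of Lemma~\ref{lem:partialdr_char}. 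One also has to check that the parametric dependence on $t$ is harmless, which holds because $\eta_\gamma$ is independent of $t$.
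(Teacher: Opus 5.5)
Your argument is correct, but it takes a genuinely different route from the paper.

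\textbf{The paper's route.} The paper argues locally. In convex charts it uses the smoothness of $\del u_j/\del x_l$ to split $u_j=\Psi_j+u^0_j$, with $\Psi_j$ smooth and $u^0_j\in\D'(\R)$ independent of $x$. It then mollifies in $t$ only, which is compatible with the transition maps since these do not mix $t$. The mollified pieces glue to $u^\varepsilon\in\cinfty(\R\times\widehat{L})$ with $\dd_{\widehat{L}}u^\varepsilon\to\dd_{\widehat{L}}u$ in the smooth topology. The conclusion comes from the closed range of $\dd_{\widehat{L}}$ in Proposition~\ref{prop:partial_derham_solvable}.

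\textbf{Your route.} You identify the obstruction as the periods $P_\gamma(t)$ of the closed forms $\pmb{f}(t,\cdot)$. You kill them by testing $\pmb{f}=\dd_{\widehat{L}}u$ against $\phi(t)\,\dd t\wedge\eta_\gamma$. This is legitimate: $\dd t\wedge(\cdot)$ annihilates the ambiguity of representatives, and $\dd(\phi\,\dd t\wedge\eta_\gamma)=-\phi\,\dd t\wedge\dd\eta_\gamma=0$. The path-integral primitive then finishes the proof, so your appeal to Proposition~\ref{prop:partial_derham_solvable} is actually unnecessary.

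\textbf{Trade-offs.} Your route gives an explicit primitive. It avoids the structure theorem for $x$-independent distributions, the mollification, and the closed-range input. It costs two things:
\begin{itemize}
\item orientability of $\widehat{L}$, which is available as noted in Section~\ref{sec:irr_degn};
\item a compactly supported closed $(n-1)$-form dual to each loop, namely the Thom form of its normal bundle. This is not an $n$-form $\Theta_\gamma$ as you first wrote.
\end{itemize}

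\textbf{Low dimensions.} Your caveat can be settled uniformly. For $n\geq 2$, extend $\gamma$ to an orientation-preserving immersion $\Gamma\colon S^1\times B^{n-1}\to\widehat{L}$; this is possible because the normal bundle is oriented, hence trivial over $S^1$. Take $\eta_\gamma\dfn\Gamma_*(\pr_2^*\beta)$ with $\beta$ a bump $(n-1)$-form of integral one. Pushforward along a local diffeomorphism commutes with $\dd$, and every loop $\Gamma(\cdot,s)$ is homotopic to $\gamma$. Hence $\int_{\widehat{L}}\alpha\wedge\eta_\gamma=\int_\gamma\alpha$ for every closed $1$-form $\alpha$, with no general-position or crossing-resolution argument needed. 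The case $n=1$ is immediate.

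By contrast, the paper's proof needs neither orientation nor any duality, only local analysis plus the closed-range property of $\dd_{\widehat{L}}$.
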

\begin{proof}
  Let $\{ (U_j ,\chi_j) \}$ be an atlas for $\widehat{L}$ such that $V_j \dfn \chi_j(U_j) \sset \R^n$ is an open convex neighborhood of the origin for each $j$. Therefore, $\Phi_j: \R \times U_j \to \R \times V_j$ given by $\Phi_j(t,x) \dfn (t,\chi_j(x))$ form an atlas for $\R \times \widehat{L}$: the datum of a distribution $u \in \D'(\R \times \widehat{L})$ is encoded in a family of distributions $u_j \in \D'(\R \times V_j)$ satisfying
  \begin{equation}
    \label{eq:compatibility}
    u_k|_{\R \times \chi_k(U_j \cap U_k)} = \left(\Phi_j \circ \Phi_k^{-1} \right)^* u_j|_{\R \times \chi_j(U_j \cap U_k)}.
  \end{equation}
  Assume that $\dd_{\widehat{L}}u$ is smooth. This means that for all $j$ we have
  \begin{equation*}
    \frac{\del u_j}{\del x_l} \in \cinfty(\R \times V_j),
    \quad 1 \leq l \leq n,
  \end{equation*}
  which implies that $u_j$ is smooth with respect to $x$, in the following sense. Since $V_j$ is convex, we can define the smooth function
  \begin{equation*}
    \Psi_j(t,x) \dfn \sum_{l=1}^n x_l\int_0^{1} \frac{\del u_j}{\del x_l} (t,sx) \dd s \in \cinfty(\R \times V_j).
  \end{equation*}
  We claim that $\del \Psi_j/\del x_k = \del u_j/\del x_k$. Indeed, note that
  \begin{equation*}
    \frac{\del}{\del s} \left( \frac{\del u_j}{\del x_k}(t, sx) \right)
    = \sum_{l=1}^n x_l \frac{\del^2 u_j}{\del x_l \del x_k}(t,sx)
    = \sum_{l=1}^n x_l \frac{\del^2 u_j}{\del x_k \del x_l}(t,sx)
\end{equation*}
since the mixed derivatives match: this is true in the distributional sense and both functions are smooth. Therefore,
  \begin{align*}
    \frac{\del \Psi_j}{\del x_k}(t,x)
    &= \int_{0}^1 \frac{\del u_j}{\del x_k}(t,sx) \dd s + \sum_{l=1}^n x_l \int_0^{1} s \frac{\del^2 u}{\del x_k \del x_l}(t,sx) \dd s \\
    &= \int_{0}^1 \frac{\del u_j}{\del x_k}(t,sx) \dd s + \int_{0}^1 s \frac{\del}{\del s} \left( \frac{\del u_j}{\del x_k}(t, sx) \right) \dd s \\
    &= \int_{0}^1 \frac{\del u_j}{\del x_k}(t,sx) \dd s + \frac{\del u_j}{\del x_k}(t, x) - \int_0^1 \frac{\del u_j}{\del x_k}(t, sx) \dd s \\
    &= \frac{\del u_j}{\del x_k}(t, x)
  \end{align*}
  where we integrated by parts. Therefore, $u^\bb \dfn u_j-\Psi_j$ is a distribution in $\R \times V_j$ such that $\del u^\bb/\del x_l = 0$ for all $l=1,\ldots,n$. By~\cite[Theorem~3.1.4']{hormander_alpdo1}, there is a distribution $u^0_j \in \D'(\R)$ such that
  \begin{equation*}
    \langle u^\bb ,\phi \rangle = \int_{V_j}\langle u^0_j, \phi(\cdot,x) \rangle \dd x,
    \quad \forall \phi \in \cinfty_c(\R \times V_j).
  \end{equation*}
  Given $x \in V_j$, define
  \begin{equation*}
    u_j(x) \dfn \Psi_j(\cdot,x) + u^0_j \in \D'(\R).
  \end{equation*}
  The map $V_j \ni x \mapsto u_j(x) \in \D'(\R)$ is smooth, in the sense that $V_j \ni x \mapsto \langle u_j(x),\psi \rangle$ is smooth for all $\psi \in \cinfty_c(\R)$, and clearly
  \begin{equation}
    \label{eq:dist_formula}
    \langle u_j, \phi \rangle = \int_{V_j} \left \langle u_j(x), \phi(\cdot,x) \right \rangle \dd x,
    \quad \forall \phi \in \cinfty(\R \times V_j).
  \end{equation}
  We introduce mollifiers $\rho_\varepsilon(t) \dfn \varepsilon^{-1}\rho(t/\varepsilon)$ where $\rho \in \cinfty_c(\R)$ is non-negative with $\int \rho = 1$. Consider, for fixed $x \in V_j$, the convolution
  \begin{equation*}
    u^\varepsilon_j(x) \dfn u_j(x) * \rho_\varepsilon \in \cinfty(\R),
  \end{equation*}
  which is the following function:
  \begin{equation*}
    \left(u^\varepsilon_j(x)\right) (t) = \left \langle u_j(x), \rho_\varepsilon(t-\cdot) \right \rangle,
    \quad t \in \R.
  \end{equation*}
  Let now
  \begin{equation*}
    u^\varepsilon_j(t,x) \dfn \left( u^\varepsilon_j(x) \right)(t) = \left( \Psi_j(\cdot,x) * \rho_{\varepsilon} \right)(t) + \left( u^0_j * \rho_\varepsilon \right)(t),
  \end{equation*}
  which is a smooth function in $\R \times V_j$. Note that $u^\varepsilon_j \to u_j$ in the weak topology of $\D'(\R \times V_j)$. We shall show that the functions $u^\varepsilon_j$ glue to a global object $u^\varepsilon \in \cinfty(\R \times \widehat{L})$. Indeed, we have to show that $u^\varepsilon_j \circ \Phi_j = u^\varepsilon_k \circ \Phi_k$ on $\R \times U_j \cap U_k$; or, equivalently,
  \begin{equation}
    \label{eq:transition_varepsilon}
    u^\varepsilon_k(t,x) = u^\varepsilon_j \left(\Phi_j\circ \Phi_k^{-1}(t,x) \right),
    \quad \forall (t,x) \in \R \times \chi_k(U_j \cap U_k).
  \end{equation}

  From~\eqref{eq:compatibility}, we deduce that, for all $\phi \in \cinfty_c \left( \R \times \phi_k(U_j\cap U_k) \right)$,
  \begin{equation*}
    \langle u_k,\phi \rangle
    = \left \langle \left(\Phi_j \circ \Phi_k^{-1} \right)^*u_j, \phi \right \rangle 
    = \left \langle u_j, \frac{1}{\left|\det \mathrm{D} (\chi_j \circ \chi_k^{-1}) \right|} \phi \circ \left(\Phi_k \circ \Phi_j^{-1} \right) \right \rangle.
  \end{equation*}
  Applying~\eqref{eq:dist_formula}, we conclude that
  \begin{align*}
    \int_{V_k} \left \langle u_k(x), \phi(\cdot,x) \right \rangle \dd x
    &= \int_{V_j}\left \langle u_j(y), \phi(\cdot, \chi_k\circ \chi_j^{-1}(y)) \right \rangle \frac{\dd y}{\left|\det \mathrm{D} (\chi_j \circ \chi_k^{-1}) \right|} \\
    &=\int_{V_k} \left \langle u_j(\chi_j \circ \chi_k^{-1}(x)), \phi(\cdot, x) \right \rangle \dd x
  \end{align*}
  by change of variables.  Applying the latter to test functions of type $\phi(t,x) = \psi(t) \phi_2(x)$ with $\phi_2 \in \cinfty_c \left( \chi_k(U_j\cap U_k) \right)$, we obtain
  \begin{equation*}
    \int_{V_k} \left(\langle u_k(x), \psi \rangle - \langle u_j(\chi_j\circ \chi_k^{-1}(x)), \psi \rangle \right) \phi_2(x) \dd x = 0,
  \end{equation*}
  which in turn entails by usual arguments that
  \begin{equation*}
    \langle u_k(x), \psi \rangle = \langle u_j \left( \chi_j\circ \chi_k^{-1}(x) \right), \psi \rangle,
    \quad  \forall x \in \chi_k(U_j\cap U_k), \ \forall \psi \in \cinfty_c(\R).
  \end{equation*}
  Choosing $\psi(s) \dfn \rho_\varepsilon(t-s)$ we obtain~\eqref{eq:transition_varepsilon}.

  Therefore, we have $u^\varepsilon \in \cinfty(\R \times \widehat{L})$: if we prove that $\dd_{\widehat{L}}u^\varepsilon \to \dd_{\widehat{L}}u$ in $\Lambda^1_{\widehat{\VV}} (\R \times \widehat{L})$ as $\varepsilon \to 0$, then the closedness of the range of $\dd_{\widehat{L}}$ entails the result. To show this, it is enough to prove that
  \begin{equation*}
    \frac{\del u^{\varepsilon}_j}{\del x_l} \lra \frac{\del u_j}{\del x_l}
    \quad \text{in $\cinfty(\R \times V_j)$ for all $l=1,\ldots,n$}.
  \end{equation*}
  Note that
  \begin{align*}
    \frac{\del u^\varepsilon_j}{\del x_l}
    &= \frac{\del}{\del x_l} \left \langle u_j(x), \rho_{\varepsilon}(t-\cdot) \right \rangle  \\
    &=\frac{\del}{\del x_l} \left( \int_{\R} \Psi_j(s,x) \rho_\varepsilon(t-s)\dd s + \left \langle u^0_j, \rho_{\varepsilon}(t-\cdot) \right \rangle \right) \\
    &=\int_{\R} \frac{\del \Psi_j}{\del x_l}(s,x) \rho_{\varepsilon}(t-s)\dd s \\
    &\lra \frac{\del \Psi_j}{\del x_l} = \frac{\del u_j}{\del x_l}
  \end{align*}
  in the smooth topology, as we desired.
\end{proof}
Now we can proceed with the main result.
\begin{Thm}
  \label{thm:group_coh_hypo}
  Suppose that~\eqref{eq:delta_map} is globally hypoelliptic in the following sense: if $\delta(u) \in \prod_{\alpha \in \per(\zeta)} \cinfty(\R)$ then $u \in \cinfty(\R)$. Then, $\dd'_0$ is~\eqref{eq:GH}.
\end{Thm}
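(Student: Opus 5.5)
Let $u \in \D'(\Omega)$ with $\dd'_0 u = \pmb{f}$ smooth. The plan is to lift to the covering, split off a smooth leafwise primitive using Lemma~\ref{lem:agh_partialderham}, and push the remaining non-smoothness into a leafwise constant distribution that the hypothesis on $\delta$ forces to be smooth.

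First, lift via $g^*$. By Proposition~\ref{prop:pullback_iso_dist}, $\pmb{U} \dfn g^* u \in \D'(\R \times \widehat{L})^{\per(\zeta)}$. Moreover, $\dd_{\widehat{L}} \pmb{U} = g^* \pmb{f}$ is smooth, since pullback by a local diffeomorphism preserves smoothness and commutes with $\dd'$. Lemma~\ref{lem:agh_partialderham} then gives $v \in \cinfty(\R \times \widehat{L})$ with $\dd_{\widehat{L}} v = \dd_{\widehat{L}} \pmb{U}$. Set $w \dfn \pmb{U} - v$; then $\dd_{\widehat{L}} w = 0$, so $w \in W$, and via the weak isomorphism $S: W \to \D'(\R)$ constructed above, $w$ corresponds to $w_0 \dfn S(w) \in \D'(\R)$, a distribution independent of the leaf variables.

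Next, compute $\delta$. Since $\pmb{U}$ is $\per(\zeta)$-invariant, for each $\alpha$ we get $T^*_\alpha w - w = -(T_\alpha^* v - v)$. The left side lies in $W$, and the right side is smooth and $\dd_{\widehat{L}}$-closed, hence a smooth element of $W$, i.e.\ a function of $t$ alone; this uses $Z^0 = \cinfty(\R)$, the smooth part of $W$. Because $S$ commutes with translations, $\delta(w_0) = (T_\alpha^* w_0 - w_0)_\alpha$ is the image under $S$ of these smooth elements. One must check that $S$ sends smooth leafwise-constant elements of $W$ to smooth functions: for $w' = \phi(t)$ we get $S(w') = \phi \int \omega_0 = \phi$. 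So $\delta(w_0) \in \prod_\alpha \cinfty(\R)$, and the hypothesis yields $w_0 \in \cinfty(\R)$. Then $w = R(w_0)$ is the function $(t,x)\mapsto w_0(t)$, which is smooth, and $\pmb{U} = v + w$ is smooth on $\R\times\widehat{L}$.

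Finally, descend. Since $g$ is a local diffeomorphism and $g^* u$ is smooth, $u$ is smooth near every point. Concretely, on a distinguished open set $U_x$ we have $u|_{U_x} = (g|_{\til U_{x,0}}^{-1})^* (\pmb{U}|_{\til U_{x,0}})$, consistent with $h \circ g^* = \Id$.

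The main obstacle is the bookkeeping in the second step: making sure the identification $W \simeq \D'(\R)$ maps the smooth leafwise-constant elements exactly onto $\cinfty(\R)$, and that $T_\alpha^* v - v$ genuinely lies in $W$ so that $S$ can be applied to it. Both reduce to $\dd_{\widehat{L}}(T^*_\alpha v - v) = T^*_\alpha \dd_{\widehat{L}}\pmb{U} - \dd_{\widehat{L}}\pmb{U} = 0$ and to the connectedness of $\widehat{L}$.
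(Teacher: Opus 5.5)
Your proposal is correct and follows essentially the same route as the paper. You lift by $g^*$, subtract a smooth leafwise primitive via Lemma~\ref{lem:agh_partialderham}, identify the $\dd_{\widehat{L}}$-closed remainder with a distribution on $\R$, apply the hypothesis on $\delta$, and descend. Your use of the isomorphism $S\colon W\to\D'(\R)$, with its compatibility with translations and its behaviour on smooth elements, simply makes rigorous the paper's one-line step ``$u = u^\bb + \phi$ with $\phi\in\D'(\R)$''. Citing Proposition~\ref{prop:pullback_iso_dist} rather than Proposition~\ref{prop:pullback_map} for the invariance of $g^*u$ is, if anything, the more accurate reference.
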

\begin{proof}
  Let $v \in \D'(\Omega)$ be such that $\dd'_0 v \in \Lambda^1_{\VV}(\Omega)$. By Proposition~\ref{prop:pullback_map}, $u \dfn g^*v \in \D'(\R \times \widehat{L})$ is $\per(\zeta)$-invariant, i.e.,~$T_\alpha^*u = u$ for all $\alpha \in \per(\zeta)$, and moreover $\dd_{\widehat{L}}u$ is smooth. From Lemma~\ref{lem:agh_partialderham}, we can find $u^\bb \in \cinfty(\R \times \widehat{L})$ such that $\dd_{\widehat{L}} (u - u^\bb) = 0$, and since $\widehat{L}$ is connected there exists $\phi \in \D'(\R)$ such that $u = u^\bb + \phi$. It follows that $\delta(\phi) = - \delta(u^\bb)$ is smooth, hence so is $\phi$ itself; the smoothness of $u$, and then of $v$, follow by consequence.
\end{proof}
Now, the proof of Theorem~\ref{thm:hypo_suff} will be complete once we show:
\begin{Prop}
  Let $1, a_1, \ldots,a_r$ positive real numbers, linearly independent over $\Q$. If $(a_1, \ldots,a_r)$ is a non-Liouville vector then the operator $\delta$ given by~\eqref{eq:delta_map} is globally hypoelliptic.
\end{Prop}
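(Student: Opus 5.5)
First reduce to the generators, exactly as in Corollary~\ref{cor:reduction_to_generators}: if $u \in \D'(\R)$ and $\delta(u)$ is smooth, then in particular $g_j \dfn T^*_{a_j} u - u \in \cinfty(\R)$ for $j = 0, \ldots, r$. So it suffices to show that $D(u) = (g_0, \ldots, g_r)$ smooth forces $u$ smooth, and this is a local statement. The plan is to construct a \emph{compactly supported parametrix} via Ehrenpreis' theorem. By Theorems~\ref{thm:ehrenpreis} and~\ref{thm:main_result_suff} there are entire functions $F_0, \ldots, F_r$ of exponential type, polynomially bounded on $\R$, with $\sum_j \frac{e^{ia_jz}-1}{z} F_j(z) = 1$. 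Replacing $z$ by $-z$ (the same estimate holds by symmetry of the lower bound under $z \mapsto -z$, or one simply reruns the argument), and using the Paley--Wiener--Schwartz theorem, we obtain $w_0, \ldots, w_r \in \E'(\R)$ with
\begin{equation*}
  \sum_{j=0}^r \hat w_j(z) \, \frac{e^{-ia_j z} - 1}{z} = 1 .
\end{equation*}
Next, note that $(e^{-ia_jz}-1)/z$ is, up to a constant factor $\pm i$, the Fourier transform of the indicator $\chi_j$ of an interval of length $a_j$. This gives $\chi_j \in \E'(\R)$ with $\delta_{a_j} - \delta_0 = c\,\partial_x \chi_j$ (after fixing normalizations and signs). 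The Fourier identity then becomes
\begin{equation*}
  \sum_j w_j * \chi_j = \delta_0' \text{-type identity, namely } \sum_j w_j * (\delta_{a_j}-\delta_0) * \chi_j \ \text{and} \ \partial_x\Big(\sum_j w_j*\chi_j\Big) = c'\delta_0 .
\end{equation*}
I would set $E \dfn \sum_j w_j * \chi_j \in \E'(\R)$, which satisfies $\hat E(z) = c''/z$ modulo the chosen factors. Because a compactly supported distribution cannot have Fourier transform $1/z$, I will instead keep the identity at the level of the operator. Convolving the identity $\sum_j \hat w_j \,(e^{-ia_jz}-1) = z$ gives
\begin{equation*}
  \sum_j w_j * (\delta_{a_j} - \delta_0) = c\,\delta_0'.
\end{equation*}
Pairing this with $u$ by convolution, and noting that $(\delta_{a_j} - \delta_0) * u$ equals $\pm(T^*_{\mp a_j}u - u)$, which is a translate of $-g_j$ and hence smooth, we get
\begin{equation*}
  c\, u' = \sum_j \check w_j * (\text{translate of } g_j) \in \cinfty(\R),
\end{equation*}
since the convolution of a compactly supported distribution with a smooth function is smooth. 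Thus $u'$ is smooth, and therefore $u$ is smooth, because a distribution on $\R$ whose derivative is smooth differs from a smooth primitive by a constant.

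The main obstacle is bookkeeping rather than analysis. One has to match the transpose and reflection conventions so that the $w_j$ obtained from Theorem~\ref{thm:ehrenpreis} act on the correct translates of $g_j$. One also has to check that the Ehrenpreis hypothesis is satisfied for the reflected functions $(e^{-ia_jz}-1)/z$. The second point holds because the estimate~\eqref{eq:main_result_suff} was proved for all $z \in \C$, and $|e^{-ia_jz}-1| = |e^{ia_j(-z)}-1|$ while the right-hand side is invariant under $z \mapsto -z$. All convolutions are legitimate, since every $w_j$ and every $\delta_{a_j}$ is compactly supported.
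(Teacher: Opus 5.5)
Your argument is correct and is essentially the paper's: Theorem~\ref{thm:ehrenpreis} together with Theorem~\ref{thm:main_result_suff} gives $w_0,\ldots,w_r\in\E'(\R)$ with $\sum_j w_j*(\delta_{a_j}-\delta_0)=c\,\delta_0'$, and convolving with $u$ exhibits $u'$ as a finite sum of compactly supported distributions convolved with translates of the smooth functions $T^*_{a_j}u-u$, hence smooth. You should drop the detour through $\chi_j$ and $E$, since its claim $\hat E = c''/z$ is wrong: $\hat E$ is in fact constant, so $E$ is a multiple of $\delta_0$. The $\check w_j$ in your final display should simply be $w_j$; neither slip affects the conclusion.
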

\begin{proof}
  Take $F_0, \ldots, F_r \in \Ol(\C)$ of exponential type, bounded by a polynomial on $\R$, such that~\eqref{eq:ideal2} holds everywhere: their existence follows from our hypothesis. We rewrite the latter as
  \begin{equation*}
    \sum_{j=0}^r (e^{-ia_j z} - 1) F_j(z) = z;
  \end{equation*}
  hence, taking Fourier transform on both sides yields
  \begin{equation*}
    \sum_{j=0}^r \left( T_{a_j}^* u_j - u_j \right) = \delta_{0}'
  \end{equation*}
  for certain $u_0, \ldots, u_r \in \E'(\R)$, where $\delta_{0}'$ denotes the derivative of the Dirac mass centered at the origin. Let $u\in \D'(\R)$ be such that $\delta(u)$ is smooth. Then its derivative is also smooth, since it equals
  \begin{equation*}
    u' = u' * \delta_0 = u * \delta'_0
    = \sum_{j=0}^r u * \left( T_{a_j}^* u_j - u_j \right)
    = \sum_{j=0}^r \left( T_{a_j}^* u - u \right) * u_j
  \end{equation*}
  for the convolution commutes with translations. Hence, $u$ is also smooth.
\end{proof}

\section{Acknowledgments}
The first-named author was supported by Sao Paulo Research Foundation (FAPESP), grants 2018/14316-3, 2021/03888-9, 2024/08416-6, 2024/12753-8, and by National Council for Scientific and Technological Development (CNPq), grants 163837/2022-8, 404175/2023-6, 170244/2023-7, 303483/2024-5. The second-named author was supported by Sao Paulo Research Foundation (FAPESP), grants 2024/08416-6, 2024/12753-8 and by National Council for Scientific and Technological Development~(CNPq), grant 313581/2021-5. The third-named author was supported by Sao Paulo Research Foundation (FAPESP), grant 2021/03888-9, 2021/03199-9 and by National Council for Scientific and Technological Development (CNPq), grant 404175/2023-6. The fourth-named author was supported by Sao Paulo Research Foundation, grants 2021/03888-9, 2023/17607-7 and by the Austrian Science Fund (FWF), grant 10.55776/PAT3705925.

\bibliographystyle{plain}
\bibliography{global1form}
\end{document}